\documentclass[11pt]{amsart}

\headheight=7pt         \topmargin=14pt
\textheight=574pt       \textwidth=445pt
\oddsidemargin=18pt     \evensidemargin=18pt 

 \setlength{\parskip}{4pt}
\usepackage{amsmath,amssymb,amsthm,amsfonts,amscd,flafter,
graphicx,verbatim,pinlabel,mathrsfs,cite}
\usepackage[all]{xy}
\usepackage{epstopdf}
\epstopdfsetup{suffix=}
\usepackage[colorlinks=false]{hyperref}
\usepackage[all]{hypcap}

\title{Naturality in sutured monopole and instanton homology}

\author[John A. Baldwin]{John A. Baldwin}
\address{Department of Mathematics \\ Boston College}
\email{john.baldwin@bc.edu}

\author[Steven Sivek]{Steven Sivek}
\address{Department of Mathematics \\ Princeton University}
\email{ssivek@math.princeton.edu}

\thanks{JAB was partially supported by NSF Grant DMS-1104688. }
\thanks{SS was supported by  NSF Postdoctoral Fellowship DMS-1204387.}

\def\R{{\mathbb{R}}}
\def\C{{\mathbb{C}}}

\newcommand\Z{\mathbb{Z}}

\newcommand\Sc{\text{Spin}^c}
\newcommand\spc{\mathfrak{s}}

\newcommand\ssm{\smallsetminus}

\newcommand\CM{\mathcal{M}}
\newcommand\CS{\mathcal{S}}

\newcommand\data{\mathscr{D}}

\newcommand\RR{\mathcal{R}}
\newcommand\GG{\mathcal{G}}
\newcommand\SHM{SHM}
\newcommand\KHM{KHM}
\newcommand\KHMt{\underline{\KHM}}
\newcommand\KHMfun{\definefunctor{\KHM}}
\newcommand\KHMtfun{\definefunctor{\KHMt}}
\newcommand\KHIfun{\definefunctor{\KHI}}
\newcommand\KHItfun{\definefunctor{\KHIt}}
\newcommand\KHI{KHI}
\newcommand\KHIt{\underline{\KHI}}

\newcommand\HM{HM}
\newcommand\HMt{\underline{\HM}}
\newcommand\HMfun{\definefunctor{\HM}}
\newcommand\HMtfun{\definefunctor{\HMt}}
\newcommand\HIfun{\definefunctor{\HI}}
\newcommand\HItfun{\definefunctor{\HIt}}
\newcommand\HI{HI}
\newcommand\HIt{\underline{\HI}}

\newcommand\SHMt{\underline{\SHM}}
\newcommand\SHI{SHI}
\newcommand\SHIt{\underline{\SHI}}
\newcommand{\definefunctor}[1]{\textbf{\textup{#1}}}
\newcommand\SHMfun{\definefunctor{\SHM}}
\newcommand\SHMtfun{\definefunctor{\SHMt}}
\newcommand\SHIfun{\definefunctor{\SHI}}
\newcommand\SHItfun{\definefunctor{\SHIt}}
\newcommand\Psit{\underline{\Psi}}
\newcommand\SFH{SFH}

\newcommand\Diff{{\rm Diff}}
\newcommand\DiffSut{\textup{\textbf{DiffSut}}}
\newcommand\BasedKnot{\textup{\textbf{BKnot}}}
\newcommand\BasedMfld{\textup{\textbf{BMfld}}}
\newcommand\CobSut{\textup{\textbf{CobSut}}}
\newcommand\ContSut{\textup{\textbf{ContSut}}}
\newcommand\Mod{\textbf{\textup{Mod}}}
\newcommand\Sys{\textbf{\textup{Sys}}}
\newcommand\PSys{\textbf{\textup{PSys}}}
\newcommand{\RMod}[1][\RR]{{#1}{\mbox{-}}\Mod}
\newcommand{\RSys}[1][\RR]{{#1}{\mbox{-}}\Sys}
\newcommand{\RPSys}[1][\RR]{{#1}{\mbox{-}}\PSys}
\newcommand\inr{{\rm int}}

\newcommand\NN{\mathscr{N}}
\newcommand\PP{\mathscr{P}}
\newcommand\Nu{\mathscr{N}^u}
\newcommand\Pu{\mathscr{P}^u}

\newcommand\rel{\mathrm{\ rel\ }}
\newcommand\dmap\Psi

\newcommand\Img{{\rm Im}}

\newcommand\HMtoc{\HMto_{\bullet}}

    \RequirePackage{rotating}                   
    \def\HMto{%
       \setbox0=\hbox{$\widehat{\mathit{HM}}$}
       \setbox1=\hbox{$\mathit{HM}$}
       \dimen0=1.1\ht0
       \advance\dimen0 by 1.17\ht1
       \smash{\mskip2mu\raise\dimen0\rlap{%
          \begin{turn}{180}
              {$\widehat{\phantom{\mathit{HM}}}$}
           \end{turn}} \mskip-2mu    
                \mathit{HM}
    }{\vphantom{\widehat{\mathit{HM}}}}{}}
    
    \newcommand*\oline[1]{%
  \vbox{%
    \hrule height 0.35pt
    \kern0.1ex
    \hbox{%
      \kern-0.0em
      \ifmmode#1\else\ensuremath{#1}\fi
      \kern-0.1em
    }
  }
}

\newtheorem{theorem}{Theorem}[section]
\newtheorem{lemma}[theorem]{Lemma}

\newtheorem{corollary}[theorem]{Corollary}
\newtheorem{proposition}[theorem]{Proposition}

\theoremstyle{definition}
\newtheorem{definition}[theorem]{Definition}

\newtheorem{remark}[theorem]{Remark}
\newtheorem{interlude}[theorem]{Interlude}
\newtheorem{notation}[theorem]{Notation}

\newtheorem{example}[theorem]{Example}

\makeatletter
\newtheorem*{rep@thm}{\rep@title}
\newcommand{\newreptheorem}[2]{%
\newenvironment{rep#1}[1][0,0]{%
\def\rep@title{#2##1}%
\begin{rep@thm}}%
{\end{rep@thm}}}
\makeatother
\newreptheorem{theorem}{}

\begin{document}
\begin{abstract} 
In \cite{km4}, Kronheimer and Mrowka defined  invariants of balanced sutured manifolds using monopole and instanton Floer homology. Their invariants assign isomorphism classes of modules to balanced sutured manifolds. In this paper, we introduce refinements of these invariants which assign much richer algebraic objects called  \emph{projectively transitive systems of  modules} to balanced sutured manifolds and  isomorphisms of such systems to diffeomorphisms of balanced sutured manifolds. Our work   provides the foundation for extending these sutured Floer   theories to other interesting functorial frameworks as well, and  can be used to construct new  invariants of contact structures and (perhaps) of knots and  bordered 3-manifolds.
\end{abstract}

\maketitle

\section{Introduction}
\label{sec:intro}
In \cite{km4}, Kronheimer and Mrowka defined  invariants of balanced sutured manifolds using monopole and instanton Floer homology. The most basic versions of their monopole and instanton invariants assign isomorphism classes of finitely generated $\mathbb{Z}$- and $\mathbb{C}$-modules,  denoted by $\SHM(M,\gamma)$ and $\SHI(M,\gamma)$, respectively, to a balanced sutured manifold $(M,\gamma)$. In this paper, we introduce refinements of Kronheimer and Mrowka's invariants which assign much richer algebraic objects to balanced sutured manifolds.  A similar program has recently been carried out in the realm of sutured (Heegaard) Floer homology ($\SFH$)  by Juh{\'a}sz and Thurston \cite{juhaszthurston}. These projects are motivated by a desire to fit these sutured Floer  theories into interesting functorial frameworks (there are no interesting morphisms between  isomorphism classes of modules). Some interesting source categories  for such functors, with balanced sutured manifolds as objects, are:

\begin{enumerate}
\item $\DiffSut$, whose morphisms are isotopy classes of diffeomorphisms of balanced sutured manifolds,
\item $\CobSut$, whose morphisms are isomorphism classes of smooth cobordisms of balanced sutured manifolds, in the sense of \cite{juhasz3},
\item $\ContSut$, where the space of morphisms from $(M,\gamma)$ to $(M',\gamma')$ is empty unless the former is a \emph{sutured submanifold} of the latter, in which case the morphism space consists of all isotopy classes of contact structures on $M'\smallsetminus\inr(M)$ for which $\partial M$ and $\partial M'$ are convex with dividing sets $\gamma$ and $\gamma'$.
\end{enumerate}

One natural target  category for such functors is $\RMod{}$, the category of $\RR$-modules for some  commutative ring $\RR$. For example, the main result of \cite{juhaszthurston} is that $\SFH$ defines  a  functor from $\DiffSut$ to $\RMod[\Z/2]$. Similarly, Juh{\'a}sz \cite{juhasz3} and Honda, Kazez and Mati{\'c} \cite{hkm5} have   shown  that $\SFH$  extends to  functors from $\CobSut$ and $\ContSut$ to $\RMod[\Z/2]$.
 
 The refinements of $\SHM$ and $\SHI$ constructed in this paper define  functors from $\DiffSut$ (and certain full subcategories of $\DiffSut$) to categories of \emph{projectively transitive systems} which are closely related to $\RMod$. We detour slightly in order to describe these categories. 
  
 \subsection{$\GG$-Transitive Systems}
Below, we generalize the notion of a \emph{transitive system of modules} which was first introduced by Eilenberg and Steenrod in \cite{eilenbergsteenrod}.
 We will  assume throughout that $\RR$ is a commutative ring with $1$.

\begin{definition}
Fix a subgroup $\GG\leq\RR^\times$ and suppose     $M_\alpha,M_\beta$ are $\RR$-modules. Two  homomorphisms $f,g:M_\alpha\to M_\beta$ are said to be \emph{$\GG$-equivalent} if $f=u\cdot g$ for  some  $u\in\GG$. 
\end{definition} 

We will write $f\doteq g$ to indicate that $f$ and $g$ are $\RR^\times$-equivalent. Observe that there is a well-defined notion of composition for $\GG$-equivalence classes.
  
\begin{definition}
\label{def:projtransys} Fix a subgroup   $\GG\leq\RR^\times$.  A \emph{$\GG$-transitive system of $\RR$-modules} consists of a set $A$ together with:
\begin{enumerate}
\item a collection of $\RR$-modules $\{M_{\alpha}\}_{\alpha\in A}$,
\item a collection of $\GG$-equivalence classes  $\{g^{\alpha}_{\beta}\}_{\alpha,\beta\in A}$ such that
\begin{enumerate}
\item elements of $g^{\alpha}_{\beta}$ are isomorphisms from $M_\alpha$ to $M_\beta$, for all $\alpha,\beta\in A$,
\item \label{equality1} $id_{M_{\alpha}}\in g^{\alpha}_{\alpha}$, for all $\alpha\in A$,
\item \label{equality2} $g^{\beta}_{\gamma}\circ g^{\alpha}_{\beta} = g^{\alpha}_{\gamma}$, for all $\alpha,\beta,\gamma\in A$.
\end{enumerate}
\end{enumerate} 
The \emph{module class} of this system refers to the isomorphism class of the $\RR$-modules in $\{M_\alpha\}$.
\end{definition}


We will use the term \emph{projectively transitive system}  to refer to an  $\RR^\times$-transitive system, while a \emph{transitive system}, as defined in \cite{eilenbergsteenrod}, is  nothing other than  a $\{1\}$-transitive system. 

\begin{remark}Just as the maps  in a transitive system  can be thought of as \emph{canonical isomorphisms}, the $\GG$-equivalence classes in a $\GG$-transitive system can  be thought of as specifying canonical isomorphisms that are well-defined up to multiplication by elements of $\GG$. Indeed, the point of introducing $\GG$-transitive systems is to make the latter notion precise.
\end{remark}

\begin{remark}
\label{rmk:modules}Note that a transitive system  of $\RR$-modules $(A,\{M_{\alpha}\}, \{g^{\alpha}_{\beta}\})$  canonically defines  an  actual $\RR$-module,   \[M=\coprod_{\alpha\in A} M_{\alpha}\big/{\sim},\] where $m_{\alpha} \sim m_{\beta}$ iff $ g^\alpha_\beta(m_{\alpha})=m_\beta$, for $m_{\alpha}\in M_{\alpha}$ and $m_{\beta}\in M_{\beta}$.  So, the closer $\GG$ is to $\{1\}$, the closer a $\GG$-transitive system is to an actual $\RR$-module.
\end{remark}

\begin{definition}
\label{def:projtransysmor} A \emph{morphism} of $\GG$-transitive systems  from  $(A,\{M_{\alpha}\},\{g^{\alpha}_{\beta}\})$ to $(B,\{N_{\gamma}\},\{h^{\gamma}_{\delta}\})$  is  a collection of $\GG$-equivalence classes  $\{f^\alpha_\gamma\}_{\alpha\in A,\,\gamma\in B}$ such that:
\begin{enumerate}
\item elements of $f^\alpha_\gamma$ are homomorphisms from $M_\alpha$ to $N_\gamma$, for all $\alpha\in A$ and $\gamma\in B$,
\item \label{eqn:projtransysmor} $f^\beta_\delta\circ g^\alpha_\beta = h^\gamma_\delta\circ f^\alpha_\gamma$, for all $\alpha,\beta\in A$ and $\gamma,\delta\in B$.
\end{enumerate}  
\end{definition}

 With a notion of morphism in place, one can talk about the category of $\GG$-transitive systems of $\RR$-modules.  In this paper, we will be  concerned primarily   with the categories $\RSys$ and $\RPSys$ of transitive and projectively transitive systems of $\RR$-modules.  
 
 \begin{remark}Note that the assignment of modules to transitive systems  above defines a  canonical functor from $\RSys$ to $\RMod$. 
 \end{remark}
 
 \begin{remark}
 For  context, it is worth noting that what Juh{\'a}sz and Thurston really prove in \cite{juhaszthurston} is that $\SFH$ defines a functor from $\DiffSut$ to $\RSys[\Z/2]$. Composing with the canonical functor from $\RSys[\Z/2]$ to $\RMod[\Z/2]$  then produces the functor  from $\DiffSut$ to $\RMod[\Z/2]$ described in \cite[Theorem 1.9]{juhaszthurston}.
 \end{remark}
 
 As mentioned above, our refinements of $\SHM$ and $\SHI$ define functors from $\DiffSut$ to $\RPSys$  for certain rings $\RR$. We describe these functors and rings in more detail below.  
 
 %



\subsection{ Our Refinements} Kronheimer and Mrowka's invariants are  defined in terms of \emph{closures} of balanced sutured manifolds. A \emph{closure} of  $(M,\gamma)$ is    a closed 3-manifold  formed  by gluing  some auxiliary piece to $(M,\gamma)$ and then ``closing up" by  identifying the remaining boundary components, together with a distinguished surface in this closed manifold. Kronheimer and Mrowka  assign modules to each such  closure, defined  in terms of the  monopole and instanton Floer groups of the  closed manifold relative to the  distinguished surface, and they  show that the modules assigned to different closures are isomorphic. So, the invariant objects they assign to $(M,\gamma)$ are the isomorphism classes, $\SHM(M,\gamma)$ and $\SHI(M,\gamma)$, of these modules.

 To extract  invariant modules rather than mere isomorphism classes  from Kronheimer and Mrowka's  constructions, one must show  that  the   modules assigned to different closures are related by canonical isomorphisms, meaning that they fit into a transitive system. We do not quite  go that far in this paper (see Subsection \ref{ssec:furtherremarks}), but we  prove something  similar. With our (twisted) refinements of $\SHM$ and $\SHI$, we show, for each theory, that the modules assigned to different closures  are related by canonical isomorphisms that are well-defined up to multiplication by  a unit,  meaning  that they fit into a projectively transitive system. 

 Our refinements of $\SHM$ and $\SHI$  start with    refined notions of closure.  For us, a closure of $(M,\gamma)$ is a tuple, commonly denoted by $\data$, which records not only  the  manifold obtained by closing up $(M,\gamma)$ and the distinguished surface therein, but also the embedding of $M$  into this  manifold and a tubular neighborhood of the distinguished surface.  The \emph{genus} of a closure refers to the genus of this surface. We describe these refinements below, beginning with  those of $\SHM$. 
 
  Our primary focus in this paper is on  a version of $\SHM$ with twisted coefficients. This  version and its refinement  are based on a  notion of  \emph{marked closure}, which also keeps track of a curve on the distinguished surface. This curve is used to define a twisted coefficient system over a ring $\RR$, where $\RR$ belongs to a particular class of rings defined below.
  
 \begin{definition}
\label{def:novring} A \emph{Novikov-type ring}  is  a commutative ring $\RR$ with $1$, equipped with  a homomorphism ${\rm exp}: \R\rightarrow \RR^{\times}$ such that 
\begin{enumerate}
\item $\RR$ does not contain any $\Z$-torsion,
\item $t-t^{-1}$ is invertible, where $t^{\alpha}:={\rm exp}(\alpha)$.
\end{enumerate}
The prototypical example  is the Novikov ring itself,  \[\bigg\{\sum_{\alpha}c_{\alpha}t^{\alpha}\,\bigg | \,\alpha\in\mathbb{R},\,c_{\alpha}\in\Z,\,\#\{\beta<n|c_{\beta}\neq 0\}<\infty\bigg\},\]  with ${\rm exp}(\alpha) = t^{\alpha}$ and $(t-t^{-1})^{-1} = -t-t^3-t^5-\dots$.
\end{definition}

 \textbf{We will assume henceforth that $\RR$ is some fixed Novikov-type ring.}
 
Suppose $(M,\gamma)$ is a balanced sutured manifold. To every marked closure $\data$ of $(M,\gamma)$, we assign an $\RR$-module $\SHMt(\data)$ in the isomorphism class $\SHM(\data)\otimes_\Z \RR$ following the  construction of $\SHM$ with twisted (local) coefficients in \cite[Definition 4.5]{km4}. For every pair $\data,\data'$ of marked closures, we construct an isomorphism \[\Psit_{\data,\data'}:\SHMt(\data)\to\SHMt(\data'),\] well-defined up to multiplication by a unit in $\RR$, such that the transitivity \begin{equation}\label{eqn:projtransmapsshm}\Psit_{\data,\data''}\doteq\Psit_{\data',\data''}\circ\Psit_{\data,\data'}\end{equation} holds for every triple $\data,\data',\data''$. Said differently, we construct canonical isomorphisms, well-defined up to multiplication by a unit in $\RR$, relating any pair of the modules in $\{\SHMt(\data)\}$. These modules and isomorphisms therefore give rise to a projectively transitive system of $\RR$-modules, which we denote by $\SHMtfun(M,\gamma)$ and refer to as the \emph{twisted sutured monopole homology of $(M,\gamma)$.} 

Given a diffeomorphism $f:(M,\gamma)\rightarrow (M',\gamma')$, we  define an isomorphism \begin{equation}\label{eqn:diffeomorphismmaps}\SHMtfun(f):\SHMtfun(M,\gamma)\rightarrow \SHMtfun(M',\gamma')\end{equation} of projectively transitive systems of $\RR$-modules   which depends only on the  smooth isotopy class of $f$.
Furthermore, these isomorphisms satisfy \[\SHMtfun(f'\circ f) =\SHMtfun(f')\circ \SHMtfun(f)\] for diffeomorphisms \[(M,\gamma)\xrightarrow{f} (M',\gamma')\xrightarrow{f'}(M'',\gamma'').\] In particular, the mapping class group of $(M,\gamma)$ acts on $\SHMtfun(M,\gamma)$. Thus, $\SHMtfun$ defines a functor from $\DiffSut$ to $\RR$-$\PSys.$ We restate this below in a weaker but more self-contained way which closely parallels \cite[Theorem 1.9]{juhaszthurston}. 
 
\begin{theorem}
\label{thm:maintwistedshm}
There exists a functor \[ \SHMtfun: \DiffSut \to \RPSys \] such that the module class of $\SHMtfun(M,\gamma)$ is equal to $\SHM(M,\gamma)\otimes_\Z \RR$.
\end{theorem}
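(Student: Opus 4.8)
The plan is to obtain Theorem~\ref{thm:maintwistedshm} as a formal consequence of the constructions summarized above: once one has the modules $\SHMt(\data)$, the isomorphisms $\Psit_{\data,\data'}$ together with transitivity~\eqref{eqn:projtransmapsshm}, the diffeomorphism-induced maps~\eqref{eqn:diffeomorphismmaps}, and their composition law, the theorem is pure assembly, with the real work living in the later sections.

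\emph{Objects.} For a balanced sutured manifold $(M,\gamma)$ I would define $\SHMtfun(M,\gamma)$ to be the triple $(A,\{\SHMt(\data)\}_{\data\in A},\{g^{\data}_{\data'}\})$, where $A$ is the (nonempty) collection of marked closures of $(M,\gamma)$ and $g^{\data}_{\data'}$ is the $\RR^\times$-equivalence class of $\Psit_{\data,\data'}$. Checking Definition~\ref{def:projtransys}: part~(a) holds since each $\Psit_{\data,\data'}$ is an isomorphism; part~(\ref{equality2}) is exactly~\eqref{eqn:projtransmapsshm}; and part~(\ref{equality1}) follows by specializing~\eqref{eqn:projtransmapsshm} to $\data=\data'=\data''$, which gives $\Psit_{\data,\data}\doteq\Psit_{\data,\data}\circ\Psit_{\data,\data}$ and hence, cancelling the invertible map $\Psit_{\data,\data}$, $\Psit_{\data,\data}\doteq\mathrm{id}_{\SHMt(\data)}$. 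Since the $\SHMt(\data)$ are pairwise isomorphic, the module class of $\SHMtfun(M,\gamma)$ is the isomorphism class of any one of them, and $\SHMt(\data)$ lies in the class $\SHM(\data)\otimes_\Z\RR$, which equals $\SHM(M,\gamma)\otimes_\Z\RR$ because every closure realizes Kronheimer--Mrowka's invariant $\SHM(M,\gamma)$. (A set-theoretic wrinkle to dispatch in passing: one should take $A$ to be an honest set, e.g.\ by bounding the genus of closures or passing to a small skeleton, so that Definition~\ref{def:projtransys} applies verbatim.)

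\emph{Morphisms and functoriality.} Given a diffeomorphism $f\colon(M,\gamma)\to(M',\gamma')$, I would let $\SHMtfun(f)$ be the system morphism whose $(\data,\data')$-component is the $\RR^\times$-class of $\Psit_{f(\data),\data'}$ composed with the tautological isomorphism $\SHMt(\data)\xrightarrow{\ \sim\ }\SHMt(f(\data))$ induced by pushing the marked closure $\data$ forward along $f$. The compatibility square in Definition~\ref{def:projtransysmor}(\ref{eqn:projtransysmor}) then reduces to naturality of $\Psit$ under pushforward by $f$ together with~\eqref{eqn:projtransmapsshm}, and independence from the smooth isotopy class of $f$ is the invariance statement accompanying~\eqref{eqn:diffeomorphismmaps}, so $\SHMtfun$ is well defined on morphisms of $\DiffSut$. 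Finally $\SHMtfun(\mathrm{id})=\mathrm{id}$ since the identity fixes every marked closure, and $\SHMtfun(f'\circ f)=\SHMtfun(f')\circ\SHMtfun(f)$ is the stated composition law; these are the functor axioms, so $\SHMtfun\colon\DiffSut\to\RPSys$ is a functor with the asserted module class.

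\emph{Main obstacle.} The substance of the proof is not this categorical packaging but the construction and properties of the ingredients it invokes, established in the body of the paper: defining $\SHMt(\data)$ with twisted local coefficients following \cite[Definition~4.5]{km4}; building the gluing and cobordism maps underlying $\Psit_{\data,\data'}$ and---the crux---showing the result is well defined up to multiplication by a unit of $\RR$ (this unit indeterminacy is exactly what forces $\RPSys$ in place of $\RSys$, and controlling it requires tracking how each auxiliary choice in the construction alters the map); proving transitivity~\eqref{eqn:projtransmapsshm}, which is a composition-of-cobordism-maps computation; and establishing naturality of $\Psit$ and of the diffeomorphism maps under composition. I expect the well-definedness of $\Psit_{\data,\data'}$ up to a unit, and the closely related transitivity relation, to be the hardest points; the assembly above is then routine bookkeeping.
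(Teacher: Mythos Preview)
Your proposal is correct and follows essentially the same approach as the paper: assemble $\SHMtfun(M,\gamma)$ from the modules $\{\SHMt(\data)\}$ and the $\RR^\times$-classes of $\{\Psit_{\data,\data'}\}$, then build $\SHMtfun(f)$ from the maps $\Psit$ together with a tautological identification of Floer groups, with all substance deferred to the well-definedness and transitivity of $\Psit$ and the composition law for diffeomorphism maps. The one cosmetic difference is that you push closures of $(M,\gamma)$ forward along $f$ (forming $f(\data)=(Y,R,r,m\circ f^{-1},\eta)$ as a closure of $(M',\gamma')$), whereas the paper pulls closures of $(M',\gamma')$ back (forming $\data'_f=(Y',R',r',m'\circ f,\eta')$ as a closure of $(M,\gamma)$) and sets $\Psit_{f,\data,\data'}:=\Theta_{\data'_f,\data'}\circ\Psit_{\data,\data'_f}$ with $\Theta$ the identity on the underlying Floer group; these are dual formulations yielding the same morphism, and the compatibility square you invoke is exactly the paper's commutativity check~(\ref{eqn:commute}) and diagram~(\ref{eqn:comm}).
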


Our  \emph{untwisted} refinements are defined using ordinary  closures.
To every genus $g$ closure $\data$ of $(M,\gamma)$, we assign a $\Z$-module $\SHM^g(\data)$ in the isomorphism class $\SHM(M,\gamma)$ following the construction in \cite[Definition 4.3]{km4}.  For each $g\geq 2$ and every pair $\data,\data'$ of genus $g$ closures, we construct an isomorphism \[\Psi_{\data,\data'}^g:\SHM^g(\data)\to\SHM^g(\data'),\] well-defined up to sign, such that  \begin{equation}\label{eqn:projtransmapsshmuntwisted}\Psi^g_{\data,\data''}\doteq\Psi^g_{\data',\data''}\circ\Psi^g_{\data,\data'}\end{equation}  for every triple $\data,\data',\data''$. These modules and maps  therefore give rise to a projectively transitive system of $\Z$-modules, which we denote by $\SHMfun^g(M,\gamma)$ and refer to as the \emph{untwisted sutured monopole homology of $(M,\gamma)$ in genus $g$.} 

One can assign isomorphisms of systems to diffeomorphisms of balanced sutured manifolds just as in the twisted case, and so this invariant defines a functor from $\DiffSut^g$  to $\RPSys[\Z]$, where $\DiffSut^g$ is the full subcategory  of $\DiffSut$ consisting of balanced sutured manifolds which admit genus $g$ closures.

\begin{theorem}
\label{thm:mainuntwistedshm}
For each $g\geq 2$, there exists a functor
\[ \SHMfun^g: \DiffSut^g \to \RPSys[\Z] \]
such that the module class of  $\SHMfun^g(M,\gamma)$ is equal to $\SHM(M,\gamma)$.  The functor 
\[ \SHMfun^g \otimes_\Z \RR: \DiffSut^g \to \RPSys \]
is naturally isomorphic to the restriction of $\SHMtfun$ to $\DiffSut^g$.
\end{theorem}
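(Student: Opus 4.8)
The plan is to prove Theorem~\ref{thm:mainuntwistedshm} in two stages: first establish the functoriality of $\SHMfun^g$ on its own, and then produce the natural isomorphism between $\SHMfun^g\otimes_\Z\RR$ and the restriction of $\SHMtfun$. For the first stage, the construction of $\SHM^g(\data)$ from an (ordinary) genus $g$ closure $\data$ is the untwisted analogue of the twisted construction, so the work is to define the canonical isomorphisms $\Psi^g_{\data,\data'}$ relating the modules of any two genus $g$ closures, show they are well-defined up to sign, and verify the transitivity relation \eqref{eqn:projtransmapsshmuntwisted}. I would follow exactly the scheme used for $\SHMt$: reduce to a handful of elementary moves relating closures of the same genus (changing the auxiliary piece, isotoping or enlarging the neighborhood of the distinguished surface, changing the gluing diffeomorphism by an isotopy, etc.), define $\Psi^g$ on each elementary move via the cobordism maps on $\HMto$ induced by the natural cobordisms between the corresponding closed 3-manifolds, and check that any two sequences of moves connecting $\data$ to $\data'$ induce the same map up to sign. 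The hypothesis $g\geq 2$ is what guarantees that the closed manifolds involved are admissible for $\HMto$ (the distinguished surface has genus at least $2$, so the relevant $\Sc$ structures are nontrivial) and that the space of genus $g$ closures is connected in the appropriate sense; this is why the functor is only defined on $\DiffSut^g$. Assigning maps to diffeomorphisms and checking compatibility with composition is then formally identical to the twisted case, so I would simply say it goes through \emph{mutatis mutandis}.

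For the second stage, I would use the fact, recorded in the discussion preceding Theorem~\ref{thm:maintwistedshm}, that for a \emph{marked} closure $\data$ the module $\SHMt(\data)$ lies in the isomorphism class $\SHM(\data)\otimes_\Z\RR$, and more precisely that Kronheimer--Mrowka's twisted-coefficient construction gives a natural identification $\SHMt(\data)\cong\SHM^g(\data)\otimes_\Z\RR$ at the chain level: the twisted monopole complex of a closed manifold with a curve is obtained from the untwisted one by tensoring with $\RR$ and twisting the differential by powers of $t$ according to intersection numbers with that curve, and on homology in the top grading relative to the distinguished surface this has no effect beyond base change (this is where the Novikov-type hypothesis, in particular invertibility of $t-t^{-1}$, enters, exactly as in \cite{km4}). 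Each marked closure has an underlying ordinary closure of the same genus, and this gives an assignment $\eta_\data:\SHMfun^g(M,\gamma)\otimes_\Z\RR\to\SHMtfun(M,\gamma)$ on the level of the modules attached to individual closures.

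To promote this to a natural isomorphism of functors, I would check two compatibilities. First, naturality in the closure: for two marked closures $\data,\data'$ with underlying ordinary closures of genus $g$, the square relating $\Psi^g_{\data,\data'}\otimes\id_\RR$ to $\Psit_{\data,\data'}$ via the identifications $\eta$ must commute up to a unit in $\RR$ --- this follows because both horizontal maps are built from the same cobordism maps on $\HMto$, one with untwisted and one with twisted coefficients, and the cobordism maps commute with the base-change/twisting identification (again using the structure of a Novikov-type ring to handle the top grading). This shows the $\eta_\data$ assemble into a well-defined isomorphism of projectively transitive systems $\SHMfun^g(M,\gamma)\otimes_\Z\RR\to\SHMtfun(M,\gamma)$. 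Second, naturality in the sutured manifold: for a diffeomorphism $f:(M,\gamma)\to(M',\gamma')$ one must check that the square comparing $(\SHMfun^g(f))\otimes\id_\RR$ with $\SHMtfun(f)$ commutes, which again reduces to the fact that both are induced by the same pushforward operation on closures, now acting compatibly on marked and unmarked closures (one can push forward the marking curve, and changing it changes the map only up to a unit).

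The main obstacle I expect is the first compatibility: verifying carefully that the untwisted and twisted cobordism maps on monopole Floer homology are intertwined by the base-change identification $\SHMt(\data)\cong\SHM^g(\data)\otimes_\Z\RR$, uniformly enough that all the squares commute up to units rather than just up to isomorphism. This is essentially a statement about the behaviour of Kronheimer--Mrowka's local-coefficient construction under cobordism maps, restricted to the top grading with respect to the distinguished surface; it is where the precise axioms of a Novikov-type ring (no $\Z$-torsion, invertibility of $t-t^{-1}$) do their work, and it must be established with enough care that the resulting natural isomorphism is genuinely canonical up to the $\RR^\times$-ambiguity built into the definition of a projectively transitive system. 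Everything else --- the functoriality of $\SHMfun^g$, the assignment of maps to diffeomorphisms, compatibility with composition --- parallels the twisted case closely enough that I would present it by reference rather than in full.
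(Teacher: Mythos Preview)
Your Stage~1 outline is essentially what the paper does: the untwisted maps $\Psi^g_{\data,\data'}$ are built by the same 2-handle cobordism construction as in Subsection~\ref{ssec:samegenus} (dropping the auxiliary $\psi^C$ and the local system), and well-definedness and transitivity follow from the untwisted analogue of Theorem~\ref{thm:maininvc}. One correction: the hypothesis $g\geq 2$ is used chiefly so that Corollary~\ref{cor:diff} applies, making the boundary identifications of the 2-handle cobordisms canonical; it is not primarily an admissibility issue for $\HMtoc$.

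Stage~2 has a genuine gap. You posit a natural chain-level identification $\SHMt(\data)\cong\SHM^g(\data)\otimes_\Z\RR$, arguing that the twisted complex is the untwisted one ``tensored with $\RR$ and twisting the differential by powers of $t$,'' and that in the top grading this is just base change. But the twisted differential is genuinely different from the untwisted one tensored up, and the isomorphism $\HMtoc(Y|R;\Gamma_\eta)\cong\HMtoc(Y|R)\otimes_\Z\RR$ is itself a nontrivial theorem of Kronheimer--Mrowka proved by excision, not a formal base-change statement. There is no canonical such identification sitting around for you to check cobordism maps against; your ``first compatibility'' cannot even be formulated as written.

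The paper instead \emph{constructs} the comparison map $\Xi^g_{\data,\data^\eta}$ via the merge-type splicing cobordism $\CM$ of Section~\ref{sec:prelim}: one plugs $1\in\RR\cong\HMtoc(R\times S^1|R;\Gamma_\eta)$ into the second factor of the isomorphism $\HMtoc(\CM|R;\Gamma_{\nu^\eta})$. The key compatibility $(\Xi^g)^{-1}\circ\Psit^g\circ\Xi^g\doteq\Psi^g\otimes\mathrm{id}$ is then Proposition~\ref{prop:keycommute}, proved by forming the composite $(-\CM)\circ X_+\circ\CM$, cutting it along an embedded copy of $R\times S^1$, and applying excision to see that the resulting cobordism splits as the untwisted 2-handle cobordism disjoint union a cylinder. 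This excision argument is the substantive idea you are missing.
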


In characteristic two, this construction produces  functors $\SHMfun^g(-;\Z/2)$ from $\DiffSut^g$ to $\RSys[\Z/2]$. Composition  with the canonical functor  from $\RSys[\Z/2]$ to $\RMod[\Z/2]$ then produces  functors from $\DiffSut^g$ to $\RMod[\Z/2]$ which, in an abuse of notation, we will also denote by $\SHMfun^g(-;\Z/2)$. This is  summarized in  the  corollaries below.

\begin{corollary}
For each $g\geq 2$, there exists a functor \[\SHMfun^g(-;\Z/2):\DiffSut^g\to\RSys[\Z/2]\] such that the module class of $\SHMfun^g(M,\gamma;\Z/2)$ is equal to $\SHM(M,\gamma;\Z/2)$.
\end{corollary}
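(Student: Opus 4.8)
The proof will be entirely formal: given Theorem~\ref{thm:mainuntwistedshm}, which already furnishes a functor $\SHMfun^g:\DiffSut^g\to\RPSys[\Z]$ for each $g\geq 2$, I would simply observe that working with the ground ring $\Z/2$ in place of $\Z$ throughout Kronheimer and Mrowka's construction in \cite[Definition 4.3]{km4} produces $\Z/2$-modules $\SHM^g(\data;\Z/2)$, and that the canonical isomorphisms $\Psi^g_{\data,\data'}$ of Theorem~\ref{thm:mainuntwistedshm} descend to $\Z/2$-module isomorphisms $\Psi^g_{\data,\data'}\otimes\Z/2$. The point is that over $\Z/2$ the ``well-defined up to sign'' ambiguity disappears, since $\{1\}=(\Z/2)^\times$, so the projective transitivity \eqref{eqn:projtransmapsshmuntwisted} becomes honest transitivity $\Psi^g_{\data,\data''}=\Psi^g_{\data',\data''}\circ\Psi^g_{\data,\data'}$, together with $\Psi^g_{\data,\data}=\mathrm{id}$. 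Thus the modules $\{\SHM^g(\data;\Z/2)\}$ and these maps satisfy conditions (1) and (2) of Definition~\ref{def:projtransys} with $\GG=\{1\}$, i.e.\ they assemble into a genuine transitive system of $\Z/2$-modules, which we take to be $\SHMfun^g(M,\gamma;\Z/2)$.

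First I would make precise the base change: since every group, chain map, and cobordism map used to build $\SHM^g$ and the comparison maps $\Psi^g_{\data,\data'}$ in the proof of Theorem~\ref{thm:mainuntwistedshm} is defined over $\Z$, one may tensor the entire construction with $\Z/2$ over $\Z$, or equivalently re-run it with $\Z/2$ coefficients in the underlying monopole Floer homology; the hypotheses of \cite{km4} on admissible closures are insensitive to the coefficient ring. This gives $\SHM^g(\data;\Z/2)\cong\SHM^g(\data)\otimes_\Z\Z/2\cong\SHM(M,\gamma;\Z/2)$, so the module class is as claimed. Next I would check that the sign indeterminacy in $\Psi^g_{\data,\data'}$ is killed: the representatives constructed in Theorem~\ref{thm:mainuntwistedshm} are well-defined up to $\pm1$, and $-1\equiv1\pmod 2$, so the $\Z/2$-reductions of any two representatives of $\Psi^g_{\data,\data'}$ coincide, giving a single well-defined isomorphism $\SHM^g(\data;\Z/2)\to\SHM^g(\data';\Z/2)$. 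Equation~\eqref{eqn:projtransmapsshmuntwisted} then holds on the nose rather than up to sign, and $\Psi^g_{\data,\data}$ reduces to the identity, so axioms (2)(a)--(2)(c) of Definition~\ref{def:projtransys} hold with $\GG=\{1\}$.

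Finally I would define $\SHMfun^g(f;\Z/2)$ for a diffeomorphism $f$ exactly as in Theorem~\ref{thm:mainuntwistedshm}, reducing mod $2$; the sign ambiguity again vanishes, so this is a genuine morphism of transitive systems rather than of projectively transitive systems, and functoriality $\SHMfun^g(f'\circ f;\Z/2)=\SHMfun^g(f';\Z/2)\circ\SHMfun^g(f;\Z/2)$ together with dependence only on the isotopy class follows by reducing the corresponding statements in Theorem~\ref{thm:mainuntwistedshm} mod $2$. This verifies that $\SHMfun^g(-;\Z/2)$ is a functor $\DiffSut^g\to\RSys[\Z/2]$ with the stated module class.

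I do not anticipate a genuine obstacle here: the content is simply the observation that reduction mod $2$ trivializes the sign ambiguity, upgrading every projectively transitive system produced in genus $g\geq 2$ to an honest transitive one. The only mild subtlety worth a sentence is checking that Kronheimer and Mrowka's constructions, and the comparison maps of Theorem~\ref{thm:mainuntwistedshm}, really are defined integrally (so that base change to $\Z/2$ makes sense) rather than only over $\Z[\tfrac12]$ or $\C$; this is the case because the untwisted $\SHM$ of \cite[Definition 4.3]{km4} already takes values in $\Z$-modules and all the gluing and excision maps used in the proof of Theorem~\ref{thm:mainuntwistedshm} are maps of $\Z$-modules.
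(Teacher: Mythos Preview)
Your proposal is correct and matches the paper's approach exactly: the paper does not give a separate proof of this corollary but simply observes, in the sentence preceding it, that ``in characteristic two, this construction produces functors $\SHMfun^g(-;\Z/2)$ from $\DiffSut^g$ to $\RSys[\Z/2]$,'' which is precisely your point that reducing mod $2$ kills the sign ambiguity and upgrades the projectively transitive system to a transitive one. Your write-up is more detailed than the paper's one-line remark, but the content is identical.
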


\begin{corollary}
For each $g\geq 2$, there exists a functor \[\SHMfun^g(-;\Z/2):\DiffSut^g\to\RMod[\Z/2]\] such that $\SHMfun^g(M,\gamma;\Z/2)$ is isomorphic to $\SHM(M,\gamma;\Z/2)$.
\end{corollary}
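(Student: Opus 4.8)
The plan is to obtain this functor by post-composing the functor of the preceding corollary with the canonical functor $\RSys[\Z/2]\to\RMod[\Z/2]$ that sends a transitive system to the module it represents. Recall from Remark~\ref{rmk:modules} that a transitive system of $\RR$-modules $(A,\{M_\alpha\},\{g^\alpha_\beta\})$ determines an honest $\RR$-module $M=\coprod_{\alpha\in A}M_{\alpha}\big/{\sim}$, where $m_\alpha\sim m_\beta$ iff $g^\alpha_\beta(m_\alpha)=m_\beta$; since each $g^\alpha_\beta$ is an isomorphism, every structure map $M_\alpha\to M$ is an isomorphism, so $M$ lies in the module class of the system. That this assignment is functorial --- a morphism of transitive systems is, by the compatibility condition of Definition~\ref{def:projtransysmor}, compatible with the relations ${\sim}$, hence descends to a well-defined $\RR$-homomorphism between the represented modules, in a way respecting composition and identities --- is precisely the content of the remark following Definition~\ref{def:projtransysmor}, which I would simply cite.

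Granting this, I would define $\SHMfun^g(-;\Z/2)\colon\DiffSut^g\to\RMod[\Z/2]$ to be the composite of the functor $\SHMfun^g(-;\Z/2)\colon\DiffSut^g\to\RSys[\Z/2]$ supplied by the previous corollary with the canonical functor $\RSys[\Z/2]\to\RMod[\Z/2]$. A composite of functors is a functor, so this is a functor. By the previous corollary the module class of the transitive system $\SHMfun^g(M,\gamma;\Z/2)$ is the isomorphism class $\SHM(M,\gamma;\Z/2)$; by the previous paragraph the $\Z/2$-module represented by a transitive system lies in its module class; hence $\SHMfun^g(M,\gamma;\Z/2)$, now viewed as a $\Z/2$-module, is isomorphic to $\SHM(M,\gamma;\Z/2)$, as required.

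I expect essentially no obstacle here: the statement is a formal consequence of the preceding corollary together with the canonical functor $\RSys\to\RMod$. The only point that genuinely requires checking is the functoriality of that passage --- that the transitivity relations make the colimit description consistent and that morphisms of systems induce well-defined maps on colimits --- but this has already been settled in the earlier remarks, and no projective ambiguity survives at this stage because $(\Z/2)^\times=\{1\}$ forces $\RPSys[\Z/2]=\RSys[\Z/2]$, which is exactly why the previous corollary lands in $\RSys[\Z/2]$ in the first place.
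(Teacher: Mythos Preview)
Your proposal is correct and matches the paper's own argument essentially verbatim: the paper obtains this corollary by composing the functor $\SHMfun^g(-;\Z/2)\colon\DiffSut^g\to\RSys[\Z/2]$ of the preceding corollary with the canonical functor $\RSys[\Z/2]\to\RMod[\Z/2]$ described in Remark~\ref{rmk:modules} and the remark after Definition~\ref{def:projtransysmor}. No further justification is given or needed.
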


At first glance, these untwisted refinements of $\SHM$ may seem preferable  in that they can be made to assign transitive systems and modules  rather than just projectively transitive systems to balanced sutured manifolds. On the other hand, we do not know how to naturally relate the  $\SHMfun^g$ for different $g$. This is not  important in practice but explains our aesthetic preference for the twisted refinement $\SHMtfun$, which naturally incorporates (marked) closures of every genus. 

In \cite{km4}, Kronheimer and Mrowka use their sutured monopole invariant to define an invariant of knots called  \emph{monopole knot homology} ($\KHM$). Given a knot $K$ in a closed 3-manifold $Y$, they define $\KHM(Y,K)$ to be the isomorphism class  \[\KHM(Y,K):=\SHM(Y\ssm \nu(K),m\cup-m),\] where $\nu(K)$ is a solid torus neighborhood of $K$ and $m$ and $-m$ are oppositely oriented meridians on $\partial \nu(K)$. We provide twisted and untwisted refinements of this invariant. Our refinements are invariants of \emph{based knots}, where a based knot in $Y$ is a knot $K\subset Y$ together with a basepoint $p\in K$. These refinements take the forms of  the functors   described below.

\begin{theorem}
\label{thm:khmtwisted}
There exists a functor
\[\KHMtfun:\BasedKnot\to \RPSys\\
\]
such that the module class of $\KHMtfun(Y,K,p)$ is equal to $\KHM(Y,K)\otimes_\Z\RR$.
\end{theorem}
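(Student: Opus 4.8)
The plan is to derive $\KHMtfun$ from $\SHMtfun$ (Theorem~\ref{thm:maintwistedshm}) by observing that the knot complement construction is functorial. First I would define a functor $\BasedKnot\to\DiffSut$: to a based knot $(Y,K,p)$ assign the balanced sutured manifold $(Y\ssm\nu(K),m\cup-m)$, where the neighborhood $\nu(K)$ and the pair of meridional sutures are chosen in a way that is canonical up to isotopy once the basepoint $p$ is fixed (the basepoint pins down, up to isotopy, the location of the sutures on $\partial\nu(K)$, removing the ambiguity that prevented $\KHM$ from being more than an isomorphism class). A diffeomorphism $f:(Y,K,p)\to(Y',K',p')$ of based knots — i.e.\ a diffeomorphism $Y\to Y'$ carrying $K$ to $K'$ and $p$ to $p'$ — then restricts to a diffeomorphism of the sutured complements, well-defined up to isotopy once $f$ is taken up to isotopy; one checks this assignment respects composition and identities. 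Then I would simply set $\KHMtfun := \SHMtfun\circ(\text{this functor})$, and functoriality is immediate.

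The key steps, in order, are: (1) make precise the assignment $(Y,K,p)\mapsto(Y\ssm\nu(K),m\cup-m)$, being careful that the choices of tubular neighborhood and of the meridional sutures relative to $p$ are unique up to an isotopy that is itself canonical — this is the standard fact that a tubular neighborhood of a submanifold is unique up to ambient isotopy, refined by tracking the basepoint; (2) check that an isotopy class of based-knot diffeomorphisms induces a well-defined isotopy class of sutured-manifold diffeomorphisms, so that we genuinely have a functor $\Phi:\BasedKnot\to\DiffSut$; (3) invoke Theorem~\ref{thm:maintwistedshm} to get $\SHMtfun:\DiffSut\to\RPSys$ and compose; (4) identify the module class: by construction the module class of $\SHMtfun(\Phi(Y,K,p))$ is $\SHM(Y\ssm\nu(K),m\cup-m)\otimes_\Z\RR$, which by Kronheimer--Mrowka's definition equals $\KHM(Y,K)\otimes_\Z\RR$.

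The main obstacle is step~(1)--(2): verifying that passing to the knot complement is well-defined \emph{on isotopy classes of morphisms}, not merely on objects. The subtlety is that $\DiffSut$ has isotopy classes of diffeomorphisms as morphisms, so one must show that isotopic based-knot diffeomorphisms yield isotopic sutured-complement diffeomorphisms, and that the neighborhood $\nu(K)$ and sutures can be chosen naturally enough that the restriction map is independent of these auxiliary choices up to the relevant isotopy. This is essentially a uniqueness-of-tubular-neighborhoods argument together with the observation that the basepoint rigidifies the otherwise-ambiguous placement of the sutures; it is routine but is the only place where real content enters, since everything after is formal composition of functors. Once $\Phi$ is established the theorem follows immediately, and the same construction will give the untwisted refinements by composing instead with $\SHMfun^g$ on the appropriate subcategory.
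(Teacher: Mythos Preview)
Your outline is in the right spirit and close to the paper's argument, but two points deserve correction.

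First, a packaging issue: you cannot literally define a functor $\Phi:\BasedKnot\to\DiffSut$ on objects, because there is no canonical sutured manifold to assign to $(Y,K,p)$ --- the complement $Y(\varphi)$ genuinely depends on the embedding $\varphi:S^1\times D^2\hookrightarrow Y$, not just on its isotopy class. The paper handles this by keeping all choices of $\varphi$ and building canonical isomorphisms $\Psit_{\varphi,\varphi'}:\SHMtfun(Y(\varphi))\to\SHMtfun(Y(\varphi'))$ between the resulting systems (Propositions~\ref{prop:khmpsiwelldefined} and~\ref{prop:khmpsitransitive}), then defining $\KHMtfun(Y,K,p)$ as the projectively transitive system assembled from this transitive system of systems (Definition~\ref{def:khm}). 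Your approach can be made to work, but only after proving exactly these canonical isomorphisms, at which point it collapses to the paper's construction.

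Second, and more substantively, you describe the well-definedness check as ``routine,'' but this is the entire content of the theorem and it is not as soft as uniqueness of tubular neighborhoods. Two ambient isotopies carrying one neighborhood to another may differ by a nontrivial element of $\pi_0(\Diff(T^2\times I\rel\partial(T^2\times I)))$, and one must show the induced sutured diffeomorphism is isotopic to the identity \emph{through maps preserving the meridional sutures}. The paper's Proposition~\ref{prop:khmpsiwelldefined} does this via the surjection $\pi_1(\Diff(T^2))\cong\Z\times\Z\to\pi_0(\Diff(T^2\times I\rel\partial))$ of Proposition~\ref{prop:diff-surjection}: one must argue that the relevant class has no longitudinal component (this uses that the isotopy extends over the solid torus fixing $p$, together with a surgery rigidity observation), and then that the remaining meridional rotation can be realized by an isotopy preserving the sutures $m_\varphi^\pm$. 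Without this torus-specific analysis your step~(2) has a genuine gap.
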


\begin{theorem}
\label{thm:khmuntwisted}
For each $g\geq 2$, there exists a functor
\[
\KHMfun^g:\BasedKnot\to \RPSys[\Z]
\]
such that the module class of $\KHMfun^g(Y,K,p)$ is equal to $\KHM(Y,K)$.
\end{theorem}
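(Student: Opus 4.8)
The plan is to reduce Theorem~\ref{thm:khmuntwisted} to Theorem~\ref{thm:mainuntwistedshm} by constructing, for each $g\geq 2$, a functor $F^g:\BasedKnot\to\DiffSut^g$ and then setting $\KHMfun^g := \SHMfun^g\circ F^g$. On objects, $F^g$ sends a based knot $(Y,K,p)$ to the balanced sutured manifold
\[
M(Y,K,p) := \bigl(Y\ssm\inr(\nu(K)),\,m\cup -m\bigr),
\]
where $\nu(K)$ is a tubular neighborhood of $K$ and $m,-m$ are oppositely oriented meridians cutting $\partial\nu(K)$ into two annuli $R_+,R_-$; the basepoint $p$ is used to normalize these choices (for instance to pin down the pair of meridians, equivalently a point of $\partial\nu(K)$ lying on the arc through $p$), so that $M(Y,K,p)$ is well-defined up to a canonical isotopy. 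One must check that $M(Y,K,p)$ lies in $\DiffSut^g$ for every $g\geq 2$, i.e.\ admits genus $g$ closures; since $\partial M$ is a torus split by its two meridional sutures into two annuli, this follows from the genus-stabilization properties of closures established earlier in the paper. The module class is then immediate: by Theorem~\ref{thm:mainuntwistedshm} the module class of $\SHMfun^g(M(Y,K,p))$ is $\SHM(M(Y,K,p)) = \SHM(Y\ssm\nu(K),m\cup -m) = \KHM(Y,K)$.

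On morphisms, a map $(Y,K,p)\to(Y',K',p')$ in $\BasedKnot$ is represented by a diffeomorphism $f$ carrying $(K,p)$ to $(K',p')$, determined up to isotopy through such diffeomorphisms. After an isotopy of $f$ arranging $f(\nu(K)) = \nu(K')$ compatibly with the normalized meridional structure, the restriction $f|:M(Y,K,p)\to M(Y',K',p')$ is a diffeomorphism of balanced sutured manifolds, and I would define $F^g(f)$ to be its isotopy class in $\DiffSut^g$. The crux is well-definedness: one must show that $[f|]$ depends only on the based-isotopy class of $f$, and not on the chosen representative, the tubular neighborhoods, or the normalization. A based isotopy of $f$ lifts, via an ambient isotopy of the $\nu(K')$'s and isotopy extension, to an isotopy of the restrictions, and any two admissible choices of the auxiliary data are interpolated in the same way; the basepoint is precisely what rigidifies $\partial\nu(K)$ enough that the construction descends to isotopy classes, preventing, for instance, an extra boundary-parallel Dehn twist from creeping in, exactly as based structures are used in the Heegaard Floer setting of \cite{juhaszthurston}. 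Functoriality of $F^g$ then follows from $(f'\circ f)|\simeq f'|\circ f|$ and $F^g(\mathrm{id})=\mathrm{id}$, so $\KHMfun^g=\SHMfun^g\circ F^g$ is a functor with the asserted module class.

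Almost all of this runs parallel to the construction of the twisted knot invariant $\KHMtfun$ of Theorem~\ref{thm:khmtwisted}: the object assignment $(Y,K,p)\mapsto M(Y,K,p)$ and the induced maps on diffeomorphisms are identical; only the last step --- feeding $M(Y,K,p)$ into $\SHMfun^g$ rather than into $\SHMtfun$ --- changes. I expect the main obstacle to be exactly the well-definedness assertion of the second paragraph, namely that passing from a based-knot diffeomorphism to its sutured complement is insensitive, up to isotopy of sutured manifolds, to all of the auxiliary choices; everything else (existence of genus $g$ closures for knot complements, identification of the module class, functoriality) is formal given Theorem~\ref{thm:mainuntwistedshm} and the foundational results on closures developed earlier in the paper.
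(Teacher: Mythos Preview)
Your plan to factor through an honest functor $F^g:\BasedKnot\to\DiffSut^g$ does not quite work as stated: the sutured complement $M(Y,K,p)$ depends on the choice of tubular neighborhood $\nu(K)$, and the basepoint does nothing to pin that choice down --- it only normalizes the meridians once $\nu(K)$ is fixed. Different $\nu(K)$ give genuinely different objects of $\DiffSut^g$, so there is no object assignment. The paper avoids this by instead indexing over \emph{all} parametrized tubular neighborhoods $\varphi:S^1\times D^2\hookrightarrow Y$ with $\varphi(\{1\}\times\{0\})=p$, applying $\SHMfun^g$ to each resulting $Y(\varphi)$, and then constructing canonical isomorphisms $\Psi^g_{\varphi,\varphi'}:\SHMfun^g(Y(\varphi))\to\SHMfun^g(Y(\varphi'))$; these assemble into a transitive system of projectively transitive systems, which is itself a projectively transitive system $\KHMfun^g(Y,K,p)$.

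The step you correctly flag as the crux --- that the restricted diffeomorphism is well-defined up to sutured isotopy --- is exactly Proposition~\ref{prop:khmpsiwelldefined}, and it is not as soft as your sketch suggests. Given two ambient isotopies $f_t,f_t'$ shrinking one neighborhood onto another, one must show that $g=(\bar f_1')^{-1}\circ\bar f_1$ is sutured-isotopic to the identity. On the thickened torus $T^2\times[0,1]$ where $g$ is supported, the surjection $\pi_1(\Diff(T^2))\cong\Z\times\Z\to\pi_0(\Diff(T^2\times[0,1]\rel\partial))$ of Proposition~\ref{prop:diff-surjection} identifies $[g]$ with a pair of meridional and longitudinal rotation numbers. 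One then uses that the isotopies extend across the solid torus \emph{fixing $p$} to argue that the longitudinal component vanishes --- otherwise $g$ could be a longitudinal boundary Dehn twist, which is \emph{not} isotopic to the identity rel sutures. Only the meridional $\Z$-factor survives, and that one \emph{is} sutured-isotopic to the identity since it preserves the meridians throughout. Your sentence about ambient isotopies lifting to isotopies of the restrictions elides precisely this argument; without it, the well-definedness claim, and hence the functor, is unproved.
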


Above, $\BasedKnot$ is the category whose objects are based knots in 3-manifolds, and where the morphism space from $(Y,K,p)$ to $(Y',K',p')$ consists of isotopy classes of diffeomorphisms from $(Y,K,p)$ to $(Y,K',p')$. One should compare Theorems \ref{thm:khmtwisted} and \ref{thm:khmuntwisted} to \cite[Theorem 1.8]{juhaszthurston}. As before, one can  define untwisted invariants in characteristic two which take the forms of functors from $\BasedKnot$ to $\RSys[\Z/2]$ and $\RMod[\Z/2]$. One can also define analogous invariants of based links, though we do not do so here.

One can similarly define an invariant of based, closed $3$-manifolds, which assigns to a pair $(Y,p)$ the isomorphism class of \[\SHM(Y(p)):=\SHM(Y\ssm\nu(p)),\] where $\nu(p)$ is a tubular neighborhood of $p$. There are twisted and untwisted refinements of this invariant which, among other things, account for the fact that $Y(p)$ technically depends on the neighborhood $\nu(p)$ rather than just on $p$.  These refinements take the form of the functors below.

\begin{theorem}
\label{thm:hmtwisted}
There exists a functor
\[\HMtfun:\BasedMfld\to \RPSys\\
\]
such that the module class of $\HMtfun(Y,p)$ is equal to $\SHM(Y(p))\otimes_\Z\RR$.
\end{theorem}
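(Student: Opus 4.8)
The plan is to deduce Theorem~\ref{thm:hmtwisted} from Theorem~\ref{thm:maintwistedshm} by constructing a functor $\Phi\colon\BasedMfld\to\DiffSut$ and setting $\HMtfun=\SHMtfun\circ\Phi$. On objects, $\Phi$ sends a based $3$-manifold $(Y,p)$ to the balanced sutured manifold $(Y\ssm\inr\nu(p),\delta)$ obtained by removing the interior of a closed ball $\nu(p)$ about $p$ and equipping $\partial\nu(p)\cong S^2$ with a single suture $\delta$, together with a labeling of the two complementary disks as $R_+$ and $R_-$; this is balanced since $R_+\cong R_-\cong D^2$. On morphisms, a diffeomorphism $f\colon(Y,p)\to(Y',p')$ can, after an ambient isotopy of $Y'$ fixing $p'$, be arranged so that $f(\nu(p))=\nu(p')$, $f(\delta)=\delta'$, and $f$ respects the labelings, and so restricts to a diffeomorphism of the associated sutured manifolds; I would take $\Phi(f)$ to be its isotopy class. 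Granting that $\Phi$ is a well-defined functor, Theorem~\ref{thm:maintwistedshm} produces $\HMtfun$, and its module class at $(Y,p)$ is that of $\SHMtfun(Y\ssm\inr\nu(p),\delta)$, namely $\SHM(Y\ssm\inr\nu(p),\delta)\otimes_\Z\RR=\SHM(Y(p))\otimes_\Z\RR$, since all single-circle sutures on $S^2$ are isotopic.

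The substance of the argument is the well-definedness of $\Phi$: the object $\Phi(Y,p)$ must be independent of the auxiliary data $(\nu(p),\delta,R_\pm)$, and $\Phi(f)$ must depend only on the isotopy class of $f$. Both follow from a single claim --- that for any two choices of auxiliary data at $p$ there is a \emph{preferred} isotopy class of diffeomorphism between the resulting sutured manifolds, with these preferred classes composing correctly --- and then either one chooses auxiliary data for each based manifold and uses the preferred classes to correct $\Phi$ into an honest functor, or, more in the spirit of the rest of the paper, one bypasses $\DiffSut$ entirely and assembles $\HMtfun(Y,p)$ directly as the projectively transitive system indexed by all triples $(\nu,\delta,\data)$ with $\data$ a marked closure of $(Y\ssm\inr\nu,\delta)$, with structure maps built by composing the isomorphisms $\Psit_{\data,\data'}$ of Theorem~\ref{thm:maintwistedshm} with the new transition maps coming from the preferred diffeomorphisms. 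Existence of a transition diffeomorphism is just the uniqueness of tubular neighborhoods of a point, i.e.\ the disc theorem: an ambient isotopy of $Y$ carries one choice of $(\nu,\delta)$ to the other, and its time-one map restricts as required. The preferred class and its coherence come from analyzing the space $\mathcal{E}(Y,p)$ of such auxiliary data: the space of balls about $p$ is contractible, and the space of sutures-with-labeling on a fixed $S^2$ is connected, so $\mathcal{E}(Y,p)$ is connected and the preferred class is well defined modulo $\pi_1(\mathcal{E}(Y,p))$.

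The hard part is controlling that residual $\pi_1$. A non-null-homotopic loop in $\mathcal{E}(Y,p)$ is realized by a loop of diffeomorphisms of $Y$ supported in a small ball about $p$, so by Hatcher's theorem that $\Diff(D^3\rel\partial)$ is contractible the resulting automorphism of $(Y\ssm\inr\nu,\delta)$ lies in a subgroup of the mapping class group of order at most two --- trivial when $Y\cong S^3$, and generated in general by a ``sphere twist'' in a collar of $\partial\nu$. To conclude one must verify that $\SHMtfun$ sends such a mapping class to multiplication by a unit of $\RR$, so that any two preferred transition isomorphisms are $\RR^\times$-equivalent; this is exactly the sort of ambiguity that the passage from $\RSys$ to $\RPSys$ is built to absorb --- recall that $\SHMtfun$ on diffeomorphisms is only well defined up to a unit and depends only on the isotopy class --- and it parallels the handling of the sign ambiguity in the proof of Theorem~\ref{thm:khmtwisted}. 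With this in place, feeding the modules $\{\SHMtfun(Y\ssm\inr\nu,\delta)\}$ and the canonical-up-to-a-unit transition isomorphisms into Definitions~\ref{def:projtransys} and~\ref{def:projtransysmor} yields the functor $\HMtfun$ with the asserted module class. I expect the homotopy-theoretic bookkeeping around $\mathcal{E}(Y,p)$, and the verification that its residual $\pi_1$ acts by units, to be the only genuinely delicate points.
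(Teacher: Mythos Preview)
Your overall architecture matches the paper's: Section~\ref{sec:khm} also refuses to pick a single neighborhood and instead assembles $\HMtfun(Y,p)$ as a transitive system of projectively transitive systems indexed by all embeddings $\varphi\colon B^3\hookrightarrow Y$ with $\varphi(0)=p$, using ambient isotopies to produce the transition maps $\Psit_{\varphi,\varphi'}$ and invoking Hatcher's theorem for well-definedness (Proposition~\ref{prop:hmpsiwelldefined}). So the skeleton of your proposal is correct.

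There is, however, a genuine gap in your handling of the ``residual $\pi_1$''. You assert that a loop in $\mathcal{E}(Y,p)$ yields an automorphism of $(Y\ssm\inr\nu,\delta)$ lying in a subgroup of the mapping class group of order at most two, generated by a sphere twist, and that one must then check that $\SHMtfun$ sends this class to multiplication by a unit. Your justification---that ``this is exactly the sort of ambiguity that the passage from $\RSys$ to $\RPSys$ is built to absorb''---is not correct. The projectively transitive structure absorbs unit ambiguities \emph{within} a single map, but $\SHMtfun$ applied to a nontrivial sutured mapping class is a potentially nontrivial automorphism of the system, not a priori multiplication by a unit; nothing in Theorem~\ref{thm:maintwistedshm} or Definition~\ref{def:projtransysmor} makes an arbitrary mapping class act by a unit.

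The actual resolution, which the paper gives, is that the sphere twist is \emph{already trivial} in $\DiffSut$. Morphisms in $\DiffSut$ are diffeomorphisms of pairs $(M,\gamma)$ (Definition~\ref{def:sutureddiffeo}); they need only preserve the suture, not fix the boundary. The generator of $\pi_1(\Diff(S^2))\cong\Z/2$ is a full rotation about an axis, and one may take this axis perpendicular to the equator so that the rotation preserves the equator $m_\varphi$ throughout. The corresponding isotopy of the sphere twist to the identity on the collar $S^2\times[0,1]$ therefore restricts to the identity on $S^2\times\{1\}$ and preserves $m_\varphi\subset S^2\times\{0\}$ at every time, so extends to an isotopy of sutured diffeomorphisms of $Y(\varphi)$. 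Hence your residual $\pi_1$ acts by the identity in $\DiffSut$, and no further verification about $\SHMtfun$ is needed. This is exactly the content of the paper's proof of Proposition~\ref{prop:hmpsiwelldefined}.
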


\begin{theorem}
\label{thm:hmuntwisted}
For each $g\geq 2$, there exists a functor
\[
\HMfun^g:\BasedMfld\to \RPSys[\Z]
\]
such that the module class of $\HMfun^g(Y,p)$ is equal to $\SHM(Y(p))$.
\end{theorem}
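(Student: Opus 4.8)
The plan is to run the construction of the based knot invariant $\KHMfun^g$ of Theorem~\ref{thm:khmuntwisted} with the solid--torus neighborhood of a knot replaced by a ball neighborhood of the basepoint, reducing every property that is needed to the corresponding one for $\SHMfun^g$ established in Theorem~\ref{thm:mainuntwistedshm}. Fix a based $3$--manifold $(Y,p)$. For a closed ball neighborhood $\nu$ of $p$ and a suture $\mathfrak s$ on $\partial\nu\cong S^2$ --- a single simple closed curve together with a labeling of the two complementary disks as $R_\pm$, so that $(Y\ssm\nu,\mathfrak s)$ is balanced --- the pair $(Y\ssm\nu,\mathfrak s)$ is a balanced sutured manifold with $\SHM(Y\ssm\nu,\mathfrak s)=\SHM(Y(p))$, and, $R_+$ being a disk, it admits genus $g$ closures for every $g\ge 2$, hence lies in $\DiffSut^g$. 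I would define $\HMfun^g(Y,p)$ to be the projectively transitive system of $\Z$--modules with index set $A$ the set of triples $\mathfrak a=(\nu,\mathfrak s,\data)$ with $\nu,\mathfrak s$ as above and $\data$ a genus $g$ closure of $(Y\ssm\nu,\mathfrak s)$, and with module $M_{\mathfrak a}=\SHM^g(\data)$.

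For the canonical maps, given $\mathfrak a=(\nu,\mathfrak s,\data)$ and $\mathfrak a'=(\nu',\mathfrak s',\data')$ I would choose a diffeomorphism of balanced sutured manifolds $\psi\colon(Y\ssm\nu,\mathfrak s)\to(Y\ssm\nu',\mathfrak s')$ and take $g^{\mathfrak a}_{\mathfrak a'}$ to be the value of $\SHMfun^g(\psi)$ on the closures $\data,\data'$ (a $\doteq$--class of isomorphisms $\SHM^g(\data)\to\SHM^g(\data')$), i.e.\ the $\doteq$--class of the composite of the closure pushforward $\SHM^g(\data)\to\SHM^g(\psi_*\data)$ with the change--of--closure isomorphism $\Psi^g_{\psi_*\data,\data'}$. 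The step I expect to require the most care --- and on which the whole construction turns --- is that $g^{\mathfrak a}_{\mathfrak a'}$ does not depend on $\psi$. I would deduce this from the fact that the configuration space $\mathcal C$ of pairs (closed ball neighborhood of $p$, suture on its boundary sphere) is connected and simply connected: via the path lifting furnished by the isotopy extension theorem for the orbit map $\Diff(Y)\to\mathcal C$, such a $\psi$ exists, and any two choices are isotopic through sutured--manifold diffeomorphisms, so $\SHMfun^g(\psi)$ is independent of $\psi$ by Theorem~\ref{thm:mainuntwistedshm}. Here the basepoint is used essentially: the space of ball neighborhoods of the fixed $p$ is weakly contractible --- it deformation retracts onto the contractible space of ellipsoidal balls centered at $p$ in a fixed chart, whereas the space of all embedded balls in $Y$ would carry $\pi_1(Y)$ --- and for fixed $\nu$ the space of sutures on $\partial\nu\cong S^2$ is the simply connected space $S^2$ of oriented great circles, where the $R_\pm$--labeling is precisely what rules out the orientation--reversing half--rotation and thereby kills the $\pi_1(SO(3))=\Z/2$ that would otherwise survive. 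Granting the independence of $\psi$, the axioms of Definition~\ref{def:projtransys} --- in particular the transitivity $g^{\mathfrak b}_{\mathfrak c}\circ g^{\mathfrak a}_{\mathfrak b}=g^{\mathfrak a}_{\mathfrak c}$, checked by representing $g^{\mathfrak a}_{\mathfrak c}$ with the relevant composite diffeomorphism --- follow from the functoriality of $\SHMfun^g$ and the transitivity~\eqref{eqn:projtransmapsshmuntwisted} of the maps $\Psi^g$, exactly as in the proof of Theorem~\ref{thm:mainuntwistedshm}; and since $\SHM^g(\data)$ lies in the class $\SHM(Y\ssm\nu,\mathfrak s)=\SHM(Y(p))$, the module class is as claimed.

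Finally, a diffeomorphism $f\colon(Y,p)\to(Y',p')$ carries $(\nu,\mathfrak s,\data)$ to $(f(\nu),f(\mathfrak s),f_*\data)$ and, together with the closure pushforwards $\SHM^g(\data)\to\SHM^g(f_*\data)$, induces a morphism $\HMfun^g(f)$ of projectively transitive systems; its compatibility with the $g^{\mathfrak a}_{\mathfrak a'}$ in the sense of Definition~\ref{def:projtransysmor}, the composition law $\HMfun^g(f'\circ f)=\HMfun^g(f')\circ\HMfun^g(f)$, and its dependence only on the smooth isotopy class of $f$ all follow --- again invoking the connectivity of $\mathcal C$, now for $(Y',p')$ --- from the corresponding assertions for $\SHMfun^g$ in Theorem~\ref{thm:mainuntwistedshm}. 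This yields the functor $\HMfun^g\colon\BasedMfld\to\RPSys[\Z]$. To summarize, the only genuinely new ingredient beyond Theorem~\ref{thm:mainuntwistedshm} is the topological fact that the space $\mathcal C$ of based ball--neighborhood data is connected and simply connected --- equivalently, that $Y(p)$ is well defined up to canonical isotopy once the basepoint is fixed, which is how these refinements ``account for the fact that $Y(p)$ depends on $\nu(p)$ rather than just on $p$'' --- and this is the main obstacle; everything else is bookkeeping transported from the functoriality of $\SHMfun^g$.
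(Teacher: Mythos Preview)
Your overall strategy---package the $\SHMfun^g$ of the various punctured manifolds $Y\ssm\nu$ into a single projectively transitive system and transport functoriality from Theorem~\ref{thm:mainuntwistedshm}---is correct and is exactly what the paper does. But the organization differs, and there is one genuine gap in your version.

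The paper indexes by \emph{parametrized} embeddings $\varphi\colon B^3\hookrightarrow Y$ with $\varphi(0)=p$, with the suture taken to be the image of a fixed equator, and compares $Y(\varphi)$ to $Y(\varphi')$ only via the ``shrinking'' move: when $\Img(\varphi')\subset\Img(\varphi)$, an ambient isotopy $f_t$ of $Y$ supported in a larger ball $N$, fixing $p$, carrying $\varphi$ to $\varphi'$, induces a sutured diffeomorphism $\bar f_1$; for general $\varphi,\varphi'$ one passes through a common sub-embedding. Well-definedness is then a direct computation on $N\cap Y(\varphi)\cong S^2\times[0,1]$: Hatcher's proof of the Smale Conjecture gives $\pi_1(\Diff(S^2))\xrightarrow{\cong}\pi_0(\Diff(S^2\times[0,1]\rel\partial))\cong\Z/2$, generated by a full rotation about an axis preserving the equator, so the discrepancy between two choices of $f_t$ is sutured-isotopic to the identity. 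Your configuration-space argument is a more global repackaging of the same content; both ultimately rest on Hatcher's theorem (your claim that the space of ball neighborhoods of $p$ is weakly contractible already needs it).

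The gap is in your uniqueness step. You write that you choose an arbitrary sutured diffeomorphism $\psi\colon(Y\ssm\nu,\mathfrak s)\to(Y\ssm\nu',\mathfrak s')$ and that ``any two choices are isotopic through sutured-manifold diffeomorphisms'' because $\mathcal C$ is simply connected. That is false as stated: if $Y$ has a nontrivial mapping class represented by a diffeomorphism fixing a ball $\nu$ pointwise, its restriction to $Y\ssm\nu$ is a sutured self-diffeomorphism not isotopic to the identity (any sutured isotopy to the identity extends over $\nu$, since $\Diff^+(B^3)\to\Diff^+(S^2)$ is a homotopy equivalence, and would force the ambient map to be isotopically trivial). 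The simple connectivity of $\mathcal C$ only controls the $\psi$'s arising by restricting elements of the identity component of $\Diff(Y,p)$---precisely the ones produced by your path-lifting. The fix is to \emph{define} $g^{\mathfrak a}_{\mathfrak a'}$ using only such $\psi$'s (equivalently, those coming from ambient isotopies of $(Y,p)$); then the long exact sequence of the fibration $\mathrm{Stab}\to\Diff_0(Y,p)\to\mathcal C$ together with $\pi_1(\mathcal C)=0$ gives that $\mathrm{Stab}$ is connected, hence any two such $\psi$'s are sutured-isotopic, and the rest of your argument goes through. This restriction is exactly what the paper builds in from the start by constructing its comparison maps from ambient isotopies $f_t$.
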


Here, $\BasedMfld$ is the category whose objects are based, closed 3-manifolds, and where the morphism space from $(Y,p)$ to $(Y',p')$ consists of isotopy classes of diffeomorphisms from $(Y,p)$ to $(Y,p')$. One should compare Theorems \ref{thm:hmtwisted} and \ref{thm:hmuntwisted} to \cite[Theorem 1.5]{juhaszthurston}. As above, one can  also define untwisted invariants in characteristic two which take the forms of functors from $\BasedMfld$ to $\RSys[\Z/2]$ and $\RMod[\Z/2]$.

Our refinements of $\SHI$ come in  untwisted and twisted flavors as well, defined in terms of  \emph{odd closures}  and \emph{marked odd closures}. As the basic forms of these refinements are virtually identical to those of $\SHM$, we will leave a more detailed discussion to Section \ref{sec:instanton} and simply state the analogues of Theorems \ref{thm:maintwistedshm} and \ref{thm:mainuntwistedshm} below.

\begin{theorem}
\label{thm:maintwistedshi}
There exists a functor \[\SHItfun:\DiffSut\to\RPSys[\C]\] such that the module class of  $\SHItfun(M,\gamma)$ is equal to $\SHI(M,\gamma)$.
\end{theorem}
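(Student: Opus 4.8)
The plan is to follow the proof of Theorem~\ref{thm:maintwistedshm} closely, replacing marked closures by \emph{marked odd closures} and twisted monopole Floer homology by the instanton Floer homology with local coefficients used in \cite{km4} to define $\SHI$; this is carried out in Section~\ref{sec:instanton}. First I would fix, for each balanced sutured manifold $(M,\gamma)$, the set of its marked odd closures. A marked odd closure $\data$ records a closed oriented $3$-manifold $Y$ containing $M$, a distinguished surface $R\subset Y$ with a tubular neighbourhood, a curve on $R$ supporting the twisted coefficients, and a curve $\omega\subset Y$ meeting $R$ transversely in a single point (whence ``odd''), which supplies the non-trivial $SO(3)$-bundle data needed to exclude reducibles. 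To $\data$ I assign the $\C$-module $\SHItfun(\data)$ defined, following \cite{km4}, as the relevant summand of the instanton Floer homology of $Y$ with the local system determined by the marking of $R$; by \cite{km4} this lies in the isomorphism class $\SHI(M,\gamma)$.

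Next, for each ordered pair $\data,\data'$ of marked odd closures I would construct an isomorphism $\Psit_{\data,\data'}\colon\SHItfun(\data)\to\SHItfun(\data')$, well-defined up to multiplication by a unit of $\C$. As in the monopole case, one writes $\data$ and $\data'$ as the ends of a finite chain of standard modifications of closures---enlarging or shrinking the auxiliary piece, stabilising the genus of $R$, sliding the supporting curve or $\omega$, and applying an ambient diffeomorphism fixing $M$---and composes the maps that each modification induces on instanton Floer homology via Floer's excision theorem and the functoriality of instanton cobordism maps. One then checks that any two such chains joining $\data$ to $\data'$ induce maps differing by a scalar, so that the $\C^\times$-equivalence class $\Psit_{\data,\data'}$ is well defined, and that the transitivity relation \eqref{eqn:projtransmapsshm}, with $\SHItfun$ in place of $\SHMt$, holds. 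The modules $\SHItfun(\data)$ and the classes $\Psit_{\data,\data'}$ then assemble into a projectively transitive system $\SHItfun(M,\gamma)$ of $\C$-modules with module class $\SHI(M,\gamma)$.

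Finally, given a diffeomorphism $f\colon(M,\gamma)\to(M',\gamma')$ I would extend it to diffeomorphisms between marked odd closures of $(M,\gamma)$ and of $(M',\gamma')$, use the induced isomorphisms on instanton Floer homology to define a morphism $\SHItfun(f)$ of projectively transitive systems, and verify---using the classes $\Psit$ already in hand---that it is independent of all auxiliary choices, depends only on the smooth isotopy class of $f$, sends identities to identities, and respects composition. This exhibits $\SHItfun$ as the desired functor $\DiffSut\to\RPSys[\C]$.

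As in the monopole setting, the main obstacle is the well-definedness of $\Psit_{\data,\data'}$: one must show that a loop of standard modifications induces multiplication by a single unit of $\C$, which reduces to a short list of commutation lemmas among the elementary maps. These rely on the associativity and naturality of instanton cobordism maps together with Floer's excision theorem, and the instanton-specific difficulties are (i) that the bundle $\omega$ and the local coefficient system must be tracked consistently through every modification, and (ii) that instanton Floer homology lacks the gradings and module structure exploited in the Heegaard and monopole arguments, so portions of the proofs in \cite{juhaszthurston} and of Theorem~\ref{thm:maintwistedshm} must be recast so as to use only excision and cobordism maps.
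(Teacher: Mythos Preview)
Your high-level strategy---mirror the monopole construction---is exactly what the paper does in Section~\ref{sec:instanton}, and the overall architecture you describe (assign modules to marked odd closures, build canonical isomorphisms up to $\C^\times$, verify transitivity, then define diffeomorphism maps) is correct.

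However, your description of the ``chain of standard modifications'' is not what the paper actually does, either in the monopole case or here. The isomorphisms $\Psit_{\data,\data'}$ are not built from a generic list of moves like ``enlarging the auxiliary piece'' or ``sliding $\omega$.'' For closures of the same genus, the paper factors a specific regluing diffeomorphism into Dehn twists and uses the associated $2$-handle cobordisms (Subsection~\ref{ssec:samegenus} and its instanton analogue); for genera differing by one, it uses a single merge-type splicing cobordism (Subsection~\ref{ssec:differbyone}). The well-definedness reduces to the instanton analogue of Theorem~\ref{thm:maininvc}, namely Theorem~\ref{thm:maininvc-shi}, whose proof in turn needs one genuinely instanton-specific input you do not mention: the nonvanishing of the relative invariant of a relatively minimal Lefschetz fibration in $I_*(\partial X|R)_{\{p\}\times S^1}$, supplied by \cite[Proposition~8.2]{sivek-lf}. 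This replaces Proposition~\ref{prop:lefschetz} from the monopole appendix and is the only place where the instanton argument is not formally identical to the monopole one.

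Finally, your difficulty (ii) is not a real obstacle: the monopole proof in Sections~\ref{sec:prelim}--\ref{sec:diffeomorphismmaps} already uses only excision and cobordism maps, with no reliance on gradings or module structures unavailable in the instanton setting, so nothing needs to be ``recast.''
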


\begin{theorem}
\label{thm:mainuntwistedshi}
For each $g\geq 2$, there exists a functor \[\SHIfun^g:\DiffSut^g\to\RPSys[\C]\] such that the module class of  $\SHIfun^g(M,\gamma)$ is equal to $\SHI(M,\gamma)$.
\end{theorem}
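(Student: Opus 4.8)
The plan is to reduce Theorem~\ref{thm:mainuntwistedshi} to the already-established machinery for $\SHM$ by translating every ingredient in the monopole construction into its instanton analogue. First I would recall from \cite{km4} the notion of an \emph{odd closure}: given a balanced sutured manifold $(M,\gamma)$, one glues an auxiliary surface-with-boundary piece to $M$ along the sutured boundary, closes up the two remaining boundary surfaces by an orientation-reversing diffeomorphism, and records the resulting closed $3$-manifold $Y$ together with a distinguished surface $R\subset Y$ of genus $g$ satisfying the nontriviality hypothesis needed so that $\SHI^g(\data):=\KHI(Y,R)$ (the instanton Floer homology of $Y$ in the top degree with respect to $R$, in the sense of \cite{km4}) is well-defined over $\C$. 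The point is that the definition of a genus~$g$ \emph{(unmarked) closure} $\data$ in the instanton setting carries exactly the same extra bookkeeping data --- the embedding $M\hookrightarrow Y$ and a tubular neighborhood of $R$ --- as in the monopole case, so the space of such closures, and the maps between them coming from the elementary moves (handle attachments in the auxiliary piece, stabilizations of $R$, isotopies, choices of gluing diffeomorphism), are organized in the same way.

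Next I would construct, for each pair $\data,\data'$ of genus~$g$ closures of $(M,\gamma)$, the canonical isomorphism $\Psi^g_{\data,\data'}:\SHI^g(\data)\to\SHI^g(\data')$, well-defined up to sign, exactly as in the $\SHM$ argument: one shows first that any two genus~$g$ closures are related by a finite sequence of elementary moves, then assigns to each elementary move an isomorphism on the instanton Floer groups (using that the cobordism maps in $I^\natural$/sutured instanton homology are functorial up to scalar, and that a stabilization of the distinguished surface induces an isomorphism on the top eigenspace as in \cite{km4}), and finally checks that the composite is independent of the choice of sequence up to sign. The transitivity relation \eqref{eqn:projtransmapsshmuntwisted}, in the form $\Psi^g_{\data,\data''}\doteq\Psi^g_{\data',\data''}\circ\Psi^g_{\data,\data'}$, and the normalization $\Psi^g_{\data,\data}\doteq\mathrm{id}$, then make $\{\SHI^g(\data)\}$ together with $\{\Psi^g_{\data,\data'}\}$ into a projectively transitive system of $\C$-modules $\SHIfun^g(M,\gamma)$, whose module class is $\SHI(M,\gamma)=\SHI^g(\data)$ by Kronheimer--Mrowka's invariance theorem. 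For functoriality, a diffeomorphism $f:(M,\gamma)\to(M',\gamma')$ is extended (non-canonically) over auxiliary pieces to a diffeomorphism of closures $\data\to f(\data)$, which induces $\SHI^g(\data)\to\SHI^g(f(\data))$; composing with $\Psi^g_{f(\data),\data'}$ gives the morphism of systems $\SHIfun^g(f)$, and one checks independence of all choices and compatibility with composition using the transitivity relation and the fact that isotopic diffeomorphisms of a closed pair $(Y,R)$ induce the same map on $\KHI$ up to sign.

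The main obstacle I anticipate is exactly the point where the instanton theory is genuinely less developed than the monopole theory: one needs the cobordism maps on sutured instanton homology (equivalently on $\KHI$ of the closed pairs) to be functorial and compatible with the stabilization isomorphisms \emph{at least up to an overall scalar in $\C^\times$}, since without some such statement the maps $\Psi^g_{\data,\data'}$ are not even well-defined projectively. In the monopole case this is handled via the well-understood functoriality of $\HMto$ under cobordisms and the structure of its module maps; for instanton homology one must instead invoke (or re-derive in this setting) the relevant properties of Kronheimer--Mrowka's $I^\natural$ and the eigenspace decomposition, and verify that the ambiguity is always a global scalar rather than something worse. The remaining steps --- connectedness of the ``graph of closures'' under elementary moves, the cocycle/coherence check for $\Psi$, and the verification that $\SHIfun^g(f)$ depends only on the isotopy class of $f$ and respects composition --- are formally identical to the $\SHM$ arguments and I would simply point to those, noting the few places where ``sign'' must be read as ``element of $\C^\times$'' or where a dimension-counting or local-coefficient subtlety peculiar to instantons (e.g.\ the role of the $w_2$ data and the basepoint) needs a sentence of justification.
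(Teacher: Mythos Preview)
Your high-level strategy --- port the monopole construction to instantons and verify the same transitivity and functoriality --- matches the paper's, but two concrete points diverge from what the paper actually does.

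First, you omit the essential extra datum in an \emph{odd} closure: a curve $\alpha\subset Y$ meeting $R$ transversely once, disjoint from $\Img(m)$. This curve is not decorative; it specifies the Hermitian line bundle $w$ (with $c_1(w)$ Poincar\'e dual to $\alpha$) needed to make $I_*(Y)_w$ well-defined, and the untwisted group is $\SHI^g(\data)=I_*(Y|R)_\alpha$, not ``$\KHI(Y,R)$'' without further qualification. The maps $\Psi^g_{\data,\data'}$ must carry $\alpha$ to $\alpha'$, so the diffeomorphism $C$ between complements and the Dehn twist factorizations are chosen to respect the basepoint $p$ with $\alpha\cap\Img(r)=r(\{p\}\times[-1,1])$. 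Without tracking $\alpha$, your construction of $\Psi^g_{\data,\data'}$ does not even get off the ground.

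Second, the paper does not connect closures by a ``graph of elementary moves'' in the Juh\'asz--Thurston style you sketch. Instead it builds $\Psi^g_{\data,\data'}$ directly: choose a diffeomorphism $C$ of the complements $Y_i\ssm\inr(\Img(r_i))$, factor the induced surface diffeomorphism $\varphi^C$ into Dehn twists, and realize the resulting cut-and-reglue as a composition of $\pm1$ surgeries whose $2$-handle cobordisms give the map. Well-definedness is then the instanton analogue of Theorem~\ref{thm:maininvc}, proved via excision plus the non-vanishing of the relative invariant of a relatively minimal Lefschetz fibration over $D^2$ --- this is the genuine instanton input (supplied by \cite[Proposition~8.2]{sivek-lf}), not the $I^\natural$ cobordism-functoriality issue you flag. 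Your anticipated obstacle is therefore aimed at the wrong target: the cobordism maps used are on $I_*$ of closed pairs, where functoriality is standard; the delicate point is showing the specific $2$-handle maps are isomorphisms, and that is exactly where the Lefschetz argument enters.
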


In Section \ref{sec:instanton}, we also define twisted and untwisted refinements of Kronheimer and Mrowka's \emph{instanton knot homology} ($\KHI$). These  take the forms of functors $\KHItfun$ and $\KHIfun^g$ from $\BasedKnot$ to $\RPSys[\C]$. Finally, we define analogues $\HItfun$ and $\HIfun^g$ of the functors in Theorems \ref{thm:hmtwisted} and \ref{thm:hmuntwisted}.


The key innovation in this paper  is an alternative  geometric interpretation of the isomorphisms  used by Kronheimer and Mrowka to relate    the modules  assigned to  different closures of the \emph{same}   genus. In \cite{km4},  these maps are defined in terms of certain \emph{splicing} cobordisms from the disjoint union of one closure and a mapping torus to the other closure. Here, they are defined  in terms of 2-handle cobordisms, based on the observation that two closures of the same genus are naturally related by surgery. 

Our alternative approach  has two main advantages. First, it makes the transitivity of these isomorphisms, as expressed  in (\ref{eqn:projtransmapsshm}) and (\ref{eqn:projtransmapsshmuntwisted}),  transparent for closures  of the same genus, and thereby enables us to define  the invariants $\SHMfun^g$ and $\SHIfun^g$ with ease. Second, and most importantly, it allows us to prove, in the twisted setting, that these isomorphisms  commute with the isomorphisms used by Kronheimer and Mrowka to relate the modules assigned to  closures whose genera \emph{differ} by one. Indeed, the latter  isomorphisms are defined in terms of splicing cobordisms similar to those mentioned above, and these splicing cobordisms commute with the 2-handle cobordisms we use to define the former isomorphisms. This commutativity is what  ultimately enables us to prove the transitivity in (\ref{eqn:projtransmapsshm}) for arbitrary triples of closures, and thereby define the invariants $\SHMtfun$ and $\SHItfun$.


\subsection{Some Applications}
Below, we discuss some applications of these ``naturality" results, mostly in the context of twisted sutured monopole homology. Nearly all of what is said below applies equally well in the untwisted and instanton contexts.

One  application is to define stronger invariants of contact  manifolds with  convex boundary. In \cite{bs3}, we use naturality to define such an invariant, which assigns to a contact structure $\xi$ on $(M,\gamma)$, for which $\partial M$ is convex with dividing curve $\gamma$, an \emph{element} of the projectively transitive system $\SHMtfun(M,\gamma)$, which can be thought of as a collection $\{c(\data,\xi) \in\SHMt(\data)\}$ of elements that are well-defined up to multiplication by a unit in $\RR$, such that \[\Psit_{\data,\data'}(c(\data,\xi))= c(\data',\xi),\] up to multiplication by a unit in $\RR$, for all marked closures $\data,\data'$ of $(M,\gamma)$. 

Naturality makes this    a much stronger invariant than it would otherwise be. For example, suppose  $\xi$ and $\xi'$ are contact structures on diffeomorphic manifolds $(M,\gamma)$ and $(M',\gamma')$ with marked closures $\data$ and $\data'$. To show that $\xi$ and $\xi'$ are not contactomorphic, it suffices to show that the map from $\SHMt(\data)$ to $\SHMt(\data')$ induced by any diffeomorphism from $(M,\gamma)$ to $(M',\gamma')$  sends $c(\data,\xi)$ to an element  which is not in the orbit of $c(\data',{\xi'})$ under the action of the mapping class group of $(M',\gamma')$. By contrast, without naturality, one must  to show that there is no isomorphism from $\SHMt(\data)$ to $\SHMt(\data')$ sending $c(\data,\xi)$ to $c(\data',{\xi'})$. In \cite{ost}, Ozsv{\'a}th and Stipsicz apply what is essentially the same  principle  to distinguish  Legendrian knots $\mathcal{K}$ and $\mathcal{K}'$ in the same smooth knot type $K$ and with the same classical invariants using   the Legendrian invariant $\widehat{\mathfrak{L}}$ defined in \cite{lossz}. Specifically, they show that $\widehat{\mathfrak{L}}(\mathcal{K})$ and $\widehat{\mathfrak{L}}(\mathcal{K'})$  are not in the same orbit under the action of the mapping class group of  $(S^3,K)$ on the knot Floer homology  $\widehat{HFK}(S^3,K)$, even though the two invariants are related by an automorphism of $\widehat{HFK}(S^3,K)$.

As mentioned at the beginning, one of the primary motivations for proving the naturality results in this paper was to set the foundation for extending $\SHM$ and $\SHI$ to other functorial frameworks. We have made some partial progress in this direction.  In \cite{bs3}, we define maps on $\SHMtfun$ associated to contact handle attachments. To extend $\SHMtfun$ to a functor from $\ContSut$ to $\RPSys$, the only remaining step is to show that if two compositions of handle attachments represent the same contact cobordism, then the corresponding compositions of maps agree. 
We will do this in future work. Once we do, we will be able to define a ``minus" (or ``from") version of $\KHMtfun$ by following a scheme of Etnyre, Vela-Vick and Zarev  for recovering the ``minus" version of knot Floer homology from $SFH$ using bypass attachment maps  \cite{evz}. In the meantime, we use these handle attachment maps in \cite{bs3}  to prove a monopole Floer  analogue  of Honda's bypass exact triangle in $\SFH$.

Once we  extend $\SHMtfun$ to a functor from $\ContSut$  to $\RPSys$, we will  then be able to   extend it to a functor from $\CobSut$  to $\RPSys$ following Juh{\'a}sz's strategy in \cite{juhasz3}. While interesting in its own right, the latter functor will  also provide a way of defining monopole Floer invariants for bordered 3-manifolds. In bordered Heegaard Floer homology,  as defined by Lipshitz, Ozsv{\'a}th and Thurston in \cite{lot3}, one assigns a differential graded algebra $\mathcal{A}(F)$ to a parametrized  closed surface $F$ and a right $\mathcal{A}_\infty$ module  $\widehat{CFA}(Y)$ over ${\mathcal{A}(F)}$ to a 3-manifold $Y$  with an identification of $\partial Y$ with $F$. In \cite{zarev,zarev2}, Zarev  shows that  $H_*(\mathcal{A}(F))$ and $H_*(\widehat{CFA}(Y))$ are naturally isomorphic to  direct sums of $\SFH$ groups,  and he gives an interpretation of the algebra and module multiplications (on   homology)  in terms of Juh{\'a}sz's sutured cobordism maps on $\SFH$ \cite{juhasz3}. Extending  $\SHMtfun$ to a functor from $\CobSut$  to $\RPSys$ will enable us to define analogous bordered invariants on the monopole side by mimicking Zarev's construction.
Of course, this will not be sufficient  to define a full bordered theory, complete with a pairing theorem (for that,  we would also need to  define the higher  multiplications), but it will represent significant progress towards such a construction.

\subsection{Further Remarks}
\label{ssec:furtherremarks}
A natural question is whether one can give refinements of Kronheimer and Mrowka's  sutured monopole and instanton homology theories which take the form of transitive systems  rather than   projectively transitive systems. It appears difficult to do so (in the monopole case, for instance) for the following reason: there are several places in our paper where we consider $\RR$-modules of the form \[\HMtoc(Y|R;\Gamma_\eta)\] where $Y$ is a mapping torus with fiber $R$ and $\Gamma_\eta$ is a local system determined by a curve $\eta\subset R$ (see Subsection \ref{ssec:HM}). The fact that we can identify \[\HMtoc(Y|R;\Gamma_\eta)\cong \RR\] is used crucially both in the definition of the maps $\Psit_{\data,\data'}$ and  the proof that these maps are well-defined up to multiplication by a unit in $\RR$. To construct a transitive system, one needs  a canonical such identification, and we do not know how to choose one at present. A naive strategy is to consider a Lefschetz fibration $X\to D^2$ with fiber $R$ and boundary $Y$ and identify $1\in \RR$ with the relative invariant of $X$. However, the use of twisted (local) coefficients makes this impossible: to define the relative invariant, one needs a 2-chain $\nu\subset X$ with $\partial \nu = \eta$, and no such $\nu$ exists in general. We are hopeful that a slightly different approach, which involves enlarging the indexing set for our systems, will ultimately allow us to define transitive systems, but we do not elaborate further here. It is worth mentioning that practically all applications of naturality that we have in mind  will work just as well with projectively transitive systems as with transitive systems.

Another natural question is whether one can define a projectively transitive system across closures of different genera in the untwisted case. In the instanton case, the answer is ``yes" given the natural isomorphism between $\SHItfun^g$ and $\SHIfun^g$ described in Theorem \ref{thm:natisoinstanton}. However, as one still needs twisted coefficients to relate the Floer groups associated to closures of different genera, this point is hardly worth emphasizing. The question is more interesting in the monopole case, and we do not know the answer. Theorem \ref{thm:mainuntwistedshm} implies that there is a natural isomorphism \[\SHMfun^g\otimes_{\mathbb{Z}}\RR\cong (\SHMfun^{g+1}\otimes_{\mathbb{Z}}\RR)|_{\DiffSut^g}.\] Yet, it is not at all clear how to construct from this  a natural isomorphism \[\SHMfun^g\cong (\SHMfun^{g+1})|_{\DiffSut^g}\] which could then be used to extend the untwisted theory to a projectively transitive system of $\mathbb{Z}$-modules across all genera.

\subsection{Organization} 
Most of this paper is devoted to constructing the maps $\Psit_{\data,\data'}$, proving their well-definedness up to multiplication by a unit in $\RR$, and showing that they satisfy the transitivity in (\ref{eqn:projtransmapsshm}).

In Section \ref{sec:closure}, we introduce our refined notions of closure and marked closure. In Section \ref{sec:SHM}, we provide the necessary background on monopole Floer homology and define the untwisted and twisted sutured monopole homology modules associated to closures and marked closures, essentially rehashing \cite[Definitions 4.3 and 4.5]{km4}. Section \ref{sec:prelim} develops the tools we will use to show that the maps $\Psit_{\data,\data'}$ are independent of the choices in their constructions. In Section \ref{sec:psit}, we define these maps, first for marked closures  of the same genus (Subsection \ref{ssec:samegenus}); then, for marked closures whose genera differ by one (Subsection \ref{ssec:differbyone}); and finally, for arbitrary marked closures (Subsection \ref{ssec:generalcase}). In the same section, we  prove that the maps $\Psit_{\data,\data'}$ are well-defined and that they satisfy the transitivity in (\ref{eqn:projtransmapsshm}). In Section \ref{sec:diffeomorphismmaps}, we construct the isomorphisms $\SHMtfun(f)$ described in (\ref{eqn:diffeomorphismmaps}) and prove that $\SHMtfun$ defines a functor from $\DiffSut$ to $\RPSys$. Section \ref{sec:untwisted} deals with  untwisted  theory. There, we define the maps $\Psi^g_{\data,\data'}$ and  the functors $\SHMfun^g$, and we describe the relationship between the twisted and untwisted monopole invariants. In Section \ref{sec:khm}, we define the monopole knot homology functors $\KHMtfun$, $\KHMfun^g$, $\HMtfun$, and $\HMtfun^g$. In Section \ref{sec:instanton}, we adapt the above results to the instanton context, defining the functors  $\SHItfun$ and $\SHIfun^g$ and describing the relationship between the two.  There, we also define the instanton knot homology functors $\KHItfun$, $\KHIfun^g$, $\HItfun$, and $\HItfun^g$. 

We end with two appendices.  In Appendix \ref{sec:appendixa}, we collect results about the diffeomorphism group of a surface times an interval relative to its boundary. A key topological operation used in this paper is that of cutting a 3-manifold open along a surface and regluing by a diffeomorphism. As explained in Section \ref{sec:prelim}, one can realize this operation via Dehn surgery. The results of Appendix \ref{sec:appendixa} provide a canonical  (up to isotopy) identification of the cut-open-and-reglued manifold with the corresponding Dehn-surgered manifold, which then provides a canonical identification of their Floer homologies.  The results of Appendix \ref{sec:appendixa}, applied to the case of a torus times an interval, are also important in our refinement of monopole knot homology. In Appendix \ref{sec:appendixb}, we prove a non-vanishing result for the (monopole Floer) relative invariants of Lefschetz fibrations over a disk, which we will use to relate the cobordism maps corresponding to the above Dehn surgeries to the splicing cobordisms used by Kronheimer and Mrowka in \cite{km4}.

\subsection{Acknowledgements} We thank Jon Bloom, Ryan Budney, Andr{\'a}s Juh{\'a}sz, Peter Kronheimer and Tom Mrowka for helpful conversations. We also thank the anonymous referee for many helpful suggestions.

\section{Closures of  Sutured Manifolds}
\label{sec:closure}
In this section, we describe  refinements of Kronheimer and Mrowka's notion of \emph{closure} for balanced sutured manifolds. We will work explicitly in the smooth category throughout. In particular, for us, balanced sutured manifolds come with smooth structures.

\begin{definition} A \emph{balanced sutured manifold}  $(M,\gamma)$ consists of a compact, oriented, smooth 3-manifold $M$ and a union $\gamma$ of disjoint, oriented,  smooth curves in $\partial M$ called \emph{sutures}. Let $R(\gamma) = \partial M\smallsetminus\gamma$, oriented as a subsurface of $\partial M$. We require that
\begin{enumerate}
\item neither $M$ nor $R(\gamma)$ has  closed components,
\item $R(\gamma) = R_+(\gamma)\sqcup R_-(\gamma)$ with $\partial \overline{R_+(\gamma)} = -\partial \overline{R_-(\gamma)} = \gamma$,
\item $\chi(R_+(\gamma)) = \chi(R_-(\gamma))$.
\end{enumerate}
 \end{definition}

Suppose $A(\gamma)$ is a closed tubular neighborhood of $\gamma$ in $\partial M$. Let $F$ be a compact, connected, oriented surface with $g(F)>0$ and $\pi_0(\partial F)\cong \pi_0(\gamma)$. Let   \[h:\partial F\times[-1,1]\rightarrow A(\gamma)\] be an orientation-reversing homeomorphism sending $\partial F\times \{\pm 1\}$ to $\partial (R_{\pm}(\gamma)\smallsetminus A(\gamma)).$ Consider the manifold \begin{equation*}\label{eqn:bF}M'=M\cup_h F\times [-1,1]\end{equation*} formed by gluing $F\times[-1,1]$ to $M$ according to $h$ and then rounding corners.  Figure \ref{fig:closure} shows a portion of $M'$. The fact that $(M,\gamma)$ is balanced ensures that $M'$ has two homeomorphic boundary components, $\partial_+ M'$ and $\partial_- M'$. One can then glue $\partial_+ M'$ to $\partial_- M$ to form a closed manifold $Y$ containing a distinguished surface $R:=\partial_+ M = \partial_- M$. In \cite{km4}, Kronheimer and Mrowka define a closure of $(M,\gamma)$ to be any pair $(Y,R)$ obtained in this way. 

\begin{figure}[ht]
\labellist
\tiny \hair 2pt
\pinlabel $+$ at 50 190
\pinlabel $+$ at 175 190
\pinlabel $-$ at 50 67
\pinlabel $-$ at 175 67
\pinlabel $\gamma$ at 19 98
\pinlabel $\gamma$ at 207 98
\pinlabel $F\times [-1,1]$ at 404 100
\pinlabel $F\times \{1\}$ at 404 142
\endlabellist
\centering
\includegraphics[width=9.5cm]{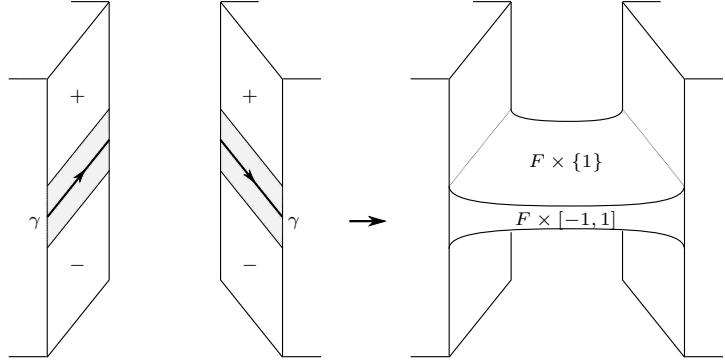}
\caption{Left, a portion of $M$. The annulus $A(\gamma)$ is shown in gray and the regions marked $\pm$ are $R_\pm(\gamma)\ssm A(\gamma)$. Right, a portion of $M'$, showing part of $F\times [-1,1]$ glued to $M$ along $A(\gamma)$ after rounding corners.}
\label{fig:closure}
\end{figure}

Our notion of closure is somewhat more ambient. Rather than building $(Y,R)$ from $(M,\gamma)$ by the process described above, we start with a smooth manifold $Y$ into which $M$ and $R\times[-1,1]$  embed appropriately. Keeping track of these embeddings is what will allow us to define canonical isomorphisms (up to multiplication by a unit) between the  monopole Floer invariants associated to different closures.

\begin{definition}
\label{def:smoothclosure} A \emph{closure} of $(M,\gamma)$ is a tuple $\data = (Y,R,r,m)$ consisting of:
\begin{enumerate}
\item a closed, oriented, smooth 3-manifold $Y$,
\item  a closed, oriented, smooth surface $R$ with $g(R)\geq 2$,
\item a smooth, orientation-preserving embedding $r:R\times[-1,1]\hookrightarrow Y$,
\item a smooth, orientation-preserving embedding $m:M\hookrightarrow Y\smallsetminus\inr(\Img(r))$ such that 
\begin{enumerate}
\item $m$ extends  to a diffeomorphism \[M\cup_h F\times [-1,1]\rightarrow Y\smallsetminus{\rm int}(\Img(r))\] for some $A(\gamma)$, $F$, $h$, as above, and some smooth structure on $M\cup_h F\times [-1,1]$ which restricts to the given smooth structure on $M$,
\item $m$ restricts to an orientation-preserving embedding \[R_+(\gamma)\smallsetminus A(\gamma)\hookrightarrow r(R\times\{-1\}).\]
\end{enumerate}
 \end{enumerate} 
 The \emph{genus} $g(\data)$ refers to the genus of $R$.
\end{definition}

Note that, for a closure $(Y,R,r,m)$ of $(M,\gamma)$, the pair $(Y,r(R\times\{t\}))$  is  a  closure in the sense of Kronheimer and Mrowka, for any $t\in[-1,1]$.

\begin{definition}
\label{def:markedsmoothclosure} A \emph{marked closure} of $(M,\gamma)$ is a tuple $(Y,R,r,m,\eta)$, where $(Y,R,r,m)$ is a closure of $(M,\gamma)$, as defined above, and $\eta$ is an oriented, homologically essential, smoothly embedded curve in $R$.
 \end{definition}

Marked closures are needed to define the \emph{twisted} sutured monopole invariants that will be the focus of this paper.

\section{Sutured Monopole Homology}
\label{sec:SHM}

In this section, we define the sutured monopole homology of a closure, closely following Kronheimer and Mrowka's definition in \cite[Definitions 4.3 and 4.4]{km4}. We begin with some background on monopole Floer homology for closed 3-manifolds. See \cite{kmbook,km4} for more details.

\subsection{Monopole Floer  Homology}
\label{ssec:HM}
Monopole Floer homology assigns to a closed, oriented, connected, smooth 3-manifold $Y$ a $\Z$-module, \[ \HMtoc(Y)=\bigoplus_{\spc\in\Sc(Y)} \HMtoc(Y,\spc).\] 
More generally, $\HMtoc$ is  a functor from $\textbf{Cob}$ to $\Z$-$\textbf{Mod}$, where the objects of $\textbf{Cob}$ are 3-manifolds as above and the morphisms are isomorphism classes of connected cobordisms with homology orientations (we will henceforth omit any mention of homology orientations, as we are only interested in cobordism maps up to sign). Here, a cobordism from $Y_1$ to $Y_2$ is a compact, oriented, smooth 4-manifold $W$ with boundary $\partial W= -\partial_-W\sqcup \partial_+W$, together with orientation-preserving diffeomorphisms \[\phi_-:\partial_-W\rightarrow Y_1 {\rm \,\,\,\,\,and\,\,\,\, \,}\phi_+:\partial_+W\rightarrow Y_2.\] Two  cobordisms $(W,\phi_\pm)$ and $(W',\phi'_\pm)$ are \emph{isomorphic} if there is an orientation-preserving diffeomorphism from $W$ to $W'$ which intertwines the maps $\phi_\pm$ and $\phi'_\pm$.  We will generally omit the diffeomorphisms $\phi_{\pm}$ from our notation and use 
\[ \HMtoc(W): \HMtoc(Y_1) \to \HMtoc(Y_2)\] to denote the map induced by $W$. As this notation indicates, we will also blur the distinction between a cobordism and its isomorphism class.

Given cobordisms $W_1$ from $Y_1$ to $Y_2$ and $W_2$ from $Y_2$ to $Y_3$,  the composite  $W_3=W_2\circ W_1$  from $Y_1$ to $Y_3$ is  formed as the quotient of $W_1\sqcup W_2$  by the map \[(\phi_2)^{-1}_-\circ (\phi_1)_+:\partial_+W_1\rightarrow \partial_-W_2,\] and is endowed with  the natural boundary identifications $(\phi_3)_- = (\phi_1)_-$ and $(\phi_3)_+ = (\phi_2)_+$. The statement that $\HMtoc$ is a functor  implies that 
\begin{equation}\label{eqn:compositionfunct} \HMtoc(W_3) = \HMtoc(W_2) \circ \HMtoc(W_1). \end{equation}

Given a smooth  1-cycle $\eta\subset Y$, Kronheimer and Mrowka define a version of monopole Floer homology with twisted (local) coefficients  which takes the form of an $\RR$-module, \[ \HMtoc(Y;\Gamma_\eta)=\bigoplus_{\spc\in\Sc(Y)} \HMtoc(Y,\spc;\Gamma_\eta).\]   Suppose $\eta_1\subset Y_1$ and $\eta_2\subset Y_2$ are smooth 1-cycles. A cobordism from $(Y_1,\eta_1)$ to $(Y_2,\eta_2)$ is a cobordism $W$  as above together with a  smooth relative 2-cycle  $\nu\subset W$ such that $\partial \nu=\eta_2-\eta_1$ under the identifications $\phi_{\pm}$. Such a  cobordism  $(W,\nu)$ induces a map \[ \HMtoc(W;\Gamma_\nu): \HMtoc(Y_1;\Gamma_{\eta_1}) \to \HMtoc(Y_2; \Gamma_{\eta_2}) \] which depends only on the homology class $[\nu]\subset H_2(W,\partial W;\R)$ and  the isomorphism class of $(W,\nu)$, where $(W,\nu)$ and $(W',\nu')$ are isomorphic if there is a diffeomorphism from one pair to the other which intertwines the maps $\phi_{\pm}$ and $\phi_{\pm}'$. Composition of cobordisms  is defined in the obvious way and the analogue of (\ref{eqn:compositionfunct}) holds  in this setting as well.

The functoriality of $\HMtoc$ can be used to assign isomorphisms on Floer homology to  diffeomorphisms between 3-manifolds as follows.  Suppose $\psi$ is an orientation-preserving diffeomorphism  from $Y$ to $Y'$ which sends a smooth 1-cycle $\eta\subset Y$ to $\eta'\subset Y'$. Consider the  cobordism $(W,\nu)= (Y\times[0,1],\eta\times[0,1])$ from $(Y,\eta)$ to $(Y',\eta')$ with  boundary identifications \[\phi_- = id:Y\times \{0\}\rightarrow Y\,\,\,\,\,{\rm and }\,\,\,\,\, \phi_+ = \psi:Y\times\{1\}\rightarrow Y'.\] Then, the induced maps \begin{align*}
\HMtoc(\psi)&:=\HMtoc(W):\HMtoc(Y)\to\HMtoc(Y')\\
\HMtoc(\psi)&:=\HMtoc(W;\Gamma_{\nu}):\HMtoc(Y;\Gamma_{\eta})\to\HMtoc(Y';\Gamma_{\eta'})
\end{align*}
are isomorphisms which depend only on the smooth isotopy class of $\psi$ (rel $\eta$ in the twisted case). Moreover, for diffeomorphisms $\psi_1$ from $(Y_1,\eta_1)$ to $(Y_2,\eta_2)$ and $\psi_2$ from $(Y_2,\eta_2)$ to $(Y_3,\eta_3)$, we have that  \[\HMtoc(\psi_2\circ\psi_1) = \HMtoc(\psi_2)\circ \HMtoc(\psi_1).\] As a special case, these  maps define actions of the mapping class groups of $Y$ and $(Y,\eta)$ on $\HMtoc(Y)$ and $\HMtoc(Y;\Gamma_{\eta})$.
 
 When defining monopole Floer invariants of sutured manifolds, we will be particularly interested in certain summands of $\HMtoc$.
In general, the summands $\HMtoc(Y,\spc)\subset\HMtoc(Y)$ are constrained by an \emph{adjunction inequality}, which states that if $R \subset Y$ is a connected, oriented, smoothly embedded surface with $g(R)>0$ and $\HMtoc(Y,\spc)$ is nonzero, then \[ |\langle c_1(\spc), [R] \rangle| \leq 2g(R) - 2. \] For $R\subset Y$ as above,  Kronheimer and Mrowka define the submodule $\HMtoc(Y|R)\subset \HMtoc(Y)$ to be  the direct sum over ``top" $\Sc$ structures,
\begin{equation*}\label{eqn:relY} \HMtoc(Y|R) := \bigoplus_{\langle c_1(\spc), [R]\rangle = 2g(R)-2} \HMtoc(Y,\spc).\end{equation*} The submodule $\HMtoc(Y|R;\Gamma_{\eta})\subset \HMtoc(Y;\Gamma_{\eta})$ is defined  analogously.

\begin{notation} 
\label{not:shorthand} Given a  3-manifold $Y$, a surface $R$, a curve $\eta\subset R$ and an embedding $r:R\times[-1,1]\hookrightarrow Y$, we will use the shorthand 
\begin{align*}
\HMtoc(Y|R)\,\,\,\,&{\rm for}\,\,\,\,\HMtoc(Y|r(R\times\{0\}))\\
\HMtoc(Y|R;\Gamma_{\eta})\,\,\,\,&{\rm for}\,\,\,\,\HMtoc(Y|r(R\times\{0\});\Gamma_{r(\eta\times\{0\})}).
\end{align*} 
\end{notation}

\begin{example} 
\label{ex:mappingtori}Suppose $R$ is a smooth surface with $g(R)\geq 2$, $\phi$ is an orientation-preserving diffeomorphism of $R$, and $\eta$ is an oriented, homologically essential, smoothly embedded curve in $R$. Consider the mapping torus
\[ R\times_{\phi}S^1:= R\times[-1,1]/((x,1)= (\phi(x),-1)). \]
In \cite[Lemma 4.7]{km4}, Kronheimer and Mrowka prove that 
\[
 \HMtoc(R\times_{\phi}S^1|R)\cong\Z\,\,\,\,\,\,{\rm and}\,\,\,\,\,\,
\HMtoc(R\times_{\phi}S^1|R;\Gamma_{\eta})\cong\RR.
\]
\end{example}

The maps induced by cobordisms decompose along $\Sc$ structures as well. For example, suppose  $R_1 \subset Y_1$ and $R_2 \subset Y_2$ are embedded surfaces as above, $W$ is a cobordism from $Y_1$ to $Y_2$ and  $R_W \subset W$ is a smoothly embedded surface containing $R_1$ and $R_2$ as components, such that every component of $R_W$ has positive genus.  Then, by summing the maps $\HMtoc(W,\spc)$ over all $\spc \in \Sc(W)$ for which \[\langle c_1(s),F \rangle = 2g(F)-2\] for every component $F\subset R_W$, one obtains a map
\begin{equation*}\label{eqn:relW} \HMtoc(W|R_W) : \HMtoc(Y_1|R_1) \to \HMtoc(Y_2|R_2). \end{equation*}
Given a cobordism $(W,\nu)$ from $(Y_1,\eta_1)$ to $(Y_2,\eta_2)$ and  $R_1,R_2,R_W$ as before,  the map \[ \HMtoc(W|R_W;\Gamma_{\nu}) : \HMtoc(Y_1|R_1;\Gamma_{\eta_1}) \to \HMtoc(Y_2|R_2;\Gamma_{\eta_2}) \] is defined analogously. Note that when $R_1$ and $R_2$ are homologous in $W$ and $g(R_1)=g(R_2)$, we have that
\[\HMtoc(W|R_1\sqcup R_2) = \HMtoc(W|R_1)=\HMtoc(W|R_2),\] and likewise in the twisted setting. We will commonly use one of the latter two expressions in place of the former. 

\begin{remark}
When studying the submodules $\HMtoc(Y|R)$ and  the  maps $\HMtoc(W|R_W)$, we can relax the requirement that our 3-manifolds and cobordisms be connected (and likewise for the twisted versions of these submodules and maps). The discussion above carries over naturally to this more general setting. See \cite[Sections 2.5 and 2.6]{km4} for details. 

\end{remark}

\subsection{Sutured Monopole Homology}
\label{ssec:SHM}
Suppose $(M,\gamma)$ is a balanced sutured manifold. Below, we define modules $\SHM(\data)$ and $\SHMt(\data)$ described in the introduction, closely following Kronheimer and Mrowka's constructions in \cite[Definitions 4.3 and 4.5]{km4}.
\begin{definition}
\label{def:shmuntwisted}  
Given a closure $\data=(Y,R,r,m)$  of $(M,\gamma)$, the \emph{untwisted sutured monopole homology of $\data$} is the $\Z$-module  \[\SHM(\data) := \HMtoc(Y|R):=\HMtoc(Y|r(R\times\{0\}).\] 
\end{definition}

\begin{definition}
\label{def:shmtwisted}
Given a marked closure $\data=(Y,R,r,m,\eta)$ of $(M,\gamma)$, the \emph{twisted sutured monopole homology of $\data$} is the $\RR$-module  \[\SHMt(\data) := \HMtoc(Y|R;\Gamma_{\eta}):=\HMtoc(Y|r(R\times\{0\});\Gamma_{r(\eta\times\{0\})}).\] 
\end{definition}

We will use $\SHM^g(\data)$ and $\SHMt^g(\data)$ in place of $\SHM(\data)$ and $\SHMt(\data)$ when we wish to emphasize that $\data$ has genus $g$. While the definitions of these modules do not really depend on the maps $m$ or $r$ (except to specify the homology class $[r(R\times\{0\})]\in H_2(Y;\mathbb{Z})$), the canonical isomorphisms  we construct between them  will. 

Before defining these isomorphisms, we establish some preliminary results in the next section that will be crucial in proving that these isomorphisms are well-defined up to multiplication by the appropriate units. If the reader prefers, she can skip ahead to Sections \ref{sec:psit} and \ref{sec:untwisted} for the definitions of these canonical isomorphisms and refer back to Section \ref{sec:prelim} as needed.

\section{Preliminary Results} 
\label{sec:prelim}
In this section, we establish  the tools  that will be used  in Section \ref{sec:psit} to construct the canonical isomorphisms $\Psit_{\data,\data'}$, to show that they are well-defined up to multiplication by  a unit in $\RR$, and to prove that they satisfy the required transitivity. We will also use these tools in Section \ref{sec:untwisted} to prove the analogous results in the untwisted case. Although the results below make no explicit mention of sutured manifolds,  there are obvious similarities between the objects studied here and  marked  closures. 

Suppose  $Y$ is a closed, oriented, smooth 3-manifold; $R$ is a closed, oriented smooth surface of genus at least two; $\eta$ is an oriented, homologically essential, smoothly embedded curve in $R$; and $r:R\times[-1,1]\hookrightarrow Y$ is an embedding. Let $r^{\pm}$ denote the restriction \[r^{\pm}:=r|_{R\times\{\pm 1\}}.\] In an abuse of notation, we will also think of $r^{\pm}$ as a map from $R$ to $Y$ via the canonical identification $R\cong R\times\{\pm 1\}$. We will  make extensive use of the shorthand \[\HMtoc(Y|R;\Gamma_{\eta})\,\,\,\,{\rm for}\,\,\,\,\HMtoc(Y|r(R\times\{0\});\Gamma_{r(\eta\times\{0\})})\] described in Notation \ref{not:shorthand}. 

Let $A^u$ and $B^u$ be diffeomorphisms of $R$, for $u=1,2$, such that $A^1\circ B^1$ and $A^2\circ B^2$ are isotopic and \[(B^2\circ (B^1)^{-1})(\eta)=\eta.\] The goal of this section is to define an isomorphism for each $u=1,2$ from  $\HMtoc(Y|R;\Gamma_{\eta})$  to the monopole Floer homology   of the manifold obtained by cutting $Y$ open along the surfaces $r(R\times \{t\})$ and $r(R\times \{t'\})$ for some $t<0<t'$ and regluing along these surfaces by \[r\circ (B^u\times id)\circ r^{-1}\,\,\,\,{\rm and}\,\,\,\, r\circ (A^u\times id)\circ r^{-1} ,\] respectively.  To define these isomorphisms, we start by  choosing factorizations of $A^u$ and $B^u$ into Dehn twists. This allows us to think of the reglued manifolds as having been obtained from $Y$ via $\pm 1$ surgeries on  curves in $r(R\times [-1,1])$. Our maps  are then  induced by the associated 2-handle cobordisms. 

\begin{interlude} For the reader's benefit, we make this relationship between cutting/regluing and Dehn surgery more precise below, by considering the case of a single Dehn twist.  Let $c$ be an embedded curve in the surface $r(R\times\{t\})\subset Y$. Let $Y'$ be the manifold obtained by cutting $Y$ open along $r(R\times\{t\})$, so that \[\partial Y' = S_+\cup -S_-,\] where both $S_+$ and $S_-$ are copies of $r(R\times\{t\})$. Let $Y''$ be the manifold obtained from $Y'$ by gluing $S_+$ to $S_-$ by a positive (i.e. right-handed) Dehn twist around $c$; that is, we identify each $x\in S_+$ with $D_c(x)\in S_-$. Alternatively, let $Y_+$ be the manifold obtained from $Y$ by performing $-1$ surgery on $c$, according to the framing induced by $r(R\times\{t\})$. Both the cutting/regluing and the Dehn surgery are local operations, so there is a canonical diffeomorphism 
\begin{equation}
\label{eqn:explain-dehn-twist}
Y'' \ssm r(R\times (t-\epsilon,t+\epsilon)) \to Y_+ \ssm r(R\times (t-\epsilon,t+\epsilon))
\end{equation}
which we will refer to as the ``identity map." Moreover, this identity map extends to a diffeomorphism from $Y''$ to $Y_+$. The primary importance of Appendix \ref{sec:appendixa} lies in Proposition \ref{prop:diff-surjection} and its Corollary \ref{cor:diff}, which implies that any two such extensions are isotopic through such extensions. The story is similar for negative (i.e. left-handed) Dehn twists, the only difference being that they correspond to $+1$ surgeries rather than $-1$ surgeries.
\end{interlude}

The main result of this section is  Theorem \ref{thm:maininvc}, which states that the $\RR^\times$-equivalence classes of these isomorphisms   ``agree" for $u=1$, $2$, and are therefore independent of the choices made in their constructions. We will use these  maps in Subsection \ref{ssec:samegenus} to construct the isomorphisms $\Psit_{\data,\data'}$ for  closures of the same genus, and Theorem \ref{thm:maininvc} will serve as our main tool for proving that these maps are well-defined up to multiplication by a unit in $\RR$.

Suppose  $A^u$ and $B^u$ are isotopic to the following compositions of Dehn twists,
\begin{align}
 \label{eqn:fact1} A^u&\sim D^{e^u_1}_{a^u_1}\circ\dots\circ  D^{e^u_{n^u}}_{a^u_{n^u}}, \\
\label{eqn:fact2} B^u&\sim D^{e^u_{n^u+1}}_{a^u_{n^u+1}}\circ\dots\circ  D^{e^u_{m^u}}_{a^u_{m^u}},
\end{align} 
where the $a^u_i$ are smoothly embedded curves in $R$ and the  $e^u_i$ are elements of $\{-1,1\}$.  Let 
\begin{align*}
\Pu &= \{i\mid e^u_i=+1\}\\
\Nu &= \{i\mid e^u_i=-1\},
\end{align*} and choose real numbers \begin{equation}\label{eqn:list}-3/4<t^u_{m^u}<\dots<t^u_{n^u+1}<-1/4<1/4<t^u_{n^u}<\dots<t^u_1<3/4.\end{equation} Pick some $t_i^{u\prime}$ between $t_i^u$ and the next greatest number in the  list (\ref{eqn:list}) for each $i\in\NN^u$.

Let $Y^u_-$  be the 3-manifold obtained from $Y$ by performing $+1$ surgeries on the curves $r(a^u_i\times\{t^u_i\})$ for  $i\in \Nu$, with respect to the framings induced by the surfaces $r(R\times\{t^u_i\})$. 
Let $X^u_-$  be the 4-manifold obtained from $Y^u_-\times[0,1]$ by attaching $-1$ framed 2-handles along the curves $r(a^u_i\times \{t_i^{u\prime}\})\times\{1\}\subset Y^u_-\times\{1\}$ for all $i\in\Nu$. One boundary component of $X^u_-$ is  $-Y^u_-$. The other is  diffeomorphic to $Y$ by a map which restricts to the identity outside of a small neighborhood of \[\bigcup_{i\in \Nu}r(R\times [t^u_i,t_i^{u\prime}])\] (the $+1$ and $-1$ surgeries above cancel in pairs, up to isotopy; see Remark \ref{rmk:positive} for an explanation of why we go through this trouble). Since $g(R)\geq 2$, it follows from  Corollary \ref{cor:diff} that there is a unique isotopy class of such diffeomorphisms, so  $X^u_-$ naturally induces a  map 
\[\HMtoc(X^u_-|R;\Gamma_{\nu}): \HMto(Y^u_-|R;\Gamma_{\eta})\rightarrow \HMto(Y|R;\,\Gamma_{\eta}),\] where  $\nu$ is the cylinder $r(\eta\times\{0\})\times[0,1]\subset X^u_-$. 

We similarly define $X^u_+$  to be the 4-manifold obtained from $Y^u_-\times[0,1]$ by attaching $-1$ framed 2-handles along the curves $r(a^u_i\times \{t^u_i\})\times\{1\}\subset Y^u_-\times\{1\}$ for all $i\in\Pu$. The boundary of $X^u_+$ is the union of $-Y^u_-$ with the 3-manifold $Y^u_+$ obtained from $Y^u_-$ by performing $-1$ surgeries on the curves  $r(a^u_i\times \{t^u_i\})$ for all $i\in  \Pu$. In particular, $Y^u_+$ is  obtained from $Y$ by performing $-e_i^u$ surgeries on the curves  $r(a^u_i\times \{t^u_i\})$ for all $i$, and is therefore diffeomorphic to the manifold obtained from $Y$ by cutting and regluing by \[r\circ (B^u\times id)\circ r^{-1}\,\,\,\,{\rm and}\,\,\,\, r\circ (A^u\times id)\circ r^{-1} ,\] as described at the top.
The cobordism $X^u_+$  induces a map
\[\HMtoc(X^u_+|R;\Gamma_{\nu}): \HMto(Y^u_-|R;\Gamma_{\eta})\rightarrow \HMto(Y^u_+|R;\Gamma_{\eta}),\] where, in this case, $\nu$ is the cylinder $r(\eta\times\{0\})\times [0,1]\subset X^u_+$ (we will use the  letter $\nu$  to denote cylinders of this form for many cobordisms; the particular cylinder we have in mind should be clear from the context).


By Corollary \ref{cor:diff}, there  is a unique isotopy class of  diffeomorphisms   \begin{equation}\label{eqn:canz1z2}Y^1_{+}\rightarrow Y^2_{+}\end{equation}  which restrict to  the identity  on $Y\ssm\inr(\Img(r))$ and to $r\circ(B^2\circ(B^1)^{-1}\times id)\circ r^{-1}$ in a neighborhood of $r(R\times\{0\})$. Let \[ \Theta_{Y^1_{+} Y^2_+}:\HMtoc(Y^1_+|R; \Gamma_{\eta})\rightarrow \HMtoc(Y^2_+|R; \Gamma_{\eta})\] be the isomorphism associated to this isotopy class, as described in Subsection \ref{ssec:HM}. 

\begin{remark}
The condition that the diffeomorphisms from $Y^1_+$ to $Y^2_+$ restrict to $r\circ(B^2\circ(B^1)^{-1}\times id)\circ r^{-1}$ in a neighborhood of $r(R\times\{0\})$ might seem superfluous. Indeed,  any two diffeomorphisms restricting to the identity on $Y\ssm\inr(\Img(r))$ are already isotopic. The point of this condition is that it ensures that  these diffeomorphisms send $r(\eta\times\{0\})\subset Y^1_+$ to $r(\eta\times\{0\})\subset Y^2_+$, which is necessary for the map $\Theta_{Y^1_{+} Y^2_+}$ to make sense.
\end{remark}

The following is the main theorem of this section.

\begin{theorem}
\label{thm:maininvc} The maps $\HMtoc(X^u_-|R;\Gamma_{\nu})$ are invertible. Moreover, the maps \begin{align}
\label{eqn:composition1}\Theta_{Y^1_{+} Y^2_{+}}\circ\HMtoc(X^1_+|R;\Gamma_{\nu})\circ\HMtoc(X^1_-|R;\Gamma_{\nu})^{-1}\\
\label{eqn:composition2}\HMtoc(X^2_+|R;\Gamma_{\nu})\circ\HMtoc(X^2_-|R;\Gamma_{\nu})^{-1}
\end{align}
from
$\HMtoc(Y|R; \Gamma_{\eta})$ to $\HMtoc(Y^2_{+}|R; \Gamma_{\eta})$ are  $\RR^\times$-equivalent and are isomorphisms.
\end{theorem}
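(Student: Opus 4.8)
The plan is to prove the two assertions in turn: first, that each $\HMtoc(X^u_{\pm}|R;\Gamma_\nu)$ is an isomorphism — which, together with the fact that $\Theta_{Y^1_+Y^2_+}$ is induced by a diffeomorphism, already shows that (\ref{eqn:composition1}) and (\ref{eqn:composition2}) are isomorphisms — and second, that the two compositions are $\RR^\times$-equivalent.

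For the first assertion I would observe that $X^u_-$ is a composition of elementary $2$-handle cobordisms, each attached along a surface-framed pushoff of one of the $a^u_i$ lying in a level set of $R$ inside the cobordism, so it suffices to treat one such piece — the cobordism $W$ of a surface-framed $(\pm1)$-surgery on an essential curve $c$ in a level set of $R$ (a Dehn twist about a nullhomotopic curve being trivial, we may assume each $a^u_i$ essential). Here I would run the monopole surgery exact triangle relating the $\infty$-, $(\pm1)$-, and $0$-surgeries on $c$: compressing $R$ along $c$ produces, inside the $0$-surgered manifold, a closed surface homologous to $R$ of genus $g(R)-1$ if $c$ is nonseparating, or a disjoint union of two positive-genus surfaces of total genus $g(R)$ if $c$ is separating; in either case the adjunction inequality kills the summand of that manifold corresponding to the top $\Sc$-structures relative to $R$, so the triangle forces the map $\HMtoc(W|R)$ between the top summands of the remaining two terms to be an isomorphism. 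The same argument goes through with twisted coefficients, since the relevant $2$-chain $\nu$ is a cylinder over a curve in the $t=0$ level and is therefore disjoint from $c$. Composing over the elementary steps handles $X^u_-$; the arguments for $X^u_+$, and for the cobordism $V^u$ below, are identical.

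For the $\RR^\times$-equivalence I would first pass to ``all-at-once'' cobordism maps. Let $V^u\colon Y\to Y^u_-$ be $Y\times[0,1]$ with $(+1)$-framed $2$-handles along $r(a^u_i\times\{t^u_i\})\times\{1\}$, $i\in\Nu$, and $W^u\colon Y\to Y^u_+$ be $Y\times[0,1]$ with $(-e^u_i)$-framed $2$-handles along $r(a^u_i\times\{t^u_i\})\times\{1\}$ for all $i$. Then $X^u_-\circ V^u$ is $Y\times[0,1]$ with, for each $i\in\Nu$, a cancelling pair of $(+1)$- and $(-1)$-framed $2$-handles along parallel pushoffs of $a^u_i$ at adjacent levels, while $X^u_+\circ V^u\cong W^u$. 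Granting the claim (addressed below) that such a cancelling pair induces a unit multiple of the identity on the top summand, functoriality gives $\HMtoc(V^u|R;\Gamma_\nu)\doteq\HMtoc(X^u_-|R;\Gamma_\nu)^{-1}$, hence $\HMtoc(X^u_+|R;\Gamma_\nu)\circ\HMtoc(X^u_-|R;\Gamma_\nu)^{-1}\doteq\HMtoc(W^u|R;\Gamma_\nu)$, and it then remains to show
\[ \Theta_{Y^1_+Y^2_+}\circ\HMtoc(W^1|R;\Gamma_\nu)\ \doteq\ \HMtoc(W^2|R;\Gamma_\nu). \]
Here $Y^u_+$ is $Y$ cut along two levels $t<0<t'$ and reglued by $r\circ(B^u\times\mathrm{id})\circ r^{-1}$ and $r\circ(A^u\times\mathrm{id})\circ r^{-1}$; the handle presentation of $W^u$ comes from the chosen factorization $[A^u][B^u]$ into Dehn twists, with all twisting curves at levels in $(-3/4,-1/4)$ or $(1/4,3/4)$ and the cylinder $\nu$ over $r(\eta\times\{0\})$ sitting at the level $t=0$, disjoint from every handle. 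Writing $\beta=B^2\circ(B^1)^{-1}$, we have $B^2=\beta B^1$ and $A^2\sim A^1\beta^{-1}$. By the results of Appendix \ref{sec:appendixa} — the dictionary between cutting--regluing and Dehn surgery, and the canonical identification it provides — the mapping cylinder of the diffeomorphism underlying $\Theta_{Y^1_+Y^2_+}$ is diffeomorphic rel boundary to a $2$-handle cobordism $Y^1_+\to Y^2_+$ realizing a cut--reglue by $\beta$ at a level just below $t=0$ and by $\beta^{-1}$ just above; composing with $W^1$, the left-hand side above becomes $\HMtoc(\widehat W^1|R;\Gamma_\nu)$ for a $2$-handle cobordism $\widehat W^1\colon Y\to Y^2_+$ whose presentation is obtained from that of $W^1$ by merging a factorization of $\beta$ into the top of the lower block (yielding a factorization of $B^2$) and a factorization of $\beta^{-1}$ into the bottom of the upper block (yielding a factorization of $A^2$) — all still at levels bounded away from $t=0$, which is possible precisely because $\beta(\eta)=\eta$, so that $\nu$ survives unchanged and $\Theta_{Y^1_+Y^2_+}$ is defined in the first place. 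Now $\widehat W^1$ and $W^2$ are both $2$-handle cobordisms $Y\to Y^2_+$ built from Dehn-twist factorizations of $A^2$ in the upper block and of $B^2$ in the lower block; since any two Dehn-twist factorizations of a fixed mapping class are connected by a finite sequence of isotopies of curves, transpositions of consecutive disjoint twists, insertions and deletions of cancelling pairs $D_c^{\pm1}D_c^{\mp1}$, and applications of the defining mapping-class-group relations (braid, chain, lantern), it suffices to check that each such move changes the induced top-summand cobordism map by at most a unit. The first three alter the cobordism only by a diffeomorphism rel boundary, and each mapping-class-group relation translates into a standard Kirby move on the traces of the corresponding surgeries (a handle slide for the braid relation); an insertion of a cancelling pair alters the map by a unit. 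The $2$-chain $\nu$, being disjoint from every handle and fixed by every diffeomorphism in play, carries through at each stage, so the twisted coefficient systems match, giving the displayed equivalence.

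It remains to justify the claim: the cobordism $E\colon Y\to Y$ given by $Y\times[0,1]$ with a $(+1)$- and a $(-1)$-framed $2$-handle along parallel pushoffs at adjacent levels of an essential curve $c\subset R$ induces a unit multiple of the identity on $\HMtoc(\cdot|R;\Gamma)$. I would prove this by realizing $E$ inside a Lefschetz fibration over the disk with fiber $R$ and trivial total monodromy, and invoking the nonvanishing of the relative monopole invariant of such a fibration (Appendix \ref{sec:appendixb}) together with the identification $\HMtoc(R\times_\phi S^1|R;\Gamma_\eta)\cong\RR$ of Example \ref{ex:mappingtori} — which is exactly the source of the (non-canonical) unit, hence of the $\doteq$ rather than $=$. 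The main obstacle is this last claim, and relatedly the two assertions that Appendix \ref{sec:appendixa} converts $\Theta_{Y^1_+Y^2_+}$ into a $2$-handle cobordism rel boundary and that the mapping-class-group relations become Kirby moves on surgery traces; pinning these down, and tracking the twisted coefficient systems through every move, is the real work of the section, with the nonvanishing of Appendix \ref{sec:appendixb} doing the essential job.
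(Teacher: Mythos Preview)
Your surgery-triangle argument for the invertibility of the $(-1)$-handle cobordisms $X^u_\pm$ is sound and gives a clean alternative to the paper's Lefschetz-fibration argument (Lemma~\ref{lem:mappingtorusiso}): once the triangle restricts to the $|R$ summands and the $0$-surgery term vanishes by adjunction on the compressed surface, each elementary step is an isomorphism.

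The $\RR^\times$-equivalence argument, however, breaks at the cancelling-pair step. The cobordism $E=X^u_-\circ V^u$ --- $Y\times[0,1]$ with a $(+1)$- and a $(-1)$-framed $2$-handle on parallel pushoffs of $c$ --- contains an embedded sphere of square zero (the two cores joined by the annulus between their attaching circles), and the induced map $\HMtoc(E|R;\Gamma_\nu)$ is \emph{identically zero}, not a unit. This is precisely the phenomenon flagged in Remark~\ref{rmk:positive}, and it cannot be repaired by Appendix~\ref{sec:appendixb}, which treats only genuine (all-positive) Lefschetz fibrations, whereas $E$ is achiral. In particular the $(+1)$-surgery $2$-handle map is zero on the $|R$ summand, so $\HMtoc(V^u|R;\Gamma_\nu)\not\doteq\HMtoc(X^u_-|R;\Gamma_\nu)^{-1}$ and the reduction to comparing $W^1$ with $W^2$ collapses. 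Even granting that reduction, the claim that mapping-class-group relations become Kirby moves fails whenever the two sides have different lengths: the lantern relation equates a product of four twists with one of three, so the corresponding traces have different $b_2$ and are not related by handle slides.

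The paper proceeds by a different mechanism. It first proves the all-positive case (Proposition~\ref{prop:maininvc2}) by sandwiching each side between merge- and split-type cobordisms $\CM,\CS$ and applying excision to factor the composite as $\mathrm{id}\otimes(\text{map between mapping tori of }R)$. Since $\HMtoc(R\times_\phi S^1|R;\Gamma_\eta)\cong\RR$ for every $\phi$, it then suffices to show the mapping-torus maps are \emph{isomorphisms}: any two isomorphisms of rank-one free $\RR$-modules differ by a unit, so no move-by-move comparison of factorizations is ever needed. The mixed-sign case is then reduced to the positive one by rewriting each $D_{a^u_i}^{-1}$ as a product of positive twists and building auxiliary $(-1)$-handle cobordisms $X^{u+2}_+$, so that $(+1)$-framed handles are never used.
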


We first prove the following special case of Theorem \ref{thm:maininvc}.

\begin{proposition}
\label{prop:maininvc2} Suppose $\Nu= \emptyset$; that is, the Dehn twists in the  factorizations of $A^u$ and $B^u$ are positive. In this case, $Y^u_-=Y$ and the maps
\begin{align*}
\Theta_{Y^1_{+} Y^2_{+}}\circ \HMtoc(X^1_+|R;\Gamma_{\nu})\\
\HMtoc(X^2_+|R;\Gamma_{\nu})
\end{align*}
from $\HMtoc(Y|R; \Gamma_{\eta})$ to $\HMtoc(Y^2_{+}|R; \Gamma_{\eta})$ are  $\RR^\times$-equivalent and are isomorphisms.
\end{proposition}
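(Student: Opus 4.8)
The plan is to reduce the proposition to a concrete statement about 2-handle cobordisms glued along a common 3-manifold, and then to invoke the mapping-torus computation of Example~\ref{ex:mappingtori} together with a non-vanishing result for Lefschetz-fibration relative invariants (the content of Appendix~\ref{sec:appendixb}). Since $\Nu=\emptyset$, both $X^u_+$ are built from $Y\times[0,1]$ by attaching $-1$-framed 2-handles along the curves $r(a^u_i\times\{t^u_i\})\times\{1\}$ for $i=1,\dots,m^u$, i.e. along a disjoint union of parallel copies of the twisting curves for the chosen factorizations of $A^u$ and $B^u$. First I would record that $\HMtoc(X^u_+|R;\Gamma_\nu)$ is, up to the canonical identifications of Corollary~\ref{cor:diff}, exactly the map Kronheimer--Mrowka would obtain from the cut-and-reglue description; this is the content of the Interlude, applied one Dehn twist at a time and composed. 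The genuine work is to show the two composites $\Theta_{Y^1_+Y^2_+}\circ\HMtoc(X^1_+|R;\Gamma_\nu)$ and $\HMtoc(X^2_+|R;\Gamma_\nu)$ from $\HMtoc(Y|R;\Gamma_\eta)$ to $\HMtoc(Y^2_+|R;\Gamma_\eta)$ agree up to a unit, and that they are isomorphisms.

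The key idea is to stack the two cobordisms and compare. Consider the cobordism $Z^u$ obtained by gluing $X^u_+$ to the ``turnaround'' cobordism $\bar X^u_+$ (the same 2-handles attached from the other side), which realizes, on the level of 3-manifolds, cutting $Y$ along two parallel copies of $R$ and regluing by $(A^u\circ B^u)\times\mathrm{id}$ and its inverse; since $A^1\circ B^1\sim A^2\circ B^2$, the resulting manifold is diffeomorphic to a mapping torus piece that is independent of $u$ up to the isotopies we have fixed. More precisely, I would build a single cobordism $W$ from $Y$ to $Y$ by composing $X^1_+$ with (the reverse of) $X^2_+$ after identifying their outgoing ends via the canonical diffeomorphism $Y^1_+\to Y^2_+$ of \eqref{eqn:canz1z2}; the curves one surgers along in $W$ are, by the chosen factorizations, parallel copies of curves in $r(R\times[-1,1])$ realizing $A^1\circ B^1\circ(A^2\circ B^2)^{-1}\sim\mathrm{id}$. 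One then recognizes $W$ (after blowing down/handle cancellation, using that the $a^u_i$ can be pushed off one another and that $+1$/$-1$ surgeries on isotopic curves cancel) as a cobordism that admits a Lefschetz fibration over the annulus with fiber $R$, or alternatively one caps $W$ off on one side to get a Lefschetz fibration $X\to D^2$ with fiber $R$ and boundary a mapping torus $R\times_\phi S^1$. Appendix~\ref{sec:appendixb} then gives that the induced map on $\HMtoc(\,\cdot\,|R;\Gamma_\nu)$ is nonzero; combined with the fact that $\HMtoc(R\times_\phi S^1|R;\Gamma_\eta)\cong\RR$ is a free rank-one $\RR$-module, a nonzero module map $\RR\to\RR$ is multiplication by a nonzero element, and with a little more care (using that the composite the other way is also nonzero, hence the element is a non-zero-divisor, and that $\RR$ has no $\Z$-torsion and $t-t^{-1}$ is invertible so the relevant element is in fact a unit) one concludes that each $\HMtoc(X^u_+|R;\Gamma_\nu)$ is an isomorphism.

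Granting invertibility, the equality up to a unit follows formally: $\Theta_{Y^1_+Y^2_+}\circ\HMtoc(X^1_+|R;\Gamma_\nu)$ and $\HMtoc(X^2_+|R;\Gamma_\nu)$ are both isomorphisms $\RR\xrightarrow{\sim}\RR$ between free rank-one modules, so they automatically differ by multiplication by a unit of $\RR$ --- there is simply nothing else an isomorphism between free rank-one modules can be. (Here I am using the identifications $\HMtoc(Y|R;\Gamma_\eta)\cong\RR$ and $\HMtoc(Y^2_+|R;\Gamma_\eta)\cong\RR$ from Example~\ref{ex:mappingtori}, valid because $Y$ and $Y^2_+$ are, by construction, mapping tori of $R$; strictly one should phrase this invariantly, noting that the $\RR^\times$-equivalence class of an isomorphism between two such rank-one modules is unique.) So the crux of the argument, and the step I expect to be the main obstacle, is the identification of the glued-up cobordism with (a piece of) a Lefschetz fibration and the invocation of the non-vanishing theorem: one must arrange the factorizations, the surgery curves, and the cancellations so that a genuine Lefschetz fibration structure with fiber $R$ appears, and one must be careful that the twisted coefficient 2-cycle $\nu$ (a product cylinder on $\eta$) is compatible with this structure --- which is exactly the subtlety flagged in Subsection~\ref{ssec:furtherremarks} about the non-existence of a $2$-chain bounding $\eta$ in a general Lefschetz fibration. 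The saving grace in this setting is that $\nu$ is a product, so it is supported away from the 2-handle regions and poses no obstruction; making that precise is the technical heart of the proof.
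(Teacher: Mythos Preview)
Your argument has a fundamental gap: you assume that $\HMtoc(Y|R;\Gamma_\eta)\cong\RR$ and $\HMtoc(Y^2_+|R;\Gamma_\eta)\cong\RR$, justifying this by claiming that ``$Y$ and $Y^2_+$ are, by construction, mapping tori of $R$.'' This is false. In the setup of Section~\ref{sec:prelim}, $Y$ is an \emph{arbitrary} closed oriented 3-manifold equipped with an embedding $r:R\times[-1,1]\hookrightarrow Y$; the complement $Y\ssm\inr(\Img(r))$ need not be a product, and indeed in the applications to sutured manifolds it typically is not. Thus $\HMtoc(Y|R;\Gamma_\eta)$ can be any $\RR$-module, and your concluding step---that two isomorphisms between free rank-one modules automatically differ by a unit---does not apply. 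Without this, both halves of your argument collapse: you have no way to deduce that the two maps are $\RR^\times$-equivalent, and your route to showing each $\HMtoc(X^u_+|R;\Gamma_\nu)$ is an isomorphism (via a nonzero endomorphism of a rank-one module being a unit) also fails.

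The paper's proof addresses exactly this point. It pre- and post-composes with the merge and split splicing cobordisms $\CM$ and $\CS$ and then applies an \emph{excision} argument along a copy of $R\times S^1$ inside the composite $W^u=\CS\circ X^u_+\circ\CM$. Cutting along this hypersurface separates the cobordism into a product piece $Y\times[0,3]$ (which carries the part of the module that is not rank one and contributes the identity) and a cobordism $U^u$ between genuine mapping tori of $R$. Only after this reduction are the relevant Floer groups isomorphic to $\RR$, and only then does the Lefschetz-fibration argument (Lemma~\ref{lem:mappingtorusiso}, using Appendix~\ref{sec:appendixb}) yield that the maps are isomorphisms and hence $\RR^\times$-equivalent. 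Your proposal essentially tries to skip the excision step; that step is not optional.
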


\begin{remark}
\label{rmk:positivity}
The positivity assumption in Proposition \ref{prop:maininvc2} is not very restrictive in  that any orientation-preserving diffeomorphism of a closed surface is isotopic to a composition of positive Dehn twists. However, we  will need to allow for negative Dehn twists for some of the applications of Theorem \ref{thm:maininvc}  in Section \ref{sec:psit}. For example, in proving that $\Psit_{\data,\data'}$ is well-defined in Theorems \ref{thm:samegenusinvariance} and \ref{thm:differbyoneinvariance}, we will need  to express a diffeomorphism of a closed surface which is  the identity outside of a compact subsurface as a composition of Dehn twists around curves in the subsurface. One needs both positive and negative Dehn twists to do so in general.
\end{remark}

\begin{remark}
\label{rmk:positive}
The reader might think the maps  in Theorem \ref{thm:maininvc} are overly complicated; why  treat the positive and negative Dehn twists in the factorizations (\ref{eqn:fact1}) and (\ref{eqn:fact2}) so differently? One  answer is that we want these maps to be defined exclusively in terms of $2$-handle cobordisms associated to $-1$ surgeries so that we can think of these cobordisms as obtained from the splicing cobordisms of Kronheimer and Mrowka via capping by Lefschetz fibrations (see the proof of Lemma \ref{lem:mappingtorusiso}). This relationship is critical for the proof of Theorem \ref{thm:maininvc}. Defining these maps strictly in terms of $-1$ surgeries will also be convenient for our construction of contact invariants in sutured monopole and instanton homology in \cite{bs3}. 

Finally, it is worth mentioning that if one  attempts the more obvious strategy -- to define these maps as  compositions of maps associated to $-1$ and $+1$ surgeries (corresponding to the positive and negative Dehn twists) -- then it will not in general be true that the maps are isomorphisms. For instance, the composition of the map induced by $-1$ surgery on a knot with the map induced by $+1$ surgery on a parallel copy of the knot is identically zero as the composite cobordism contains an embedded $2$-sphere with self-intersection $0$ (see \cite[Lemma 7.1]{kmosz}).

\end{remark}

\begin{proof}[Proof of Proposition \ref{prop:maininvc2}]
To show  that
\begin{equation}
\label{eqn:neweq}
\Theta_{Y^1_{+} Y^2_{+}}\circ\HMtoc(X^1_+|R;\Gamma_{\nu})\doteq\HMtoc(X^2_+|R;\Gamma_{\nu}),
\end{equation} we will prove that the two sides are $\RR^\times$-equivalent after pre- and post-composing with  isomorphisms induced by certain \emph{merge-type} and \emph{split-type} cobordisms, $\CM$ and $\CS$. For this, we will show that it  suffices (by an excision argument) to prove (\ref{eqn:neweq}) in the case that $Y\ssm\inr(\Img(r))$ is also diffeomorphic to a product $R\times I$. In this case, both $Y$ and $Y^u_{+}$ are mapping tori and it is enough to demonstrate that both sides of (\ref{eqn:neweq}) are isomorphisms since the relevant Floer homology groups are isomorphic to $\RR$. We will prove this by an argument involving relative invariants of Lefschetz fibrations.

We first describe the cobordisms $\CM$ and $\CS$. Both are examples of what we referred to in the introduction  as \emph{splicing} cobordisms. Let $S$ denote the 2-dimensional saddle on the left in Figure \ref{fig:merge}. Its boundary is a union of horizontal and vertical edges, $H_1,\dots,H_4$ and $V_1,\dots,V_4$. For convenience, we pick identifications of the horizontal edges with the interval $[0,1]$ and identifications of the vertical edges with  intervals,
\begin{align}
\label{eqn:identv1} V_1&\sim [-1,1],\\
\label{eqn:identv2} V_2&\sim [3/4,-3/4],\\
\label{eqn:identv3} V_3&\sim [-1,-3/4],\\
\label{eqn:identv4} V_4&\sim [3/4,1],
\end{align}
where $[3/4,-3/4]$ is thought of as a subinterval of the circle $S^1:=[-1,1]/(-1= 1).$
The merge-type cobordism $\CM$ is built by gluing together three 4-manifolds with corners, 
\begin{align*}
\CM_1&=(Y\ssm\inr(\Img(r))) \times [0,1],\\
\CM_2&=R\times S, \\
\CM_3&=R\times[-3/4,3/4]\times[0,1],
\end{align*} 
along the horizontal portions of their boundaries. Specifically, we glue $\CM_2$ to $\CM_1$ according to the maps \begin{align*}
r^-\times id&:R\times H_1\rightarrow Y\times [0,1],\\
r^+\times id&:R\times H_2\rightarrow Y\times [0,1],
\end{align*} and then glue $\CM_3$ to $\CM_1\cup\CM_2$  according to  \begin{align*}
id\times id&: (R\times\{-3/4\})\times[0,1]\rightarrow R\times H_3 ,\\
id\times id&:  (R\times\{+3/4\})\times[0,1]\rightarrow R\times H_4.
\end{align*} Let $\nu$ be the cylinder $\eta \times\{0\}\times[0,1]\subset \CM_3\subset\CM$. See the right side of Figure \ref{fig:merge} for a schematic of $\CM$, $\nu$ and these gluings.

\begin{figure}[ht]
\labellist
\tiny \hair 2pt
\small\pinlabel $\CM_2$ at 493 122
\pinlabel $\CM_3$ at 493 23
\pinlabel $\CM_1$ at 493 230
\pinlabel $M_2$ at 338 50
\pinlabel $M_3$ at 613 122
\pinlabel $M_1$ at 338 210
\tiny\pinlabel $r^+\times id$ at 445 185
\pinlabel $r^-\times id$ at 421 204
\pinlabel $id\times id$ at 420 47
\pinlabel $id\times id$ at 445 30
\pinlabel $0$ at 61 12
\pinlabel $1$ at 253 12
\pinlabel $-\frac{3}{4}$ at 243 55
\pinlabel $-1$ at 243 211
\pinlabel $\frac{3}{4}$ at 278 37
\pinlabel $1$ at 281 193
\pinlabel $-1$ at -10 210
\pinlabel $1$ at 36 193
\pinlabel $\frac{3}{4}$ at 36 38
\pinlabel $-\frac{3}{4}$ at -11 56
\pinlabel $\nu$ at 377 15
\small\pinlabel $S$ at 150 124
\tiny\pinlabel $H_1$ at 149 219
\pinlabel $H_2$ at 185 200
\pinlabel $H_3$ at 149 63
\pinlabel $H_4$ at 185 45
\pinlabel $V_1$ at 66 172
\pinlabel $V_2$ at 68 77
\pinlabel $V_3$ at 199 141
\pinlabel $V_4$ at 250 140
\endlabellist
\centering
\includegraphics[width=12.5cm]{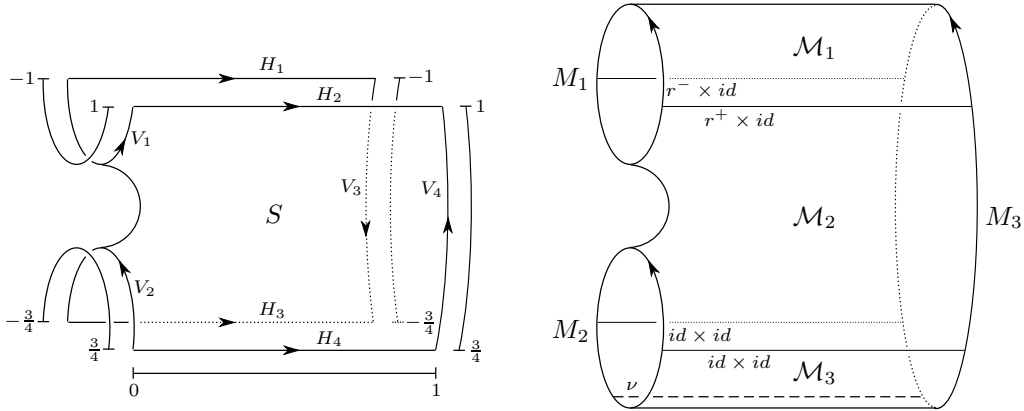}
\caption{Left, the 2-dimensional saddle $S$ and identifications of the horizontal and vertical edges of $\partial S$ with intervals. Right, a schematic of the  cobordism $\CM$. The dashed line represents the cylinder $\nu$.}
\label{fig:merge}
\end{figure}

The 4-manifold $\CM$ has boundary $\partial\CM = -M_1\sqcup -M_2\sqcup M_3$, where
\begin{align*}
M_1&=Y\ssm\inr(\Img(r)) \,\cup\, R\times V_1,\\
M_2&= R\times V_2\,\cup\, R\times[-3/4,3/4],\\
M_3&= Y\ssm\inr(\Img(r)) \,\cup\, R\times V_3\,\cup \,R\times[-3/4,3/4]\,\cup\, R\times V_4.
\end{align*}
Note that there are canonical isotopy classes of diffeomorphisms 
\begin{align}
\label{eqn:M1}(M_1, R\times\{0\})&\rightarrow (Y, r(R\times\{0\}))\\
\label{eqn:M2} (M_2, R\times\{0\},\eta\times\{0\}) &\rightarrow (R\times S^1, R\times\{0\},\eta\times\{0\})\\
\label{eqn:canonident} (M_3,R\times\{0\},\eta\times\{0\})&\rightarrow (Y,r(R\times\{0\}),r(\eta\times\{0\}))
\end{align} given the identifications in (\ref{eqn:identv1})-(\ref{eqn:identv4}).
Thus, $(\CM,\nu)$  naturally gives rise to a map \begin{equation}\label{eqn:M}\HMtoc(\CM|R;\Gamma_{\nu}):\HMtoc(Y|R)\otimes_\Z\HMtoc(R\times S^1|R;\Gamma_\eta)
\rightarrow\HMtoc(Y|R;\Gamma_{\eta}),\end{equation} which is shown in \cite{km4} to be an isomorphism. 

\begin{remark}Technically, in order to define a smooth structure on $\CM$, one must  specify collar neighborhoods of the horizontal boundary components of $\CM_1$, $\CM_2 $ and $\CM_3$. 
However, the map in (\ref{eqn:M}) does not depend on this choice of collars. To see this, suppose  $(\CM,c)$ and $(\CM,c')$ are the smooth 4-manifolds formed according to the gluing instructions above and two  choices $c$ and $c'$ of such collars. Then $(\CM,c)$ is diffeomorphic to $(\CM,c')$   by a map which is the identity outside of some tubular neighborhood of the gluing regions (cf. \cite[Theorem 3.5]{csp}). The restriction of such a map to $\partial (\CM,c)$ is therefore a diffeomorphism which is the identity outside of tubular neighborhoods of surfaces of genus at least two. By Corollary \ref{cor:diff}, any two such diffeomorphisms are isotopic. Hence, the gluing instructions alone specify a canonical \emph{isomorphism class} of  cobordisms from $Y\sqcup (R\times S^1,\eta\times\{0\})$ to $(Y, r(\eta\times\{0\})).$ One can think of $(\CM,\nu)$ as denoting a representative of this isomorphism class or  the isomorphism class itself. In either case, the map (\ref{eqn:M}) makes sense without reference to  collars. 

The same reasoning applies to the maps induced by the splicing cobordisms $\CS$ and $\mathcal{P}$ defined later in this section. We will thus omit any discussion of collars until Subsection \ref{ssec:differbyone}; there, we are gluing along tori and need to be more careful.
\end{remark}

The split-type splicing cobordism $\CS$ is built by gluing together  the cornered 4-manifolds
\begin{align*}
\CS_1&=(Y\ssm\inr(\Img(r))) \times [0,1],\\
\CS_2&=R\times S', \\
\CS_3&=(R\times[-3/4,3/4])^{2}_{+}\times[0,1],
\end{align*} 
where $S'$ is the saddle  gotten by ``turning $S$ around," as indicated in Figure \ref{fig:split}, and $(R\times[-3/4,3/4])^{u}_{+}$ is the manifold obtained from $R\times[-3/4,3/4]$ by performing $-e^u_i$ (= $-1$ since we are assuming that $\NN^u=\emptyset$) surgeries on the curves $a^u_i\times \{t^u_i\}$ for all $i$. We label  the edges of $S'$ as shown in Figure \ref{fig:split} and choose the same edge identifications as before with respect to this new labeling. In forming $\CS$, we glue $\CS_2$ to $\CS_1$ according to the maps \begin{align*}
r^-\times id&:R\times H_1\rightarrow Y\times [0,1],\\
r^+\times id&:R\times H_2\rightarrow Y\times [0,1].
\end{align*} We then glue $\CS_3$ to $\CS_1\cup\CS_2$  according to  \begin{align*}
id\times id&: (R\times\{-3/4\})\times[0,1]\rightarrow R\times H_3 ,\\
id\times id&:  (R\times\{+3/4\})\times[0,1]\rightarrow R\times H_4,
\end{align*} as indicated in Figure \ref{fig:split}. Let $\nu$ denote the cylinder $\eta \times\{0\}\times[0,1]\subset \CS_3\subset\CS$. 

\begin{figure}[ht]
\labellist
\tiny \hair 2pt
\small\pinlabel $\CS_2$ at 478 122
\pinlabel $\CS_3$ at 478 23
\pinlabel $\CS_1$ at 478 230
\pinlabel $S_2$ at 598 49
\pinlabel $S_1$ at 324 122
\pinlabel $S_3$ at 598 208
\tiny\pinlabel $r^+\times id$ at 436 185
\pinlabel $r^-\times id$ at 413 204
\pinlabel $id\times id$ at 411 47
\pinlabel $id\times id$ at 436 30

\pinlabel $\nu$ at 369 15
\small\pinlabel $S'$ at 130 124
\tiny\pinlabel $H_1$ at 139 217
\pinlabel $H_2$ at 175 199
\pinlabel $H_3$ at 139 62
\pinlabel $H_4$ at 175 44
\pinlabel $V_1$ at 248 172
\pinlabel $V_2$ at 250 77
\pinlabel $V_3$ at -6 141
\pinlabel $V_4$ at 45 140
\endlabellist
\centering
\includegraphics[width=12.5cm]{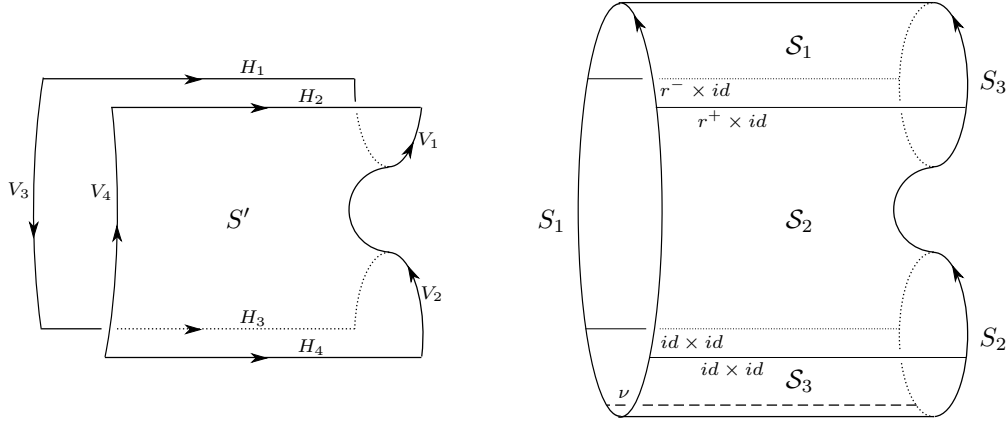}
\caption{Left, the 2-dimensional saddle $S'$. Right, a schematic of the  cobordism $\CS$. The dashed line represents the cylinder $\nu$.}
\label{fig:split}
\end{figure}

The 4-manifold $\CS$ has boundary $\partial \CS=-S_1 \sqcup S_2\sqcup S_3$, where
\begin{align*}
S_1&=Y\ssm\inr(\Img(r)) \,\cup \,R\times V_3\,\cup\, (R\times[-3/4,3/4])^2_{+}\,\cup\, R\times V_4 ,\\
S_2&= R\times V_2\,\cup \,(R\times[-3/4,3/4])^{2}_{+},\\
S_3&= Y\ssm\inr(\Img(r))\, \cup\, R\times V_1.
\end{align*}
As before, there are canonical isotopy classes of diffeomorphisms,
\begin{align}
\label{eqn:canonident2}(S_1, R\times\{0\},\eta\times\{0\})&\rightarrow (Y^2_{+}, r(R\times\{0\}),r(\eta\times\{0\}))\\
(S_2, R\times\{0\},\eta\times\{0\}))&\rightarrow ((R\times S^1)^2_{+}, R\times\{0\},\eta\times\{0\}) \\
(S_3, R\times\{0\})&\rightarrow (Y, r(R\times\{0\})),
\end{align} where $(R\times S^1)^{u}_{+}$ is the manifold obtained from $R\times S^1$ by performing $-e^u_i$ ($=-1$) surgeries along the curves $a^u_i\times \{t^u_i\}$ for all $i$.  Note that $(R\times S^1)^2_{+}$ is diffeomorphic to the mapping torus of the map $A^2\circ B^2$. Thus, $(\CS,\nu)$ gives rise to a map 
\begin{equation}\label{eqn:S}\HMtoc(\CS|R;\Gamma_{\nu}):\HMtoc(Y^2_{+}|R;\Gamma_{\eta})
\rightarrow\HMtoc(Y|R)\otimes_\Z \HMtoc((R\times S^1)^2_{+}|R;\Gamma_{\eta}),\end{equation} which is shown in \cite{km4} to be an isomorphism.

To prove the equality in (\ref{eqn:neweq}), it therefore suffices to show that the maps
\begin{align}
\label{eqn:comp1}
\HMtoc(\CS|R;\Gamma_{\nu})\circ \Theta_{Y^1_{+}Y^2_{+}}\circ\HMtoc(X^1_+|R;\Gamma_{\nu})\circ \HMtoc(\CM|R;\Gamma_{\nu})\\
\label{eqn:comp2}\HMtoc(\CS|R;\Gamma_{\nu})\circ\HMtoc(X^2_+|R;\Gamma_{\nu})\circ \HMtoc(\CM|R;\Gamma_{\nu})
\end{align}
from
\[\HMtoc(Y|R)\otimes_\Z\HMtoc(R\times S^1|R;\Gamma_\eta)\to \HMtoc(Y|R)\otimes_\Z \HMtoc((R\times S^1)^2_{+}|R;\Gamma_{\eta})\] are $\RR^\times$-equivalent.  As alluded to earlier, the proof of this fact begins with an excision argument very similar to that employed by Kronheimer and Mrowka to show that the maps in (\ref{eqn:M}) and (\ref{eqn:S}) are isomorphisms. 
 
Let $W^u$ be the composite cobordism $\CS\circ X^u_+\circ \CM$. Here, $X^u_+$ is glued to $\CM$  via the identification in (\ref{eqn:canonident}). For $u=1$, we glue  $\CS$ to $X^1_+\circ\CM$ via the identification in  (\ref{eqn:canonident2}) together with a diffeomorphism \[Y^1_{+}\rightarrow Y^2_{+}\] as in (\ref{eqn:canz1z2}). For $u=2$, we glue  $\CS$ to $X^2_+\circ\CM$ via the identification in (\ref{eqn:canonident2}). Let $\nu=\eta\times\{0\}\times[0,3]\subset W^u$ be the composite of the cylinders labeled $\nu$ in $\CM$, $X^u_+$ and $\CS$.  The compositions in (\ref{eqn:comp1}) and (\ref{eqn:comp2}) are then $\RR^\times$-equivalent to the maps
\[\HMtoc(W^u|R;\Gamma_{\nu})\] for $u=1$ and $2$, respectively. Let $c$ be a smoothly embedded arc  in $S$ with boundary on $V_3$ and $V_4$ at the points identified with $-7/8$ and $7/8$, respectively, and let $c'$ be the corresponding arc in $S'$. Let $T^u$ be the 3-dimensional submanifold of $W^u$ given by \[T^u = R\times c \,\cup\, R\times\{-7/8,7/8\}\times[0,1] \,\cup\, R\times c',\] where $R\times\{-7/8,7/8\}\times[0,1]\subset X^u_+$. Note that $T^u$ is diffeomorphic to a product $R\times S^1$. See Figure \ref{fig:comp2} below for a schematic. 
 \begin{figure}[ht]
\labellist
\tiny \hair 2pt
\small
\pinlabel $\CS$ at 690 422
\pinlabel $X^u_+$ at 415 422
\pinlabel $\CM$ at 127 422
\pinlabel $T^u$ at 242 100
\pinlabel $\cong_{\substack{\ref{eqn:canonident2}\,{\rm or}\\\ref{eqn:canonident2},\ref{eqn:canz1z2}}}$ at 549 412
\pinlabel $\cong_{\ref{eqn:canonident}}$ at 270 421

\pinlabel $\nu$ at 128 315
\pinlabel $\nu$ at 690 315
\pinlabel $\nu$ at 410 315
\pinlabel $\nu$ at 240 19

\endlabellist
\centering
\includegraphics[width=12.5cm]{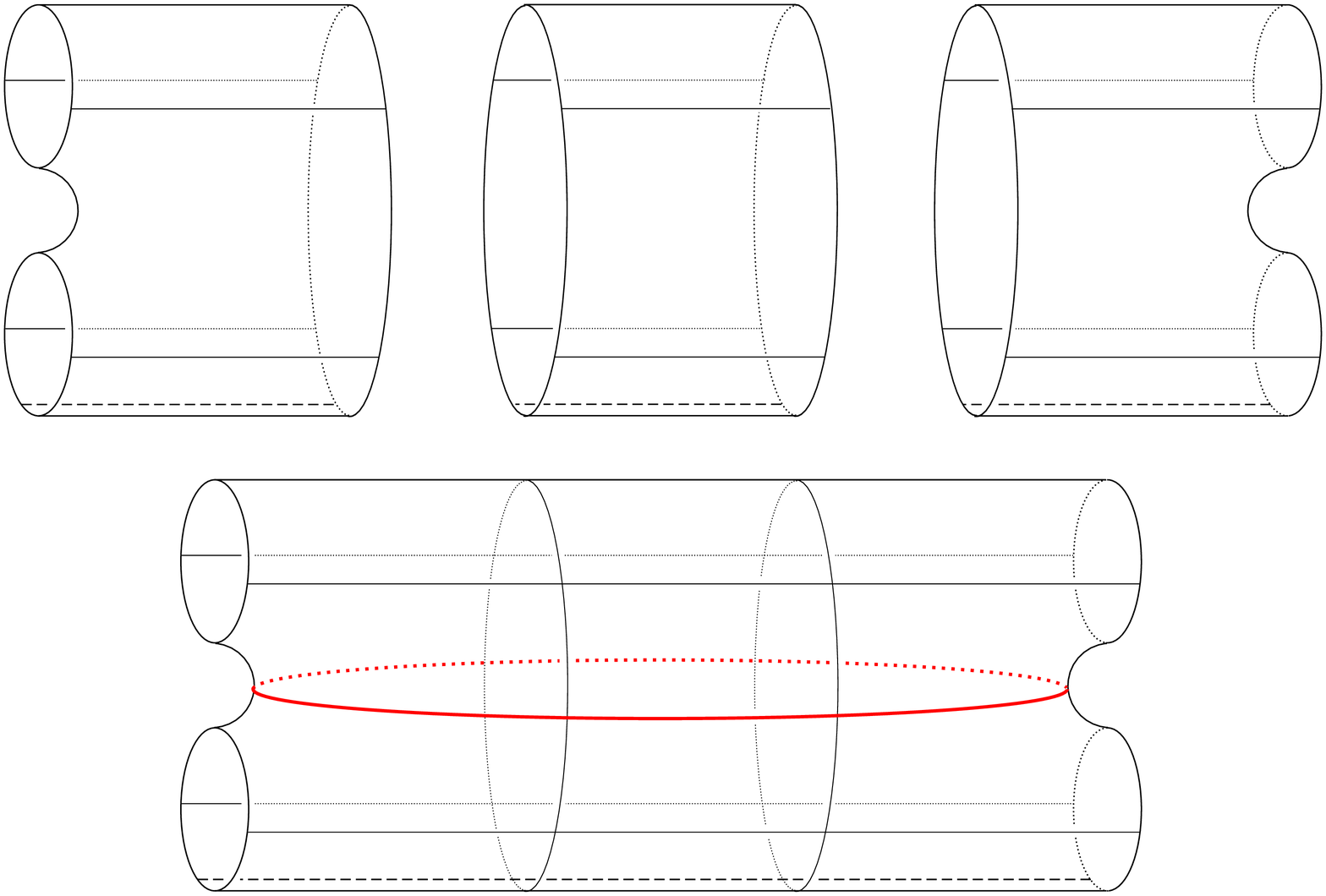}
\caption{Top, a schematic of the identifications used to form $W^u$. Bottom, the cobordism $W^u$ and the 3-manifold $T^u\cong R\times S^1$ depicted in red. }
\label{fig:comp2}
\end{figure}

 Let $\overline W^u$ be the 4-manifold obtained by cutting $W^u$ open along $T^u$ and capping off the two newly introduced boundary components, each of the form $R\times S^1$, with  copies of $R\times D^2$.   The cobordism $\overline W^u$ is isomorphic to the disjoint union of  the cylindrical cobordism $Y\times [0,3]$ with another cobordism $U^u$ from $R\times S^1$ to $(R\times S^1)^{2}_{+}$. Here,  $U^u$ is the composite $((R\times S^1)^{2}_{+}\times [2,3])\circ V^u$, where $V^u$ is the  cobordism from $R\times S^1$ to $(R\times S^1)^{u}_{+}$ obtained from $R\times S^1\times[0,2]$ by attaching $-e^u_i$ ($=-1$) framed 2-handles along the curves $a^u_i\times \{t^u_i\}\times\{2\}\subset R\times S^1\times\{2\}$ for all $i$. In forming $U^1$, we glue $(R\times S^1)^{2}_{+}\times [2,3]$ to $V^1$  according to a diffeomorphism \begin{equation}\label{eqn:ident1}(R\times S^1)^{1}_{+}\rightarrow (R\times S^1)^{2}_{+}\end{equation} which restricts to  the identity outside of a small neighborhood of $(R\times [-3/4,3/4])^{1}_{+}$ and to $B^2\circ (B^1)^{-1}\times id$ in a neighborhood of $R\times\{0\}$. In forming $U^2$, we glue $(R\times S^1)^{2}_{+}\times [2,3]$ to $V^2$  according to the identity map. See Figure \ref{fig:U} for a schematic of $U^u$.
 
  \begin{figure}[ht]
\labellist
\tiny \hair 2pt
\small
\pinlabel $(R\times S^1)^{2}_{+}\times [2,3]$ at 589 120
\pinlabel $V^u$ at 218 120
\pinlabel $\cong_{\substack{\ref{eqn:ident1}\,{\rm or}\\id}}$ at 438 50
\pinlabel $\nu$ at 134 18
\pinlabel $\nu$ at 589 18
\endlabellist
\centering
\includegraphics[width=12.5cm]{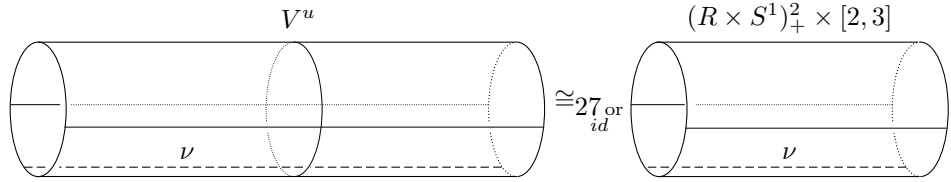}
\caption{A schematic of the identifications used to form $U^u$.  }
\label{fig:U}
\end{figure}

 Let $\nu\subset \overline W^u$ denote the image of the cylinder $\nu\subset W^u$. Note that $\nu\subset \overline W^u$ is the union of the cylinder  $\eta\times\{0\}\times[0,2]\subset V^u$ with  $\eta\times\{0\}\times[2,3]\subset(R\times S^1)^{2}_{+}\times [2,3]$, both also denoted by $\nu$. Kronheimer and Mrowka prove in \cite{km4} that 
 \begin{align*}
 \HMtoc(W^u|R;\Gamma_{\nu}) &\doteq \HMtoc(\overline W^u|R;\Gamma_{\nu})\\
 &=\HMtoc(Y\times [0,3]|R)\otimes\HMtoc(U^u|R;\Gamma_{\nu})\\
 &=id\otimes\HMtoc(U^u|R;\Gamma_{\nu})
 \end{align*}
 as maps from \[\HMtoc(Y|R)\otimes_\Z\HMtoc(R\times S^1|R;\Gamma_\eta)\to \HMtoc(Y|R)\otimes_\Z \HMtoc((R\times S^1)^2_{+}|R;\Gamma_{\eta}).\] So, to establish (\ref{eqn:neweq}), we need only show that the maps \[\HMtoc(U^u|R;\Gamma_{\nu}):\HMtoc(R\times S^1|R;\Gamma_\eta)\to \HMtoc((R\times S^1)^2_{+}|R;\Gamma_{\eta})\] are $\RR^\times$-equivalent for $u=1$, $2$.  It suffices to prove that both maps are isomorphisms since \[\HMtoc(R\times S^1|R;\Gamma_\eta)\cong \HMtoc((R\times S^1)^2_{+}|R;\Gamma_{\eta})\cong\RR,\] by Example \ref{ex:mappingtori}. Since the cylindrical cobordism $((R\times S^1)^{2}_{+}\times [2,3],\nu)$ induces the identity map, it is enough to prove the following.
\begin{proposition}
\label{prop:mappingtorusiso}The maps \[\HMtoc(V^u|R;\Gamma_{\nu}):\HMtoc(R\times S^1|R;\Gamma_\eta)\to \HMtoc((R\times S^1)^u_{+}|R;\Gamma_{\eta})\] are isomorphisms for $u=1$, $2$. 
\end{proposition}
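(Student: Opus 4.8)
The goal is to prove that each 2-handle cobordism map
\[ \HMtoc(V^u|R;\Gamma_{\nu}):\HMtoc(R\times S^1|R;\Gamma_\eta)\to \HMtoc((R\times S^1)^u_{+}|R;\Gamma_{\eta}) \]
is an isomorphism. Both source and target are isomorphic to $\RR$ by Example \ref{ex:mappingtori}, and $\RR$ is a Novikov-type ring with no $\Z$-torsion, so a homomorphism $\RR\to\RR$ is an isomorphism precisely when it is nonzero and hits a unit; in fact it suffices to show the map is nonzero, modulo a unit-output check at the end. So the real content is a nonvanishing statement for a relative invariant. The strategy is to exhibit $V^u$ as the $2$-handle portion of a Lefschetz fibration over a disk and then invoke the nonvanishing result for relative invariants of Lefschetz fibrations promised in Appendix \ref{sec:appendixb}.

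\textbf{Key steps.} First I would recall that $(R\times S^1)^{u}_{+}$ is, by the discussion preceding the proposition, the mapping torus of a composition of positive Dehn twists (here $\NN^u=\emptyset$, so the surgeries are all $-1$-framed and correspond to right-handed twists). Concretely, $V^u$ is built from $R\times S^1\times[0,2]$ by attaching $-1$-framed $2$-handles along the curves $a^u_i\times\{t^u_i\}\times\{2\}$; capping off the incoming end $R\times S^1\times\{0\}$ with a copy of $R\times D^2$ produces a $4$-manifold $X^u\to D^2$ with boundary $(R\times S^1)^u_+$, and the Dehn-twist surgery description of $2$-handles on fibered links shows that $X^u$ admits a Lefschetz fibration structure over $D^2$ with regular fiber $R$ and vanishing cycles the curves $a^u_i$. (This identification of ``$-1$-surgery on a curve in a fiber'' with ``adding a Lefschetz critical point with that vanishing cycle'' is precisely why Remark \ref{rmk:positive} insists on phrasing everything via $-1$ surgeries; see also the reference to Lemma \ref{lem:mappingtorusiso} / the proof of Proposition \ref{prop:maininvc2}.) Second, I would identify the cylinder $\nu\subset V^u$ (namely $\eta\times\{0\}\times[0,2]$) with the relative $2$-cycle needed to define the twisted relative invariant of the Lefschetz fibration: its boundary is $\eta$ in the outgoing end $(R\times S^1)^u_+$, and on the capped end $R\times D^2$ one takes $\eta\times(\text{radius})$, which is a valid $2$-chain since $\eta$ bounds there topologically only after capping — this is exactly the situation Appendix \ref{sec:appendixb} is set up to handle. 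Third, I would appeal to the nonvanishing theorem of Appendix \ref{sec:appendixb}: the relative invariant of a Lefschetz fibration over $D^2$ with fiber of genus $\geq 2$, computed in the twisted Floer homology of its fibered boundary $\HMtoc((R\times S^1)^u_+|R;\Gamma_\eta)\cong\RR$, is a unit (or at least nonzero). Since the relative invariant is, up to the canonical identifications, the image under $\HMtoc(V^u|R;\Gamma_\nu)$ of the relative invariant of the trivial Lefschetz fibration $R\times D^2\to D^2$ on the source — which itself generates $\HMtoc(R\times S^1|R;\Gamma_\eta)\cong\RR$ — the map $\HMtoc(V^u|R;\Gamma_\nu)$ sends a generator to a generator, hence is an isomorphism. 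Running this for $u=1$ and $u=2$ finishes the proposition, and with it the reduction of (\ref{eqn:neweq}) carried out in the proof of Proposition \ref{prop:maininvc2}.

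\textbf{Main obstacle.} The crux is the Lefschetz-fibration nonvanishing input, i.e.\ Appendix \ref{sec:appendixb}; everything else is bookkeeping with the canonical identifications of cut-and-reglue manifolds from Appendix \ref{sec:appendixa}. The subtlety in that nonvanishing argument is the twisted coefficient system: one must produce the relative $2$-chain $\nu$ bounding $\eta$ inside the (capped) Lefschetz fibration and check that the resulting twisted relative invariant is still nonzero. This is where the hypotheses on $\RR$ — no $\Z$-torsion and $t-t^{-1}$ invertible — get used, since the computation of $\HMtoc(R\times_\phi S^1|R;\Gamma_\eta)$ as $\RR$ and the identification of the relative invariant of the fiberwise-product fibration with a generator both rely on Kronheimer--Mrowka's local-coefficient computations \cite[Lemma 4.7]{km4} together with a neck-stretching / excision argument analogous to the one already invoked in this section. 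A secondary, more routine point is verifying that the identifications (\ref{eqn:ident1}) used to assemble $U^u$ (and hence the interpretation of $(R\times S^1)^u_+$ as the correct mapping torus) are compatible with the canonical isotopy classes provided by Corollary \ref{cor:diff}, so that the generator of the source really does map to the relative invariant in the target rather than to some twisted variant of it.
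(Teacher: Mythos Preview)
Your approach has a genuine gap, and it is precisely the obstruction flagged in Subsection~\ref{ssec:furtherremarks}. You propose to cap the incoming end of $V^u$ with $R\times D^2$ to obtain a Lefschetz fibration $X^u\to D^2$ with boundary $(R\times S^1)^u_+$, and then invoke a twisted relative invariant of $X^u$ in $\HMtoc((R\times S^1)^u_+|R;\Gamma_\eta)$. But to define such a twisted relative invariant one needs a relative $2$-chain $\nu\subset X^u$ with $\partial\nu=\eta$, and no such chain exists: $\eta$ is homologically essential in the fiber $R$, and $X^u$ is homotopy equivalent to $R$ with $2$-cells attached along the vanishing cycles $a^u_i$, so $[\eta]$ is generally nonzero in $H_1(X^u)$. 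Your suggested chain ``$\eta\times(\text{radius})$'' in $R\times D^2$ has boundary $\eta$ at the center of the disk as well as on $R\times S^1$, so it does not do the job. Correspondingly, Appendix~\ref{sec:appendixb} only proves the \emph{untwisted} statement $\Psi_{\mathcal L}\in\HMtoc(Y|R)\cong\Z$ equals $\pm1$; there is no twisted version available, and the paper explicitly explains why not.

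The paper's proof avoids this by never asking for a twisted relative invariant of a Lefschetz fibration. It reduces to one Dehn twist at a time (Lemma~\ref{lem:mappingtorusiso}), and for a single $-1$-handle cobordism $X$ it factors $(X,\nu)\cong(\mathcal P,\nu)\circ\mathcal L$, where $\mathcal P$ is a merge-type splicing cobordism from $(R\times_\phi S^1,\eta)\sqcup R\times_{D_\gamma}S^1$ to $(R\times_{\phi\circ D_\gamma}S^1,\eta)$ and $\mathcal L$ is a Lefschetz fibration with one vanishing cycle capping the \emph{untwisted} boundary component $R\times_{D_\gamma}S^1$. The map $\HMtoc(\mathcal P|R;\Gamma_\nu)$ is an isomorphism by excision \cite{km4}, and the relative invariant $\Psi_{\mathcal L}$ lives in $\HMtoc(R\times_{D_\gamma}S^1|R)\cong\Z$ where Proposition~\ref{prop:lefschetz} applies directly to give $\Psi_{\mathcal L}=\pm1$. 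The twisted cycle $\nu$ stays entirely in the $\mathcal P_1$ piece and never enters the Lefschetz fibration. This separation of the local coefficient system from the Lefschetz cap is the essential idea you are missing.
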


\begin{proof}

Let $\phi$ be an orientation-preserving diffeomorphism of $R$ and suppose $\gamma$ is a homologically essential, smoothly embedded curve in $R$. Performing $-1$ surgery along $\gamma\times\{t\}\subset R\times_{\phi} S^1$  (we will assume that $t\neq 0$) results in a manifold diffeomorphic to  $R\times_{\phi\circ D_{\gamma}}S^1$. Let $X$ denote the  cobordism obtained from $R\times_{\phi} S^1\times[0,1]$ by attaching a $-1$ framed 2-handle along $\gamma\times\{t\}\times\{1\}\subset R\times_{\phi} S^1\times\{1\}$. Let $\eta$  be an oriented, homologically essential, smoothly embedded curve in $R$ and let $\nu$ denote the cylinder $\eta\times\{0\}\times[0,1]\subset X$. 
\begin{lemma}
\label{lem:mappingtorusiso} The induced map
\begin{equation*}\label{eqn:Xmap}\HMtoc(X|R;\Gamma_{\nu}):\HMtoc(R\times_{\phi} S^1|R;\Gamma_{\eta})\rightarrow\HMtoc(R\times_{\phi\circ D_{\gamma}}S^1|R;\Gamma_{\eta}) \end{equation*} is an isomorphism. 
\end{lemma} 

Note that Proposition \ref{prop:mappingtorusiso} follows from this lemma since  $\HMtoc(V^u|R;\Gamma_{\nu})$ is a composition of maps of the form $\HMtoc(X|R;\Gamma_{\nu})$. 

\begin{proof}[Proof of Lemma \ref{lem:mappingtorusiso}] Below, we show that  $(X,\nu)$ can alternately be obtained from a  merge-type splicing cobordism $(\mathcal{P},\nu)$ from $(R\times_{\phi}S^1,\eta\times\{0\})\,\sqcup\, R\times_{D_{\gamma}}S^1$ to $(R\times_{\phi\circ D_{\gamma}}S^1,\eta\times\{0\})$ by filling the boundary component $R\times_{D_{\gamma}}S^1$ with a  Lefschetz fibration over $D^2$ with fibers diffeomorphic to $R$ and vanishing cycle $\gamma$ in some fiber. That $\HMtoc(X|R;\Gamma_{\nu})$ is an isomorphism then follows from the facts that the induced map 
\begin{equation}\label{eqn:Pmap}\HMtoc(\mathcal{P}|R;\Gamma_{\nu}): \HMtoc(R\times_{\phi}S^1|R;\Gamma_{\eta})\otimes_\Z \HMtoc(R\times_{D_{\gamma}}S^1|R)
\rightarrow \HMtoc(R\times_{\phi\circ D_{\gamma}}S^1|R;\Gamma_{\eta})
\end{equation}
is an isomorphism, that $\HMtoc(X|R;\Gamma_{\nu})$ is the map obtained from $\HMtoc(\mathcal{P}|R;\Gamma_{\nu})$ by plugging the relative invariant of the Lefschetz fibration into the second factor, and  that this relative invariant is a unit in $\HMtoc(R\times_{D_{\gamma}}S^1|R)\cong \Z$. We provide more details below.

The construction of  $(\mathcal{P},\nu)$ is very similar  to that of $(\CM,\nu)$ from Section \ref{sec:prelim}. Let $S$ denote the 2-dimensional saddle used to define $\CM$. The cobordism $\mathcal{P}$ is built by gluing together the cornered 4-manifolds
\begin{align*}
\mathcal{P}_1&=R\times[-1/2,1/2] \times [0,1],\\
\mathcal{P}_2&=R\times S, \\
\mathcal{P}_3&=R\times[-1/2,1/2]\times[0,1],
\end{align*} 
along the horizontal portions of their boundaries.  We glue $\mathcal{P}_2$ to $\mathcal{P}_1$ according to the maps \begin{align*}
id\times id&:R\times H_1\rightarrow (R\times\{+1/2\})\times [0,1],\\
\phi\times id&:R\times H_2\rightarrow (R\times\{-1/2\})\times [0,1],
\end{align*} and then glue $\mathcal{P}_3$ to $\mathcal{P}_1\cup \mathcal{P}_2$  according to  \begin{align*}
id\times id&: (R\times\{-1/2\})\times[0,1]\rightarrow R\times H_3 ,\\
D_{\gamma}\times id&:  (R\times\{+1/2\})\times[0,1]\rightarrow R\times H_4.
\end{align*} Let $\nu$ be  the cylinder $\eta\times[0,1]\subset \mathcal{P}_1\subset \mathcal{P}$. See the right side of Figure \ref{fig:mergeP} for a schematic of $\mathcal{P}$, $\nu$ and these gluings.

\begin{figure}[ht]
\labellist
\tiny \hair 2pt
\small\pinlabel $\mathcal{P}_2$ at 493 122
\pinlabel $\mathcal{P}_3$ at 493 23
\pinlabel $\mathcal{P}_1$ at 493 229
\pinlabel $P_2$ at 338 50
\pinlabel $P_3$ at 613 122
\pinlabel $P_1$ at 338 210
\tiny\pinlabel $\phi\times id$ at 446 185
\pinlabel $id\times id$ at 423 204
\pinlabel $id\times id$ at 421 47
\pinlabel $D_{\gamma}\times id$ at 446 29
\pinlabel $0$ at 61 12
\pinlabel $1$ at 253 12
\pinlabel $\nu$ at 420 244
\small\pinlabel $S$ at 150 124
\tiny\pinlabel $H_1$ at 149 219
\pinlabel $H_2$ at 185 200
\pinlabel $H_3$ at 149 63
\pinlabel $H_4$ at 185 45
\pinlabel $V_1$ at 66 172
\pinlabel $V_2$ at 68 77
\pinlabel $V_3$ at 199 141
\pinlabel $V_4$ at 250 140
\endlabellist
\centering
\includegraphics[width=12.5cm]{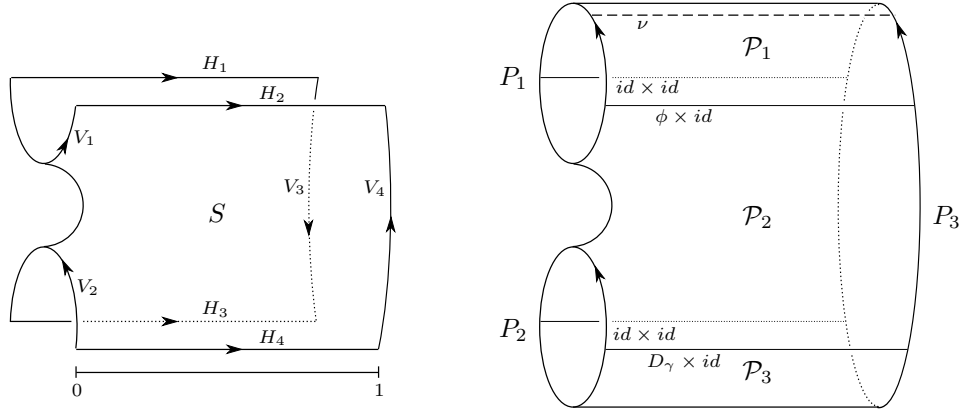}
\caption{Left, the 2-dimensional saddle $S$. Right, a schematic of the   cobordism $(\mathcal{P},\nu)$.}
\label{fig:mergeP}
\end{figure}

The 4-manifold $\mathcal{P}$ has boundary $\partial\mathcal{P}=-P_1\sqcup -P_2\sqcup P_3$, where
\begin{align}
\notag P_1&=R\times[-1/2,1/2] \,\cup \,R\times V_1,\\
\notag P_2&= R\times[-1/2,1/2] \,\cup \,R\times V_2\\
\notag P_3&= R\times[-1/2,1/2] \,\cup \,R\times V_3\,\cup\, R\times[-1/2,1/2]\,\cup \,R\times V_4.
\end{align}
There are unique isotopy classes of diffeomorphisms 
\begin{align}
\notag (P_1, R\times\{0\},\eta\times\{0\})&\rightarrow (R\times_{\phi}S^1,R\times\{0\},\eta\times\{0\})\\
\notag (P_2,R\times\{0\})&\rightarrow (R\times_{D_{\gamma}} S^1,R\times\{0\})\\
\label{eqn:mapp3} (P_3, R\times\{0\},\eta\times\{0\}) &\rightarrow (R\times_{\phi\circ D_{\gamma}}S^1, R\times\{0\},\eta\times\{0\})
\end{align}
which restrict to the identity in a small neighborhood of $R\times\{0\}$ in each case. (There are two surfaces of the form $R\times\{0\}$ in $P_3$; above, we are referring to the one contained in $\mathcal{P}_1$.) The  cobordism $(\mathcal{P},\nu)$  thus gives rise to the map in (\ref{eqn:Pmap}), which is shown to be an isomorphism in \cite{km4}.

Let $\mathcal{L}$ be the 4-manifold obtained from $R\times D^2$ by attaching a $-1$ framed 2-handle along \[\gamma\times\{s\}\subset R\times S^1=R\times \partial D^2,\] for some $s\in [-1/2,1/2]$. Then $\mathcal{L}$ is the total space of a relatively minimal Lefschetz fibration as described above. There is a unique isotopy class of diffeomorphisms \begin{equation}\label{eqn:LtoP}\partial \mathcal{L} \rightarrow P_2\end{equation} which restrict to the identity on a small neighborhood of $R\times\{0\}$. We may therefore view $\mathcal{L}$ as a cobordism from the empty manifold to $P_2$. As such, $\mathcal{L}$ gives rise to a map \[\HMtoc(\mathcal{L}|R):\mathbb{Z}\rightarrow \HMtoc(P_2|R),\] and the relative invariant of $\mathcal{L}$ refers to the element \[\Psi_{\mathcal{L}}:=\HMtoc(\mathcal{L}|R)(1)\in\HMtoc(P_2|R)\cong \mathbb{Z}.\] 

Consider the composite $\mathcal{P}\circ\mathcal{L}$ formed by gluing $\mathcal{L}$ to $\mathcal{P}$ by the identification in (\ref{eqn:LtoP}). Figure \ref{fig:P2} shows a schematic of this composite. The induced map \[\HMtoc(\mathcal{P}\circ\mathcal{L}|R;\Gamma_{\nu}): \HMtoc(R\times_{\phi}S^1|R;\Gamma_{\eta})\rightarrow \HMtoc(R\times_{\phi\circ D_{\gamma}}S^1|R;\Gamma_{\eta})\] is therefore given by  \[\HMtoc(\mathcal{P}\circ\mathcal{L}|R;\Gamma_{\nu})(-)=\HMtoc(\mathcal{P}|R;\Gamma_{\nu})(-\otimes \Psi_{\mathcal{L}}).\] Since $\HMtoc(\mathcal{P}|R;\Gamma_{\nu})$ is an isomorphism and $\Psi_{\mathcal{L}}=\pm 1$ by Proposition \ref{prop:lefschetz}, the map $\HMtoc(\mathcal{P}\circ\mathcal{L}|R;\Gamma_{\nu})$ is an isomorphism. 

\begin{figure}[ht]
\labellist
\pinlabel $\mathcal{L}$ at 39 2
\pinlabel $\mathcal{P}$ at 224 2
\pinlabel $\mathcal{P}\circ\mathcal{L}$ at 555 2
\pinlabel $P_3$ at 692 140
\pinlabel $P_1$ at 415 225
\tiny
\pinlabel $\phi\times id$ at 527 201
\pinlabel $id\times id$ at 503 220
\pinlabel $id\times id$ at 503 64
\pinlabel $D_{\gamma}\times id$ at 527 44
\pinlabel $\phi\times id$ at 196 201
\pinlabel $id\times id$ at 172 220
\pinlabel $id\times id$ at 172 64
\pinlabel $D_{\gamma}\times id$ at 196 44

\pinlabel $\cong_{\ref{eqn:LtoP}}$ at 87 63
\pinlabel $\nu$ at 170 260
\pinlabel $\nu$ at 500 260
\endlabellist
\centering
\includegraphics[width=13cm]{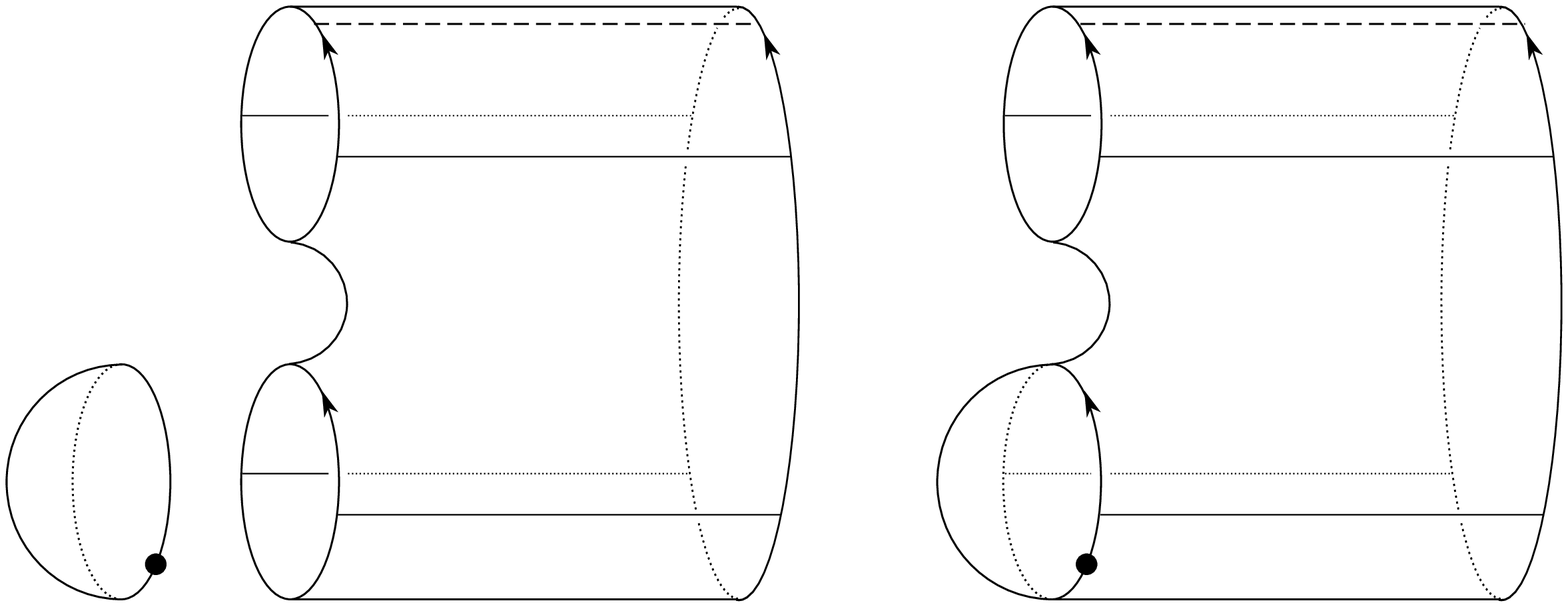}
\caption{Left, schematics of  $\mathcal{L}$ and $\mathcal{P}$. The Lefschetz fibration $\mathcal{L}$ is obtained from $R\times D^2$ by attaching a $-1$ framed 2-handle along the curve $\gamma\times\{s\}$ in its boundary. The black dot in the schematic represents this 2-handle. Right, the composite $\mathcal{P}\circ \mathcal{L}$.}
\label{fig:P2}
\end{figure}

\begin{figure}[ht]
\labellist
 \hair 2pt
\small
\pinlabel $\mathcal{L}$ at 132 70
\pinlabel $P_1$ at -12 70
\pinlabel $P_3$ at 276 70
\pinlabel $(P_1\times I)\cup h$ at 399 -5
\pinlabel $P_3\times I$ at 515 -5
\tiny
\pinlabel $\cong$ at 300 70
\pinlabel $\phi\times id$ at 75 98
\pinlabel $id\times id$ at 75 41
\pinlabel $id\times id$ at 205 61
\pinlabel $D_\gamma\times id$ at 205 78
\pinlabel $\phi\times id$ at 397 98
\pinlabel $id\times id$ at 397 41
\pinlabel $id\times id$ at 527 61
\pinlabel $D_{\gamma}\times id$ at 527 78
\endlabellist
\centering
\includegraphics[width=14cm]{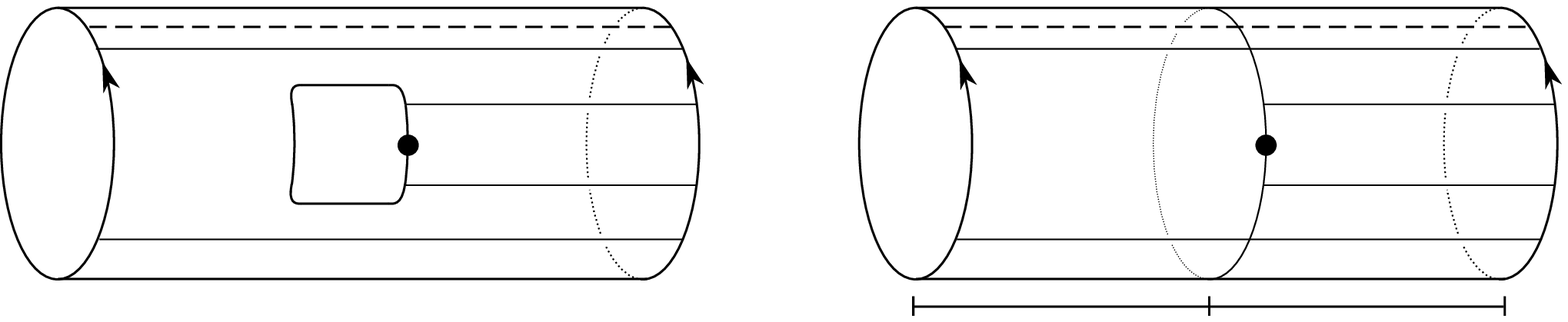}
\caption{Another view of $(\mathcal{P}\circ\mathcal{L},\nu)$ which makes it clear that $\mathcal{P}\circ\mathcal{L}$  is isomorphic to the composition of $P_3\times [0,1]$ with the cobordism obtained from $P_1\times[0,1]$ by attaching  a $-1$ framed 2-handle $h$ along $\gamma\times\{t\}\times\{1\}$. The latter cobordism is   isomorphic to the cobordism from $P_1$ to $P_3$ obtained by attaching a $-1$ framed 2-handle along $\gamma\times\{t\}$, which is  then  obviously isomorphic to $(X,\nu)$.}
\label{fig:lefschetzhandle}
\end{figure}

Lemma \ref{lem:mappingtorusiso} then follows from the observation that  $(\mathcal{P}\circ\mathcal{L},\nu)$ is isomorphic to $(X,\nu)$ in the case that $t>0$. The case $t<0$ is virtually identical. Figure \ref{fig:lefschetzhandle} provides another view of $(\mathcal{P}\circ\mathcal{L},\nu)$ which better  illustrates the fact  that this  composite is isomorphic to $(X,\nu)$.
\end{proof}
As mentioned above, this completes the proof of Proposition \ref{prop:mappingtorusiso}.
\end{proof}

This also completes the proof of Proposition \ref{prop:maininvc2}.
\end{proof}

We may now prove Theorem \ref{thm:maininvc}.

\begin{proof}[Proof of Theorem \ref{thm:maininvc}.] That  $\HMtoc(X^u_-|R;\Gamma_{\nu})$ is an isomorphism follows  from Proposition \ref{prop:maininvc2} since the  cobordism $(X^u_-,\nu)$ is of exactly  the same form as   $(X^u_+,\nu)$. Indeed, both  are  2-handle cobordisms associated to $-1$ surgeries   on curves of the form $\gamma\times\{t\}$ in a product region $R\times[-1,1]$ of a 3-manifold. We  prove next that the inverse $\HMtoc(X^u_-|R;\Gamma_{\nu})^{-1}$ is equal to the map induced by a  cobordism of this form as well. 

We start by writing each negative Dehn twist appearing in the factorizations (\ref{eqn:fact1}) and (\ref{eqn:fact2}) as a composition of positive Dehn twists. Specifically, for  $i\in\Nu$, choose a factorization
\begin{equation}\label{eqn:compnegpos}
   D_{a^u_i}^{-1}\sim D_{a^u_{1,i}}\circ\dots\circ D_{a^u_{n_i^u,i}}
\end{equation}
and  real numbers 
\[t^u_{n_i^u,i}<\dots<t^u_{1,i} \] contained in the interval $(t^{u\prime}_{i+1},t^{u\prime}_{i}).$

Let $X^{u+2}_+$ be the 4-manifold obtained from $Y\times[0,1]$ by attaching $-1$ framed 2-handles along the curves $r(a_{j,i}^u\times \{t^u_{j,i}\})\times\{1\}\subset Y\times\{1\}$ for all $i\in\Nu$ and $j=1,\dots,n_i^u$. One boundary component of $X^{u+2}_+$ is $-Y$. The other  is diffeomorphic to $Y^u_-$ by a map which restricts to the identity on $Y\ssm\inr(\Img(r))$ and on a small neighborhood of $r(R\times\{0\})$. As there is a unique isotopy class of such diffeomorphisms, $X^{u+2}_+$ naturally induces a map 
\[\HMtoc(X^{u+2}_+|R;\Gamma_{\nu}): \HMto(Y|R;\Gamma_{\eta})\rightarrow \HMto(Y^u_-|R;\,\Gamma_{\eta}).\] Our goal is to show that this map is $\RR^\times$-equivalent to the inverse of $\HMtoc(X^u_-|R;\Gamma_{\nu}).$

Let $X^{u+4}_+$ be the 4-manifold obtained from $Y\times[0,1]$ by attaching $-1$ framed 2-handles along the curves $r(a^{u}_i\times\{t^{u\prime}_i\})\times\{1\}$ and $r(a^u_{j,i}\times\{t^u_{j,i}\})\times\{1\}$ in $Y\times\{1\}$ for all $i\in\Nu$ and $j=1,\dots,n^u_i$. One boundary component of $X^{u+4}_+$ is $-Y$. The other is diffeomorphic to $Y$ by a map which restricts to the identity  on $Y\ssm\inr(\Img(r))$ and on a small neighborhood of $r(R\times\{0\})$. With the boundary identifications above, the  cobordism $(X^{u+4}_+,\nu)$ is isomorphic to the composite $ (X^u_-,\nu)\circ (X^{u+2}_+,\nu)$. It follows that map \[\HMtoc(X^{u+4}_+|R;\Gamma_{\nu}):\HMto(Y|R;\Gamma_{\eta})\rightarrow \HMto(Y|R;\Gamma_{\eta})\] is equal to
\begin{equation*}\label{eqn:compid}\HMtoc(X^u_-|R;\Gamma_{\nu})\circ\HMtoc(X^{u+2}_+|R;\Gamma_{\nu}).
\end{equation*} On the other hand, $(X^{u+4}_+,\nu)$ is the  cobordism associated to compositions of Dehn twists like those in (\ref{eqn:fact1}) and (\ref{eqn:fact2}), but where   each  composition consists solely of positive Dehn twists and is isotopic to the identity.  Proposition \ref{prop:maininvc2} therefore implies that  $\HMtoc(X^{u+4}_+|R;\Gamma_{\nu})$ is $\RR^\times$-equivalent to the identity map. It follows that 
\[\HMtoc(X^u_-|R;\Gamma_{\nu})^{-1}\doteq\HMtoc(X^{u+2}_+|R;\Gamma_{\nu}).\] 

To finish the proof of Theorem \ref{thm:maininvc}, it  therefore suffices to show that  the maps
\begin{align}
\label{eqn:comp11}\Theta_{Y^1_{+} Y^2_{+}}\circ\HMtoc(X^1_+|R;\Gamma_{\nu})\circ\HMtoc(X^3_+|R;\Gamma_{\nu})\\
\label{eqn:comp21}\HMtoc(X^2_+|R;\Gamma_{\nu})\circ\HMtoc(X^4_+|R;\Gamma_{\nu})
\end{align}
from $\HMto(Y|R;\Gamma_{\eta})$ to $\HMto(Y^2_{+}|R;\Gamma_{\eta})$
are $\RR^\times$-equivalent. Let $X^{u+6}_+$ be the 4-manifold obtained from $Y\times[0,1]$ by attaching $-1$ framed 2-handles along the curves $r(a^{u}_i\times\{t^{u}_i\})\times\{1\}\subset Y\times\{1\}$ for all $i\in \Pu$ and $r(a^u_{j,i}\times\{t^u_{j,i}\})\times\{1\}\subset Y\times\{1\}$ for all $i\in\Nu$ and $j=1,\dots,n^u_i$. One boundary component of $X^{u+6}_+$ is $-Y$. The other  is diffeomorphic to $Y^u_{+}$ by a map which restricts to the identity  on $Y\ssm\inr(\Img(r))$ and on a small neighborhood of $r(R\times\{0\})$. With these boundary identifications,  the  cobordism $(X^{u+6}_+,\nu)$ is isomorphic to the composite   $(X^u_+,\nu)\circ(X^{u+2}_+,\nu)$. Proposition \ref{prop:maininvc2} implies that \[\Theta_{Y^1_{+} Y^2_{+}}\circ\HMtoc(X^7_+|R;\Gamma_{\nu})\doteq\HMtoc(X^8_+|R;\Gamma_{\nu}),\] and, hence, that the compositions in  (\ref{eqn:comp11}) and (\ref{eqn:comp21}) are $\RR^\times$-equivalent. 
\end{proof}

\section{The Maps $\Psit_{\data,\data'}$}
\label{sec:psit}
In this section, we define the canonical isomorphisms \[\Psit_{\data,\data'}:\SHMt(\data)\to\SHMt(\data')\] described in the introduction. We will first define these maps for marked closures of the same genus (Subsection \ref{ssec:samegenus}) and then for marked closures whose genera differ by one (Subsection \ref{ssec:differbyone}) before defining  $\Psit_{\data,\data'}$ for arbitrary marked closures (Subsection \ref{ssec:generalcase}). 

\subsection{Same Genus}
\label{ssec:samegenus}
Suppose $\data$ and $\data'$ are marked closures of $(M,\gamma)$ with $g(\data')=g(\data)=g$. Below, we define  the isomorphism \[\Psit_{\data,\data'}=\Psit_{\data,\data'}^g:\SHMt^g(\data)\to\SHMt^g(\data').\] 
For the sake of exposition, let us write 
\begin{align*}
\data&= \data_1 = (Y_1,R_1,r_1,m_1,\eta_1)\\
\data'& = \data_2 = (Y_2,R_2,r_2,m_2,\eta_2).
\end{align*}
To define $\Psit^g_{\data,\data'} = \Psit^g_{\data_1,\data_2}$, we start by noting that the complements $Y_1\ssm\inr(\Img(r_1))$ and $Y_2\ssm\inr(\Img(r_2))$ are diffeomorphic by a map\begin{equation}\label{eqn:CC}C:Y_1\ssm\inr(\Img(r_1))\rightarrow Y_2\ssm\inr(\Img(r_2))\end{equation} which restricts to $m_2\circ m_1^{-1}$ on $ m_1(M\smallsetminus N(\gamma))$, for some tubular neighborhood $N(\gamma)$ of $\gamma\subset M$. Let $\varphi^C_{\pm}$ and $\varphi^C$ be the diffeomorphisms defined by
\begin{align*}
\varphi^C_{\pm} &:= (r_2^{\pm})^{-1}\circ C\circ r_1^{\pm}:R_1\rightarrow R_2\\
\varphi^C &:= (\varphi^C_+)^{-1}\circ\varphi^C_-:R_1\rightarrow R_1.
\end{align*}
Finally, choose any diffeomorphism \[\psi^C:R_1\rightarrow R_1\] such that \begin{equation*}\label{eqn:littlepsi}(\varphi^C_-\circ\psi^C)(\eta_1) = (\eta_2).\end{equation*}
Note that $\varphi^C$ and $\varphi^C_{\pm}$ are  determined by $C$ whereas $\psi^C$ is not. 

The maps $\varphi^C$ and $\psi^C$ are defined so that the triple \[(Y_2,r_2(R_2\times\{0\}),r_2(\eta_2\times\{0\}))\] is diffeomorphic to the triple obtained from \[(Y_1,r_1(R_1\times\{0\}),r_1(\eta_1\times\{0\}))\] by cutting $Y_1$ open along the surfaces $r_1(R_1\times\{t\})$ and $r_1(R_1\times\{t'\})$  for some $t<0<t'$ and regluing by    \[r_1\circ ((\psi^C)^{-1}\times id)\circ r_1^{-1}\,\,\,\,{\rm and}\,\,\,\,r_1\circ ((\varphi^C\circ\psi^C)\times id)\circ r_1^{-1},\] respectively. We may therefore define the map $\Psit^g_{\data_1,\data_2}$  using the construction in Section \ref{sec:prelim}, with $\varphi^C\circ\psi^C$ and $(\psi^C)^{-1}$ playing the roles of $A^u$ and $B^u$ (for a single value of $u$, say). This is made precise below.

Suppose  $\varphi^C\circ\psi^C$ and $(\psi^C)^{-1}$ are isotopic to the following compositions of Dehn twists,
\begin{align*}
\varphi^C\circ\psi^C&\sim D^{e_1}_{a_1}\circ\dots \circ D^{e_{n}}_{a_{n}}, \\
(\psi^C)^{-1}&\sim D^{e_{n+1}}_{a_{n+1}}\circ\dots \circ D^{e_{m}}_{a_{m}},
\end{align*} 
where each $a_i$ is a smoothly embedded curve in $R_1$ and each  $e_i$ is an element of $\{-1,1\}$.   Let 
\begin{align*}
\PP &= \{i\mid e_i=+1\}\\
\NN &= \{i\mid e_i=-1\},
\end{align*}   choose real numbers \[-3/4<t_{m}<\dots<t_{n+1}<-1/4<1/4<t_{n}<\dots<t_1<3/4,\]  and pick some $t_i'$ between $t_i$ and the next greatest number in this  list   for each $i\in \NN$.

Let $(Y_1)_-$ denote the 3-manifold obtained from $Y_1$ by performing $+1$ surgeries on the curves $r_1(a_i\times \{t_i\})$ for  $i\in \NN$. Let $X_-$ be the 4-manifold obtained from $(Y_1)_-\times[0,1]$ by attaching $-1$ framed 2-handles along $r_1(a_i\times \{t_i'\})\times\{1\}\subset (Y_1)_-\times \{1\}$ for all $i\in \NN$. One boundary component of $X_-$ is  $-(Y_1)_-$. The other is  diffeomorphic to $Y_1$ by a map which restricts to the identity outside of a small neighborhood of \[\bigcup_{i\in \NN}R_1\times [t_i,t_i'].\]  As  there is a unique isotopy class of such diffeomorphisms,  $X_-$ naturally  induces  a  map 
\[\HMtoc(X_-|R_1;\Gamma_{\nu}): \HMto((Y_1)_-|R_1;\Gamma_{\eta_1})\rightarrow \HMto(Y_{1}|R_1;\Gamma_{\eta_1}),\] where $\nu$ is the cylinder $r_1(\eta_1\times\{0\})\times[0,1]\subset X_-$. It follows immediately from Theorem \ref{thm:maininvc} that this map is an isomorphism.

We similarly define $X_+$ to be the 4-manifold obtained from $(Y_1)_-\times[0,1]$ by attaching $-1$ framed 2-handles along the curves $r_1(a_i\times \{t_i\})\times\{1\}\subset (Y_1)_-\times\{1\}$ for all $i\in \PP$. The boundary of $X_+$ is the union of $-(Y_1)_-$ with the 3-manifold $(Y_1)_+$ obtained from $(Y_1)_-$ by performing $-1$ surgeries on the  curves $r_1(a_i\times \{t_i\})\times\{1\}$ for all $i\in  \PP$. The cobordism $X_+$ thus induces a map \[\HMtoc(X_+|R_1;\Gamma_{\nu}): \HMto((Y_1)_-|R_1;\Gamma_{\eta_1})\rightarrow \HMto((Y_1)_+|R_1;\Gamma_{\eta_1}),\] where $\nu$ is the cylinder $r_1(\eta_1\times\{0\})\times [0,1]\subset X_+$ in this case (as in Section \ref{sec:prelim}, we will use the same letter $\nu$ to denote cylinders of this form for many cobordisms).

By construction, $(Y_1)_+$ is diffeomorphic to $Y_2$ by a map which restricts to $C$ on $Y_1\ssm\inr(\Img(r_1))$ and to $ r_2\circ((\varphi_-^C\circ \psi^C)\times id)\circ r_1^{-1}$  on a neighborhood of $r_1(R_1\times\{0\})$. Let
\[\Theta_{(Y_1)_+Y_2}^C:\HMtoc((Y_1)_+|R_1;\Gamma_{\eta_1})\rightarrow \HMtoc(Y_2|R_2;\Gamma_{\eta_2})\] denote the isomorphism associated to the  isotopy class of such diffeomorphisms. We now have everything we need to define the map $\Psit^g_{\data,\data'}$.

\begin{definition}
\label{def:Psi12} The map $\Psit^g_{\data,\data'}$ is given by
\[\Psit^g_{\data,\data'}=\Psit^g_{\data_1,\data_2}:= \Theta_{(Y_1)_+Y_2}^C\circ \HMtoc(X_+|R_1;\Gamma_{\nu})\circ \HMtoc(X_-|R_1;\Gamma_{\nu})^{-1}.\]
\end{definition} 

Next, we prove that the $\RR^\times$-equivalence class of this map is well-defined.

\begin{theorem}
\label{thm:samegenusinvariance} The map $\Psit^g_{\data,\data'}$ is independent of the choices made in its construction, up to multiplication by a unit in $\RR$.
\end{theorem}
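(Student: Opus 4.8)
The plan is to show that the $\RR^\times$-equivalence class of $\Psit^g_{\data,\data'}$ does not change under any of the choices entering Definition \ref{def:Psi12}. These choices are: the diffeomorphism $C$ of complements in (\ref{eqn:CC}); the auxiliary diffeomorphism $\psi^C$ used to correct the curve $\eta_1$; the Dehn twist factorizations of $\varphi^C\circ\psi^C$ and $(\psi^C)^{-1}$; and the real numbers $t_i, t_i'$. I would organize the argument by treating the last two (the ``internal'' choices in the Section \ref{sec:prelim} machine) first, then $\psi^C$, then $C$.

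First I would observe that independence of the Dehn twist factorizations and the parameters $t_i, t_i'$ is essentially immediate from Theorem \ref{thm:maininvc}. Indeed, given two systems of choices, set $A^1 = A^2 = \varphi^C\circ\psi^C$ and $B^1 = B^2 = (\psi^C)^{-1}$ but feed in the two different factorizations and parameter lists for $u=1,2$. The hypotheses of Theorem \ref{thm:maininvc} are satisfied: $A^1\circ B^1$ and $A^2\circ B^2$ are literally the same map (hence isotopic), and $(B^2\circ(B^1)^{-1})(\eta_1) = \eta_1$ since $B^1 = B^2$. Moreover $\Theta_{Y^1_+Y^2_+}$ is the isomorphism associated to the identity (it restricts to $r\circ((B^2\circ(B^1)^{-1})\times id)\circ r^{-1} = \mathrm{id}$ near $r(R\times\{0\})$), so Theorem \ref{thm:maininvc} says precisely that the two composites $\Theta^C_{(Y_1)_+ Y_2}\circ\HMtoc(X_+|R_1;\Gamma_\nu)\circ\HMtoc(X_-|R_1;\Gamma_\nu)^{-1}$ built from the two choice-systems are $\RR^\times$-equivalent. (Here one also uses that $\Theta^C_{(Y_1)_+Y_2}$, being associated to a fixed isotopy class of diffeomorphisms, is the same up to post-composition with these identifications; this is a routine bookkeeping point.) So up to a unit, $\Psit^g_{\data,\data'}$ depends only on the isotopy classes of $\varphi^C\circ\psi^C$ and $(\psi^C)^{-1}$, i.e. only on $\varphi^C$ and the isotopy class of $\psi^C$.

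Next, independence of $\psi^C$: suppose $\psi^C$ and $\tilde\psi^C$ are two valid choices, so $(\varphi^C_-\circ\psi^C)(\eta_1) = \eta_2 = (\varphi^C_-\circ\tilde\psi^C)(\eta_1)$, whence $(\psi^C\circ(\tilde\psi^C)^{-1})(\eta_1) = \eta_1$. Now apply Theorem \ref{thm:maininvc} again, this time genuinely using two different pairs: take $A^1 = \varphi^C\circ\psi^C$, $B^1 = (\psi^C)^{-1}$ and $A^2 = \varphi^C\circ\tilde\psi^C$, $B^2 = (\tilde\psi^C)^{-1}$. Then $A^1\circ B^1 = \varphi^C = A^2\circ B^2$, so these are isotopic, and $B^2\circ(B^1)^{-1} = (\tilde\psi^C)^{-1}\circ\psi^C$, which indeed fixes $\eta_1$ since its inverse $\psi^C\circ(\tilde\psi^C)^{-1}$ does. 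The theorem then gives $\RR^\times$-equivalence of the two resulting composites, and one checks that the $\Theta_{Y^1_+Y^2_+}$ appearing there matches the discrepancy between the two maps $\Theta^C_{(Y_1)_+Y_2}$ for the two choices of $\psi^C$ — this is where one uses that both manifolds $(Y_1)_+$ are identified with $Y_2$ via diffeomorphisms restricting to $C$ off $\Img(r_1)$, and that near $r_1(R_1\times\{0\})$ the two identifications differ exactly by $r_2\circ((\varphi^C_-\circ\tilde\psi^C\circ(\psi^C)^{-1}\circ(\varphi^C_-)^{-1})\times id)\circ r_2^{-1}$, which transports to $(\tilde\psi^C)^{-1}\circ\psi^C$ under the appropriate conjugation. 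So $\Psit^g_{\data,\data'}$ is independent of $\psi^C$ up to a unit.

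Finally, independence of $C$: any two choices $C, C'$ of diffeomorphism as in (\ref{eqn:CC}) agree with $m_2\circ m_1^{-1}$ on $m_1(M\smallsetminus N(\gamma))$, hence $C'\circ C^{-1}$ is a diffeomorphism of $Y_2\smallsetminus\inr(\Img(r_2))$ supported in (a collar neighborhood of) the product region near $r_2(R_2\times\{\pm1\})$ — i.e. it is isotopic to one of the form ``cut and reglue by surface diffeomorphisms of $R_2$.'' The key input here is Appendix \ref{sec:appendixa} (Corollary \ref{cor:diff}): since $g(R_2)\geq 2$, such a diffeomorphism is isotopic, rel the relevant boundary, to a composition of diffeomorphisms of $R_2$ supported near the two copies of $R_2$, and this changes $\varphi^{C'}_\pm$ from $\varphi^C_\pm$ by pre/post-composition with isotopically-determined maps. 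Feeding the altered data into the Section \ref{sec:prelim} construction and invoking Theorem \ref{thm:maininvc} once more — with $A^u, B^u$ now the factorizations coming from $C$ and $C'$ respectively, and noting $A^1\circ B^1 = \varphi^C$, $A^2\circ B^2 = \varphi^{C'}$ are isotopic because $C, C'$ induce isotopic gluings overall — shows the resulting maps agree up to a unit.

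The main obstacle I anticipate is the $C$-independence step, specifically verifying that any two complement-diffeomorphisms fixing $m_1(M\smallsetminus N(\gamma))$ differ (up to isotopy) by cut-and-reglue data that the Section \ref{sec:prelim} machinery can digest, and that the resulting changes in $\varphi^C_\pm$, $\varphi^C$, and the hypothesis ``$(B^2\circ(B^1)^{-1})(\eta)=\eta$'' of Theorem \ref{thm:maininvc} all line up correctly. This is a careful application of the surface-diffeomorphism results of Appendix \ref{sec:appendixa} together with bookkeeping of which isotopy classes are canonically determined; the Floer-theoretic content is entirely contained in Theorem \ref{thm:maininvc}, which does all the real work in each of the three steps.
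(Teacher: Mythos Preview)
Your treatment of the independence from the Dehn twist factorizations, the parameters $t_i,t_i'$, and the auxiliary diffeomorphism $\psi^C$ is correct and matches the paper's argument: all three are handled by a single invocation of Theorem~\ref{thm:maininvc} with suitably chosen $A^u,B^u$.

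The $C$-independence step, however, has a real gap. You propose to apply Theorem~\ref{thm:maininvc} directly with $A^u,B^u$ coming from $C$ and $C'$, but the theorem compares two maps whose terminal identifications $\Theta_{Y^1_+Y^2_+}$ restrict to the \emph{identity} on $Y\ssm\inr(\Img(r))$. In your situation the two identifications $\Theta^{C}_{(Y_1)_+Y_2}$ and $\Theta^{C'}_{(Y_1)_+Y_2}$ restrict to $C$ and $C'=C\circ D$ on $Y_1\ssm\inr(\Img(r_1))$, and these differ by the nontrivial diffeomorphism $D$ on the complement of the product region. So the discrepancy between the two versions of $\Psit^g_{\data,\data'}$ is not of the form Theorem~\ref{thm:maininvc} controls. (Relatedly, your claim that $\varphi^C$ and $\varphi^{C'}$ are isotopic ``because $C,C'$ induce isotopic gluings overall'' is not justified and is beside the point: even granting it, the $\Theta$-maps still do not match the hypothesis.)

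The paper's resolution is to write the $C_2$-version as the $C_1$-version composed with a ``$D$-correction'' map $\Theta^D_{(Y_1)^3_+Y_1}\circ\HMtoc(X^3_+|R_1;\Gamma_\nu)\circ\HMtoc(X^3_-|R_1;\Gamma_\nu)^{-1}$ (Lemma~\ref{lem:equalcomps}), and then to show this correction is $\RR^\times$-equivalent to the identity (Lemma~\ref{lem:compid}). The key idea you are missing is in the latter lemma: one builds a \emph{new} embedding $r:R_1\times[-1,1]\hookrightarrow Y_1$ whose image $Q$ swallows both the support of $D$ (which lives in the $F\times[-1,1]$ piece outside $m_1(M)$) and the surgery curves. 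Relative to this enlarged product region, the Dehn surgeries genuinely cancel $D$ and the identification $\Theta^D$ \emph{does} restrict to the identity on the complement, so Theorem~\ref{thm:maininvc} now applies. This step also forces the use of both positive and negative Dehn twists (Remark~\ref{rmk:positivity2}), since the relevant subsurfaces have boundary.
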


\begin{proof}
The choices  we made in defining $\Psit^g_{\data,\data'}$ were those of: 
\begin{enumerate}
\item the diffeomorphism $C$, 
\item the diffeomorphism $\psi^C$,
\item the factorizations of $\varphi^C\circ\psi^C$ and $(\psi^C)^{-1}$ into Dehn twists,  
\item the $t_i$, $t_i'$.
\end{enumerate} It follows   from Theorem \ref{thm:maininvc} that $\Psit^g_{\data,\data'}$ is independent of the choices in (2)-(4). To see this, suppose we have fixed $C$ and let $\psi_1^C$ and $\psi_2^C$ be two diffeomorphisms satisfying (\ref{eqn:littlepsi}). Let $A^1=\varphi^C\circ\psi_1^C$, $A^2 = \varphi^C\circ\psi_2^C$, $B^1=(\psi_1^C)^{-1}$ and $B^2=(\psi_2^C)^{-1}$ and choose factorizations of these diffeomorphisms into Dehn twists. Let $(Y_1)^u_{\pm}$, $X^u_+$ and $X^u_-$ be the surgered manifolds and 2-handle cobordisms associated to these factorizations of $A^u$ and $B^u$ for $u=1,2$, as defined above and  in the previous section. It suffices to show that the maps  
\begin{align}
\label{eqn:map1}\Theta_{(Y_1)^1_{+}Y_2}^C\circ \HMtoc(X^1_+|R_1;\Gamma_{\nu})\circ \HMtoc(X^1_-|R_1;\Gamma_{\nu})^{-1}\\
\label{eqn:map2} \Theta_{(Y_1)^2_{+}Y_2}^C\circ \HMtoc(X^2_+|R_1;\Gamma_{\nu})\circ \HMtoc(X^2_-|R_1;\Gamma_{\nu})^{-1}
\end{align}
are $\RR^\times$-equivalent.  Note that  \[\Theta_{(Y_1)^1_{+}Y_2}^C = \Theta_{(Y_1)^2_{+}Y_2}^C\circ \Theta_{(Y_1)^1_{+}(Y_1)^2_{+}},\] where \[\Theta_{(Y_1)^1_{+}(Y_1)^2_{+}}:\HMtoc((Y_{1})^1_{+}|R_1;\Gamma_{\eta_1})\rightarrow \HMtoc((Y_{1})^2_{+}|R_1;\Gamma_{\eta_1})\] is the map associated to the unique isotopy class of diffeomorphisms from $(Y_{1})^1_{+}$ to $(Y_{1})^2_{+}$ which restrict to the identity on $Y_1\ssm\inr(\Img(r_1))$ and to \[r_1\circ((\psi_2^C)^{-1}\circ \psi_1^C\times id)\circ r_1^{-1}=r_1\circ(B^2\circ (B^1)^{-1}\times id)\circ r_1^{-1}\] in a neighborhood of $r_1(R_1\times\{0\})$, as defined in the previous section. But Theorem \ref{thm:maininvc} implies that the maps
\begin{align*}
\Theta_{(Y_1)^1_{+}(Y_1)^2_{+}}\circ\HMtoc(X^1_+|R_1;\Gamma_{\nu})\circ \HMtoc(X^1_-|R_1;\Gamma_{\nu})^{-1}\\
 \HMtoc(X^2_+|R_1;\Gamma_{\nu})\circ \HMtoc(X^2_-|R_1;\Gamma_{\nu})^{-1}
\end{align*} are $\RR^\times$-equivalent. Composing both  with $\Theta_{(Y_1)^2_{+}Y_2}^C$,  it follows that the maps in (\ref{eqn:map1}) and (\ref{eqn:map2}) are $\RR^\times$-equivalent as well.

It remains to show that $\Psit^g_{\data,\data'}$ does not depend on the choice of  $C$. Suppose $C_1$ and $C_2$ are two such choices. Then $C_2=C_1\circ D$, where $D$ is a diffeomorphism of $Y_1\ssm\inr(\Img(r_1))$ which is equal to the identity  on $ m_1(M\smallsetminus N(\gamma))$, for some tubular neighborhood $N(\gamma)$ of $\gamma\subset M$. The first step in relating the constructions of $\Psit^g_{\data,\data'}$ for the different choices $C_1$ and $C_2$ is to relate  $\varphi_{\pm}^{C_1}$, $\varphi^{C_1}$ and $\psi^{C_1}$ with the maps $\varphi_{\pm}^{C_2}$, $\varphi^{C_2}$ and $\psi^{C_2}$. 
Let \begin{align*}
\varphi_{\pm}^D&:=(r_1^{\pm})^{-1}\circ D \circ r_1^{\pm}\\
 \psi^D&:=(\varphi^D_-)^{-1}.
 \end{align*} Note that
 \[\varphi_{\pm}^{C_2} = \varphi_{\pm}^{C_1}\circ \varphi^D_{\pm},\] and that we may choose $\psi^{C_2}$ to be \[\psi^{C_2} = \psi^D\circ\psi^{C_1}\] since \[\varphi_-^{C_2}\circ\psi^{C_2} = \varphi_-^{C_1}\circ \psi^{C_1}\] both send $\eta_1$ to $\eta_2$ in this case. Then, we have 
\begin{align}
\label{eqn:varphipsi}\varphi^{C_2}\circ\psi^{C_2} &= (\varphi^D_+)^{-1}\circ \varphi^{C_1}\circ\psi^{C_1}\\
\label{eqn:psiinverse}(\psi^{C_2})^{-1} &= (\psi^{C_1})^{-1}\circ \varphi^D_-.
\end{align}
Suppose  $(\varphi^D_+)^{-1}$, $\varphi^{C_1}\circ\psi^{C_1}$, $(\psi^{C_1})^{-1}$ and $\varphi^D_-$ are isotopic to the following compositions of Dehn twists,
\begin{align}
\label{eqn:fact111} (\varphi^D_+)^{-1}&\sim D^{e_1}_{a_1}\circ\dots \circ D^{e_{n}}_{a_{n}}, \\
\label{eqn:fact112} \varphi^{C_1}\circ\psi^{C_1}&\sim D^{e_{n+1}}_{a_{n+1}}\circ\dots \circ D^{e_{m}}_{a_{m}},\\
\label{eqn:fact113} (\psi^{C_1})^{-1}&\sim D^{e_{m+1}}_{a_{m+1}}\circ\dots \circ D^{e_{l}}_{a_{l}},\\
\label{eqn:fact114} \varphi^D_-&\sim D^{e_{l+1}}_{a_{l+1}}\circ\dots \circ D^{e_{k}}_{a_{k}},
\end{align} 
Let 
\begin{align*}
\PP &= \{i\mid e_i=+1\} \\
\NN &= \{i\mid e_i=-1\},
\end{align*}
and define
\begin{align*}
\PP^1 &= \PP\cap \{n+1,\dots, l\} \\
\NN^1 &= \NN\cap \{n+1,\dots, l\} \\
\PP^2 & = \PP\\
\NN^2 &= \NN\\
\PP^3 &= \PP\cap \{1,\dots, n,l+1,\dots,k\} \\
\NN^3 &= \NN\cap \{1,\dots, n,l+1,\dots,k\} 
\end{align*}
 Choose real numbers \[-3/4<t_{k}<\dots<t_{m+1}<-1/4<1/4<t_{m}<\dots<t_1<3/4,\] and  pick  some $t_i'$ between $t_i$ and the next greatest number in this list for each $i\in \NN$.
 
For $u=1,2,3$, we will denote by $(Y_1)^u_-$   the 3-manifold obtained from $Y_1$ by performing $+1$ surgeries on the curves $r_1(a_i\times\{t_i\})$ for all $i\in \NN^u$, and by $X^u_-$   the 4-manifold obtained from $(Y_1)^u_-\times[0,1]$ by attaching $-1$ framed 2-handles along the curves $r_1(a_i\times \{t_i'\})\times\{1\}\subset (Y_1)^u_-\times\{1\}$ for all $i\in\NN^u$. As usual,   $X^u_-$ induces a  map 
\[\HMtoc(X^u_-|R_1;\Gamma_{\nu}): \HMto((Y_1)^u_-|R_1;\Gamma_{\eta_1})\rightarrow \HMto(Y_1|R_1;\,\Gamma_{\eta_1}).\] We will likewise denote by $X^u_+$  the 4-manifold obtained from $(Y_1)^u_-\times[0,1]$ by attaching $-1$ framed 2-handles along the curves $r_1(a_i\times \{t_i\})\times\{1\}\subset (Y_1)^u_-\times\{1\}$ for all $i\in\PP^u$ and by $(Y_1)^u_{+}$ the 3-manifold obtained from $(Y_1)^u_-$ by performing $-1$ surgeries on the curves  $r_1(a_i\times \{t_i\})$ for all $i\in  \PP^u$. Then, $X^u_+$  induces a map
\[\HMtoc(X^u_+|R_1;\Gamma_{\nu}): \HMto((Y_1)^u_-|R_1;\Gamma_{\eta_1})\rightarrow \HMto((Y_1)^u_{+}|R_1;\Gamma_{\eta_1}).\]

 To complete the proof of Theorem \ref{thm:samegenusinvariance}, it  suffices to show that the maps 
 \begin{align}
\label{eqn:firstcomp}\Theta_{(Y_1)^1_{+}Y_2}^{C_1}\circ \HMtoc(X^1_+|R_1;\Gamma_{\nu})\circ \HMtoc(X^1_-|R_1;\Gamma_{\nu})^{-1}\\
\label{eqn:secondcomp}\Theta_{(Y_1)^2_{+}Y_2}^{C_2}\circ \HMtoc(X^2_+|R_1;\Gamma_{\nu})\circ \HMtoc(X^2_-|R_1;\Gamma_{\nu})^{-1}
 \end{align}
are $\RR^\times$-equivalent. Indeed, (\ref{eqn:firstcomp}) is $\Psit^g_{\data,\data'}$ as defined with respect to  $C_1$ and (\ref{eqn:secondcomp}) is  $\Psit^g_{\data,\data'}$ as defined with respect to   $C_2$. We will prove that the maps in (\ref{eqn:firstcomp}) and (\ref{eqn:secondcomp}) agree using two lemmas, starting with the following.

\begin{lemma}
\label{lem:equalcomps}
The map in (\ref{eqn:secondcomp}) is $\RR^\times$-equivalent to 
\begin{align*}
\Theta_{(Y_1)^1_{+}Y_2}^{C_1}&\circ \HMtoc(X^1_+|R_1;\Gamma_{\nu})\circ \HMtoc(X^1_-|R_1;\Gamma_{\nu})^{-1}\\
&\circ\Theta_{(Y_1)^3_{+}Y_1}^{D}\circ \HMtoc(X^3_+|R_1;\Gamma_{\nu})\circ \HMtoc(X^3_-|R_1;\Gamma_{\nu})^{-1},\end{align*} where \[\Theta_{(Y_1)^3_{+}Y_1}^{D}:\HMtoc((Y_{1})^3_{+}|R_1;\Gamma_{\eta_1})\rightarrow \HMtoc(Y_1|R_1;\Gamma_{\eta_1})\]
is the map associated to the unique isotopy class of diffeomorphisms from $(Y_1)^3_{+}$ to $Y_1$ which restrict to $D$ on $Y_1\ssm\inr(\Img(r_1))$ and to
\[ r_1\circ((\varphi_-^D\circ \psi^D)\times id)\circ r_1^{-1} = id\]
 on a neighborhood of $r_1(R_1\times\{0\})$.
\end{lemma} 


Once this  is established, we need only show  the following.

\begin{lemma}
\label{lem:compid}
The map \[\Theta_{(Y_1)^3_{+}Y_1}^{D}\circ \HMtoc(X^3_+|R_1;\Gamma_{\nu})\circ \HMtoc(X^3_-|R_1;\Gamma_{\nu})^{-1}\]  is $\RR^\times$-equivalent to the identity on $\HMtoc(Y_1|R_1;\Gamma_{\eta_1}).$
\end{lemma}

\begin{proof}[Proof of Lemma \ref{lem:equalcomps}]
By Theorem \ref{thm:maininvc}, we are free to assume that $\NN=\emptyset$. Our task is then to show that 
\begin{align}
\label{eqn:desiredequality1}
\Theta_{(Y_1)^2_{+}Y_2}^{C_2}\circ \HMtoc(X^2_+|R_1;\Gamma_{\nu})\doteq\Theta_{(Y_1)^1_{+}Y_2}^{C_1}&\circ \HMtoc(X^1_+|R_1;\Gamma_{\nu})\\
\notag&\circ \Theta_{(Y_1)^3_{+}Y_1}^{D}\circ \HMtoc(X^3_+|R_1;\Gamma_{\nu}).
\end{align} Consider the composite $(X^1_+,\nu)\circ (X^3_+,\nu)$ formed by gluing along $(Y_1)^3_{+}\cong Y_1$ via a diffeomorphism in the isotopy class used to define $\Theta_{(Y_1)^3_{+}Y_1}^{D}$ and suppose we identify the boundary component $(Y_1)^1_{+}$ with $Y_2$ via  a diffeomorphism in the isotopy class used to define $\Theta_{(Y_1)^1_{+}Y_2}^{C_1}$. Likewise, consider  the  cobordism $(X^2_+,\nu)$ with   boundary identification $(Y_1)^2_{+}\cong Y_2$ given by a diffeomorphism in the isotopy class used to define $\Theta_{(Y_1)^2_{+}Y_2}^{C_2}$. It is not hard to see that, with these boundary identifications, $(X^1_+,\nu)\circ (X^3_+,\nu)$ is isomorphic to $(X^2_+,\nu)$ since $C_2=C_1\circ D$. The desired $\RR^\times$-equivalence follows.  \end{proof}

\begin{proof}[Proof of Lemma \ref{lem:compid}]
Up to isotopy, we can assume that $D$ is the identity on a small neighborhood $N$ of $m_1(M)\subset Y_1\ssm\inr(\Img(r_1))$. By Definition \ref{def:smoothclosure}, $Y_1\ssm\inr(\Img(r_1))\ssm\inr(\Img(m_1))$ is homeomorphic to a product $F\times[-1,1]$, where $F$ is a compact surface with boundary. We can therefore assume that the complement $Y_1\ssm\inr(\Img(r_1))\ssm N$ is diffeomorphic to a product $F'\times [-1,1]$, where $F'$ is a compact surface with boundary homeomorphic to $F$. Let \[f:F'\times [-1,1] \rightarrow Y_1\ssm\inr(\Img(r_1))\ssm N\] be  a diffeomorphism such that  the image of $f^{\pm}$  is contained in the image of $r_1^{\mp}$, where, as usual, $f^{\pm}$ refers to the composition \[F'\xrightarrow{id\times\{\pm 1\}}F'\times\{\pm 1\}\xrightarrow{f}Y_1\ssm\inr(\Img(r_1))\ssm N.\] Our assumption about $D$ implies that $g_+^{-1}$ and $g_-$ restrict to the identity outside of  compact subsurfaces $F'_+$ and $F'_-$ of $ R_1$, respectively, where \[F'_{\pm}:=(r_1^{\pm})^{-1}(f^{\mp}(F')).\]  We may therefore assume that the curves $a_i\times\{t_i\}$, $a_i\times\{t_i'\}$ for $i\in \PP_3\cup\NN_3$ are contained in $F'_+\times (1/4,1/2)$ or $F'_-\times (-1/2,-1/4)$ depending on whether $i$ is in $\{1,\dots,n\}$ or $\{l+1,\dots,k\}$, respectively. 

\begin{remark} \label{rmk:positivity2} We cannot make this last assumption about  $a_i\times\{t_i\}$, $a_i\times\{t_i'\}$ without allowing for both positive and negative Dehn twists in the factorizations of $g_+^{-1}$ and $g_-$ since the surfaces $F'_+$ and $F'_-$ have boundary,  as alluded to in Remark \ref{rmk:positivity}. \end{remark}

Let  $Q$ be the union \[Q:=r_1(R_1\times[-1/8,1/8])\cup r_1(F'_+\times[1/8,1])\cup f(F'\times[-1,1])\cup r_1(F'_-\times[-1,-1/4]),\] as depicted in Figure \ref{fig:Q}. Observe that  $Q$ is diffeomorphic to a product of $R_1$ with an interval, after rounding corners. By design, this product contains a neighborhood of $r_1(\eta_1\times\{0\})$ as well as the curves $r_1(a_i\times\{t_i\})$, $r_1(a_i\times\{t_i'\})$ used to define the map  \[\HMtoc(X^3_+|R_1;\Gamma_{\nu})\circ \HMtoc(X^3_-|R_1;\Gamma_{\nu})^{-1}.\] 

By construction, there is a unique isotopy class of diffeomorphisms from $(Y_1)^3_{+}$ to $Y_1$ which restricts  to $D$ on $f(F'\times[-1,1])$ and to the identity outside of \[F'_+\times(1/4,1]\cup f(F'\times[-1,1])\cup F_-'\times[-1,-1/4) \] (in particular, outside of $Q$), and to the identity in a neighborhood of $r_1(R_1\times\{0\})$. The point is that the Dehn surgeries defining $(Y_1)^3_+$, which are supported in the regions $F'_+\times(1/4,1]$ and $F_-'\times[-1,-1/4)$, effectively ``cancel out" the diffeomorphism $D$, which is supported in $f(F'\times[-1,1])$.  One can then apply Theorem \ref{thm:maininvc}, noting that a diffeomorphism in this isotopy class is also in the isotopy class used to define $\Theta_{(Y_1)^3_{+}Y_1}^{D}$, to show that \[\Theta_{(Y_1)^3_{+}Y_1}^{D}\circ\HMtoc(X^3_+|R_1;\Gamma_{\nu})\circ \HMtoc(X^3_-|R_1;\Gamma_{\nu})^{-1}\] is $\RR^\times$-equivalent to the identity map on $\HMtoc(Y_1|R_1;\Gamma_{\eta_1})$.

\begin{figure}[ht]
\labellist
\tiny
\pinlabel $-1$ at 10 173
\pinlabel $\frac{1}{2}$ at 16 53
\pinlabel $-\frac{1}{2}$ at 10 226
\pinlabel $1$ at 16 105
\pinlabel $0$ at 17 278
\pinlabel $0$ at 17 1
\endlabellist
\centering
\includegraphics[width=15cm]{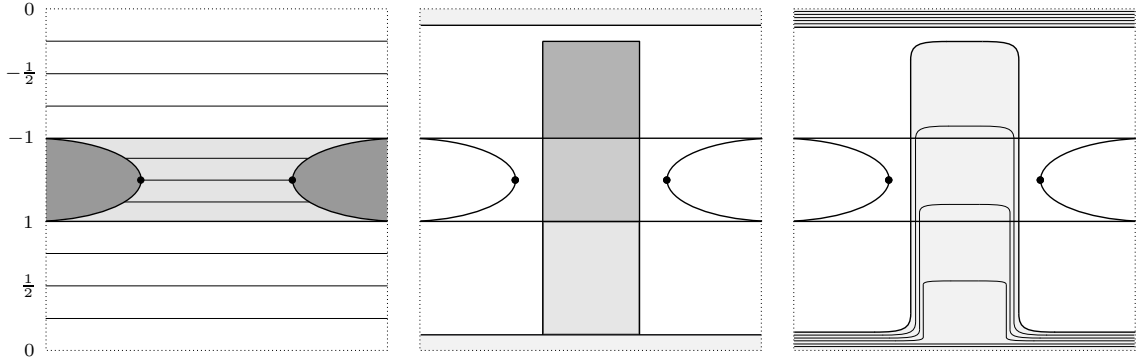}
\caption{Left, a portion of $Y_1$ with the pieces $\Img(m_1)$ and $r_1(R_1\times[-1,1])$ shown in dark gray and white, respectively. The light gray region represents a piece  homeomorphic to $F\times[-1,1]$. The dots represent  $m_1(\gamma)$. The middle diagram shows $Q$ with the pieces $r_1(R_1\times[-1/8,1/8]), $ $r_1(F'_+\times[1/8,1]),$ $f(F'\times[-1,1])$ and $r_1(F'_-\times[-1,-1/4])$ shaded in very light, light, medium and dark gray, respectively. The rightmost picture shows the image $r(R_1\times[-1,1])$ in very light gray. We have drawn some of  the fibers $r(R_1\times\{t\})$.}
\label{fig:Q}
\end{figure}

To  make this argument a bit more precise, let \[r:R_1\times[-1,1]\rightarrow Y_1\] be an embedding whose image contains $Q$ and is contained in a small neighborhood of $Q$. We can arrange that $r$ sends $R_1\times\{0\}$ to $r_1(R_1\times\{0\})$ by the map $r_1$ and sends $b_i\times\{s_i\}$, $b_i\times\{s_i'\}$ to  $r_1(a_i\times\{t_i\})$, $r_1(a_i\times\{t_i'\})$, where the $b_i$ are curves in $R_1$ and  \[1/4< s_k<\dots<s_{l+1}<s_n<\dots<s_1<3/4\] with $s_i'$ between $s_i$ and the next greatest number in this list. We can, moreover,  assume that $r$ sends the standard framings on $b_i\times\{s_i\}$, $b_i\times\{s_i'\}$ to those on $r_1(a_i\times\{t_i\})$, $r_1(a_i\times\{t_i'\})$. The fact that there exists a diffeomorphism from $(Y_1)^3_{+}$ to $Y_1$ which restricts  to the identity outside of the image $r(R_1\times[-1,1])$ implies that the composition 
\begin{equation}\label{eqn:factb}D_{b_1}^{e_1}\circ\dots\circ D_{b_n}^{e_n}\circ D_{b_{l+1}}^{e_{l+1}}\circ\dots\circ D_{b_k}^{e_k}\end{equation} is isotopic to the identity. Since $(Y_3)_-$, $(Y_3)_{+}$, $X^3_-$ and $X^3_+$ are  the 3- and 4-manifolds associated to surgeries and 2-handle attachments along these $r(b_i\times\{s_i\})$, $r(b_i\times\{s_i'\})$, Theorem \ref{thm:maininvc}  implies that \begin{equation}\label{eqn:Thetacomp}\Theta_{(Y_1)^3_{+}Y_1}^D\circ\HMtoc(X^3_+|R_1;\Gamma_{\nu})\circ \HMtoc(X^3_-|R_1;\Gamma_{\nu})^{-1}\end{equation} is $\RR^\times$-equivalent to  the identity map on $\HMtoc(Y_1|R_1;\Gamma_{\eta_1})$, as desired. 
\end{proof}
The proof of Theorem \ref{thm:samegenusinvariance} is now complete.
\end{proof}

Below, we show that these maps satisfy the following transitivity.

\begin{theorem}
\label{thm:samegenustransitive} Suppose $\data,\data',\data''$ are genus $g$ marked closures of $(M,\gamma)$. Then \[\Psit^g_{\data,\data''} = \Psit^g_{\data',\data''}\circ \Psit^g_{\data,\data'},\] up to multiplication by a unit in $\RR$. \end{theorem}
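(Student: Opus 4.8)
The plan is to reduce the transitivity to Theorem \ref{thm:maininvc} by realizing the composite $\Psit^g_{\data',\data''}\circ\Psit^g_{\data,\data'}$ as the standard $2$-handle cobordism map defining $\Psit^g_{\data,\data''}$ for a compatible set of choices, and then invoking Theorem \ref{thm:samegenusinvariance} to discard the choices. Write $\data_i=(Y_i,R_i,r_i,m_i,\eta_i)$ for $i=1,2,3$. First I would fix diffeomorphisms $C:Y_1\ssm\inr(\Img(r_1))\to Y_2\ssm\inr(\Img(r_2))$ and $C':Y_2\ssm\inr(\Img(r_2))\to Y_3\ssm\inr(\Img(r_3))$ of the form required to construct $\Psit^g_{\data_1,\data_2}$ and $\Psit^g_{\data_2,\data_3}$, and note that $C'':=C'\circ C$ is an admissible choice for $\Psit^g_{\data_1,\data_3}$. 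Then $\varphi^{C''}_\pm=\varphi^{C'}_\pm\circ\varphi^C_\pm$, and one checks that $\psi^{C''}:=(\varphi^C_-)^{-1}\circ\psi^{C'}\circ\varphi^C_-\circ\psi^C$ satisfies $(\varphi^{C''}_-\circ\psi^{C''})(\eta_1)=\eta_3$, so it is an admissible $\psi$. Substituting these in yields the key identities
\[
\varphi^{C''}\circ\psi^{C''}=(\varphi^C_+)^{-1}\circ(\varphi^{C'}\circ\psi^{C'})\circ\varphi^C_-\circ\psi^C,\qquad
(\psi^{C''})^{-1}=(\psi^C)^{-1}\circ(\varphi^C_-)^{-1}\circ(\psi^{C'})^{-1}\circ\varphi^C_-,
\]
which express the gluing data $A_{13},B_{13}$ defining $\Psit^g_{\data_1,\data_3}$ as products of the gluing data $A_{12},B_{12}$ for $\Psit^g_{\data_1,\data_2}$ with $\varphi^C_\pm$-conjugates of the gluing data $A_{23},B_{23}$ for $\Psit^g_{\data_2,\data_3}$.

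Next I would choose factorizations into Dehn twists of $A_{12},B_{12}$ (curves in $R_1$) and of $A_{23},B_{23}$ (curves in $R_2$), and form the \emph{concatenated} factorizations of $A_{13},B_{13}$ obtained by appending to the factorizations of $A_{12},B_{12}$ the images under $(\varphi^C_+)^{-1}$, resp. $(\varphi^C_-)^{-1}$, of the chosen factorizations of $A_{23},B_{23}$ (using that conjugation by an orientation-preserving diffeomorphism carries $D_c^{\pm1}$ to $D_{\,\text{image of }c}^{\pm1}$). By Theorem \ref{thm:samegenusinvariance} all three maps may be computed with these choices. Using the functoriality of $\HMtoc$, together with the fact established in the proof of Theorem \ref{thm:maininvc} that each map $\HMtoc(X_-|R;\Gamma_\nu)^{-1}$ is $\RR^\times$-equivalent to the map induced by a $2$-handle cobordism, I would rewrite $\Psit^g_{\data_1,\data_2}\doteq\Theta^C\circ\HMtoc(Z^{(12)}|R;\Gamma_\nu)$ and $\Psit^g_{\data_2,\data_3}\doteq\Theta^{C'}\circ\HMtoc(Z^{(23)}|R;\Gamma_\nu)$ for $2$-handle cobordisms $Z^{(12)}:Y_1\to(Y_1)_+^{(12)}$ and $Z^{(23)}:Y_2\to(Y_2)_+^{(23)}$ (composites of the relevant $X_+$ cobordisms) whose attaching curves realize the chosen factorizations after rewriting negative twists as products of positive ones. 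Absorbing the intermediate diffeomorphism $\Theta^C$ into the incoming boundary identification of $Z^{(23)}$ via functoriality produces a cobordism $\widetilde Z^{(23)}:(Y_1)_+^{(12)}\to(Y_2)_+^{(23)}$ with $\Psit^g_{\data_2,\data_3}\circ\Psit^g_{\data_1,\data_2}\doteq\Theta^{C'}\circ\HMtoc(\widetilde Z^{(23)}\circ Z^{(12)}|R;\Gamma_\nu)$.

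The main obstacle, and the step requiring real care, is to recognize $\widetilde Z^{(23)}\circ Z^{(12)}$ as a representative of the $\RR^\times$-equivalence class of the standard cobordism attached to the concatenated factorization of $A_{13},B_{13}$. Here one uses that the surgeries producing $(Y_1)_+^{(12)}$ are supported in $r_1(R_1\times\{|t|>1/4\})$, so that $(Y_1)_+^{(12)}$ still contains a product region near $r_1(R_1\times\{0\})$ on which the gluing diffeomorphism $\theta^C$ defining $\Theta^C$ is the product map $r_2\circ((\varphi^C_-\circ\psi^C)\times id)\circ r_1^{-1}$. I would isotope the attaching curves of $\widetilde Z^{(23)}$, which a priori lie in the product region of $Y_2$ carried over by $\theta^C$, to lie inside this region (preserving framings); under $(\theta^C)^{-1}$ they become curves $r_1((\varphi^C_-\circ\psi^C)^{-1}(a_i^{(23)})\times\{s_i\})$ with $|s_i|<1/4$, i.e.\ curves in the original product region of $Y_1$, nested just inside the $(12)$-curves along the $t$-axis. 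A direct reading of the resulting composition of Dehn twists along the $t$-axis, together with the two identities above, shows that $\widetilde Z^{(23)}\circ Z^{(12)}$ is exactly the $2$-handle cobordism for the concatenated factorization, up to insertion of cancelling $\pm1$-surgery pairs (arising from the elimination of the inverse maps), which by Proposition \ref{prop:maininvc2} alters the induced map only by a unit.

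Finally I would check that the composite boundary identification of $(Y_2)_+^{(23)}$ with $Y_3$ — obtained by chaining the identification $\theta^C$, the natural identification of the surgered manifold, and $\theta^{C'}$ — restricts to $C''=C'\circ C$ on $Y_1\ssm\inr(\Img(r_1))$ and to $r_3\circ((\varphi^{C''}_-\circ\psi^{C''})\times id)\circ r_1^{-1}$ near $r_1(R_1\times\{0\})$ (the latter following from $\varphi^{C''}_-\circ\psi^{C''}=\varphi^{C'}_-\circ\psi^{C'}\circ\varphi^C_-\circ\psi^C$). By Corollary \ref{cor:diff} this identification lies in the isotopy class defining $\Theta^{C''}$, so the trailing $\Theta^{C'}$ and the boundary identification together furnish $\Theta^{C''}$. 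Combining the last two paragraphs gives $\Psit^g_{\data_2,\data_3}\circ\Psit^g_{\data_1,\data_2}\doteq\Theta^{C''}\circ\HMtoc(Z^{(13)}|R;\Gamma_\nu)=\Psit^g_{\data_1,\data_3}$, which is the desired transitivity.
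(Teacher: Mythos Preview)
Your approach is essentially the same as the paper's: choose $C''=C'\circ C$, set $\psi^{C''}=(\varphi^C_-)^{-1}\circ\psi^{C'}\circ\varphi^C_-\circ\psi^C$, pull the $(23)$-factorization back to $R_1$ via the conjugation relation $D_{f(a)}=f\circ D_a\circ f^{-1}$, and then observe that the composite $2$-handle cobordism is isomorphic (with the correct boundary identifications) to the cobordism for the concatenated factorization. One simplification the paper makes that you do not: since $R_1$ is closed, every diffeomorphism factors into \emph{positive} Dehn twists, so one may take $\NN=\emptyset$ throughout and avoid the $X_-$ cobordisms and their inverses entirely; this removes the ``cancelling $\pm1$-surgery pairs'' step and makes the cobordism isomorphism $(X^2_+,\nu)\circ(X^1_+,\nu)\cong(X^3_+,\nu)$ immediate.

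A small wrinkle in your write-up: you first describe the pulled-back curves as images under $(\varphi^C_\pm)^{-1}$ and later as images under $(\varphi^C_-\circ\psi^C)^{-1}$. The latter is what the geometry of $\theta^C$ actually gives (and what the paper uses), and it is also what is needed for the concatenated factorization to realize $A_{13}=A_{12}\circ g$ with $g=(\varphi^C_-\circ\psi^C)^{-1}\circ A_{23}\circ(\varphi^C_-\circ\psi^C)$. This is not a genuine gap---Theorem~\ref{thm:maininvc} makes the map independent of the factorization anyway---but it is worth keeping straight.
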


\begin{proof}
For the sake of exposition, let us write 
\begin{align*}\data&= \data_1 = (Y_1,R_1,r_1,m_1,\eta_1)\\
\data' &= \data_2 = (Y_2,R_2,r_2,m_2,\eta_2)\\
\data''&= \data_3 = (Y_3,R_3,r_3,m_3,\eta_3).
\end{align*}
We must then show that \[\Psit^g_{\data_1,\data_3} \doteq \Psit^g_{\data_2,\data_3}\circ\Psit^g_{\data_1,\data_2}.\] 
To define $\Psit^g_{\data_1,\data_2}$ and $\Psit^g_{\data_2,\data_3}$, we start by choosing diffeomorphisms 
\begin{align*}
C_1&:Y_1\ssm\inr(\Img(r_1))\rightarrow Y_2\ssm\inr(\Img(r_2))\\
C_2&:Y_2\ssm\inr(\Img(r_2))\rightarrow Y_3\ssm\inr(\Img(r_3))
\end{align*}
which satisfy the conditions described at the beginning of this subsection. We may then use the diffeomorphism
\begin{align*}
C_3&:Y_1\ssm\inr(\Img(r_1))\rightarrow Y_3\ssm\inr(\Img(r_3)),
\end{align*}
given by $C_3=C_2\circ C_1,$ to define the map $\Psit^g_{\data_1,\data_3}$. To compare the maps $\Psit^g_{\data_1,\data_2}$, $\Psit^g_{\data_2,\data_3}$, $\Psit^g_{\data_1,\data_3}$, we must first understand the relationships between the diffeomorphisms $\varphi_{\pm}^{C_i}$, $\varphi^{C_i}$, $\psi^{C_i}$ for $i=1,2,3$. Note that \begin{equation}
\label{C3C1C2}\varphi_{\pm}^{C_3} = \varphi_{\pm}^{C_2}\circ \varphi_{\pm}^{C_1}.\end{equation}  Pick $\psi^{C_1}$ and $\psi^{C_2}$ which satisfy 
\begin{align*}
\varphi_-^{C_1}\circ \psi^{C_1}(\eta_1)&=\eta_2\\
\varphi_-^{C_2}\circ \psi^{C_2}(\eta_2)&=\eta_3.
\end{align*}
Let us then define $\psi^{C_3}$ by \[\psi^{C_3}:= (\varphi_-^{C_1})^{-1}\circ \psi^{C_2}\circ\varphi_-^{C_1}\circ \psi^{C_1}.\] We may do so since this $\psi^{C_3}$ satisfies  
\begin{align*}
\varphi_-^{C_3}\circ \psi^{C_3}(\eta_1)&=\eta_3.
\end{align*}
Finally, a quick substitution shows   that 
\begin{align*}
\varphi^{C_3}\circ \psi^{C_3}&= \varphi^{C_1}\circ \psi^{C_1}\circ g\\
(\psi^{C_3})^{-1}&= h\circ (\psi^{C_1})^{-1},
\end{align*}
where 
\begin{align*}
g&= (\varphi_-^{C_1}\circ\psi^{C_1})^{-1}\circ (\varphi^{C_2}\circ \psi^{C_2})\circ (\varphi_-^{C_1}\circ \psi^{C_1})\\
h&= (\varphi_-^{C_1}\circ\psi^{C_1})^{-1}\circ (\psi^{C_2})^{-1}\circ (\varphi_-^{C_1}\circ \psi^{C_1}).
\end{align*}

To define  $\Psit^g_{\data_1,\data_2}$, $\Psit^g_{\data_2,\data_3}$, $\Psit^g_{\data_1,\data_3}$,   let us suppose  that $\varphi^{C_1}\circ \psi^{C_1}$, $g$, $h$ and $(\psi^{C_1})^{-1}$ are isotopic to  the following compositions of positive Dehn twists,
\begin{align*}
\varphi^{C_1}\circ \psi^{C_1}&\sim D_{a_1}\circ\dots \circ D_{a_{n}} \\
g&\sim D_{a_{n+1}}\circ\dots \circ D_{a_{m}}\\
h&\sim D_{a_{m+1}}\circ\dots \circ D_{a_{l}}\\
(\psi^{C_1})^{-1}&\sim D_{a_{l+1}}\circ\dots \circ D_{a_{k}}
\end{align*} around curves $a_i$ in $R_1$.
Define 
\begin{align*}
\PP^1 &=  \{1,\dots, n,l+1,\dots,k\} \\
\PP^2 &=  \{n+1,\dots,l\} \\
\PP^3 &=\{1,\dots,k\},
\end{align*}
and set $\NN^i=\emptyset$ for $i=1,2,3$.
Choose real numbers \[-3/4<t_{k}<\dots<t_{m+1}<-1/4<1/4<t_{m}<\dots<t_1<3/4.\] For $i\in\PP^2$, let $b_i$ be the curve in $R_2$ given by \[b_i := \varphi_-^{C_1}\circ \psi^{C_1}(a_i),\] and note that \begin{align*}
 \varphi^{C_2}\circ\psi^{C_2}=(\varphi_-^{C_1}\circ \psi^{C_1})\circ g\circ  (\varphi_-^{C_1}\circ \psi^{C_1})^{-1}&\sim D_{b_{n+1}}\circ\dots \circ D_{b_{m}},\\
 (\psi^{C_2})^{-1} =(\varphi_-^{C_1}\circ \psi^{C_1})\circ h\circ  (\varphi_-^{C_1}\circ \psi^{C_1})^{-1} &\sim D_{b_{m+1}}\circ\dots \circ D_{b_{l}}.
 \end{align*} 
This follows from the well-known relation  \[D_{f(a)} = f\circ D_a\circ f^{-1},\] which holds for any smoothly embedded curve $a\subset R_1$ and any diffeomorphism $f:R_1\rightarrow R_2$.

For $u=1,3$, we will denote by $(Y_1)^u_{+}$ the 3-manifold  obtained from $Y_1$ by performing $-1$ surgeries on the curves  $r_1(a_i\times \{t_i\})$ for all $i\in  \PP^u$ and by $X^u_+$  the corresponding 2-handle cobordism from $Y_1$ to $(Y_1)^u_{+}$. We will  denote by $(Y_2)^2_{+}$ the 3-manifold  obtained from $Y_2$ by performing $-1$ surgeries on the curves  $r_2(b_i\times \{t_i\})$ for all $i\in  \PP^2$ and by $X^2_+$  the corresponding 2-handle cobordism from $Y_2$ to $(Y_2)^2_{+}$. Our task is then to show that the map
\[\Psit^g_{\data_1,\data_3}=\Theta_{(Y_1)^3_{+} Y_3}^{C_3}\circ \HMtoc(X^3_+|R_1;\Gamma_{\nu})\] is $\RR^\times$-equivalent to the composition
\[\Psit^g_{\data_2,\data_3}\circ\Psit^g_{\data_1,\data_2} =\Theta_{(Y_2)^2_{+} Y_3}^{C_2}\circ \HMtoc(X^2_+|R_2;\Gamma_{\nu}) \circ \Theta_{(Y_1)^1_{+}Y_2}^{C_1}\circ \HMtoc(X^1_+|R_1;\Gamma_{\nu}).\] Consider the composite $(X^2_+,\nu)\circ(X^1_+,\nu)$ formed by gluing along $(Y_1)^1_{+}\cong Y_2$ via a diffeomorphism in the isotopy class used to define $\Theta_{(Y_1)^1_{+}Y_2}^{C_1}$ and suppose   we identify the boundary component $(Y_2)^2_{+}$  with $Y_3$ via  a diffeomorphism in the isotopy class used to define $\Theta_{(Y_2)^2_{+}Y_3}^{C_2}$. Likewise, consider the decorated cobordism  $(X^3_+,\nu)$ with   boundary identification $(Y_1)^3_{+}\cong Y_3$ given by a diffeomorphism in the isotopy class used to define $\Theta_{(Y_1)^3_{+}Y_3}^{C_3}$. It is not hard to see that, with these boundary identifications, $(X^2_+,\nu)\circ(X^1_+,\nu)$ is isomorphic to $(X^3_+,\nu)$. This is because $\varphi_-^{C_1}\circ\psi^{C_1}$ sends each $a_i$ to  $b_i$ (preserving framings) and because \begin{equation*}\label{eqn:psinew}r_3\circ(\varphi^{C_3}_-\circ \psi^{C_3})\circ r_1^{-1} = r_3\circ(\varphi^{C_2}_-\circ \psi^{C_2})\circ r_2^{-1} \circ r_2\circ(\varphi^{C_1}_-\circ \psi^{C_1})\circ r_1^{-1},\end{equation*} which follows from (\ref{C3C1C2}) and the definition of $\psi^{C_3}$ above. It follows that $\Psit^g_{\data_1,\data_3} \doteq \Psit^g_{\data_2,\data_3}\circ\Psit^g_{\data_1,\data_2}$, as desired, completing the proof of Theorem \ref{thm:samegenustransitive}.
\end{proof}

\begin{remark}
For a genus $g$ marked closure $\data$ of $(M,\gamma)$, the map \[\Psit^g_{\data,\data}:\SHMt^g(\data)\to\SHMt^g(\data)\] is $\RR^\times$-equivalent to the identity.
\end{remark}

The modules in $\{\SHMt^g(\data)\}$ and maps in $\{\Psit^g_{\data,\data'}\}$ therefore define a projectively transitive system of $\RR$-modules.\footnote{The collection of marked closures, even  of a fixed genus, is a proper class rather than a set and so cannot technically serve as the indexing object for a projectively transitive system. One can remedy this by requiring that $Y$ and $R$ be submanifolds of Euclidean space. We will not worry about such issues in any case.} 

\begin{definition}
\label{def:tsmhg} The \emph{twisted sutured monopole homology of $(M,\gamma)$ in genus $g$} is  the projectively transitive system  of $\RR$-modules defined by  $\{\SHMt^g(\data)\}$ and  $\{\Psit^g_{\data,\data'}\}$. We will denote this system by  $\SHMtfun^g(M,\gamma)$.
\end{definition}

\subsection{Genera Differ by One}
\label{ssec:differbyone}Now, suppose   $\data$ and $\data'$ are marked closures of $(M,\gamma)$ with $g(\data')=g(\data)+1=g+1$. Below, we define the  maps 
\begin{align*}
\Psit_{\data,\data'}=\Psit_{\data,\data'}^{g,g+1}&:\SHMt^{g}(\data)\to\SHMt^{g+1}(\data')\\
\Psit_{\data',\data}=\Psit_{\data',\data}^{g+1,g}&:\SHMt^{g+1}(\data')\to\SHMt^{g}(\data).
\end{align*}
For the sake of exposition, let 
\begin{align*}
\data&= \data_1 = (Y_1,R_1,r_1,m_1,\eta_1)\\
\data'& = \data_4 = (Y_4,R_4,r_4,m_4,\eta_4).
\end{align*}
To define $\Psit^{g,g+1}_{\data,\data'}=\Psit^{g,g+1}_{\data_1,\data_4}$, we start by choosing an auxiliary marked closure \[\data_3 = (Y_3,R_3,r_3,m_3,\eta_3),\] with $g(\data_3)=g(\data_4)=g+1,$ which satisfies the following conditions:
\begin{enumerate}
\item there exist disjoint, oriented,  embedded tori  $T_1,T_2\subset Y_3\ssm\Img(m_3)$ 
which cut $Y_3$ into two pieces whose closures  $Y_3^1,Y_3^2$ satisfy \[\partial Y_3^1 = T_1\cup T_2 = -\partial Y_3^2\,\,\,\,{\rm and} \,\,\,\,m_3(M)\subset Y_3^1;\]
\item  each $T_i$ intersects $r_3(R_3\times[-1,1])$ in  an oriented annulus $r_3(c_i\times[-1,1])$, where $c_1,c_2\subset R_3$ are oriented,  embedded curves which cut $R_3$ into two pieces whose closures $R_3^1,R_3^2$ satisfy \[\partial R_3^1 = c_1\cup c_2=-\partial R_3^2\,\,\,\,{\rm and }\,\,\,\, R_3^2\,\cong\, \Sigma_{1,2},\] where $\Sigma_{1,2}$ is a genus one surface with two boundary components;
\item $\eta_3$ intersects each  $R_3^i$ in an oriented, non-boundary-parallel, properly embedded  arc $\eta_3^i$. 
\end{enumerate}
Figure \ref{fig:cutready} shows a portion of $Y_3$ near some $r_3(R_3\times\{t\})$.
 It is not difficult to  construct   such a closure; we leave this as an exercise for the reader. We will refer to  a marked closure $\data_3$ satisfying the above conditions as a \emph{cut-ready} closure with respect to the tori $T_1,T_2$. 
 
 \begin{figure}[ht]
\labellist
\tiny
\pinlabel $Y_3^2$ at 810 300
\pinlabel $Y_3^1$ at 500 56
\tiny \hair 2pt
\pinlabel $T_1$ at 92 4
\pinlabel $T_2$ at 228 360
\pinlabel $\eta_3^1$ at 513 257
\pinlabel $\eta_3^2$ at 789 170
\pinlabel $R_3^1$ at 463 290
\pinlabel $R_3^2$ at 770 40
\pinlabel $c_1$ at 563 83
\pinlabel $c_2$ at 641 207

\endlabellist
\centering
\includegraphics[width=13cm]{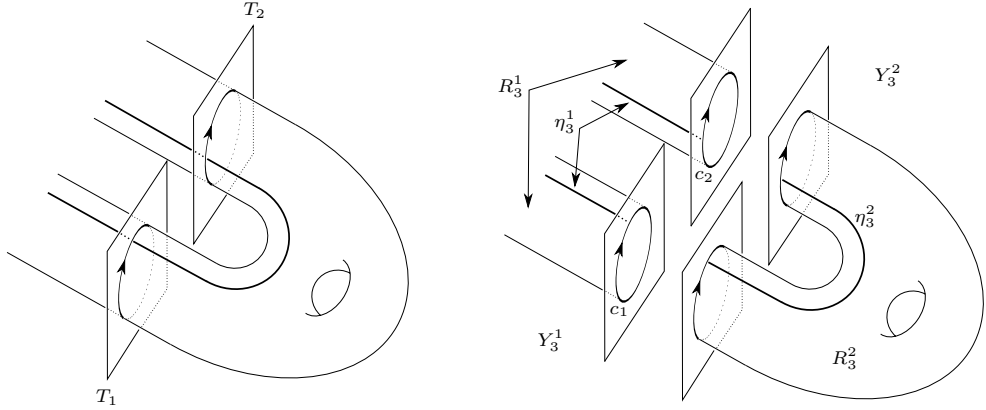}
\caption{Left, a portion of $Y_3$ and $T_1,T_2$ near some $r_3(R_3\times\{t\})$. Right, portions of  the manifolds $Y_3^1, Y_3^2$ obtained by cutting $Y_3$ open along $T_1,T_2$. We have labeled $r_3(R_3^i\times\{t\})$ and $r_3(\eta_3^i\times\{t\})$ simply by $R_3^i$ and $\eta_3^i$.}
\label{fig:cutready}
\end{figure}

The following lemma will be important in a  bit.

\begin{lemma}
\label{lem:product}The piece $Y_3^2$ is diffeomorphic to the mapping torus of some  diffeomorphism of the surface $R_3^2\cong\Sigma_{1,2}$.
\end{lemma}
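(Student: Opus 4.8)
The plan is to realize $Y_3^2$ as the total space of a surface bundle over $S^1$ with fiber $R_3^2\cong\Sigma_{1,2}$; such a bundle is exactly the mapping torus of its monodromy. The first step is to decompose $Y_3^2$. Since $T_1,T_2$ are disjoint from $\Img(m_3)$ and $m_3(M)\subset Y_3^1$, the submanifold $m_3(M)$ lies in $\inr(Y_3^1)$, so $Y_3^2$ is disjoint from $\Img(m_3)$. Let $\widehat m_3\colon M\cup_h F\times[-1,1]\to Y_3\ssm\inr(\Img(r_3))$ be the diffeomorphism extending $m_3$ (Definition \ref{def:smoothclosure}(4a)), and put $N:=\widehat m_3(F\times[-1,1])\cong F\times[-1,1]$, so that $Y_3\ssm\inr(\Img(r_3))=\Img(m_3)\cup N$ with $\Img(m_3)\cap N=\widehat m_3(\partial F\times[-1,1])$. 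Then
\[
Y_3^2 \;=\; r_3(R_3^2\times[-1,1]) \;\cup\; N_3^2, \qquad N_3^2:=Y_3^2\cap N,
\]
and the two pieces meet precisely along the two copies of $\Sigma_{1,2}$ given by $r_3(R_3^2\times\{\pm1\})$. Here $N_3^2$ is the closure of the component of $N\ssm(A_1\cup A_2)$ disjoint from $\widehat m_3(\partial F\times[-1,1])$, where $A_i:=\overline{T_i\ssm r_3(c_i\times[-1,1])}=T_i\cap N$. Each $c_i$ is non-separating in $R_3$ (cutting $R_3$ along $c_i$ alone leaves it connected, since $R_3^1$ and $R_3^2$ remain joined along the other curve $c_{3-i}$), hence essential in $R_3$ and in the subsurface of $R_3$ identified with $F$; using incompressibility of the faces $F\times\{\pm1\}$ in $F\times[-1,1]$ one checks that each $A_i$ is a properly embedded annulus with one boundary circle in each of $\widehat m_3(F\times\{-1\})$ and $\widehat m_3(F\times\{1\})$.

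The heart of the argument is to identify $N_3^2\cong\Sigma_{1,2}\times[-1,1]$ with the gluing faces corresponding to $\Sigma_{1,2}\times\{\pm1\}$. Each $A_i$ is an incompressible annulus in the product $N\cong F\times[-1,1]$ which meets both boundary surfaces and hence is not boundary-parallel. By the standard classification of incompressible surfaces in a product $F\times I$ (Waldhausen), there is an ambient isotopy of $N$, preserving each of $F\times\{-1\}$ and $F\times\{1\}$ setwise, carrying $A_1\sqcup A_2$ to a union of vertical annuli $(c_1'\sqcup c_2')\times[-1,1]$. Cutting $F\times[-1,1]$ along those vertical annuli yields a disjoint union of products $F''_j\times[-1,1]$; the component disjoint from $\partial F\times[-1,1]$ is the image of $N_3^2$, and since it contains the face $r_3(R_3^2\times\{1\})\cong\Sigma_{1,2}$, its base surface $F''$ is diffeomorphic to $\Sigma_{1,2}$. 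Thus $N_3^2\cong\Sigma_{1,2}\times[-1,1]$, with $r_3(R_3^2\times\{\pm1\})$ identified with $\Sigma_{1,2}\times\{\pm1\}$.

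Finally I would assemble the bundle. Both $r_3(R_3^2\times[-1,1])$ and $N_3^2$ are copies of $\Sigma_{1,2}\times[-1,1]$ fibered by $\Sigma_{1,2}$-slices, and $Y_3^2$ is formed by gluing them along both pairs of horizontal faces; on each gluing surface both slice-fibrations restrict to that surface being a single fiber, so they patch together to a fibration $Y_3^2\to S^1$ with fiber $R_3^2\cong\Sigma_{1,2}$. Equivalently, gluing two copies of $\Sigma_{1,2}\times I$ along both ends produces the mapping torus of the composition of the two gluing diffeomorphisms of $\Sigma_{1,2}$. Hence $Y_3^2$ is the mapping torus of a diffeomorphism of $R_3^2\cong\Sigma_{1,2}$, as claimed. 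The main obstacle is the middle step — showing that $N_3^2$ is a product — which rests on the classification of incompressible annuli in $F\times I$; the remainder is bookkeeping about how the cut-ready closure $\data_3$ is glued together.
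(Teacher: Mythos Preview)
Your proof is correct and follows essentially the same approach as the paper's: decompose $Y_3^2$ as the union of $r_3(R_3^2\times[-1,1])$ with a piece $N_3^2$ of the product region $F\times[-1,1]$, show $N_3^2\cong\Sigma_{1,2}\times[-1,1]$ by straightening the annuli $A_1,A_2$ to vertical annuli, and conclude that the union of two products glued along both ends is a mapping torus. The only cosmetic difference is in the straightening step: the paper argues directly via Baer's theorem (the boundary curves $\gamma_i,\gamma_i'$ of $f^{-1}(A_i)$ are freely homotopic through the annulus, hence isotopic) and then appeals to the argument of Proposition~\ref{prop:diff} in Appendix~\ref{sec:appendixa}, whereas you invoke Waldhausen's classification of incompressible annuli in $F\times I$; these are interchangeable here.
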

\begin{proof}
$Y_3^2$ is the union of $r_3(R_3^2\times [-1,1])$ with a portion of $Y_3\ssm\inr(\Img(r_3))\ssm\Img(m_3).$ We can assume  the latter portion  is contained in $Y_3\ssm\inr(\Img(r_3))\ssm N$ for some neighborhood $N$ of $\Img(m_3)$. By Definition \ref{def:smoothclosure}, \[Y_3\ssm\inr(\Img(r_3))\ssm \inr(\Img(m_3))\] is homeomorphic to  $F\times[-1,1]$ for some  compact surface $F$ with boundary. As in the proof of Lemma \ref{lem:compid}, we may therefore assume that  $Y_3\ssm\inr(\Img(r_3))\ssm N$ is diffeomorphic to  $F'\times[-1,1]$, where $F'$ is some compact surface with boundary, homeomorphic to $F$. Let 
 \[f:F'\times[-1,1]\rightarrow Y_3\ssm\inr(\Img(r_3))\ssm N\] be such a diffeomorphism.  Each $T_i$ intersects $f(F'\times[-1,1])$ in an annulus $A_i$ with boundary on both $f(F'\times\{+1\})$ and $f(F'\times\{-1\})$. In fact, note that \[\partial A_i = f(\gamma_i\times\{+1\})\cup f(\gamma_i'\times\{-1\}),\]  where  $\gamma_i \times \{+1\}= f^{-1}(r_3(c_i\times\{-1\}))$ and $\gamma_i'\times\{-1\}= f^{-1}(r_3(c_i\times\{+1\}))$. Using the annulus $f^{-1}(A_i)$ we see that $\gamma_i'$ and $\gamma_i$ are freely homotopic, hence they are isotopic by a theorem of Baer (see \cite[Proposition 1.10]{fm}) and so we can assume (by changing $f$ if necessary) that $\gamma_i=\gamma_i'$. We can then assume, as in the proof of Proposition \ref{prop:diff}, that $A_i$ is the vertical annulus $A_i = f(\gamma_i\times[-1,1])$. It follows that \[Y_3^2\cap (Y_3\ssm\inr(\Img(r_3))\ssm\Img(m_3)) = f(\Sigma_{1,2}\times[-1,1]),\] where $\Sigma_{1,2}$ is the genus one, two boundary component subsurface of $F'$ given by \[\Sigma_{2,1}=f^{-1}(r_3(R_3^2\times\{-1\})).\] So, $Y_3^2$ is the union \[Y_3^2=f(\Sigma_{1,2}\times[-1,1])\cup r_3(R_3^2\times[-1,1]),\] which is diffeomorphic to a mapping torus as claimed. 
\end{proof}

Next, we will form a marked closure $\data_2=(Y_2,R_2,r_2,m_2,\eta_2)$ from $\data_3$ with $g(\data_2) = g(\data_1)=g$, where $Y_2$ is the manifold obtained from $Y_3^1$ by gluing its boundary components $T_1,T_2$ together. We will then construct an isomorphism \begin{equation}\label{eqn:23}\Psit^{g,g+1}_{\data_2,\data_3}:\SHMt^g(\data_2)\rightarrow\SHMt^{g+1}(\data_3),\end{equation} and define $\Psit^{g,g+1}_{\data_1,\data_4}$ to be the composition \[\Psit^{g,g+1}_{\data_1,\data_4}:=\Psit^{g+1}_{\data_3,\data_4}\circ\Psit^{g,g+1}_{\data_2,\data_3}\circ\Psit^g_{\data_1,\data_2}.\] We describe the construction of $\data_2$ below.

Let  $p_1 = c_1\cap \eta_3$ and $p_2 = c_2\cap \eta_3$ and choose an orientation-reversing diffeomorphism \[f:c_1\rightarrow c_2\] which sends $p_1$ to $p_2$. Next, choose an orientation-reversing diffeomorphism  \[F:T_1\rightarrow T_2\] which restricts to $r_3\circ (f\times id)\circ r_3^{-1}$ on $r_3(c_1\times[-1,1])$.  For $i=1,2$, let $\bar{Y}_3^i$ be the manifold obtained from $Y_3^i$ by gluing its boundary components together by  $F$.  Similarly, let $\bar{R}_3^i$ be the surface obtained from $R_3^i$ by gluing its boundary components together by $f$ and let $\bar\eta_3^i\subset\bar{R}_3^i$ be the oriented curve obtained from $\eta_3^i$ in this gluing.  For the latter gluing, we use collar neighborhoods of $c_1,c_2\subset \partial R_3^i$ which come from tubular neighborhoods 
\begin{align*}
n_1:c_1\times[-\epsilon,\epsilon]\rightarrow R_3\\
n_2:c_2\times[-\epsilon,\epsilon]\rightarrow R_3
\end{align*}
of $c_1,c_2\subset R_3$ such that $n_i$  sends each $x\times\{0\}$ to $x$ and maps  $p_i\times[-\epsilon,\epsilon]$ into $\eta_3$. This ensures that $\bar\eta_3^i$ is a smooth curve in $\bar{R}_3^i$. Note that $r_3$ naturally induces maps \[\bar{r}_3^i:\bar{R}_3^i\times[-1,1]\rightarrow \bar{Y}_3^i.\] To ensure that  each $\bar{r}_3^i$ is smooth, we perform the initial gluing using collar neighborhoods of $T_1,T_2\subset \partial Y_3^i$ which come from tubular neighborhoods
\begin{align*}
N_1&:T_1\times[-\epsilon,\epsilon]\rightarrow Y_3\\
N_2&:T_2\times[-\epsilon,\epsilon]\rightarrow Y_3
\end{align*}
of $T_1,T_2\subset Y_3$ that sends each $x\times\{0\}$ to $x$ and are compatible with $n_1,n_2$. For this compatibility, we require  that  $r_3^{-1}\circ N_i\circ (r_3\times id)$ restricts to a map \[c_i\times[-1,1]\times[-\epsilon,\epsilon]\rightarrow R_3\times[-1,1]\] and sends each $(x,t,s)$ to $n_i(x,s)\times\{t\}$.

Note that $\bar\eta_3^i$ is a homologically essential curve in $\bar{R}_3^i$ and $g(\bar{R}_3^1)=g(R_3)-1=g$. Moreover, it follows from Lemma \ref{lem:product} that  $\bar{Y}_3^2$ is diffeomorphic to  the mapping torus of some diffeomorphism of the closed genus two surface $\bar{R}_3^2$. Let \[Y_2=\bar{Y}_3^1,\,\,\,R_2=\bar{R}_3^1,\,\,\,\eta_2=\bar{\eta}_3^1,\,\,\,r_2=\bar{r}_3^1,\] and note that $m_3$   induces an embedding 
\[m_2:M\hookrightarrow Y_2.\]  
These define a marked closure $\data_2 = (Y_2,R_2,r_2,m_2,\eta_2)$ of $(M,\gamma)$ with $g(\data_2)=g(\data_1)=g$, as promised above. We  will refer to $\data_2$ as the \emph{cut-open} closure associated to $\data_3$. Below, we define the isomorphism $\Psit^{g,g+1}_{\data_2,\data_3}$  mentioned in (\ref{eqn:23}), following  Kronheimer and Mrowka's approach in \cite{km4}. 


 Let $S$ be the 2-dimensional saddle used to define $\CM$ in Section \ref{sec:prelim}. The map $\Psit^{g,g+1}_{\data_2,\data_3}$ is defined in terms of a merge-type splicing  cobordism $\mathcal{W}$ which is built by gluing together the cornered 4-manifolds 
 \begin{align*}
\mathcal{W}_1&=Y_3^1 \times [0,1],\\
\mathcal{W}_2&=T_1\times S, \\
\mathcal{W}_3&=Y_3^2\times[0,1],
\end{align*} 
along the horizontal portions of their boundaries.
Specifically, we glue $\mathcal{W}_2$ to $\mathcal{W}_1$ according to the maps \begin{align*}
F\times id&:T_1\times H_1\rightarrow T_2\times [0,1],\\
id\times id&:T_1\times H_2\rightarrow T_1\times[0,1], 
\end{align*} and then glue $\mathcal{W}_3$ to $\mathcal{W}_1\cup \mathcal{W}_2$  according to  \begin{align*}
F^{-1}\times id&: T_2\times[0,1] \rightarrow T_1\times H_3,\\
id\times id&: T_1\times[0,1]\rightarrow T_1\times H_4,
\end{align*} 
as depicted schematically in Figure \ref{fig:mergeW}.
\begin{figure}[ht]
\labellist
\tiny \hair 2pt
\small\pinlabel $\mathcal{W}_2$ at 493 122
\pinlabel $\mathcal{W}_3$ at 493 23
\pinlabel $\mathcal{W}_1$ at 493 229
\pinlabel $W_2$ at 338 50
\pinlabel $W_3$ at 613 122
\pinlabel $W_1$ at 338 210
\tiny\pinlabel $id\times id$ at 446 185
\pinlabel $F\times id$ at 423 203
\pinlabel $F^{-1}\times id$ at 429 49
\pinlabel $id\times id$ at 446 28
\pinlabel $0$ at 61 12
\pinlabel $1$ at 253 12
\small\pinlabel $S$ at 150 124
\tiny\pinlabel $H_1$ at 149 219
\pinlabel $H_2$ at 185 200
\pinlabel $H_3$ at 149 63
\pinlabel $H_4$ at 185 45
\pinlabel $V_1$ at 66 172
\pinlabel $V_2$ at 68 77
\pinlabel $V_3$ at 199 141
\pinlabel $V_4$ at 250 140
\endlabellist
\centering
\includegraphics[width=12.5cm]{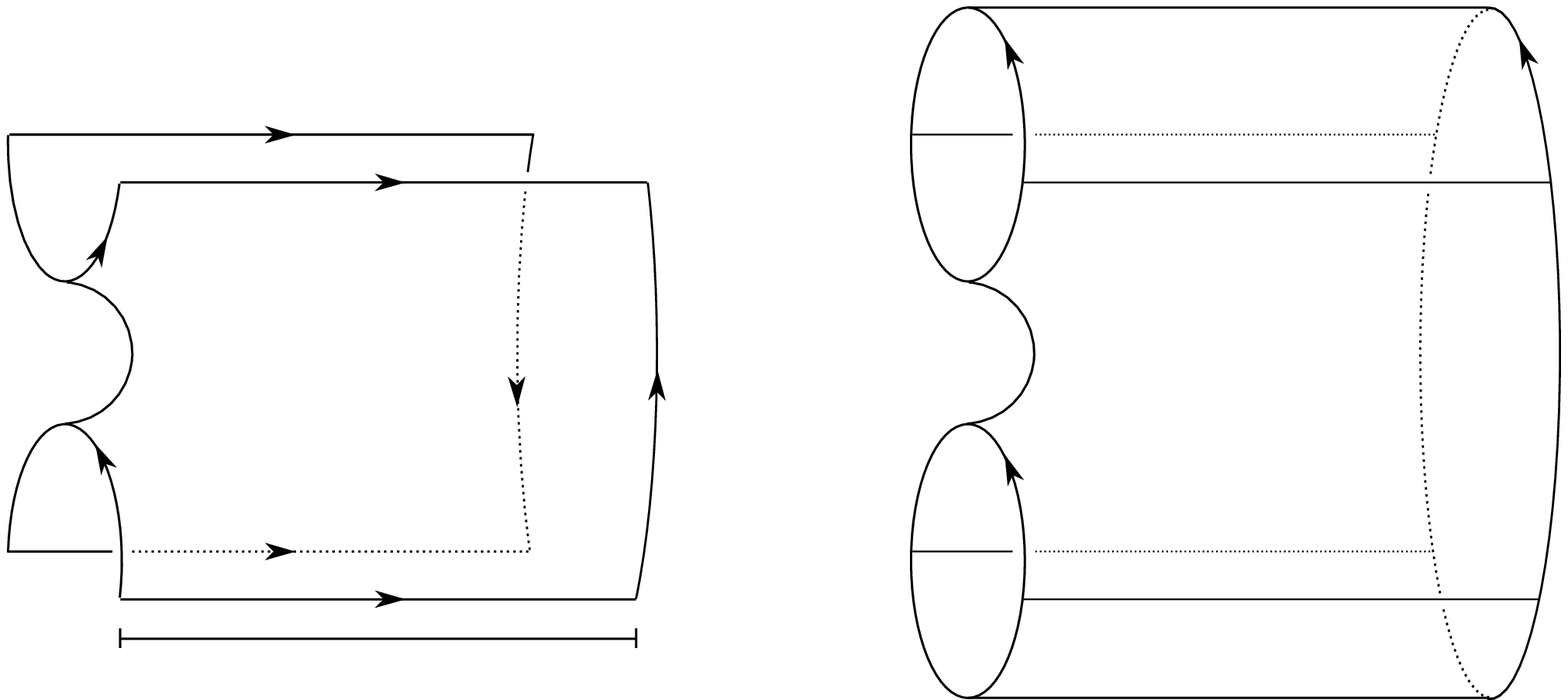}
\caption{Left, the 2-dimensional saddle $S$. Right, a schematic of $\mathcal{W}$.}
\label{fig:mergeW}
\end{figure}

Note that $\partial \mathcal{W}=-W_1\sqcup -W_2 \sqcup W_3$, where
\begin{align*}
W_1&=Y_3^1\,\cup\, T_1\times V_1,\\
W_2&= Y_3^2\, \cup \,T_1\times V_2,\\
W_3&= Y_3^1 \,\cup \,T_1\times V_3\,\cup \,Y_3^2\,\cup \,T_1\times V_4.
\end{align*} For ease of exposition, let us write
\begin{align*}S_3^i &= r_3(R_3^i\times\{0\})\\
\theta_3^i &= r_3(\eta_3^i\times\{0\})\\
d_1 &=r_3(c_1\times\{0\})\\
q_1&=r_3(p_1\times\{0\}).
\end{align*}
Let $\bar{S}_3^1, \bar{S}_3^2, S_3$ be the closed surfaces in $W_1, W_2, W_3$ given by
\begin{align*}
\bar{S}_3^{1} &= S_3^1\,\cup \,d_1\times V_1,\\
\bar{S}_3^{2} &= S_3^2\,\cup \,d_1\times V_2,\\
S_3 &= S_3^1\,\cup\,d_1\times V_3\,\cup \,S_3^2\,\cup \,d_1\times V_4,
 \end{align*}
and let $\bar{\theta}_3^{1}, \bar{\theta}_3^{2}, \theta_3$ be the closed curves in $\bar{S}_3^1, \bar{S}_3^2, S_3$ given by
\begin{align*}
\bar{\theta}_3^{1} &= \theta_3^1\,\cup \,\{q_1\}\times V_1,\\
\bar{\theta}_3^{2} &= \theta_3^2\,\cup \,\{q_1\}\times V_2,\\
\theta_3 &= \theta_3^1\,\cup\,\{q_1\}\times V_3\,\cup \,\theta_3^2\,\cup \,\{q_1\}\times V_4.
 \end{align*}
Finally, let $\nu$ be the 2-dimensional cobordism in $\mathcal{W}$ from $\bar{\theta}_3^{1}\sqcup\bar{\theta}_3^{2}$ to $\theta_3 $ given by \[\nu=\theta_3^1\times[0,1]\,\cup \,\{q_1\}\times S\, \cup \,\theta_3^2\times[0,1].\]  
Note that there are canonical isotopy classes of diffeomorphisms 
\begin{align}
\label{eqn:bident1}(W_1,\bar{S}_3^{1},\bar\theta_3^1)&\rightarrow (\bar{Y}_3^1,\bar{r}^1_3(\bar{R}_3^{1}\times\{0\}),\bar{r}^1_3(\bar\eta_3^1\times\{0\}))=(Y_2,r_2(R_2\times\{0\}),r_2(\eta_2\times\{0\})) \\
\label{eqn:bident2}(W_2,\bar{S}_3^{2},\bar\theta_3^2)&\rightarrow (\bar{Y}_3^2,\bar{r}^2_3(\bar{R}_3^{2}\times\{0\}),\bar{r}^2_3(\bar\eta_3^2\times\{0\}))\\
\label{eqn:bident3}(W_3,{S}_3,\theta_3) &\rightarrow (Y_3,r_3(R_3\times\{0\}),r_3(\eta_3\times\{0\})).
\end{align} Moreover,  $S_3$ is cobordant to $\bar{S}_3^{1}\sqcup \bar{S}_3^{2}$ in $\mathcal{W}$ via the cobordism \[ S_3^1\times[0,1]\,\cup \,d_1\times S\,\cup\,S_3^2\times[0,1]\] and  \[2g(S_3)-2 = 2g(\bar{S}_3^{1}) -2 + 2g(\bar{S}_3^{2})-2.\]
Therefore, $\mathcal{W}$ gives rise to a map
\begin{equation*}\label{eqn:Wmap}\HMtoc(\mathcal{W}|S_3;\Gamma_{\nu}): \HMtoc(Y_2|R_2;\Gamma_{\eta_2})\otimes_\RR\HMtoc(\bar{Y}_3^2|\bar{R}_3^{2};\Gamma_{\bar\eta_3^2})\rightarrow \HMtoc(Y_3|R_3;\Gamma_{\eta_3}),
\end{equation*}
which is shown to be an isomorphism in \cite{km4}. Since $\bar{Y}_3^2$ is the mapping torus of some  diffeomorphism of a genus two surface, $\bar{r}^2_3(\bar{R}_3^{2}\times\{0\})$ is a fiber in this mapping torus, and $\bar{r}^2_3(\bar\eta_3^2\times\{0\})$ is a homologically essential curve in this fiber, we have that \begin{equation}\label{eqn:RRident}\HMtoc(\bar{Y}_3^2|\bar{R}_3^{2};\Gamma_{\bar\eta_3^2})\cong \RR,\end{equation} as explained in Example \ref{ex:mappingtori}.  Choose any  such identification, and  define \[\Psit^{g,g+1}_{\data_2,\data_3}(-):=\HMtoc(\mathcal{W}|S_3;\Gamma_{\nu})(-\otimes 1).\] Note that this map is  only well-defined up to multiplication by a unit in $\RR$ since we do not pin down the identification in (\ref{eqn:RRident}). 

\begin{remark}In constructing the smooth  4-manifold $\mathcal{W}$, we use the collar neighborhoods of the horizontal boundary components of $\mathcal{W}_1$ and $\mathcal{W}_3$ naturally induced by the collars of $\partial Y_3^1$ and $\partial Y_3^2$. For the horizontal boundary components of $\mathcal{W}_2$, we use  collars  induced by collars of the horizontal boundary components of $S$. So, the only choice involved in defining $\mathcal{W}$ is that of the collars of $H_1,\dots,H_4$. But, for any two  sets of such collars, there is a unique isotopy class of diffeomorphisms of $S$ which sends one  to the other. It follows that the isomorphism class of $(W,\nu)$ as a cobordism from $(Y_2,r_2(\eta_2\times\{0\}))\,\sqcup\,(\bar{Y}_3^2,r_3(\bar{\eta}_3^{2}\times\{0\}))$ to $(Y_3,r_3(\eta_3\times\{0\}))$ is  independent of the choice of collar neighborhoods of $H_1,\dots,H_4$. The map $\Psit^{g,g+1}_{\data_2,\data_3}$ is therefore also independent of this choice, up to multiplication by a unit in $\RR$.
\end{remark}

We  now define the maps $\Psit^{g,g+1}_{\data,\data'}$ and $\Psit^{g+1,g}_{\data',\data}$ as follows.

\begin{definition}
\label{def:Psi23gplus} The map $\Psit^{g,g+1}_{\data,\data'}$ is given by
\[\Psit^{g,g+1}_{\data,\data'}=\Psit^{g,g+1}_{\data_1,\data_4}:=\Psit^{g+1}_{\data_3,\data_4}\circ\Psit^{g,g+1}_{\data_2,\data_3}\circ\Psit^g_{\data_1,\data_2}.\]
\end{definition} 

\begin{definition}
\label{def:Psi23gminus} The map $\Psit^{g+1,g}_{\data',\data}$ is given by
\[\Psit^{g+1,g}_{\data',\data}:=(\Psit^{g,g+1}_{\data,\data'})^{-1}.\]
\end{definition} 

\begin{remark}
\label{rmk:changesame}
It follows immediately from these definitions and Theorem \ref{thm:samegenustransitive} that if $\data,\data',\data''$ are marked closures of $(M,\gamma)$ with $g(\data'') = g(\data') = g(\data)+1=g+1$, then
\begin{align*}
\Psit^{g,g+1}_{\data,\data''}&\doteq\Psit^{g+1}_{\data',\data''}\circ\Psit^{g,g+1}_{\data,\data'}\\
\Psit^{g+1,g}_{\data'',\data}&\doteq\Psit^{g+1,g}_{\data',\data}\circ\Psit^{g+1}_{\data'',\data'}.
\end{align*}
\end{remark} 

Next, we prove that the $\RR^\times$-equivalence classes  of the  maps $\Psit^{g,g+1}_{\data,\data'}$ and $\Psit^{g+1,g}_{\data',\data}$ are well-defined. This follows from the theorem below.

\begin{theorem}
\label{thm:differbyoneinvariance} The map $\Psit^{g,g+1}_{\data,\data'}$ is independent of the choices made in its construction, up to multiplication by a unit in $\RR$.
\end{theorem}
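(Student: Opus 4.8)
The plan is to enumerate the choices entering the construction of $\Psit^{g,g+1}_{\data,\data'}=\Psit^{g+1}_{\data_3,\data_4}\circ\Psit^{g,g+1}_{\data_2,\data_3}\circ\Psit^g_{\data_1,\data_2}$ and to dispose of each in turn, reducing everything to results already proved. The choices are: (i) the auxiliary cut-ready closure $\data_3$, together with the tori $T_1,T_2$; (ii) the gluing diffeomorphisms $f:c_1\to c_2$ and $F:T_1\to T_2$ used to produce the cut-open closure $\data_2$; (iii) the identification $\HMtoc(\bar Y_3^2|\bar R_3^2;\Gamma_{\bar\eta_3^2})\cong\RR$ used inside $\Psit^{g,g+1}_{\data_2,\data_3}$; and (iv) the collar neighborhoods used to smooth $\mathcal W$. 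Choice (iii) is handled by the last remark in the excerpt, which already observes that changing the identification in $(\ref{eqn:RRident})$ multiplies $\Psit^{g,g+1}_{\data_2,\data_3}$ by a unit; choice (iv) is handled by the remark preceding it, which shows the isomorphism class of the splicing cobordism $(\mathcal W,\nu)$ is independent of the collars. So the substantive content is (i) and (ii).

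First I would treat (ii). Two choices $(f,F)$ and $(f',F')$ produce cut-open closures $\data_2$ and $\data_2'$ of the \emph{same} genus $g$, and there is an evident diffeomorphism between $Y_2$ and $Y_2'$ obtained by regarding both as regluings of $Y_3^1$; moreover the two splicing cobordisms $\mathcal W$ and $\mathcal W'$ differ only in the gluing maps along the torus boundaries. The key point is that any two orientation-reversing diffeomorphisms $T_1\to T_2$ restricting to the prescribed map on the annulus $r_3(c_1\times[-1,1])$ and sending $\bar\theta$-data correctly are isotopic through such maps — here is where Corollary \ref{cor:diff} of Appendix \ref{sec:appendixa}, applied to a torus times an interval, enters (this is flagged in the Organization section as one of the intended uses of Appendix \ref{sec:appendixa}). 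Combining this with the compatibility of the collar data, one gets that $(\mathcal W,\nu)$ and $(\mathcal W',\nu')$ are isomorphic as cobordisms after the canonical identification $Y_2\cong Y_2'$, so that $\Psit^{g,g+1}_{\data_2,\data_3}$ and $\Psit^{g,g+1}_{\data_2',\data_3}$ agree after precomposition with $\Psit^g_{\data_2',\data_2}$; functoriality of $\HMtoc$ together with Theorem \ref{thm:samegenustransitive} then packages this into the statement $\Psit^{g,g+1}_{\data_1,\data_4}$ is unchanged up to a unit. Along the way I would also verify that $\Psit^{g,g+1}_{\data,\data'}$ does not depend on which homologically essential arc pattern $\eta_3$ realizes — but that is subsumed into the choice of $\data_3$, treated next.

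The main obstacle is (i): independence of the auxiliary cut-ready closure $\data_3$ (and hence of $T_1,T_2$). Given two cut-ready closures $\data_3,\data_3'$, both of genus $g+1$, I would interpolate. The cleanest route is to show that for \emph{any} two cut-ready closures of genus $g+1$ one can find a common ``refinement'': produce a third cut-ready closure, or a chain of them, related to each by moves whose effect on $\Psit^{g,g+1}_{\data_2,\data_3}$ is controlled. Concretely, since $\data_3$ and $\data_3'$ are both closures of $(M,\gamma)$ of genus $g+1$, Theorems \ref{thm:samegenusinvariance} and \ref{thm:samegenustransitive} already give a canonical $\Psit^{g+1}_{\data_3,\data_3'}$; what must be checked is that this map intertwines the two copies of $\Psit^{g,g+1}_{\data_2,\data_3}$ and $\Psit^{g,g+1}_{\data_2',\data_3'}$, i.e. that the square built from these maps and from $\Psit^g_{\data_2,\data_2'}$ commutes up to a unit. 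I expect this to follow by the same excision/splicing-cobordism bookkeeping used in Section \ref{sec:prelim}: the composite cobordism relating the two sides is cut along a copy of $R\times S^1$ (a mapping torus piece coming from $\bar Y_3^2$), capped by copies of $R\times D^2$, and shown to split as a product cobordism on the $Y_2$-factor tensor an isomorphism on the mapping-torus factor — with the nonvanishing of the relevant relative invariant supplied by Example \ref{ex:mappingtori} and the Lefschetz-fibration argument of Appendix \ref{sec:appendixb} exactly as in the proof of Lemma \ref{lem:mappingtorusiso}. The delicate part is arranging $T_1,T_2$ and $T_1',T_2'$ to be simultaneously ``visible'' — i.e. choosing the intermediate diffeomorphisms $C$ between the complements so that both torus pairs are carried to standard position — and this is where one uses the ambient rigidity from Corollary \ref{cor:diff} to see that the resulting cobordism is well-defined up to the unique isotopy class of boundary identifications. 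Once the square commutes, the definition $\Psit^{g,g+1}_{\data_1,\data_4}=\Psit^{g+1}_{\data_3,\data_4}\circ\Psit^{g,g+1}_{\data_2,\data_3}\circ\Psit^g_{\data_1,\data_2}$ together with Theorem \ref{thm:samegenustransitive} yields the claimed independence up to multiplication by a unit in $\RR$, completing the proof.
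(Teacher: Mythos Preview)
Your overall architecture is right: the essential point is the commutativity of the square \eqref{eqn:comm2} relating $\Psit^{g,g+1}_{\data_2,\data_3}$, $\Psit^{g,g+1}_{\data_2',\data_3'}$, $\Psit^g_{\data_2,\data_2'}$, and $\Psit^{g+1}_{\data_3,\data_3'}$, and once this is known, Theorem~\ref{thm:samegenustransitive} finishes the job. But there are two genuine gaps.

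First, in your treatment of (ii) you invoke Corollary~\ref{cor:diff} for a torus times an interval. That corollary explicitly excludes the torus: $\pi_0(\Diff(T^2\times I\rel\partial))\cong\Z\times\Z$ is nontrivial, so you cannot conclude that two choices of $F$ give isomorphic splicing cobordisms in this way. The paper does not attempt to show that two choices of $F$ yield literally isomorphic $\mathcal W$; instead it absorbs the choice of $F$ (and of the tori and tubular neighborhoods) into a single comparison of arbitrary cut-ready data $(\data_3,T_i,N_i,F)$ and $(\data_3',T_i',N_i',F')$.

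Second, and more seriously, your proposed mechanism for the commutativity of that square is not the one that works. You suggest an excision argument in the style of Proposition~\ref{prop:maininvc2}: cut the composite along a copy of $R\times S^1$ coming from $\bar Y_3^2$, cap with $R\times D^2$, and invoke the Lefschetz non-vanishing of Appendix~\ref{sec:appendixb}. The paper's argument is both more elementary and more direct. One chooses the diffeomorphism $C:Y_3\ssm\inr(\Img(r_3))\to Y_3'\ssm\inr(\Img(r_3'))$ with the \emph{extra} requirement that $C$ carries the annuli $A_i=T_i\cap(Y_3\ssm\inr(\Img(r_3)))$ to $A_i'$ compatibly with $F,F'$ and the tubular neighborhoods (the existence of such a $C$ needs its own lemma). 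This forces $\varphi^C$ and $\psi^C$ to preserve $c_1\cup c_2$, so their Dehn twist factorizations can be taken around curves in $R_3\ssm(c_1\cup c_2)$ --- and this is exactly why negative twists must be allowed here (Remark~\ref{rmk:positivity3}). The $2$-handles defining $X_\pm$ are then attached along curves disjoint from $T_1,T_2$, and the same curves (split according to whether $a_i\subset R_3^1$ or $R_3^2$) define $X^{1,2}_\pm$ and hence $\Psit^g_{\data_2,\data_2'}$. Since the splicing cobordisms $\mathcal W,\mathcal W_\pm,\mathcal W'$ contain product pieces over the regions where the $2$-handles live, the composites $(X_-,\nu)\circ(\mathcal W_-,\nu)$ and $(\mathcal W,\nu)\circ(X^{1,2}_-,\nu)$ (and their analogues with $X_+$ and with $\Theta^C$) are \emph{isomorphic as cobordisms}, giving three commuting squares directly. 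No excision and no Lefschetz input is used at this stage. Your outline does gesture at aligning the tori via $C$, but then attributes the commutativity to Corollary~\ref{cor:diff} and an excision step; the real content is the careful choice of $C$ respecting the $A_i$ and the resulting disjointness of the $2$-handle attaching curves from the splicing tori.
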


\begin{proof}
The choices we made in the construction of $\Psit^{g,g+1}_{\data,\data'}$ were those  of:
\begin{enumerate}
\item the cut-ready closure $\data_3$,
\item the tori $T_1,T_2$,
\item the tubular neighborhoods $N_1, N_2$,
 \item The diffeomorphism $F:T_1\to T_2$.
 \end{enumerate}
  Let 
\begin{align*}
\data_3&=(Y_3,R_3, r_3, m_3,\eta_3)\\
\data_{3}'&=(Y_{3}',R_{3}', r_{3}', m_{3}',\eta_{3}')
\end{align*}
 be cut-ready closures of $(M,\gamma)$ with respect to tori $T_1,T_2\subset Y_3$ and $T_{1}',T_{2}'\subset Y_{3}'$ which satisfy \[g(\data_3)=g(\data_{4})=g(\data_3').\] Define the curves $c_1,c_2\subset R_3$ and $c_1',c_2'\subset R_3'$ accordingly.  Let $\data_2$ and $\data_2'$ be the cut-open closures associated to $\data_3$ and $\data_3'$, respectively, for tubular neighborhoods $N_1,N_2$ and $N_1', N_2'$ of the above tori and diffeomorphisms $F:T_1\to T_2$ and $F':T_1'\to T_2'$. We must show that \[\Psit^{g+1}_{\data_3,\data_4}\circ\Psit^{g,g+1}_{\data_2,\data_3}\circ\Psit^g_{\data_1,\data_2} \doteq\Psit^{g+1}_{\data_3',\data_4}\circ\Psit^{g,g+1}_{\data_2',\data_3'}\circ\Psit^g_{\data_1,\data_2'}.\] By Theorem \ref{thm:samegenustransitive},  the right hand side is $\RR^\times$-equivalent  to 
\[
 \Psit^{g+1}_{\data_3,\data_4}\circ\Psit^{g+1}_{\data_3',\data_3}\circ\Psit^{g,g+1}_{\data_2',\data_3'}\circ\Psit^g_{\data_2,\data_2'}\circ\Psit^g_{\data_1,\data_2}. 
\]
It therefore suffices to show that 
\[
\Psit^{g,g+1}_{\data_2,\data_3}\doteq \Psit^{g+1}_{\data_3',\data_3} \circ \Psit^{g,g+1}_{\data_2',\data_3'}\circ \Psit^g_{\data_2,\data_2'},
\]
which is equivalent to saying that the diagram 
\begin{equation}\label{eqn:comm2}\xymatrix@C=45pt@R=35pt{\SHMt^g(\data_2) \ar[r]^{\Psit^{g,g+1}_{\data_2,\data_3}}\ar[d]_{\Psit^g_{\data_2,\data_2'}}&  \SHMt^{g+1}(\data_3) \ar[d]^{\Psit^{g+1}_{\data_3,\data_3'}}\\
\SHMt^g(\data_2') \ar[r]_{\Psit^{g,g+1}_{\data_2',\data_3'}} & \SHMt^{g+1}(\data_3')}\end{equation} commutes up to multiplication by a unit in $\RR$. This commutativity is  ultimately  a consequence of the fact that the maps $\Psit^{g}_{\data_2,\data_2'}$ and $\Psit^{g+1}_{\data_3,\data_3'}$ can be defined in terms of 2-handle cobordisms where the 2-handles are attached along curves that are disjoint from the tori $T_1,T_2, T_1',T_2'$ used to construct the splicing cobordisms which go into the definitions of $\Psit^{g,g+1}_{\data_2,\data_3}$ and $\Psit^{g,g+1}_{\data_2',\data_3'}$.

To prove the commutativity of the diagram in (\ref{eqn:comm2}), we start by making   careful choices in the constructions of  $\Psit^{g+1}_{\data_3,\data_3'}$ and $\Psit^g_{\data_2,\data_2'}$. For $\Psit^{g+1}_{\data_3,\data_3'}$, we choose a diffeomorphism \[C:Y_3\ssm\inr(\Img(r_3))\rightarrow Y_3'\ssm\inr(\Img(r_3'))\] as in Subsection \ref{ssec:samegenus}, but with some additional requirements. Let $A_i, A_i'$ be the annuli \begin{align*}
A_i&=T_i\cap Y_3\ssm\inr(\Img(r_3))\\
A_i'&=T'_i\cap Y_3'\ssm\inr(\Img(r_3')),
\end{align*}
for $i=1,2$.
We require that $C$  sends each $A_i$ to $A_i'$ and the diagrams 
\begin{equation}
\label{eqn:comma1a2}\xymatrix@C=45pt@R=30pt{A_1 \ar[r]^{F}\ar[d]_{C}&  A_2 \ar[d]^{C}\\
A_1' \ar[r]_{F'} & A_2'}\end{equation}
\begin{equation}
\label{eqn:commnn'}\xymatrix@C=45pt@R=35pt{A_i \times[-\epsilon,\epsilon] \ar[r]^-{N_i}\ar[d]_{C\times id}&  Y_3\ssm\inr(\Img(r_3)) \ar[d]^{C}\\
A_i'\times[-\epsilon,\epsilon] \ar[r]_-{N_i'} & Y_3'\ssm\inr(\Img(r_3'))}
\end{equation} commute. These requirements will guarantee that $C$ naturally induces diffeomorphisms \begin{align*}
C^1&:\bar Y_3^1\ssm\inr(\Img(\bar r_3^1))\rightarrow \bar Y_3^{1\prime }\ssm\inr(\Img(\bar r_3^{1\prime }))\\
C^2&:\bar Y_3^2\ssm\inr(\Img(\bar r_3^2))\rightarrow \bar Y_3^{2\prime }\ssm\inr(\Img(\bar r_3^{2\prime })).
\end{align*} 

\begin{lemma}
There exists a diffeomorphism $C$ satisfying the requirements above.
\end{lemma}

\begin{proof}
Start with any diffeomorphism \[C_0:Y_3\ssm\inr(\Img(r_3))\rightarrow Y_3'\ssm\inr(\Img(r_3'))\] satisfying the conditions described in Subsection \ref{ssec:samegenus}. Then $C_0^{-1}(A_1')$ and $C_0^{-1}(A_2')$ are disjoint annuli in $Y\ssm\inr(\Img(r_3))\ssm N$ for some neighborhood $N$ of $\Img(m_1)$. By the discussion in the proof of Lemma \ref{lem:product}, there are diffeomorphisms 
\[
f,g:F'\times [-1,1]\rightarrow Y_3\ssm\inr(\Img(r_3))\ssm N\]
such that the $A_i$ and $C_0^{-1}(A_i')$ are vertical annuli of the form 
\begin{align*}
A_i&=f(\gamma_i\times[-1,1]),\\
C_0^{-1}(A_i')&=g(\gamma_i'\times[-1,1]).
\end{align*}
 The pairs $\gamma_1,\gamma_2$ and $\gamma_1',\gamma_2'$ each separate $F'$ into two pieces, one of which is a genus one surface with two boundary components. There is thus a diffeomorphism of $F'$ which restricts to the identity on $\partial F'$ and sends each $\gamma_i$ to $\gamma_i'$.  It follows that there is a diffeomorphism of $F'\times[-1,1]$ which restricts to the identity on $\partial F'\times[-1,1]$ and sends each $\gamma_i\times[-1,1]$ to $\gamma_i'\times[-1,1]$, and, hence, a diffeomorphism $D$ of $Y\ssm\inr(\Img(r_3))$ which restricts to the identity on $N$ and sends each $A_i$ to $C_0^{-1}(A_i')$. We can, moreover,  force  the restriction \[D|_{A_1}:A_1\to C_0^{-1}(A_1')\]  to be whatever diffeomorphism of annuli we like: the above description only requires the image of $A_1$ to be $C_0^{-1}(A'_1)$ as a set, so we are free to change $D$ by composing with any diffeomorphism supported on a neighborhood of $C_0^{-1}(A'_1)$ which fixes the image of $A_1$ setwise. By choosing this restriction carefully, we can arrange that  the diagram in (\ref{eqn:comma1a2}) commutes, where $C=C_0\circ D$. We can also arrange, by altering $D$ near the $A_i$ if necessary, that (\ref{eqn:commnn'}) commutes, where $C=C_0\circ D$. This is because there exists a diffeomorphism of $Y_3\ssm\inr(\Img(r_3))\ssm N$ which restricts to the identity outside of a neighborhood of each $A_i$ and sends any tubular neighborhood of $A_i$ to any other. So, the map $C=C_0\circ D$ satisfies the requirements above.
\end{proof}

Note that the diffeomorphisms \[\varphi^C_{\pm}:R_3\rightarrow R_3'\] send each $c_i$ to $c_i'$ (and each $p_i$ to $p_i'$). It follows that \[\varphi^{C}:R_3\rightarrow R_3\] sends each $c_i$ (and $p_i$) to itself, and we can  pick \[\psi^{C}:R_3\rightarrow R_3\] with the same property. This allows us to choose factorizations 
\begin{align}
\label{eqn:factdiffgenus1}\varphi^{C}\circ\psi^{C}&\sim D^{e_1}_{a_1}\circ\dots \circ D^{e_{n}}_{a_{n}}, \\
\label{eqn:factdiffgenus2}(\psi^{C})^{-1}&\sim D^{e_{n+1}}_{a_{n+1}}\circ\dots \circ D^{e_{m}}_{a_{m}},
\end{align} 
where the curves $a_i$ are contained in $R_3\ssm(c_1\cup c_2)$. 

\begin{remark}
\label{rmk:positivity3} In general, one cannot choose the curves $a_i$ to be disjoint from $c_1\cup c_2$ without allowing for both positive and negative Dehn twists in the factorizations (\ref{eqn:factdiffgenus1}) and (\ref{eqn:factdiffgenus2}).
\end{remark}

Note that  the associated maps
\begin{align*}
\varphi_{\pm}^{C^1}&:\bar{R}_3^1\rightarrow \bar{R}_3^{1\prime}\\
\varphi^{C^1}&:\bar{R}_3^1\rightarrow \bar{R}_3^{1}
\end{align*} are the diffeomorphisms naturally induced by the restrictions of $\varphi_{\pm}^{C}$ and $\varphi^C$ to $R_3^1$, and we likewise may choose \[\psi^{C^1}:\bar{R}_3^1\rightarrow \bar{R}_3^1\] to be the diffeomorphism  induced by the restriction of $\psi^C$ to $R_3^1$. Let 
\begin{align*}
\varphi_{\pm}^{C^2}&:\bar{R}_3^2\rightarrow \bar{R}_3^{2\prime}\\
\varphi^{C^2}&:\bar{R}_3^2\rightarrow \bar{R}_3^{2}\\
\psi^{C^2}&:\bar{R}_3^2\rightarrow \bar{R}_3^{2}
\end{align*}
be the diffeomorphisms induced by the restrictions of $\varphi_{\pm}^{C}$, $\varphi^C$ and $\psi^C$ to $R_3^2$. Since the curves $a_i$ are disjoint from $c_1\cup c_2$, each  is contained in either $R_3^1$ or $R_3^2$ and  therefore   corresponds naturally to a curve in either $\bar{R}_3^1$ or $\bar{R}_3^2$. This means we can choose factorizations for $\varphi^{C^1}\circ\psi^{C^1}$ and $(\psi^{C^1})^{-1}$ that are obtained from those in (\ref{eqn:factdiffgenus1}) and (\ref{eqn:factdiffgenus2}) by omitting the Dehn twists around the curves in $R_3^2$. We can likewise choose factorizations for $\varphi^{C^2}\circ\psi^{C^2}$ and $(\psi^{C^2})^{-1}$ obtained from those in (\ref{eqn:factdiffgenus1}) and (\ref{eqn:factdiffgenus2}) by omitting the Dehn twists around the curves in $R_3^1$.

To define $\Psit^{g+1}_{\data_3,\data_3'}$ and $\Psit^g_{\data_2,\data_2'}$, we now proceed in the usual way. Let 
\begin{align*}
\PP &= \{i\mid e_i=+1\}\\
\NN &= \{i\mid e_i=-1\}
\end{align*}  and choose real numbers \[-3/4<t_{m}<\dots<t_{n+1}<-1/4<1/4<t_{n}<\dots<t_1<3/4.\] Pick some $t_i'$ between $t_i$ and the next greatest number in this  list   for each $i\in \NN$. Let 
\begin{align*}
\PP^1 &= \PP\cap\{i\mid a_i\subset R_3^1\}\\
\NN^1 &= \NN\cap\{i\mid a_i\subset R_3^1\},\\
\PP^2 &= \PP\cap\{i\mid a_i\subset R_3^2\},\\
\NN^2 &= \NN\cap\{i\mid a_i\subset R_3^2\}.
\end{align*}  

As usual, we denote by  $(Y_3)_-$  the manifold obtained from $Y_3$ by performing $+1$ surgeries on the curves $r_3(a_i\times \{t_i\})$ for  $i\in \NN$ and by $(Y_3)_+$ the manifold obtained from $(Y_3)_-$ by performing $-1$ surgeries on the curves $r_3(a_i\times \{t_i\})$ for  $i\in \PP.$ We then have the usual   cobordisms $X_-$ and $X_+$ which induce maps 
\begin{align*}
\HMtoc(X_-|R_3;\Gamma_{\nu})&: \HMtoc((Y_3)_-|R_3;\Gamma_{\eta_3})\rightarrow \HMtoc(Y_{3}|R_3;\Gamma_{\eta_3})\\
 \HMtoc(X_+|R_3;\Gamma_{\nu})&: \HMtoc((Y_3)_-|R_3;\Gamma_{\eta_3})\rightarrow \HMtoc((Y_3)_{+}|R_3;\Gamma_{\eta_3}).
 \end{align*}
Recall that $\Psit^{g+1}_{\data_3,\data_3'}$ is given by  \[\Psit^{g+1}_{\data_3,\data_3'}=\Theta_{(Y_3)_{+}Y_3'}^{C}\circ \HMtoc(X_+|R_3;\Gamma_{\nu})\circ \HMtoc(X_-|R_3;\Gamma_{\nu})^{-1}.\] 

\begin{remark}
\label{rmk:disjoint}Observe that the curves $r_3(a_i\times\{t_i\}),$ $r_3(a_i\times\{t_i'\})$ are disjoint from the tori $T_1, T_2$ since the $a_i$ are disjoint from $c_1\cup c_2$. As alluded to earlier, this will play a key role in proving the commutativity of (\ref{eqn:comm2}).
\end{remark}

Similarly, for each $j=1,2$, we denote by $(\bar{Y}_3^j)_-$  the manifold obtained from $\bar{Y}_3^j$ by performing $+1$ surgeries on the curves $\bar{r}^j_3(a_i\times \{t_i\})$ for  $i\in \NN^j$ and by $(\bar{Y}_3^j)_+$  the manifold obtained from $\bar{Y}_3^j$ by performing $-1$ surgeries on the curves $\bar{r}^j_3(a_i\times \{t_i\})$ for  $i\in \PP^j$. As usual, we define cobordisms $X^j_-$ and $X^j_+$ which induce maps
\begin{align*}
\HMtoc(X^j_-|\bar{R}_3^j;\Gamma_{\nu})&: \HMtoc((\bar{Y}_3^j)_-|\bar{R}_3^j;\Gamma_{\bar{\eta}^j_3})\rightarrow \HMtoc(\bar{Y}_{3}^j|\bar{R}_3^j;\Gamma_{\bar{\eta}^j_3})\\
\HMtoc(X^j_+|\bar{R}^j_3;\Gamma_{\nu})&: \HMtoc((\bar{Y}^j_3)_-|\bar{R}^j_3;\Gamma_{\bar{\eta}^j_3})\rightarrow \HMtoc((\bar{Y}^j_3)_{+}|\bar{R}^j_3;\Gamma_{\bar{\eta}^j_3}).
\end{align*} We then define a map from \begin{equation}\label{eqn:cutmap}\HMtoc(\bar{Y}^j_3|\bar{R}^j_3;\Gamma_{\bar{\eta}^j_3})\rightarrow \HMtoc(\bar{Y}^{j\prime}_3|\bar{R}^{j\prime}_3;\Gamma_{\bar{\eta}^{j\prime}_3})\end{equation} by \[\Theta_{(\bar{Y}^j_3)_{+}\bar{Y}^{j\prime}_3}^{C^j}\circ \HMtoc(X^j_+|\bar{R}^j_3;\Gamma_{\nu})\circ \HMtoc(X^j_-|\bar{R}^j_3;\Gamma_{\nu})^{-1},\] where \[\Theta_{(\bar{Y}^j_3)_{+}\bar{Y}^{j\prime}_3}^{C^j}: \HMtoc((\bar{Y}^j_3)_{+}|\bar{R}^j_3;\Gamma_{\bar{\eta}^j_3})\rightarrow \HMtoc(\bar{Y}^{j\prime}_3|\bar{R}^{j\prime}_3;\Gamma_{\bar{\eta}^{j\prime}_3})\] is the isomorphism associated to the unique isotopy class of diffeomorphisms from $(\bar{Y}^j_3)_{+}$ to $\bar{Y}^{j\prime}_3$ which restrict to $C^j$ on $\bar{Y}_3^j\ssm\inr(\Img(\bar{r}_3^j))$ and to $ \bar{r}^{j\prime}_3\circ((\varphi_-^{C^j}\circ \psi^{C^j})\times id)\circ (\bar{r}^j_3)^{-1}$  on a neighborhood of $\bar{r}_3^j(\bar{R}_3^j\times\{0\})$. 

For $j=1$, the map in (\ref{eqn:cutmap}) is  equal to $\Psit^g_{\data_2,\data_2'}$. For $j=2,$ it is also  an isomorphism, essentially by Proposition \ref{prop:mappingtorusiso}. For notational convenience, let us write\begin{align*}
\bar{Y}^{1,2}_3 &:= \bar{Y}^1_3\sqcup \bar{Y}^2_3\\
\bar{Y}^{1,2\prime}_3 &:= \bar{Y}^{1\prime}_3\sqcup \bar{Y}^{2\prime}_3\\
(\bar{Y}^{1,2}_3)_{\pm} &:= (\bar{Y}^1_3)_{\pm}\sqcup (\bar{Y}^2_3)_{\pm}\\
\bar{R}^{1,2}_3 &:= \bar{R}^1_3\sqcup \bar{R}^2_3\\
\bar{R}^{1,2\prime}_3 &:= \bar{R}^{1\prime}_3\sqcup \bar{R}^{2\prime}_3\\
\bar{\eta}^{1,2}_3 &:= \bar{\eta}^1_3\sqcup \bar{\eta}^2_3\\
\bar{\eta}^{1,2\prime}_3 &:= \bar{\eta}^{1\prime}_3\sqcup \bar{\eta}^{2\prime}_3\\
X^{1,2}_- &:=X^1_-\sqcup X^2_-\\
X^{1,2}_+ &:= X^1_+\sqcup X^2_+.
\end{align*}
Then \[\HMtoc(\bar{Y}^{1,2}_3|\bar{R}^{1,2}_3;\Gamma_{\bar{\eta}^{1,2}_3})\cong \HMtoc(\bar{Y}^1_3|\bar{R}^1_3;\Gamma_{\bar{\eta}^1_3})\otimes_\RR \HMtoc(\bar{Y}^2_3|\bar{R}^2_3;\Gamma_{\bar{\eta}^2_3}),\] and likewise for the modules associated to $\bar{Y}^{1,2\prime}_3$ and $(\bar{Y}^{1,2}_3)_{\pm}$. In each case, the second module in the tensor product on the right is isomorphic to $\RR$.  The map \[\Psit^g_{\data_2,\data_2'}\otimes id:\HMtoc(\bar{Y}^{1,2}_3|\bar{R}^{1,2}_3;\Gamma_{\bar{\eta}^{1,2}_3})\rightarrow \HMtoc(\bar{Y}^{1,2\prime}_3|\bar{R}^{1,2\prime}_3;\Gamma_{\bar{\eta}^{1,2\prime}_3})\] is therefore $\RR^\times$-equivalent to the composition
\[\Theta_{(\bar{Y}^{1,2}_3)_{+}\bar{Y}^{1,2\prime}_3}^{C^{1,2}}\circ \HMtoc(X^{1,2}_+|\bar{R}^{1,2}_3;\Gamma_{\nu})\circ \HMtoc(X^{1,2}_-|\bar{R}^{1,2}_3;\Gamma_{\nu})^{-1},\] where  $\nu$  refers to the appropriate disjoint union of  cylinders, and \[\Theta_{(\bar{Y}^{1,2}_3)_{+}\bar{Y}^{1,2\prime}_3}^{C^{1,2}}:=\Theta_{(\bar{Y}^1_3)_{+}\bar{Y}^{1\prime}_3}^{C^1}\,\otimes \,\Theta_{(\bar{Y}^2_3)_{+}\bar{Y}^{2\prime}_3}^{C^2}.\] Let $\mathcal{W}$ and $\mathcal{W}'$ denote the splicing cobordisms from $\bar{Y}^{1,2}_3$ to $Y_3$ and $\bar{Y}^{1,2\prime}_3$ to $Y_3'$ constructed according to the procedure described earlier in this section. We can  define splicing cobordisms $\mathcal{W}_{\pm}$   from    $(\bar{Y}^{1,2}_3)_{\pm}$ to $(Y_3)_{\pm}$ in the same way since  the curves $r_3(a_i\times\{t_i\}),$ $r_3(a_i\times\{t_i'\})$ are disjoint from the tori $T_1, T_2$. The commutativity of the diagram in (\ref{eqn:comm2}) then follows  from the commutativity of the three diagrams below (up to multiplication by a unit in $\RR$, of course). In these diagrams, the  arrows labeled by  cobordisms represent the corresponding maps.

\begin{align*}\xymatrix@C=37pt@R=40pt{\HMtoc(\bar{Y}^{1,2}_3|\bar{R}^{1,2}_3;\Gamma_{\bar{\eta}^{1,2}_3})\ar[r]^(.53){\mathcal{W}}&  \HMtoc(Y_3|R_3;\Gamma_{\eta_3}) \\
\HMtoc((\bar{Y}^{1,2}_3)_-|\bar{R}^{1,2}_3;\Gamma_{\bar{\eta}^{1,2}_3}) \ar[r]_(.54){\mathcal{W}_-} \ar[u]^{X^{1,2}_-}& \HMtoc((Y_3)_-|R_3;\Gamma_{\eta_3})\ar[u]_{X_-}}\\\\
\xymatrix@C=37pt@R=40pt{\HMtoc((\bar{Y}^{1,2}_3)_-|\bar{R}^{1,2}_3;\Gamma_{\bar{\eta}^{1,2}_3})\ar[r]^(.54){\mathcal{W}_-}\ar[d]_{X^{1,2}_{+}}& \HMtoc((Y_3)_-|R_3;\Gamma_{\eta_3})\ar[d]^{X_{+}}\\
\HMtoc((\bar{Y}^{1,2}_3)_{+}|\bar{R}^{1,2}_3;\Gamma_{\bar{\eta}^{1,2}_3}) \ar[r]_(.54){\mathcal{W}_{+}} & \HMtoc((Y_3)_{+}|R_3;\Gamma_{\eta_3})}\\\\
\xymatrix@C=37pt@R=40pt{\HMtoc((\bar{Y}^{1,2}_3)_{+}|\bar{R}^{1,2}_3;\Gamma_{\bar{\eta}^{1,2}_3})\ar[r]^(.54){\mathcal{W}_{+}}\ar[d]_{\Theta_{(\bar{Y}^{1,2}_3)_{+}\bar{Y}^{1,2\prime}_3}^{C^{1,2}}}& \HMtoc((Y_3)_{+}|R_3;\Gamma_{\eta_3})\ar[d]^{\Theta_{(Y'_3)_{+}Y'_3}^{C}}\\
\HMtoc(\bar{Y}^{1,2\prime}_3|\bar{R}^{1,2\prime}_3;\Gamma_{\bar{\eta}^{1,2\prime}_3}) \ar[r]_(.54){\mathcal{W}'} & \HMtoc(Y_3'|R_3';\Gamma_{\eta_3'})}
\end{align*}

That the first of these diagrams commutes follows  from the observation that the composites $(X_-,\nu)\circ(\mathcal{W}_-,\nu)$ and $(\mathcal{W},\nu)\circ(X^{1,2}_-,\nu)$ are isomorphic. To see this, note once more that  the 2-handles used to form $X_-$ and $X^{1,2}_-$ are attached along curves in  regions  of $Y_3$ and $\bar{Y}^{1,2}_3$ that are disjoint from the tori $T_1$ and $T_2$. The cobordisms $\mathcal{W}$ and $\mathcal{W}_-$ therefore contain  pieces diffeomorphic to the products of these regions with the interval $[0,1]$. The above observation follows immediately from this fact. The commutativity of the second and third diagrams above follows from very similar considerations.
\end{proof}

\subsection{The General Case}
\label{ssec:generalcase} Here, we define the map \[\Psit_{\data,\data'}:\SHMt(\data)\to\SHMt(\data')\] for an arbitrary pair  $\data,\data'$ of marked closures of $(M,\gamma)$.
For this, we choose a sequence \[\{\data_i= (Y_i,R_i, r_i, m_i, \eta_i)\}_{i=1}^n\] of marked  closures  of $(M,\gamma)$, where $\data = \data_1$, $\data' = \data_n$ and \[|g(\data_i)-g(\data_{i+1})|\leq 1\] for $i=1,\dots,n-1$. Let $\Psit^\circ_{\data_i,\data_{i+1}}$ denote $\Psit^{g}_{\data_i,\data_{i+1}},$ $\Psit^{g,g+1}_{\data_i,\data_{i+1}}$ or $\Psit^{g+1,g}_{\data_i,\data_{i+1}}$, as appropriate. We define $\Psit_{\data,\data'}$ as follows.

\begin{definition}
\label{def:psi12general}
The map $\Psit_{\data,\data'}$ is given by  \[\Psit_{\data,\data'} =\Psit_{\data_1,\data_n}:= \Psit^\circ_{\data_{n-1},\data_n}\circ\dots\circ\Psit^\circ_{\data_1,\data_2}.\]
\end{definition}

Next, we prove that the $\RR^\times$-equivalence class of this map is well-defined.

\begin{theorem}
\label{thm:generalcaseinvariance} The map $\Psit_{\data,\data'}$ is independent of the choices made in its construction, up to multiplication by a unit in $\RR$.
\end{theorem}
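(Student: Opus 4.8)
The proof of Theorem~\ref{thm:generalcaseinvariance} splits into two independences: that $\Psit_{\data,\data'}$ does not depend on the choices made in constructing each individual map $\Psit^\circ_{\data_i,\data_{i+1}}$, and that it does not depend on the choice of connecting sequence $\{\data_i\}_{i=1}^n$ itself. The first is immediate: by Theorems~\ref{thm:samegenusinvariance} and~\ref{thm:differbyoneinvariance}, each factor $\Psit^\circ_{\data_i,\data_{i+1}}$ is already well-defined up to multiplication by a unit in $\RR$, so their composition is too. Thus the entire content is the second independence, which I will prove by showing that any two connecting sequences can be joined by a finite chain of ``elementary moves,'' each of which leaves the $\RR^\times$-equivalence class of the composite unchanged.

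The plan is to reduce everything to two basic facts about insertion and transitivity. First, given a sequence $\data = \data_1, \dots, \data_n = \data'$ and any marked closure $\data^*$ with $|g(\data^*) - g(\data_i)| \le 1$ and $|g(\data^*) - g(\data_{i+1})| \le 1$, inserting $\data^*$ between $\data_i$ and $\data_{i+1}$ does not change the $\RR^\times$-equivalence class of the composite. This is exactly the transitivity already established in the genus-preserving case (Theorem~\ref{thm:samegenustransitive}), in the ``differ by one on both sides'' cases (Remark~\ref{rmk:changesame}, together with Definitions~\ref{def:Psi23gplus} and~\ref{def:Psi23gminus} which give $\Psit^{g,g+1}$ and $\Psit^{g+1,g}$ as honest inverses), and in the mixed cases where $g(\data_i), g(\data_{i+1}), g(\data^*)$ are not all equal but are pairwise within one; the mixed cases follow by unwinding the definition of $\Psit^{g,g+1}$ as $\Psit^{g+1}_{\data_3,\data_4}\circ\Psit^{g,g+1}_{\data_2,\data_3}\circ\Psit^g_{\data_1,\data_2}$ and repeatedly applying Theorems~\ref{thm:samegenustransitive} and Remark~\ref{rmk:changesame}. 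Second, and dually, any marked closure $\data_i$ with $i \ne 1, n$ whose removal still leaves a valid connecting sequence (i.e. $|g(\data_{i-1}) - g(\data_{i+1})| \le 1$) can be deleted without changing the composite's class, by the same identities read in reverse; note $\Psit^g_{\data,\data}\doteq \mathrm{id}$ handles the degenerate insertions.

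Given these two facts, I would argue as follows. Let $\{\data_i\}_{i=1}^n$ and $\{\data'_j\}_{j=1}^{n'}$ be two connecting sequences from $\data$ to $\data'$. First interleave: since the genus of consecutive terms changes by at most one, and $\data_1 = \data'_1 = \data$, $\data_n = \data'_{n'} = \data'$, I can form a single long sequence by concatenating $\{\data_i\}$ with the reverse of an arbitrary common refinement, or more directly: insert all of $\data'_2, \dots, \data'_{n'-1}$ (appropriately ordered and padded with auxiliary intermediate closures of the right genus so that consecutive genera differ by at most one) into the first sequence one at a time, each insertion being legal by the first fact; then delete all of the original $\data_2, \dots, \data_{n-1}$ one at a time, each deletion being legal by the second fact. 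Since each step preserves the $\RR^\times$-equivalence class of the composite, the composites of the two original sequences agree up to a unit. The one subtlety requiring care is that an intermediate insertion may not be immediately legal because the genus constraint fails at that moment; this is handled by first padding with a ``genus ladder'' of closures $\data^{(g)}$ for each intervening genus $g$, which exist by the construction of closures of every sufficiently large genus and which can be slotted in legally step by step.

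The main obstacle is verifying the transitivity identities in the mixed-genus cases — that is, checking that the pentagon-type coherences hold when the three closures $\data_i, \data^*, \data_{i+1}$ sit at genera $g, g, g+1$ or $g, g+1, g+1$ or $g, g+1, g$ and so on, rather than all at the same genus. For the all-equal and both-sides-differ cases these are Theorem~\ref{thm:samegenustransitive} and Remark~\ref{rmk:changesame} verbatim, but the genuinely mixed triangles require expanding $\Psit^{g,g+1}$ through its definition via the auxiliary cut-ready/cut-open closures $\data_2, \data_3$ and the splicing cobordism $\mathcal{W}$, and then invoking Theorem~\ref{thm:differbyoneinvariance} (independence of the cut-ready closure) to rechoose the auxiliary data compatibly across the two sides of the triangle. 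This is where essentially all the work lies; once those finitely many coherence identities are in hand, the insertion/deletion argument above is purely formal bookkeeping.
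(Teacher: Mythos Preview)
Your insertion/deletion strategy is a reasonable alternative to the paper's argument, but it is more involved than necessary, and you explicitly leave the hardest part---the mixed-genus triangle coherences---undone. The paper avoids those coherences entirely by a cleaner reduction: rather than morphing one connecting sequence into another, it observes that comparing two sequences from $\data$ to $\data'$ is equivalent to showing that the composite around any \emph{closed} sequence $\data=\data_1,\dots,\data_n=\data$ is $\RR^\times$-equivalent to the identity. This is then proved by induction on $n$. If two consecutive terms have equal genus, collapse them using Theorem~\ref{thm:samegenustransitive} or Remark~\ref{rmk:changesame}. Otherwise the genus sequence has a local peak or valley, say $g(\data_{i-1})=g(\data_{i+1})=g$ and $g(\data_i)=g+1$; then Remark~\ref{rmk:changesame} gives
\[
\Psit^{g+1,g}_{\data_i,\data_{i+1}}\circ\Psit^{g,g+1}_{\data_{i-1},\data_i}
\doteq \Psit^{g+1,g}_{\data_i,\data_{i+1}}\circ\Psit^{g,g+1}_{\data_{i+1},\data_i}\circ\Psit^g_{\data_{i-1},\data_{i+1}}
\doteq \Psit^g_{\data_{i-1},\data_{i+1}},
\]
using only that $\Psit^{g+1,g}$ is \emph{defined} as the inverse of $\Psit^{g,g+1}$. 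So the only ingredients are Theorem~\ref{thm:samegenustransitive}, Remark~\ref{rmk:changesame}, and Definition~\ref{def:Psi23gminus}; no further mixed-genus coherences or genus-ladder padding are needed.

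Your approach would ultimately work, but it buys nothing: the ``main obstacle'' you identify dissolves under the loop reduction, and the interleaving/padding procedure you sketch is both vaguer and strictly harder to make precise than the paper's length-decreasing induction.
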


\begin{proof}
 The one choice we made in defining $\Psit_{\data,\data'}$ was the sequence of marked closures interpolating between $\data$ and $\data'$ as above. Let 
\begin{align*}
\{&\data_{i}^1= (Y_{i}^1,R_i^1, r_i^1, m_i^1,\eta_i^1)\}_{i=1}^{\ell}\\
\{&\data_i^2= (Y_i^2,R_i^2, r_i^2, m_i^2,\eta_i^2)\}_{i=1}^{m}
\end{align*} be  two such sequences. We must show that  \[\Psit^\circ_{\data_{\ell-1}^1,\data_{\ell}^1}\circ\dots\circ\Psit^\circ_{\data_{1}^1,\data_{2}^1}\doteq\Psit^\circ_{\data_{m-1}^2,\data_{m}^2}\circ\dots\circ\Psit^\circ_{\data_{1}^2,\data_{2}^2}\]  as maps from $\SHMt(\data)$ to $\SHMt(\data')$. This  is equivalent to showing that \[\Psit^\circ_{\data_{2}^2,\data_{1}^2}\circ\dots\circ\Psit^\circ_{\data_{m}^2,\data_{m-1}^2}\circ\Psit^\circ_{\data_{\ell-1}^1,\data_\ell^1}\circ\dots\circ\Psit^\circ_{\data_1^1,\data_2^1}\]   is $\RR^\times$-equivalent to the identity map on $\SHMt(\data)$. Let  $\{\data_i= (Y_i,R_i, r_i, m_i,\eta_i)\}_{i=1}^n$ be any sequence of marked closures of the kind used to define $\Psit_{\data,\data'}$ in the case that $\data=\data'$. It suffices to show that  \begin{equation}\label{eqn:comppsi}\Psit^\circ_{\data_{n-1},\data_n}\circ\dots\circ\Psit^\circ_{\data_1,\data_2}\end{equation} is $\RR^\times$-equivalent to the identity map on $\SHMt(\data)$. 

If $n=2$, then  this composition is just $\Psit^\circ_{\data_{1},\data_2} = \Psit^g_{\data,\data}$, where $g=g(\data)$, which is $\RR^\times$-equivalent the identity map. 

If $n>2$, then this composition is  $\RR^\times$-equivalent to another composition of the same kind but with fewer terms. 
For example, if $g(\data_i) = g(\data_{i+1})=g$ for some $i$, then we can  either replace $\Psit^g_{\data_i,\data_{i+1}}\circ \Psit^\circ_{\data_{i-1},\data_{i}}$ with $\Psit^\circ_{\data_{i-1},\data_{i+1}}$ or $\Psit^\circ_{\data_{i+1},\data_{i+2}}\circ \Psit^g_{\data_{i},\data_{i+1}}$ with $\Psit^\circ_{\data_i,\data_{i+2}}$, as  follows  from Theorem \ref{thm:samegenustransitive} or from Remark \ref{rmk:changesame}.
On the other hand, if $g(\data_i)\neq g(\data_{i+1})$ for any $i$, then, since $g(\data_1)=g(\data_n)$, there is   some $1< i <n$ such that either  \[g(\data_i)=g(\data_{i-1})+1 = g(\data_{i+1})+1=g+1\] or \[g(\data_i)=g(\data_{i-1})-1 = g(\data_{i+1})-1=g-1.\] In the first  case, we can replace $\Psit^{g+1,g}_{\data_{i},\data_{i+1}}\circ\Psit^{g,g+1}_{\data_{i-1},\data_{i}}$ with 
\[
\Psit^{g+1,g}_{\data_{i},\data_{i+1}}\circ\Psit^{g,g+1}_{\data_{i+1},\data_{i}}\circ\Psit^g_{\data_{i-1},\data_{i+1}}\doteq\Psit^g_{\data_{i-1},\data_{i+1}}, 
\] by Remark \ref{rmk:changesame}. The second case is treated almost identically. That the map in (\ref{eqn:comppsi}) is the identity now follows by induction on $n$. 
\end{proof}

The maps $\Psit_{\data,\data'}$ satisfy the following transitivity.

\begin{theorem}
\label{thm:generalcasetransitive} Suppose $\data,\data',\data''$ are marked closures of $(M,\gamma)$. Then \[\Psit_{\data,\data''} = \Psit_{\data',\data''}\circ \Psit_{\data,\data'},\] up to multiplication by a unit in $\RR$.
\end{theorem}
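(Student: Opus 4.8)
The plan is to reduce the general transitivity statement to the composition of maps already understood, namely the building-block isomorphisms $\Psit^g$, $\Psit^{g,g+1}$, and $\Psit^{g+1,g}$, by choosing interpolating sequences of marked closures and then invoking the previously established transitivity results in the ``local'' cases. First I would recall that, by Definition \ref{def:psi12general}, each of $\Psit_{\data,\data'}$, $\Psit_{\data',\data''}$, $\Psit_{\data,\data''}$ is defined as a composition $\Psit^\circ_{\data_{n-1},\data_n}\circ\dots\circ\Psit^\circ_{\data_1,\data_2}$ along \emph{some} chosen interpolating sequence of marked closures whose consecutive genera differ by at most one, and that by Theorem \ref{thm:generalcaseinvariance} the $\RR^\times$-equivalence class of each such composition is independent of the chosen sequence.

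The key step is then the following: to compute $\Psit_{\data',\data''}\circ\Psit_{\data,\data'}$, I would pick a sequence $\{\data_i\}_{i=1}^{n}$ interpolating from $\data=\data_1$ to $\data'=\data_n$ and a sequence $\{\data_j'\}_{j=1}^{m}$ interpolating from $\data'=\data_1'$ to $\data''=\data_m'$, with $\data_n=\data_1'=\data'$. Concatenating these two sequences produces a single interpolating sequence from $\data$ to $\data''$ whose consecutive genera still differ by at most one. By Definition \ref{def:psi12general} applied to this concatenated sequence, the composition $\Psit^\circ_{\data_{m-1}',\data_m'}\circ\dots\circ\Psit^\circ_{\data_1',\data_2'}\circ\Psit^\circ_{\data_{n-1},\data_n}\circ\dots\circ\Psit^\circ_{\data_1,\data_2}$ is, by construction, a valid choice of $\Psit_{\data,\data''}$. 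But the first $m-1$ factors compose (by Definition \ref{def:psi12general} again, applied to $\{\data_j'\}$) to a valid choice of $\Psit_{\data',\data''}$, and the last $n-1$ factors compose to a valid choice of $\Psit_{\data,\data'}$. Hence $\Psit_{\data,\data''}\doteq\Psit_{\data',\data''}\circ\Psit_{\data,\data'}$ up to a unit, since any two choices agree up to a unit by Theorem \ref{thm:generalcaseinvariance}.

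I expect the main obstacle to be a purely bookkeeping one: ensuring that the concatenated sequence genuinely satisfies the hypothesis $|g(\data_i)-g(\data_{i+1})|\le 1$ across the junction (which is automatic here since $\data_n=\data_1'$), and more importantly ensuring that $\Psit^\circ_{\data',\data'}$ — the identity-type map that appears when $\data_n$ and $\data_1'$ are literally the same closure — is $\RR^\times$-equivalent to the identity, so that no spurious factor is introduced at the junction. This is exactly the content of the Remark immediately following Theorem \ref{thm:samegenustransitive} (that $\Psit^g_{\data,\data}\doteq\mathrm{id}$), so the argument goes through cleanly. There is no analytic or geometric difficulty here; the work was all done in establishing Theorems \ref{thm:samegenustransitive}, \ref{thm:differbyoneinvariance}, and \ref{thm:generalcaseinvariance}, and this theorem is a formal consequence of the well-definedness of $\Psit_{\data,\data'}$ together with the concatenation trick.
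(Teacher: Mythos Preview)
Your proposal is correct and matches the paper's approach exactly: the paper's proof simply says ``This follows immediately from the definitions of these maps and the fact that they are well-defined up to multiplication by a unit in $\RR$,'' which is precisely your concatenation argument invoking Theorem~\ref{thm:generalcaseinvariance}. One small remark: your worry about a junction map $\Psit^\circ_{\data',\data'}$ is unnecessary, since concatenating $\{\data_1,\dots,\data_n\}$ with $\{\data'_1,\dots,\data'_m\}$ at $\data_n=\data'_1$ yields a single sequence with no repeated-index map inserted.
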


\begin{proof}
This follows immediately from the definitions of these maps  and the fact that they are well-defined up to multiplication by a unit in $\RR$.  
\end{proof}

The modules in $\{\SHMt(\data)\}$ and maps in $\{\Psit_{\data,\data'}\}$ therefore define a projectively transitive system of $\RR$-modules. 

\begin{definition}
\label{def:tsmh} The \emph{twisted sutured monopole homology of $(M,\gamma)$} is  the projectively transitive system  of $\RR$-modules defined by  $\{\SHMt(\data)\}$ and  $\{\Psit_{\data,\data'}\}$. We will denote this system by  $\SHMtfun(M,\gamma)$.
\end{definition}

\section{Maps Induced by Diffeomorphisms}
\label{sec:diffeomorphismmaps}
We start with the following definition.

 \begin{definition}
 \label{def:sutureddiffeo}
 A \emph{diffeomorphism of  balanced sutured manifolds} from $(M,\gamma)$ to $(M',\gamma')$ is an orientation-preserving diffeomorphism of pairs, $f:(M,\gamma)\rightarrow (M',\gamma')$. 
 \end{definition}
 
In this short section, we explain how to associate to a diffeomorphism $f$ as in the definition above the isomorphism \[\SHMtfun(f):\SHMtfun(M,\gamma)\rightarrow \SHMtfun(M',\gamma')\] of projectively transitive systems of $\RR$-modules described in the introduction. To define such a map, it suffices to construct isomorphisms \[\Psit_{f,\data,\data'}:\SHMt(\data)\rightarrow \SHMt(\data')\] for every pair of marked closures $\data,\data'$  of $(M,\gamma)$ and $(M',\gamma')$, such that \begin{equation}\label{eqn:commute}\Psit_{\data',\data'''}\circ\Psit_{f,\data,\data'} \doteq  \Psit_{f,\data'',\data'''}\circ\Psit_{\data,\data''}\end{equation}  for all marked closures $\data,\data''$ of $(M,\gamma)$ and all marked closures $\data',\data'''$  of $(M',\gamma')$. 

\begin{definition}
\label{def:mapinducedbydiffeo} The map $\SHMtfun(f)$ is the isomorphism of projectively transitive systems of $\RR$-modules defined by the collection $\{\Psit_{f,\data,\data'}\}$.
\end{definition} 

Below, we define the maps $\Psit_{f,\data,\data'}$.
Let $\data=(Y,R,r,m,\eta)$ and $\data' = (Y',R',r',m',\eta')$ be marked  closures of $(M,\gamma)$ and $(M',\gamma')$, and let \[\data'_f= (Y',R',r',m'\circ f,\eta').\] We define \[\Psit_{f,\data,\data'}:=\Theta_{\data'_f,\data'}\circ\Psit_{\data,\data'_f},\] where \[\Theta_{\data'_f,\data'}:\SHMt(\data'_f)\rightarrow\SHMt(\data')\] is defined to be the identity map from $\HMtoc(Y'|R';\Gamma_{\eta'})$ to $\HMtoc(Y'|R';\Gamma_{\eta'})$. For the identity in (\ref{eqn:commute}), note that 
\begin{align*}
\Psit_{\data',\data'''}\circ\Psit_{f,\data,\data'}&\doteq\Psit_{\data',\data'''}\circ\Theta_{\data'_f,\data'}\circ\Psit_{\data,\data'_f}\\
&\doteq\Psit_{\data',\data'''}\circ\Theta_{\data'_f,\data'}\circ\Psit_{\data'''_f,\data'_f}\circ\Psit_{\data,\data'''_f}\\
&\doteq\Psit_{\data',\data'''}\circ\Psit_{\data''',\data'}\circ \Theta_{\data'''_f,\data'''}\circ \Psit_{\data,\data'''_f}\\
&\doteq\Psit_{\data',\data'''}\circ\Psit_{\data''',\data'}\circ \Theta_{\data'''_f,\data'''}\circ \Psit_{\data'',\data'''_f}\circ \Psit_{\data,\data''}\\
&\doteq\Psit_{\data',\data'''}\circ\Psit_{\data''',\data'}\circ \Psit_{f,\data'',\data'''}\circ\Psit_{\data,\data''}\\
&\doteq \Psit_{f,\data'',\data'''}\circ\Psit_{\data,\data''},
\end{align*}
 where the second and fourth and sixth equalities follow from Theorem \ref{thm:generalcasetransitive} and the third follows from  the easy fact that the diagram 
\begin{equation}\label{eqn:comm}\xymatrix@C=45pt@R=30pt{\SHMt(\data'''_f) \ar[r]^{\Psit_{\data'''_f,\data'_f}}\ar[d]_{\Theta_{\data'''_f,\data'''}}&  \SHMt(\data'_f) \ar[d]^{\Theta_{\data'_f,\data'}}\\
\SHMt(\data''') \ar[r]_{\Psit_{\data''',\data'}} & \SHMt(\data')}\end{equation} commutes.

We close this section with the following theorem.

 
 \begin{theorem}
 \label{thm:mcgaction} The isomorphism $\SHMtfun(f)$ is an invariant of the smooth isotopy class of $f$. Moreover, these maps satisfy  \[\SHMtfun(f'\circ f)=\SHMtfun(f')\circ \SHMtfun(f)\] for  diffeomorphisms \[(M,\gamma)\xrightarrow{f} (M',\gamma')\xrightarrow{f'}(M'',\gamma'').\] In particular, the mapping class group of $(M,\gamma)$ acts on $\SHMtfun(M,\gamma)$.
 \end{theorem}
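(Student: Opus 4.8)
The plan is to verify the two assertions of Theorem~\ref{thm:mcgaction} directly from the definition of $\SHMtfun(f)$ in terms of the collection $\{\Psit_{f,\data,\data'}\}$, and then deduce the mapping class group action as a formal consequence. Recall that for a single marked closure we have $\Psit_{f,\data,\data'}=\Theta_{\data'_f,\data'}\circ\Psit_{\data,\data'_f}$, where $\data'_f=(Y',R',r',m'\circ f,\eta')$ and $\Theta_{\data'_f,\data'}$ is literally the identity map of $\HMtoc(Y'|R';\Gamma_{\eta'})$.

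First I would treat isotopy invariance. Suppose $f_0,f_1:(M,\gamma)\to(M',\gamma')$ are smoothly isotopic. The key observation is that $\data'_{f_0}$ and $\data'_{f_1}$ are marked closures of $(M,\gamma)$ which have the \emph{same} underlying data $(Y',R',r',\eta')$ and whose embeddings $m'\circ f_0$ and $m'\circ f_1$ differ by precomposition with a diffeomorphism of $(M,\gamma)$ isotopic to the identity. Because the map $C$ appearing in the construction of $\Psit_{\data'_{f_0},\data'_{f_1}}$ (Subsection~\ref{ssec:samegenus}) is only required to restrict to $m'\circ f_1\circ(m'\circ f_0)^{-1}$ on $m'\circ f_0(M\ssm N(\gamma))$, and this restriction is isotopic to the identity rel a neighborhood of the sutures, one may choose $C=\mathrm{id}$; this forces $\varphi^C_\pm=\mathrm{id}$, $\varphi^C=\mathrm{id}$, and (after choosing $\psi^C=\mathrm{id}$, which is legitimate since $\eta'$ is sent to itself) all the surgery curves and 2-handle cobordisms are trivial, so $\Psit_{\data'_{f_0},\data'_{f_1}}\doteq\mathrm{id}$. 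Then, using Theorem~\ref{thm:generalcasetransitive} and the commuting square~\eqref{eqn:comm} (with the roles of the closures adjusted), one computes
\[
\Psit_{f_1,\data,\data'}\doteq\Theta_{\data'_{f_1},\data'}\circ\Psit_{\data'_{f_0},\data'_{f_1}}\circ\Psit_{\data,\data'_{f_0}}\doteq\Theta_{\data'_{f_0},\data'}\circ\Psit_{\data,\data'_{f_0}}\doteq\Psit_{f_0,\data,\data'},
\]
so $\SHMtfun(f_0)=\SHMtfun(f_1)$ as morphisms of projectively transitive systems. I expect this step — making precise that one may take $C=\mathrm{id}$ and that isotopic embeddings yield the identity map up to a unit — to be the main obstacle, since it is where the geometric content of the preliminary results (Theorem~\ref{thm:maininvc}, Corollary~\ref{cor:diff}) is actually used.

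Next I would check functoriality. Given $f:(M,\gamma)\to(M',\gamma')$ and $f':(M',\gamma')\to(M'',\gamma'')$ and marked closures $\data,\data''$ of the source and target, I would unwind
\[
\SHMtfun(f')\circ\SHMtfun(f):\ \Psit_{f',\data',\data''}\circ\Psit_{f,\data,\data'}
\]
for an intermediate marked closure $\data'$ of $(M',\gamma')$. The point is that $(\data'')_{f'}=(Y'',R'',r'',m''\circ f',\eta'')$ is a marked closure of $(M',\gamma')$, and $((\data'')_{f'})_f=(Y'',R'',r'',m''\circ f'\circ f,\eta'')=(\data'')_{f'\circ f}$. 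Using the definition $\Psit_{f,\data,\data'}=\Theta_{\data'_f,\data'}\circ\Psit_{\data,\data'_f}$, the fact that each $\Theta$ is an identity map, Theorem~\ref{thm:generalcasetransitive}, and the commuting square~\eqref{eqn:comm}, a chain of $\doteq$'s exactly parallel to the one displayed after Definition~\ref{def:mapinducedbydiffeo} collapses the composite to $\Theta_{(\data'')_{f'\circ f},\data''}\circ\Psit_{\data,(\data'')_{f'\circ f}}=\Psit_{f'\circ f,\data,\data''}$. Hence $\SHMtfun(f'\circ f)=\SHMtfun(f')\circ\SHMtfun(f)$.

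Finally, the mapping class group statement is immediate: applying functoriality to $M'=M$, $\gamma'=\gamma$ gives a monoid homomorphism from the monoid of diffeomorphisms of $(M,\gamma)$ to automorphisms of $\SHMtfun(M,\gamma)$, which by isotopy invariance factors through the mapping class group; since every mapping class is invertible and $\SHMtfun(\mathrm{id})\doteq\mathrm{id}$ (again by the $C=\mathrm{id}$ argument), each $\SHMtfun(f)$ is an automorphism, giving the desired action. This last paragraph requires no new ideas, only assembling the two parts already proved.
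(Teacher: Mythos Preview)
Your functoriality argument is correct and matches the paper's proof essentially verbatim: both unwind $\Psit_{f',\data',\data''}\circ\Psit_{f,\data,\data'}$ using the definition, Theorem~\ref{thm:generalcasetransitive}, the identity $(\data''_{f'})_f=\data''_{f'\circ f}$, and the commuting square~\eqref{eqn:comm}.

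For isotopy invariance there is a small but genuine slip. You assert that since the required restriction of $C$---namely $(m'\circ f_1)\circ(m'\circ f_0)^{-1}$ on $m'\circ f_0(M\ssm N(\gamma))$---is \emph{isotopic} to the identity, one may take $C=\mathrm{id}$. That does not follow: the condition in \eqref{eqn:CC} is a pointwise equality on that region, not a condition up to isotopy, so $C=\mathrm{id}$ is not a valid choice unless $f_0=f_1$ there. The fix is immediate: by isotopy extension, the isotopy $f_t\circ f_0^{-1}$ of $M'$ pushes forward via $m'$ and extends to an ambient isotopy $H_t$ of $Y'\ssm\inr(\Img(r'))$ supported near $\Img(m')$, hence equal to the identity near $r'(R'\times\{\pm 1\})$. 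Taking $C=H_1$ satisfies the restriction condition exactly, and since $H_1$ is the identity near the boundary one still gets $\varphi^C_\pm=\mathrm{id}$, $\varphi^C=\mathrm{id}$, and may take $\psi^C=\mathrm{id}$; the resulting $\Theta^C$ is induced by a diffeomorphism isotopic to the identity (via $H_t$), so $\Psit_{\data'_{f_0},\data'_{f_1}}\doteq\mathrm{id}$ as you wanted. The rest of your chain then goes through.

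The paper's own argument is slightly more direct: rather than computing $\Psit_{\data'_{f_0},\data'_{f_1}}$, it observes that $\Psit_{\data,\data'_f}$ itself depends only on the isotopy class of the embedding $m'\circ f$, since composing a valid $C$ for $f_0$ with $H_1$ yields a valid $C$ for $f_1$ without changing $\varphi^C_\pm$ or the subsequent choices. Your route via transitivity is equally valid once $C$ is chosen correctly.
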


The following corollary proves Theorem \ref{thm:maintwistedshm}.

\begin{corollary}
\label{cor:functor}$\SHMtfun$ defines a functor from $\DiffSut$ to $\RPSys$.
\end{corollary}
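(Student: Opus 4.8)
\textbf{Proof proposal for Theorem \ref{thm:mcgaction} and Corollary \ref{cor:functor}.}

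The plan is to verify the two assertions of Theorem \ref{thm:mcgaction} directly from the definition of $\SHMtfun(f)$ in terms of the collection $\{\Psit_{f,\data,\data'}\}$, and then observe that Corollary \ref{cor:functor} is a formal consequence. First I would establish that $\SHMtfun(f)$ is well-defined as a morphism of projectively transitive systems: this is exactly the content of the identity \eqref{eqn:commute}, which is already checked in the displayed computation preceding the theorem, so nothing new is needed there. The point is that $\{\Psit_{f,\data,\data'}\}$ satisfies the compatibility condition of Definition \ref{def:projtransysmor}, hence determines a morphism $\SHMtfun(M,\gamma)\to\SHMtfun(M',\gamma')$; it is an \emph{iso}morphism because each $\Psit_{f,\data,\data'}=\Theta_{\data'_f,\data'}\circ\Psit_{\data,\data'_f}$ is a composition of the isomorphism $\Psit_{\data,\data'_f}$ (Theorem \ref{thm:generalcaseinvariance} together with the fact, from Section \ref{sec:psit}, that the $\Psit$ maps are isomorphisms) with the identity map $\Theta_{\data'_f,\data'}$.

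Next I would prove isotopy invariance. Suppose $f_0$ and $f_1$ are smoothly isotopic diffeomorphisms $(M,\gamma)\to(M',\gamma')$. Fix marked closures $\data$ of $(M,\gamma)$ and $\data'$ of $(M',\gamma')$; I must show $\Psit_{f_0,\data,\data'}\doteq\Psit_{f_1,\data,\data'}$, i.e. $\Theta_{\data'_{f_0},\data'}\circ\Psit_{\data,\data'_{f_0}}\doteq\Theta_{\data'_{f_1},\data'}\circ\Psit_{\data,\data'_{f_1}}$. The key observation is that an isotopy from $f_0$ to $f_1$ produces an isotopy from the embedding $m'\circ f_0$ to $m'\circ f_1$ through embeddings $M\hookrightarrow Y'\smallsetminus\inr(\Img(r'))$ of the required form, so $\data'_{f_0}$ and $\data'_{f_1}$ are marked closures of $(M,\gamma)$ that differ only by an ambient isotopy of $Y'$ fixing $r'$ and $\eta'$. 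One then checks that, when comparing $\Psit_{\data,\data'_{f_0}}$ and $\Psit_{\data,\data'_{f_1}}$, the diffeomorphisms $C$ used in the same-genus construction of Subsection \ref{ssec:samegenus} can be chosen so that they agree up to the ambient isotopy, and the maps $\Theta_{\data'_{f_i},\data'}$ (both literally the identity map on $\HMtoc(Y'|R';\Gamma_{\eta'})$) absorb the discrepancy; the required equality up to a unit then follows from the well-definedness statement Theorem \ref{thm:generalcaseinvariance} applied with the two descriptions. I expect this bookkeeping --- tracking how an isotopy of $f$ feeds through the choice of $C$ and the 2-handle cobordisms --- to be the main obstacle, though it is conceptually routine once one unwinds the definitions.

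For functoriality, given $(M,\gamma)\xrightarrow{f}(M',\gamma')\xrightarrow{f'}(M'',\gamma'')$, I would fix marked closures $\data$ of $(M,\gamma)$, $\data'$ of $(M',\gamma')$, $\data''$ of $(M'',\gamma'')$ and compute, using $\data''_{f'}=(Y'',R'',r'',m''\circ f',\eta'')$, $\data''_{f'\circ f}=(Y'',R'',r'',m''\circ f'\circ f,\eta'')$, and the fact that $(\data''_{f'})_f = \data''_{f'\circ f}$:
\begin{align*}
\Psit_{f',\data',\data''}\circ\Psit_{f,\data,\data'}
&\doteq \Theta_{\data''_{f'},\data''}\circ\Psit_{\data',\data''_{f'}}\circ\Theta_{\data'_f,\data'}\circ\Psit_{\data,\data'_f}\\
&\doteq \Theta_{\data''_{f'},\data''}\circ\Psit_{\data',\data''_{f'}}\circ\Psit_{\data,\data'_f}\\
&\doteq \Theta_{\data''_{f'},\data''}\circ\Psit_{\data'_f,\data''_{f'}}\circ\Psit_{\data,\data'_f}\\
&\doteq \Theta_{\data''_{f'},\data''}\circ\Psit_{\data,\data''_{f'}}\\
&\doteq \Theta_{\data''_{f'\circ f},\data''}\circ\Psit_{\data,\data''_{f'\circ f}}
\doteq \Psit_{f'\circ f,\data,\data''},
\end{align*}
where the second line uses that $\Theta_{\data'_f,\data'}$ is the identity map, the third uses that $\Psit_{\data',\data''_{f'}}$ and $\Psit_{\data'_f,\data''_{f'}}$ have the same domain and codomain once one notes $\Theta_{\data'_f,\data'}=\mathrm{id}$ so $\data'_f$ and $\data'$ carry identified modules (alternatively invoke Theorem \ref{thm:generalcasetransitive} directly), the fourth is Theorem \ref{thm:generalcasetransitive}, and the fifth rewrites $m''\circ f'$ precomposed with $f$ as $m''\circ(f'\circ f)$ together with the observation that $\Theta_{\data''_{f'},\data''}$ and $\Theta_{\data''_{f'\circ f},\data''}$ are both the identity map on $\HMtoc(Y''|R'';\Gamma_{\eta''})$. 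Finally, taking $(M',\gamma')=(M,\gamma)$ and $f'=\mathrm{id}$ shows $\SHMtfun(\mathrm{id})\doteq\mathrm{id}$, so the mapping class group of $(M,\gamma)$ acts on $\SHMtfun(M,\gamma)$. Corollary \ref{cor:functor} then follows immediately: the assignment $(M,\gamma)\mapsto\SHMtfun(M,\gamma)$, $f\mapsto\SHMtfun(f)$ respects composition and identities by Theorem \ref{thm:mcgaction} and depends only on isotopy classes, hence defines a functor $\DiffSut\to\RPSys$, and the module class of $\SHMtfun(M,\gamma)$ was identified as $\SHM(M,\gamma)\otimes_\Z\RR$ in Section \ref{sec:psit}.
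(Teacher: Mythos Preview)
Your overall strategy matches the paper's, and your isotopy-invariance argument is essentially the paper's one-line claim expanded out. The functoriality computation, however, has a genuine bookkeeping gap.

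In line three of your displayed chain you write $\Psit_{\data'_f,\data''_{f'}}$, and in line four $\Psit_{\data,\data''_{f'}}$. Neither of these maps is defined: $\data'_f$ and $\data$ are marked closures of $(M,\gamma)$, whereas $\data''_{f'}=(Y'',R'',r'',m''\circ f',\eta'')$ is a marked closure of $(M',\gamma')$, and the $\Psit$-maps of Section~\ref{sec:psit} only relate closures of the \emph{same} balanced sutured manifold. Saying that $\Theta_{\data'_f,\data'}$ is the identity on the underlying $\RR$-module does not license you to rename the source closure in the subscript of a $\Psit$-map; the invariance statement Theorem~\ref{thm:samegenusinvariance} is proved for a fixed sutured manifold, and your ``alternative'' of invoking Theorem~\ref{thm:generalcasetransitive} fails for the same reason.

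The correct bookkeeping, which the paper carries out, is to observe that $(\data''_{f'})_f=\data''_{f'\circ f}$ is a closure of $(M,\gamma)$, introduce the identity map $\Theta_{\data''_{f'\circ f},\data''_{f'}}$, and then invoke the commutative square~\eqref{eqn:comm} in the form
\[
\Theta_{\data''_{f'\circ f},\data''_{f'}}\circ\Psit_{\data'_f,\data''_{f'\circ f}}
\;=\;\Psit_{\data',\data''_{f'}}\circ\Theta_{\data'_f,\data'}.
\]
With this substitution your second line becomes $\Theta_{\data''_{f'},\data''}\circ\Theta_{\data''_{f'\circ f},\data''_{f'}}\circ\Psit_{\data'_f,\data''_{f'\circ f}}\circ\Psit_{\data,\data'_f}$, where now both $\Psit$-maps are between closures of $(M,\gamma)$; Theorem~\ref{thm:generalcasetransitive} then legitimately collapses them to $\Psit_{\data,\data''_{f'\circ f}}$, and the two $\Theta$'s compose to $\Theta_{\data''_{f'\circ f},\data''}$, yielding $\Psit_{f'\circ f,\data,\data''}$ as desired. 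This is exactly the role diagram~\eqref{eqn:comm} plays in the paper's proof, and it is the one ingredient your chain is missing.
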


 \begin{proof}[Proof of Theorem \ref{thm:mcgaction}]
 
 That $\SHMtfun(f)$ is an invariant of the smooth isotopy class of $f$ follows from the fact that each $\Psit_{f,\data,\data'}$ is, which itself follows directly from the construction of the maps $\Psit_{\data,\data'_f}$. To show that \[\SHMtfun(f'\circ f)=\SHMtfun(f')\circ \SHMtfun(f),\] it is enough to show that \begin{equation*}\label{eqn:compsi}\Psit_{f'\circ f,\data,\data''} = \Psit_{f',\data',\data''}\circ\Psit_{f,\data,\data'},\end{equation*} for all marked closures $\data,\data',\data''$ of $(M,\gamma),(M',\gamma'),(M'',\gamma''),$ respectively. But  \begin{align*}
 \Psit_{f'\circ f,\data,\data''}&\doteq \Theta_{\data''_{f'\circ f},\data''}\circ \Psit_{\data,\data''_{f'\circ f}} \\
&\doteq\Theta_{\data''_{f'},\data''}\circ \Theta_{\data''_{f'\circ f},\data''_{f'}}\circ \Psit_{\data,\data''_{f'\circ f}}\\
 &\doteq \Theta_{\data''_{f'},\data''}\circ \Theta_{\data''_{f'\circ f},\data''_{f'}}\circ \Psit_{\data'_f,\data''_{f'\circ f}} \circ \Psit_{\data,\data'_{ f}}\\
 &\doteq\Theta_{\data''_{f'},\data''}\circ \Psit_{\data',\data''_{f'}}\circ \Theta_{\data'_{f},\data'}\circ \Psit_{\data,\data'_{ f}}\\
 &\doteq\Psit_{f',\data',\data''}\circ\Psit_{f,\data,\data'},
   \end{align*}
   as desired, where the third equality above follows from Theorem \ref{thm:generalcasetransitive}  and the fourth follows from the commutativity of the diagram in (\ref{eqn:comm}). 
    \end{proof}

\section{The Untwisted Theory}
\label{sec:untwisted}
Recall that the untwisted sutured monopole homology groups associated to $(M,\gamma)$ are defined in terms of ordinary  rather than marked closures.  Suppose $\data$ and $\data'$ are (ordinary) closures of $(M,\gamma)$ with $g(\data)=g(\data')=g$. In this section, we construct the canonical isomorphisms 
\[ \Psi^g_{\data,\data'}: \SHM^g(\data) \to \SHM^g(\data') \] described in the introduction. In addition, we will describe the relationship between these untwisted invariants of $(M,\gamma)$ and the twisted invariants defined in previous sections.

The maps $\Psi^g_{\data,\data'}$ are constructed in almost exactly the same way as are the canonical isomorphisms in the twisted setting for closures of the same genus. We briefly spell out the modified construction below.
For the sake of exposition, let us write 
\begin{align*}
\data&= \data_1 = (Y_1,R_1,r_1,m_1)\\
\data'& = \data_2 = (Y_2,R_2,r_2,m_2).
\end{align*}
We first choose a diffeomorphism \begin{equation*}\label{eqn:C}C:Y_1\ssm\inr(\Img(r_1))\rightarrow Y_2\ssm\inr(\Img(r_2))\end{equation*} and define the map $\varphi^C$ exactly as in Subsection \ref{ssec:samegenus} (we do not need the map $\psi^C$ in the untwisted setting).
Suppose  $\varphi^C$ is isotopic to the following compositions of Dehn twists,
\[
\varphi^C\sim D^{e_1}_{a_1}\circ\dots \circ D^{e_{n}}_{a_{n}},\\
\]
 and let 
\begin{align*}
\PP &= \{i\mid e_i=+1\}\\
\NN &= \{i\mid e_i=-1\}.
\end{align*}   Choose real numbers \[-3/4<t_{n}<\dots<t_1<3/4,\]  and pick some $t_i'$ between $t_i$ and the next greatest number in this  list   for each $i\in \NN$.

We then define the 3-manifolds $(Y_1)_\pm$ and the 2-handle cobordisms $X_\pm$ exactly as in Subsection \ref{ssec:samegenus}. In particular, $(Y_1)_-$ is the manifold obtained from $Y_1$ by performing $+1$ surgeries on the curves $r_1(a_i\times \{t_i\})$ for  $i\in \NN$, while $(Y_1)_+$ is the manifold obtained from $(Y_1)_-$ by performing $-1$ surgeries on the  curves $r_1(a_i\times \{t_i\})\times\{1\}$ for all $i\in  \PP$. The cobordisms $X_\pm$ give rise to maps 
\begin{align*}
\HMtoc(X_-|R_1)&: \HMto((Y_1)_-|R_1)\rightarrow \HMto(Y_{1}|R_1),\\
\HMtoc(X_+|R_1)&: \HMto((Y_1)_-|R_1)\rightarrow \HMto((Y_1)_+|R_1),
\end{align*}
which are isomorphisms by the untwisted analogues of the results in Section \ref{sec:prelim}. Let
\[\Theta_{(Y_1)_+Y_2}^C:\HMtoc((Y_1)_+|R_1)\rightarrow \HMtoc(Y_2|R_2)\] denote the isomorphism associated to the  isotopy class of  diffeomorphisms from $(Y_1)_+$ to $Y_2$ which restrict to $C$ on $Y_1\ssm\inr(\Img(r_1))$. 

\begin{definition}
\label{def:Psi12untwisted} The map $\Psi^g_{\data,\data'}$ is given by
\[\Psi^g_{\data,\data'}=\Psi^g_{\data_1,\data_2}:= \Theta_{(Y_1)_+Y_2}^C\circ \HMtoc(X_+|R_1)\circ \HMtoc(X_-|R_1)^{-1}.\]
\end{definition}

\begin{theorem}
\label{thm:samegenusinvarianceuntwisted} The map $\Psi^g_{\data,\data'}$ is independent of the choices made in its construction, up to sign. Furthermore, if $\data,\data',\data''$ are genus $g$  closures of $(M,\gamma)$, then \[\Psi^g_{\data,\data''} = \Psi^g_{\data',\data''}\circ \Psi^g_{\data,\data'},\] up to sign.
\end{theorem}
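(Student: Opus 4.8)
The plan is to mimic, in the untwisted setting, the arguments already carried out for $\Psit^g_{\data,\data'}$ in Subsection \ref{ssec:samegenus}, invoking the ``untwisted analogues of the results in Section \ref{sec:prelim}'' referred to in the paragraph preceding Definition \ref{def:Psi12untwisted}. Concretely, one first establishes an untwisted version of Theorem \ref{thm:maininvc}: given diffeomorphisms $A^u, B^u$ of $R$ ($u=1,2$) with $A^1\circ B^1\sim A^2\circ B^2$, the two composites
\[
\Theta_{Y^1_+Y^2_+}\circ\HMtoc(X^1_+|R)\circ\HMtoc(X^1_-|R)^{-1}
\qquad\text{and}\qquad
\HMtoc(X^2_+|R)\circ\HMtoc(X^2_-|R)^{-1}
\]
are equal up to sign and are isomorphisms. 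The proof of this is verbatim the same as the proof of Theorem \ref{thm:maininvc}: one reduces to the positive case (Proposition \ref{prop:maininvc2}), uses the merge/split splicing cobordisms $\CM,\CS$ and an excision argument to reduce to mapping tori, and applies Lemma \ref{lem:mappingtorusiso} and the Lefschetz-fibration relative-invariant computation (Proposition \ref{prop:lefschetz}); every instance of $\Gamma_\nu$, $\Gamma_\eta$, $\RR^\times$ is simply dropped and replaced by $\Z$, $\pm1$. Note in the untwisted case the curve $\eta$ plays no role, so there is no condition ``$(B^2\circ(B^1)^{-1})(\eta)=\eta$'' to worry about, which in fact simplifies matters: there is no auxiliary diffeomorphism $\psi^C$, only $\varphi^C$.

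With this untwisted Theorem \ref{thm:maininvc} in hand, well-definedness up to sign follows exactly as in the proof of Theorem \ref{thm:samegenusinvariance}. The choices are: the diffeomorphism $C$, the factorization of $\varphi^C$ into Dehn twists, and the real numbers $t_i, t_i'$. Independence of the factorization and of the $t_i$ is immediate from the untwisted Theorem \ref{thm:maininvc} (take $A^1=A^2=\varphi^C$, $B^u=\mathrm{id}$, with two different factorizations). Independence of $C$ is handled as in Lemmas \ref{lem:equalcomps} and \ref{lem:compid}: writing $C_2=C_1\circ D$ with $D$ supported away from $m_1(M\ssm N(\gamma))$, one factors $(\varphi^D)^{-1}$, $\varphi^{C_1}$, $\varphi^D$ into Dehn twists, forms the three families of surgered manifolds and $2$-handle cobordisms, observes that the composite cobordism $(X^1_+)\circ(X^3_+)$ is isomorphic to $X^2_+$ (because $C_2=C_1\circ D$), and then shows $\Theta^D_{(Y_1)^3_+ Y_1}\circ\HMtoc(X^3_+|R_1)\circ\HMtoc(X^3_-|R_1)^{-1}\doteq\mathrm{id}$ by the ``cancellation'' argument of Lemma \ref{lem:compid} (the Dehn surgeries supported in the product region $F'\times[-1,1]$ cancel the diffeomorphism $D$ supported there), where now $\doteq$ means ``up to sign.'' The region $Q$ and the embedding $r\colon R_1\times[-1,1]\to Y_1$ of that proof carry over unchanged.

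For the transitivity statement $\Psi^g_{\data,\data''}=\Psi^g_{\data',\data''}\circ\Psi^g_{\data,\data'}$ up to sign, one follows the proof of Theorem \ref{thm:samegenustransitive}. Writing $\data_i=(Y_i,R_i,r_i,m_i)$ for $i=1,2,3$, choose $C_1,C_2$ and set $C_3=C_2\circ C_1$; then $\varphi^{C_3}_\pm=\varphi^{C_2}_\pm\circ\varphi^{C_1}_\pm$, and a substitution gives $\varphi^{C_3}=\varphi^{C_1}\circ g$ with $g=(\varphi^{C_1}_-)^{-1}\circ\varphi^{C_2}\circ\varphi^{C_1}_-$. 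Factoring $\varphi^{C_1}$ and $g$ into (positive) Dehn twists and using the relation $D_{f(a)}=f\circ D_a\circ f^{-1}$ to transport the factorization of $g$ to curves $b_i=\varphi^{C_1}_-(a_i)$ in $R_2$ realizing a factorization of $\varphi^{C_2}$, one checks that the composite $(X^2_+)\circ(X^1_+)$ — glued using diffeomorphisms in the isotopy classes defining $\Theta^{C_1}_{(Y_1)^1_+ Y_2}$ and $\Theta^{C_2}_{(Y_2)^2_+ Y_3}$ — is isomorphic to $X^3_+$ glued via $\Theta^{C_3}_{(Y_1)^3_+ Y_3}$, using $C_3=C_2\circ C_1$ and $\varphi^{C_3}_-=\varphi^{C_2}_-\circ\varphi^{C_1}_-$. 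Functoriality (\ref{eqn:compositionfunct}) of $\HMtoc$ then yields the claim up to sign.

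The main obstacle is simply verifying that the proof of Theorem \ref{thm:maininvc} — in particular the excision argument and the non-vanishing of the Lefschetz relative invariant (Proposition \ref{prop:lefschetz}), which is proved for $\HMtoc$ in general in Appendix \ref{sec:appendixb} — goes through with $\Z$-coefficients rather than the Novikov-type ring $\RR$; but since the twisted statements specialize to the untwisted ones (the untwisted theory is the $\eta=\emptyset$, or rather trivial-local-system, case, and the splicing-cobordism isomorphism statements of \cite{km4} hold in both settings), this is routine. Everything else is a mechanical transcription of Subsection \ref{ssec:samegenus}, erasing all decorations $\eta,\nu,\Gamma$ and replacing ``$\RR^\times$-equivalent'' by ``equal up to sign.''
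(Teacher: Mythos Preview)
Your proposal is correct and follows exactly the paper's own approach: the paper's proof simply states that the argument is the same as for Theorems \ref{thm:samegenusinvariance} and \ref{thm:samegenustransitive}, using the untwisted analogues of the results in Section \ref{sec:prelim}. Your write-up is a faithful (and more detailed) elaboration of this, including the observation that dropping $\eta$ and $\psi^C$ only simplifies matters.
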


\begin{proof}
This  is proved in the same way that Theorems \ref{thm:samegenusinvariance} and \ref{thm:samegenustransitive} are proved. We just use the untwisted analogues of the results in Section \ref{sec:prelim} where needed.
\end{proof}

\begin{definition}
\label{def:utsmhg} The \emph{untwisted sutured monopole homology of $(M,\gamma)$ in genus $g$} is  the projectively transitive system  of $\Z$-modules defined by  $\{\SHM^g(\data)\}$ and  $\{\Psi^g_{\data,\data'}\}$. We will denote this system by  $\SHMfun^g(M,\gamma)$.
\end{definition}

Given a diffeomorphism $f:(M,\gamma)\to (M',\gamma')$ of balanced sutured manifolds and genus $g$ closures $\data=(Y,R,r,m)$ and $\data'=(Y',R',r',m')$ of $(M,\gamma)$ and $(M',\gamma')$, we define an isomorphism \[\Psi^g_{f,\data,\data'}:\SHM^g(\data)\to\SHM(\data')\] exactly as in Section \ref{sec:diffeomorphismmaps}. That is, we let \[\Psi^g_{f,\data,\data'}=\Theta_{\data'_f,\data'}\circ\Psi^g_{\data,\data'_f},\] where $\data'_f= (Y',R',r',m'\circ f)$ and \[\Theta_{\data'_f,\data'}:\SHM^g(\data'_f)\rightarrow\SHM^g(\data')\] is  the identity map on $\HMtoc(Y'|R')$. As in Section \ref{sec:diffeomorphismmaps}, these maps satisfy \begin{equation}\label{eqn:commute2}\Psi^g_{\data',\data'''}\circ\Psi^g_{f,\data,\data'} \doteq  \Psi^g_{f,\data'',\data'''}\circ\Psi^g_{\data,\data''}\end{equation}  for all genus $g$ closures $\data,\data''$ of $(M,\gamma)$ and all genus $g$ closures $\data',\data'''$  of $(M',\gamma')$.

\begin{definition}
\label{def:mapinducedbydiffeo2} The map \[\SHMfun^g(f):\SHMfun^g(M,\gamma)\to\SHMfun^g(M',\gamma')\]
 is the isomorphism of projectively transitive systems of $\Z$-modules defined by the collection $\{\Psi^g_{f,\data,\data'}\}$.
\end{definition} 

The following are untwisted analogues of Theorem \ref{thm:mcgaction} and Corollary \ref{cor:functor}.  Recall that $\DiffSut^g$ is the full subcategory of $\DiffSut$ whose objects are balanced sutured manifolds admitting genus $g$ closures.

 \begin{theorem}
 \label{thm:mcgactionuntwisted} The isomorphism $\SHMfun^g(f)$  is an invariant of the smooth isotopy class of $f$. Moreover, these maps satisfy \[\SHMfun^g(f'\circ f)=\SHMfun^g(f')\circ \SHMfun^g(f) \] for  diffeomorphisms \[(M,\gamma)\xrightarrow{f} (M',\gamma')\xrightarrow{f'}(M'',\gamma'').\] In particular, the mapping class group of $(M,\gamma)$ acts on $\SHMfun^g(M,\gamma)$. \qed
 \end{theorem}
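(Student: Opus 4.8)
This final statement, Theorem \ref{thm:mcgactionuntwisted}, is the untwisted analogue of Theorem \ref{thm:mcgaction}, and its proof should follow the same structure, with the twisted objects replaced by their untwisted counterparts.

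\begin{proof}[Proof sketch of Theorem \ref{thm:mcgactionuntwisted}]
The plan is to mimic the proof of Theorem \ref{thm:mcgaction} line by line. First I would observe that $\SHMfun^g(f)$ depends only on the smooth isotopy class of $f$: this reduces to the corresponding statement for each map $\Psi^g_{f,\data,\data'}$, which in turn follows directly from the construction of $\Psi^g_{\data,\data'_f}$, since an isotopy of $f$ only changes the embedding $m'\circ f$ used to define $\data'_f$ by an isotopy, and the isomorphisms of Section \ref{sec:prelim} are insensitive to such changes (this uses Corollary \ref{cor:diff}, exactly as in the twisted case).

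Next I would establish the composition law $\SHMfun^g(f'\circ f)=\SHMfun^g(f')\circ\SHMfun^g(f)$. As in Theorem \ref{thm:mcgaction}, it suffices to check that $\Psi^g_{f'\circ f,\data,\data''}\doteq\Psi^g_{f',\data',\data''}\circ\Psi^g_{f,\data,\data'}$ for all genus $g$ closures $\data,\data',\data''$ of $(M,\gamma)$, $(M',\gamma')$, $(M'',\gamma'')$. The chain of $\RR^\times$-equivalences ($\doteq$ being equivalence up to sign here) is formally identical to the one displayed in the proof of Theorem \ref{thm:mcgaction}: it uses the transitivity of the maps $\Psi^g_{\data,\data'}$ among genus $g$ closures (the untwisted analogue of Theorem \ref{thm:generalcasetransitive}, which is the content of Theorem \ref{thm:samegenusinvarianceuntwisted}) together with the commutativity of the square
\[
\xymatrix@C=45pt@R=30pt{\SHM^g(\data''_{f'}) \ar[r]^{\Psi^g_{\data''_{f'},\data'_{f}}}\ar[d]_{\Theta_{\data''_{f'},\data''}}&  \SHM^g(\data'_{f}) \ar[d]^{\Theta_{\data'_{f},\data'}}\\
\SHM^g(\data'') \ar[r]_{\Psi^g_{\data'',\data'}} & \SHM^g(\data')}
\]
which holds because both composites are induced, via the construction of Subsection \ref{ssec:samegenus}, by the same cobordism with appropriately relabelled boundary identifications (the maps $\Theta$ here being the identity maps on the underlying modules $\HMtoc(Y'|R')$). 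The case where the genera of the various closures are forced to agree is precisely the setting of Subsection \ref{ssec:samegenus}, so no analogue of the genera-differ-by-one analysis is needed. Finally, the statement that the mapping class group of $(M,\gamma)$ acts on $\SHMfun^g(M,\gamma)$ is the special case $(M,\gamma)=(M',\gamma')=(M'',\gamma'')$ of the composition law, together with the observation that $\SHMfun^g(\mathrm{id})$ is the identity isomorphism of the system.

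I do not anticipate any genuine obstacle here: every ingredient needed is either stated earlier (Theorem \ref{thm:samegenusinvarianceuntwisted}, Corollary \ref{cor:diff}) or is a formal transcription of an argument already carried out in the twisted setting (the proof of Theorem \ref{thm:mcgaction}). The only point requiring a modicum of care is bookkeeping: one must check that since all closures in sight have the same fixed genus $g$, every invocation of transitivity stays within the genus $g$ theory, so that Theorem \ref{thm:samegenusinvarianceuntwisted} (rather than an untwisted analogue of Theorem \ref{thm:generalcasetransitive}, which is not needed) suffices.
\end{proof}
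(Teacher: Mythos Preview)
Your proposal is correct and matches the paper's approach exactly: the paper simply marks the theorem with \qed, implicitly deferring to the proof of Theorem~\ref{thm:mcgaction} with the twisted objects replaced by their untwisted genus-$g$ counterparts and Theorem~\ref{thm:samegenusinvarianceuntwisted} in place of Theorem~\ref{thm:generalcasetransitive}. One small slip: in your displayed square the top-left entry should be $\SHM^g(\data''_{f'\circ f})$ (a closure of $(M,\gamma)$, since $(\data''_{f'})_f=\data''_{f'\circ f}$) rather than $\SHM^g(\data''_{f'})$, so that the horizontal maps are between closures of the same sutured manifold; with that correction the diagram is the untwisted instance of~\eqref{eqn:comm} needed for the fourth equality.
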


\begin{corollary}$\SHMfun^g$ defines a functor from $\DiffSut^g$ to $\RPSys[\Z]$.
\end{corollary}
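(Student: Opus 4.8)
The plan is to assemble this statement from the pieces already in place, exactly paralleling the way Corollary~\ref{cor:functor} follows from Theorem~\ref{thm:mcgaction}. First I would check that $\SHMfun^g$ is well-defined on objects: for a balanced sutured manifold $(M,\gamma)$ admitting genus $g$ closures, Definition~\ref{def:utsmhg} packages the modules $\{\SHM^g(\data)\}$ (indexed by genus $g$ closures $\data$) together with the $\Z^\times$-equivalence classes $\{\Psi^g_{\data,\data'}\}$, and Theorem~\ref{thm:samegenusinvarianceuntwisted} supplies precisely the axioms of Definition~\ref{def:projtransys}: each $\Psi^g_{\data,\data'}$ is a well-defined $\Z^\times$-equivalence class of isomorphisms, $\Psi^g_{\data,\data}$ is $\Z^\times$-equivalent to the identity (as noted just before Definition~\ref{def:utsmhg}'s analogue in the twisted case, or directly from the transitivity with $\data=\data'=\data''$), and $\Psi^g_{\data,\data''}\doteq\Psi^g_{\data',\data''}\circ\Psi^g_{\data,\data'}$. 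Hence $\SHMfun^g(M,\gamma)$ is an object of $\RPSys[\Z]$. (The set-theoretic point that genus $g$ closures form a proper class is handled as in the footnote following Theorem~\ref{thm:samegenustransitive}.)

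Next I would check that $\SHMfun^g$ is well-defined on morphisms. Morphisms in $\DiffSut^g$ are smooth isotopy classes of diffeomorphisms $f:(M,\gamma)\to(M',\gamma')$. Given such an $f$, Definition~\ref{def:mapinducedbydiffeo2} defines $\SHMfun^g(f)$ by the collection $\{\Psi^g_{f,\data,\data'}\}$, where $\Psi^g_{f,\data,\data'}=\Theta_{\data'_f,\data'}\circ\Psi^g_{\data,\data'_f}$; each of these is a $\Z^\times$-equivalence class of homomorphisms because $\Psi^g_{\data,\data'_f}$ is one and $\Theta_{\data'_f,\data'}$ is the identity map on $\HMtoc(Y'|R')$. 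The compatibility condition~(\ref{eqn:projtransysmor}) required of a morphism in Definition~\ref{def:projtransysmor} is exactly the relation~(\ref{eqn:commute2}), which is established (by the same chain of identities as in the twisted case, using Theorem~\ref{thm:samegenusinvarianceuntwisted} in place of Theorem~\ref{thm:generalcasetransitive} and the evident commuting square analogous to~(\ref{eqn:comm})). That $\SHMfun^g(f)$ depends only on the isotopy class of $f$ is the first assertion of Theorem~\ref{thm:mcgactionuntwisted}, so the assignment descends to $\DiffSut^g$.

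Finally I would verify the two functor axioms. Compatibility with composition, $\SHMfun^g(f'\circ f)=\SHMfun^g(f')\circ\SHMfun^g(f)$, is precisely the second assertion of Theorem~\ref{thm:mcgactionuntwisted}. For the identity, note that $\data'_{\mathrm{id}_{(M,\gamma)}}=\data'$ and $\Theta_{\data',\data'}$ is the identity, so $\Psi^g_{\mathrm{id},\data,\data'}=\Psi^g_{\data,\data'}$; the collection $\{\Psi^g_{\data,\data'}\}$ is by Definition~\ref{def:projtransys}(2c) the identity morphism of the system $\SHMfun^g(M,\gamma)$. Therefore $\SHMfun^g$ respects identities and composition, completing the proof. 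I do not expect a genuine obstacle here: every ingredient is a previously proved theorem, and the only truly new check — the identity axiom — is immediate from the definitions. The part most worth stating carefully is the verification that $\{\Psi^g_{f,\data,\data'}\}$ satisfies the morphism-of-systems naturality square, but that reduces verbatim to~(\ref{eqn:commute2}).
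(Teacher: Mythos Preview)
Your proposal is correct and follows essentially the same approach as the paper: the corollary is stated immediately after Theorem~\ref{thm:mcgactionuntwisted} with no separate proof, exactly paralleling how Corollary~\ref{cor:functor} follows from Theorem~\ref{thm:mcgaction}, and your writeup simply spells out the details of that deduction.
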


In particular, this corollary  proves Theorem \ref{thm:mainuntwistedshm}.

The rest of this section is devoted to clarifying the relationship between the untwisted invariants defined in this section and the twisted invariants defined earlier. This relationship may be stated as below. 
 Let $\SHMfun^g(M,\gamma)\otimes_\Z \RR$  denote the projectively transitive system of $\RR$-modules defined by of the modules in $\{\SHM^g(\data)\otimes_\Z\RR\}$ and maps in $\{\Psi^g_{\data,\data'}\otimes id\}$.
 
 \begin{theorem}
 \label{thm:twisteduntwisted}
$\SHMfun^g(M,\gamma)\otimes_\Z\RR$ and $\SHMtfun^g(M,\gamma)$ are isomorphic as projectively transitive systems of $\RR$-modules.
 \end{theorem}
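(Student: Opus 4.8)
The plan is to construct an explicit isomorphism of projectively transitive systems between $\SHMfun^g(M,\gamma)\otimes_\Z\RR$ and $\SHMtfun^g(M,\gamma)$. Recall that a morphism of $\GG$-transitive systems is a collection of $\GG$-equivalence classes of maps, one for each pair of indices, satisfying the compatibility condition \eqref{eqn:projtransysmor}. Here the index set for $\SHMtfun^g(M,\gamma)$ is the set of genus $g$ marked closures $\data=(Y,R,r,m,\eta)$, while the index set for $\SHMfun^g(M,\gamma)\otimes_\Z\RR$ is the set of genus $g$ ordinary closures. The natural first move is to record that every marked closure $\data=(Y,R,r,m,\eta)$ has an underlying ordinary closure $\data^\flat=(Y,R,r,m)$, and that for this pair there is a canonical ``identity'' comparison: by \cite[Definition 4.5]{km4}, $\SHMt(\data)=\HMtoc(Y|R;\Gamma_\eta)$ is defined as $\SHM(\data^\flat)\otimes_\Z\RR = \HMtoc(Y|R)\otimes_\Z\RR$ with a twisted differential, but on homology Kronheimer and Mrowka's construction gives a fixed isomorphism class; more precisely, since the local system $\Gamma_\eta$ is a system of \emph{free} rank-one $\RR$-modules, the twisted complex is literally $CM^\bullet(Y|R)\otimes_\Z\RR$ as an $\RR$-module (only the differential is twisted), and one can take the comparison map to be the one furnished by \cite[Definition 4.5]{km4}. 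So define, for an ordinary closure $\data_0$ and a marked closure $\data$, the map $\Xi_{\data_0,\data}:\SHM^g(\data_0)\otimes_\Z\RR\to\SHMt(\data)$ to be $\Psit^g_{\data_0^\sharp,\data}\circ(\text{comparison})$, where $\data_0^\sharp$ is $\data_0$ equipped with an arbitrary choice of essential curve $\eta_0\subset R_0$.

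The key steps, in order: (1) Show this $\Xi_{\data_0,\data}$ is well-defined up to a unit in $\RR$, i.e. independent of the choice of auxiliary curve $\eta_0$ used to promote $\data_0$ to a marked closure. This follows because any two marked closures with the same underlying ordinary closure are related by an $\RR^\times$-equivalence class of isomorphisms (both the comparison maps and the $\Psit$'s absorb into this), using Theorems \ref{thm:samegenusinvariance} and \ref{thm:samegenustransitive}. (2) Verify the compatibility condition: for ordinary closures $\data_0,\data_0'$ and marked closures $\data,\data'$, one needs $\Xi_{\data_0',\data'}\circ(\Psi^g_{\data_0,\data_0'}\otimes id)\doteq\Psit^g_{\data,\data'}\circ\Xi_{\data_0,\data}$. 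After unwinding definitions this reduces to a statement entirely about the twisted maps $\Psit^g$ together with the relationship between $\Psi^g$ (untwisted) and $\Psit^g$ (twisted) on closures of the \emph{same} genus: namely that $\Psit^g_{\data_0^\sharp,\data_0'^\sharp}$ (for compatible auxiliary curves) agrees with $\Psi^g_{\data_0,\data_0'}\otimes id$ up to a unit. This last point is the crux, and it holds because both maps are built from exactly the same $2$-handle cobordisms $X_\pm$ of Subsection \ref{ssec:samegenus} and the same diffeomorphism-induced map $\Theta^C$ — the only difference being the presence of the cylinder $\nu$ as twisting $2$-cycle — so the twisted cobordism map reduces mod the twisting to the untwisted one, and on the top summand both are isomorphisms between modules of the expected form. (3) Conclude that $\{\Xi_{\data_0,\data}\}$ and its inverse collection define mutually inverse morphisms of projectively transitive systems, hence an isomorphism.

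The main obstacle I expect is Step (2), specifically pinning down that the twisted same-genus map $\Psit^g$ is compatible with the untwisted map $\Psi^g$ after tensoring — one must carefully track that the comparison isomorphism $\SHMt(\data)\cong\SHM(\data^\flat)\otimes_\Z\RR$ of \cite[Definition 4.5]{km4} is natural with respect to the $2$-handle cobordism maps $\HMtoc(X_\pm|R;\Gamma_\nu)$ versus $\HMtoc(X_\pm|R)\otimes id$, and with respect to $\Theta^C_\nu$ versus $\Theta^C\otimes id$. The naturality of cobordism maps under reduction of coefficient systems — i.e. the fact that setting $t^\alpha\mapsto 1$ (or, more precisely, that $\Gamma_\nu$ restricted to the relevant cobordism is pulled back from the trivial system when $\nu$ bounds) commutes with the induced maps — is essentially built into Kronheimer--Mrowka's functoriality, but it requires an argument since in general $\nu$ need not be null-homologous; what saves us is that we only need the identification of the systems, which is insensitive to overall multiplication by units, so it suffices that the reduction is natural up to $\RR^\times$. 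I would organize the proof of Step (2) as a diagram chase: assemble the square relating the four relevant modules, decompose each $\Xi$ and each $\Psi,\Psit$ into its constituent cobordism and diffeomorphism maps as in Subsection \ref{ssec:samegenus}, and observe that the resulting prism of cobordisms commutes on the nose (the underlying $4$-manifolds are literally the same), so the only discrepancy is the coefficient data, which contributes at most a unit by the uniqueness-up-to-units statements of Theorem \ref{thm:maininvc}. The remaining verification — that the inverse collection really is inverse — is then routine from Theorem \ref{thm:generalcasetransitive} and its untwisted analogue in Theorem \ref{thm:samegenusinvarianceuntwisted}.
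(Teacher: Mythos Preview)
Your overall architecture is right --- construct maps $\Xi^g_{\data_0,\data}$ and verify the compatibility square --- and this matches the paper's structure. But there is a genuine gap at the foundation: the ``comparison isomorphism'' you invoke does not exist in the form you describe.

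You claim that \cite[Definition 4.5]{km4} furnishes a comparison map $\SHM(\data^\flat)\otimes_\Z\RR \to \SHMt(\data)$ on homology because the twisted chain complex is ``literally $CM^\bullet(Y|R)\otimes_\Z\RR$ as an $\RR$-module (only the differential is twisted).'' That is true at the chain level, but a chain-group identification with \emph{different differentials} induces no map on homology. There is no ``reduction mod the twisting'' homomorphism $\HMtoc(Y|R;\Gamma_\eta)\to\HMtoc(Y|R)\otimes_\Z\RR$ or vice versa coming from coefficient reduction, because the local system $\Gamma_\eta$ has nontrivial monodromy (indeed, that is the whole point of the construction). Your later concession that ``it suffices that the reduction is natural up to $\RR^\times$'' misdiagnoses the issue: the problem is not that the comparison is canonical only up to a unit, but that no such comparison map exists at all without further geometric input. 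Consequently your Step (2) argument --- that the twisted and untwisted cobordism maps $\HMtoc(X_\pm|R;\Gamma_\nu)$ and $\HMtoc(X_\pm|R)\otimes id$ agree up to a unit because ``the underlying 4-manifolds are literally the same'' --- has no content, since there is no identification of source and target under which to compare them.

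The paper supplies exactly this missing ingredient: it defines $\Xi^g_{\data,\data^\eta}$ \emph{geometrically} via the merge-type excision cobordism $\CM(\data)$ of Section~\ref{sec:prelim}, which gives an honest isomorphism
\[
\HMtoc(\CM(\data)|R;\Gamma_{\nu^\eta}):\HMtoc(Y|R)\otimes_\Z\HMtoc(R\times S^1|R;\Gamma_\eta)\longrightarrow\HMtoc(Y|R;\Gamma_\eta),
\]
and then proves the compatibility (your Step (2), the paper's Proposition~\ref{prop:keycommute}) by an excision argument: one composes $\CM(\data_1)$, the 2-handle cobordism $X_+$, and the split cobordism $-\CM(\data_2)$, cuts the composite along an embedded $R\times S^1$, and recognizes the result as $X'_+\sqcup(\text{cylinder})$, whence the induced map factors as $\Psi^g_{\data_1,\data_2}\otimes f$. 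This excision step is the real content of the theorem and cannot be replaced by the formal observation that the same 2-handle cobordisms appear on both sides.
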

 
 \begin{proof}
 To define an isomorphism from $\SHMfun^g(M,\gamma)\otimes_\Z\RR$ to $\SHMtfun^g(M,\gamma)$, it suffices to define isomorphisms \[\Xi^g_{\data,\data'}:\SHM^g(\data)\otimes_\Z\RR\to \SHMt^g(\data')\] for all genus $g$ closures $\data$ and genus $g$ marked closures $\data'$ of $(M,\gamma)$ such that \begin{equation}\label{eqn:commutativeut}\Psit^g_{\data',\data'''}\circ\Xi^g_{\data,\data'}\doteq \Xi^g_{\data'',\data'''}\circ (\Psi^g_{\data,\data''}\otimes id)\end{equation} for all genus $g$ closures $\data,\data''$  and all genus $g$ marked closures $\data',\data'''$ of $(M,\gamma)$.
 
 Suppose $\data = (Y,R,r,m)$ is a closure of $(M,\gamma)$ and $\eta$ is an oriented, homologically essential, smoothly embedded curve in $R$. We will denote by $\data^\eta$  the marked closure given by \[\data^n = (Y,R,r,m,\eta).\] We  first define an isomorphism \[\Xi^g_{\data,\data^\eta}:\SHM^g(\data)\otimes_\Z\RR\to \SHMt^g(\data^\eta)\] using the merge-type cobordism $\CM$ introduced in Section \ref{sec:prelim}. Recall that $\CM$ is built by gluing together the cornered 4-manifolds\begin{align*}
\CM_1&=(Y\ssm\inr(\Img(r))) \times [0,1],\\
\CM_2&=R\times S, \\
\CM_3&=R\times[-3/4,3/4]\times[0,1],
\end{align*} 
along the horizontal portions of their boundaries, as depicted in Figure \ref{fig:merge}, and that \[\nu: = \eta\times\{0\}\times[0,1].\] Here, we will denote $\nu$ by $\nu^{\eta}$ to keep track of $\eta$ and we will denote $\CM$ by $\CM(\data)$ to keep track of $\data$. As discussed in Section \ref{sec:prelim}, $(\CM(\data),\nu^{\eta})$ defines  a  cobordism from $Y\sqcup (R\times S^1,\eta\times\{0\})$ to $(Y, r(\eta\times\{0\})),$ and thus gives rise to an isomorphism \[\HMtoc(\CM(\data)|R;\Gamma_{\nu^{\eta}}):\HMtoc(Y|R)\otimes_\Z\RR
\rightarrow\HMtoc(Y|R;\Gamma_{\eta})\] after choosing an identification $\HMtoc(R\times S^1|R;\Gamma_{\eta})\cong \RR.$ We define \[\Xi^g_{\data,\data^\eta}:=\HMtoc(\CM(\data)|R;\Gamma_{\nu^{\eta}}).\]

 The following proposition is the key to defining the maps $\Xi^g_{\data,\data'}$ in general and to proving the commutativity in (\ref{eqn:commutativeut}).
 
 \begin{proposition}
 \label{prop:keycommute}
 Suppose $\data_1 = (Y_1,R_1,r_1,m_1)$ and $\data_2=(Y_2,R_2,r_2,m_2)$ are two genus $g$ closures of $(M,\gamma)$, and let $\eta_1$ and $\eta_2$ be curves in $R_1$ and $R_2$ as above. Then the maps \[(\Xi^g_{\data_2,\data_2^{\eta_2}})^{-1}\circ \Psit^g_{\data_1^{\eta_1},\data_2^{\eta_2}}\circ \Xi^g_{\data_1,\data_1^{\eta_1}}\,\,\,\,\,{\rm and}\,\,\,\,\, \Psi^g_{\data_1,\data_2}\otimes id\] from \[\SHM^g(\data_1)\otimes_\Z\RR\to\SHM^g(\data_2)\otimes_\Z\RR\] are $\RR^\times$-equivalent.
 \end{proposition}
 
 Let us postpone the proof of Proposition \ref{prop:keycommute} to the end this section and first see how this proposition is used to define the maps $\Xi^g_{\data,\data'}$ and prove that (\ref{eqn:commutativeut}) holds. To define the  isomorphism $\Xi^g_{\data,\data'}$, we choose any $\eta$ as above and set \[\Xi^g_{\data,\data'} := \Psit^g_{\data^\eta,\data'}\circ\Xi^g_{\data,\data^\eta}.\] 
 
 \begin{lemma} The map $\Xi^g_{\data,\data'}$ is  well-defined  (i.e.\ does not depend on $\eta$) up to multiplication by a unit in $\RR$.
 \end{lemma}
 
\begin{proof}
 We must show that \[\Psit^g_{\data^{\eta_1},\data'}\circ \Xi^g_{\data,\data^{\eta_1}}\doteq \Psit^g_{\data^{\eta_2},\data'}\circ \Xi^g_{\data,\data^{\eta_2}}\] for any $\eta_1$ and $\eta_2$. But this is equivalent to showing that 
 \[(\Xi^g_{\data,\data^{\eta_2}})^{-1}\circ\Psit^g_{\data^{\eta_1},\data^{\eta_2}}\circ \Xi^g_{\data,\data^{\eta_1}}
\] is $\RR^\times$-equivalent to the identity map from $\SHM^g(\data)\otimes_\Z\RR$ to itself, which is just a special case of Proposition \ref{prop:keycommute} where $\data_1=\data_2=\data$.
\end{proof}

The following lemma establishes the commutativity in (\ref{eqn:commutativeut}).

\begin{lemma} Suppose $\data_1,\data_2$ are genus $g$ closures of $(M,\gamma)$ and $\data_3,\data_4$ are genus $g$ marked closures of $(M,\gamma)$. Then \[\Psit^g_{\data_3,\data_4}\circ\Xi^g_{\data_1,\data_3}= \Xi^g_{\data_2,\data_4}\circ (\Psi^g_{\data_1,\data_2}\otimes id),\] up to multiplication by a unit in $\RR$. 
\end{lemma}

\begin{proof}We must  show that \begin{equation}\label{eqn:commutative2ut}(\Xi^g_{\data_2,\data_4})^{-1}\circ\Psit^g_{\data_3,\data_4}\circ\Xi^g_{\data_1,\data_3}\doteq \Psi^g_{\data_1,\data_2}\otimes id.\end{equation} Let $\eta_1$ and $\eta_2$ be oriented, homologically essential, smoothly embedded curves in $R_1$ and $R_2$ as above. The left hand side of (\ref{eqn:commutative2ut}) is $\RR^\times$-equivalent to 
\begin{align*}
&(\Psit^g_{\data_2^{\eta_2},\data_4}\circ\Xi^g_{\data_2,\data_2^{\eta_2}})^{-1}\circ\Psit^g_{\data_2^{\eta_2},\data_4}\circ\Psit^g_{\data_3,\data_2^{\eta_2}}\circ\Psit^g_{\data_1^{\eta_1},\data_3}\circ\Xi^g_{\data_1,\data_1^{\eta_1}}\\
&\doteq(\Xi^g_{\data_2,\data_2^{\eta_2}})^{-1}(\Psit^g_{\data_2^{\eta_2},\data_4})^{-1}\circ\Psit^g_{\data_2^{\eta_2},\data_4}\circ\Psit^g_{\data_1^{\eta_1},\data_2^{\eta_2}}\circ\Xi^g_{\data_1,\data_1^{\eta_1}}\\
&\doteq(\Xi^g_{\data_2,\data_2^{\eta_2}})^{-1}\circ \Psit^g_{\data_1^{\eta_1},\data_2^{\eta_2}}\circ \Xi^g_{\data_1,\data_1^{\eta_1}},
\end{align*}
so (\ref{eqn:commutative2ut}) just follows from Proposition \ref{prop:keycommute}.
\end{proof}

All that remains is to prove Proposition \ref{prop:keycommute}.

\begin{proof}[Proof of Proposition \ref{prop:keycommute}]
Recall that to define $\Psit^g_{\data_1^{\eta_2},\data_2^{\eta_2}}$, we start by choosing a diffeomorphism \[C:Y_1\ssm\inr(\Img(r_1))\to Y_2\ssm\inr(\Img(r_2))\] as in Subsection \ref{ssec:samegenus}. We then define $\varphi_\pm^C$, $\varphi^C$ and $\psi^C$ as usual,  pick factorizations of $\varphi^C\circ\psi^C$ and $(\psi^C)^{-1}$
into positive Dehn twists,\footnote{In Subsection \ref{ssec:samegenus} we had to allow negative Dehn twists in order to prepare for the maps $\Psit^{g,g+1}_{\data,\data'}$ of Subsection \ref{ssec:differbyone}, but we can avoid them here since we are not planning to compare $\SHMfun^g(M,\gamma)$ for different values of $g$.}
\begin{align*}
\varphi^C\circ\psi^C&\sim D_{a_1}\circ\dots \circ D_{a_{n}}, \\
(\psi^C)^{-1}&\sim D_{a_{n+1}}\circ\dots \circ D_{a_{m}},
\end{align*} 
 and choose real numbers \[-1<t_{m}<\dots<t_{n+1}<-7/8<7/8<t_{n}<\dots<t_1<1\] (normally, we choose these numbers in the intervals $[-3/4,-1/4]$ and $[1/4,3/4]$ but this change will not affect the resulting map).  Let $(Y_1)_+$ be the 3-manifold obtained from $Y_1$ by performing  $-1$ surgeries on the curves $r_1(a_i\times\{t_i\})$ for all $i$, and let $X_+$ be the associated 2-handle cobordism from $Y_1$ to $(Y_1)_+$. As usual, $X_+$ induces a map \[\HMtoc(X_+|R_1;\Gamma_{\nu}):\HMtoc(Y_1|R_1;\Gamma_{\eta_1})\to \HMtoc((Y_1)_+|R_1;\Gamma_{\eta_1}).\] Then, \[\Psit^g_{\data_1^{\eta_1},\data_2^{\eta_2}}:= \Theta_{(Y_1)_+Y_2}^C\circ \HMtoc(X_+|R_1;\Gamma_{\nu}).\] 
 
As above, the maps $\Xi^g_{\data_1,\data_1^{\eta_1}}$ and $\Xi^g_{\data_2,\data_2^{\eta_2}}$ are defined in terms of the merge-type cobordisms $(\CM(\data_1),\nu^{\eta_1})$ and $(\CM(\data_2),\nu^{\eta_2})$. Kronheimer and Mrowka use an excision argument in \cite{km4}  to prove that the inverse map $(\Xi^g_{\data_2,\data_2^{\eta_2}})^{-1}$ is $\RR^\times$-equivalent to the map induced by the split-type cobordism $(-\CM(\data_2),-\nu^{\eta_2})$ from  $(Y_2,r_2(\eta_2\times\{0\}))$ to $Y_2\sqcup (R_2\times S^1, \eta_2\times\{0\})$.  We will think of $(-\CM(\data_2),-\nu^{\eta_2})$ as having been obtained by gluing together the cornered 4-manifolds
\begin{align*}
-\CM(\data_2)_1&=(Y_2\ssm\inr(\Img(r_2))) \times [0,1],\\
-\CM(\data_2)_2&=R_2\times S', \\
-\CM(\data_2)_3&=R_2\times[-3/4,3/4]\times[0,1],
\end{align*} 
where $S'$ is the saddle cobordism used to define the split-type cobordism $\CS$ in Section \ref{sec:prelim}.  Here, we glue $-\CM(\data_2)_2$ to $-\CM(\data_2)_1$ according to the maps \begin{align*}
r_2^-\times id&:R_2\times H_1\rightarrow Y_2\times [0,1],\\
r_2^+\times id&:R_2\times H_2\rightarrow Y_2\times [0,1],
\end{align*} and we glue $-\CM(\data_2)_3$ to $-\CM(\data_2)_2\cup -\CM(\data_2)_1$  according to  \begin{align*}
id\times id&: (R_2\times\{-3/4\})\times[0,1]\rightarrow R_2\times H_3 ,\\
id\times id&:  (R_2\times\{+3/4\})\times[0,1]\rightarrow R_2\times H_4.
\end{align*} In this case, $-\nu^{\eta_2}$ corresponds to the cylinder $\eta_2 \times\{0\}\times[0,1]\subset -\CM(\data_2)_3\subset-\CM(\data_2)$. 

The map \[(\Xi^g_{\data_2,\data_2^{\eta_2}})^{-1}\circ \Psit^g_{\data_1^{\eta_1},\data_2^{\eta_2}}\circ \Xi^g_{\data_1,\data_1^{\eta_1}}:\HMtoc(Y_1|R_1)\otimes_\Z \RR\to\HMtoc(Y_2|R_2)\otimes_\Z \RR\] is then $\RR^\times$-equivalent to the map \begin{equation}\label{eqn:composition}\HMtoc(-\CM(\data_2)|R_2;\Gamma_{-\nu^{\eta_2}})\circ\Theta_{(Y_1)_+Y_2}^C\circ \HMtoc(X_+|R_1;\Gamma_{\nu})\circ \HMtoc(\CM(\data_1)|R_1;\Gamma_{\nu^{\eta_1}})\end{equation} from \[\HMtoc(Y_1|R_1)\otimes_\Z \HMtoc(R_1\times S^1|R_1;\Gamma_{\eta_1})\to\HMtoc(Y_2|R_2)\otimes_\Z \HMtoc(R_2\times S^1|R_2;\Gamma_{\eta_2})\] after choosing identifications of  $\HMtoc(R_1\times S^1|R_1;\Gamma_{\eta_1})$ and  $\HMtoc(R_2\times S^1|R_2;\Gamma_{\eta_2})$ with $\RR$. So, to prove Proposition \ref{prop:keycommute}, we just need to show that this composition in (\ref{eqn:composition}) is $\RR^\times$-equivalent to $\Psi^g_{\data_1,\data_2}\otimes f,$ for some isomorphism \[f:\HMtoc(R_1\times S^1|R_1;\Gamma_{\eta_1})\to\HMtoc(R_2\times S^1|R_2;\Gamma_{\eta_2}).\] 

For this, consider the composite cobordism \[(W,\nu)=(-\CM(\data_2),-\nu^{\eta_2}) \circ (X_+,\nu)\circ (\CM(\data_1),\nu^{\eta_1}),\] where $-\CM(\data_2)$ is glued to $X_+$ along $(Y_1)_+\cong Y_2$ by a map which restricts to $C$ on $Y_1\ssm\inr(\Img(r_1))$ and to $ r_2\circ((\varphi_-^C\circ \psi^C)\times id)\circ r_1^{-1}$  on a neighborhood of $r_1(R_1\times[-7/8,7/8])$. The induced map $\HMtoc(W|R_1;\Gamma_{\nu})$ is   equal to the map in (\ref{eqn:composition}). To show that \[\HMtoc(W|R_1;\Gamma_{\nu})\doteq \Psi^g_{\data_1,\data_2}\otimes f,\] for some isomorphism $f$ as above, we use an excision argument nearly identical to that  in the proof of Proposition \ref{prop:maininvc2}. Namely, we consider the submanifold $T\subset W$ given by \[T=R_1\times c \,\cup \,R_1\times \{-7/8,7/8\} \times [0,1]\,\cup\, R_2\times c'.\] Recall that $c$ is a smoothly embedded arc in $S$ with boundary  on $V_3$ and $V_4$ at the points identified with $-7/8$ and $7/8$, and $c'$ is the corresponding arc in $S'$, so that $R_1\times c$ and $R_2\times c'$ are properly embedded submanifolds of $\CM(\data_1)$ and $\CM(\data_2)$. The middle piece $R_1\times\{-7/8,7/8\}\times [0,1]$ is a properly embedded submanifold of $X_+$, where $X_+$ is viewed as  a union of $Y_1\times[0,1]$ with  2-handles and $Y_1$ is viewed as the boundary component \[Y_1\ssm\inr(\Img(r_1))\,\cup R_1\times V_3\,\cup\, R_1\times[-3/4,3/4]\,\cup\, R_1\times V_4\] of $\CM(\data_1)$.

We form a new 4-manifold manifold $\overline{W}$ by cutting $W$ open along $T\cong R_1\times S^1$ and capping off the new boundary components with copies of $R\times D^2$. The resulting cobordism $(\overline{W},\nu)$ is  isomorphic to the disjoint union \[X_+'\,\sqcup \,((R_1\times S^1)\times [0,3],\eta_1\times[0,3]),\] where $X_+'$ is a cobordism from $Y_1$ to $Y_2$ of the sort used to define the map $\Psi^g_{\data_1,\data_2}$ and $((R_1\times S^1)\times [0,3],\eta_1\times[0,3])$ is the cobordism from $(R_1\times S^1,\eta_1)$ to $(R_2\times S^1,\eta_2)$ with  boundary identification \[((R_1\times S^1)\times\{3\},\eta_1\times\{3\})\cong (R_2\times S^1,\eta_2)\]  induced by the map \begin{equation}\label{eqn:identr1r2}\varphi_-^C\circ \psi^C:R_1\to R_2.\end{equation} By excision, we have that 
\begin{align*}
\HMtoc(W|R_1;\Gamma_\nu)&\doteq \HMtoc(\overline{W}|R_1;\Gamma_\nu)\\
&= \HMtoc(X_+'|R_1)\otimes \HMtoc((R_1\times S^1)\times[0,3]|R_1;\Gamma_{\eta_1\times[0,3]})\\
&=\Psi^g_{\data_1,\data_2}\otimes f,
\end{align*}
where \[f:\HMtoc(R_1\times S^1|R_1;\Gamma_{\eta_1})\to\HMtoc(R_2\times S^1|R_2;\Gamma_{\eta_2})\] is the isomorphism induced by the identification in (\ref{eqn:identr1r2}). This completes the proof of Proposition \ref{prop:keycommute}.
\end{proof}

The above results show that the  maps in $\{\Xi^g_{\data,\data'}\}$  define an isomorphism of projectively transitive systems as desired, completing the proof of Theorem \ref{thm:twisteduntwisted}. \end{proof}
 
 One can easily adapt the above proof to show the following.
  
 \begin{theorem}
 \label{thm:natiso}
 The functors $\SHMfun^g\otimes_\Z\RR$ and $\SHMtfun^g$ from $\DiffSut^g$ to $\RPSys$ are naturally isomorphic. \qed
\end{theorem}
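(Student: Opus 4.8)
The natural isomorphism will be the one furnished by the family $\{\Xi^g_{\data,\data'}\}$ constructed in the proof of Theorem \ref{thm:twisteduntwisted}: by that theorem this family defines, for every object $(M,\gamma)$ of $\DiffSut^g$, an isomorphism $\Xi^g_{(M,\gamma)}\colon \SHMfun^g(M,\gamma)\otimes_\Z\RR\to\SHMtfun^g(M,\gamma)$ of projectively transitive systems of $\RR$-modules. The plan is to check that these assemble into a natural transformation, i.e.\ that for every diffeomorphism $f\colon(M,\gamma)\to(M',\gamma')$ of balanced sutured manifolds admitting genus $g$ closures,
\[
\SHMtfun^g(f)\circ\Xi^g_{(M,\gamma)}\ \doteq\ \Xi^g_{(M',\gamma')}\circ\bigl(\SHMfun^g(f)\otimes id\bigr).
\]
Since a morphism of projectively transitive systems is determined up to $\RR^\times$-equivalence by any one of its components, it suffices to compare the $(\data_1,\data_2)$-components of the two sides for a single genus $g$ closure $\data_1$ of $(M,\gamma)$ and a single genus $g$ marked closure $\data_2$ of $(M',\gamma')$.

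Unwinding the definitions of $\SHMfun^g(f)$ and $\SHMtfun^g(f)$ from Section \ref{sec:diffeomorphismmaps}, and using the freedom to insert auxiliary closures afforded by Theorems \ref{thm:generalcasetransitive} and \ref{thm:samegenusinvarianceuntwisted}, I would compute these two components using the auxiliary marked closure $\data_1^{\eta_1}$ of $(M,\gamma)$ (for a choice of curve $\eta_1\subset R_1$) on one side and the underlying ordinary closure of $\data_2$ on the other. Writing $\data'_f$ for the closure obtained from a closure $\data'$ of $(M',\gamma')$ by precomposing its embedding with $f$, the diffeomorphism maps satisfy $\Psi^g_{f,\data,\data'}=\Theta_{\data'_f,\data'}\circ\Psi^g_{\data,\data'_f}$ and $\Psit^g_{f,\data,\data'}=\Theta_{\data'_f,\data'}\circ\Psit^g_{\data,\data'_f}$, with $\Theta_{\data'_f,\data'}$ an identity map on the relevant Floer group. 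The first step of the argument is then to push these $\Theta$-maps past the $\Xi^g$-maps: since $\Xi^g_{\data,\data'}$ is defined in terms of the merge cobordism $\CM(\data)$, which depends only on $Y$, $R$, $r$ and not on the embedding $m$, the relevant square of $\Theta$'s and $\Xi^g$'s commutes on the nose. After doing so the identity to be verified becomes exactly
\[
\bigl(\Xi^g_{\data_2',(\data_2')^{\eta_2}}\bigr)^{-1}\circ\Psit^g_{\data_1^{\eta_1},(\data_2')^{\eta_2}}\circ\Xi^g_{\data_1,\data_1^{\eta_1}}\ \doteq\ \Psi^g_{\data_1,\data_2'}\otimes id,
\]
where $\data_2'$ is the closure of $(M,\gamma)$ obtained from $\data_2$ by forgetting its marking and precomposing its embedding with $f$, and $\eta_2\subset R_2$ is the marking of $\data_2$. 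This is precisely Proposition \ref{prop:keycommute} applied to the closures $\data_1$ and $\data_2'$ of $(M,\gamma)$.

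The proof therefore requires no new geometric input: Proposition \ref{prop:keycommute}, together with Theorems \ref{thm:generalcasetransitive} and \ref{thm:samegenusinvarianceuntwisted} and the trivial commutativity of squares built from the identity-type maps $\Theta$, suffices. I expect the only real obstacle to be the bookkeeping --- carefully tracking which closures belong to $(M,\gamma)$ versus $(M',\gamma')$ as $f$ is threaded through the constructions, and checking that the plumbing by identity maps used to define the diffeomorphism-induced maps in Section \ref{sec:diffeomorphismmaps} does not interact nontrivially with the splicing cobordisms entering the definition of the $\Xi^g_{\data,\data'}$. This is the same kind of diagram chase already carried out to establish \eqref{eqn:commute} and \eqref{eqn:commutativeut}, now performed one level up.
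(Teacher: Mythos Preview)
Your proposal is correct and matches what the paper intends: the paper omits the proof entirely, remarking only that one can ``easily adapt the above proof'' (of Theorem~\ref{thm:twisteduntwisted}), and your argument is exactly that adaptation---using the family $\{\Xi^g_{\data,\data'}\}$ as the components of the natural transformation and reducing the naturality square to Proposition~\ref{prop:keycommute} after observing that the merge cobordism $\CM(\data)$, and hence $\Xi^g_{\data,\data^\eta}$, is insensitive to the embedding $m$ and therefore commutes with the identity-type maps $\Theta_{\data'_f,\data'}$.
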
 

\section{Monopole Knot Homology}
\label{sec:khm}
In this section, we define the functors $\KHMtfun$ and $\KHMfun^g$ mentioned in the introduction. At the end, we define the functors $\HMtfun$ and $\HMfun^g$ by a simpler version of the same construction. 

Suppose $(K,p)$ is a based knot in $Y$. Let $D^2$ be the unit disk in the complex plane, let $S^1 = \partial D^2$, and suppose \begin{equation}\label{eqn:tubK}\varphi:S^1\times D^2\to Y\end{equation} is an embedding such that $\varphi(S^1\times\{0\}) = K$ and $\varphi(\{1\}\times\{0\}) = p$. Let $Y(\varphi)$ be the balanced sutured manifold given by \[Y(\varphi):=(Y\ssm \inr(\Img(\varphi)), m^+_{\varphi}\cup -m^-_{\varphi}),\] where $m^{\pm}_{\varphi}$ is the oriented meridian $\varphi(\{\pm 1\}\times \partial D^2)$ on $\partial Y(\varphi)$. The monopole knot homology  $\KHMtfun(Y,K,p)$ is defined, roughly speaking,  as $\SHMtfun(Y(\varphi))$. Of course, this does not  make complete sense since the latter  depends on $\varphi$ rather than just on $(Y,K,p)$. However, given embeddings $\varphi$ and $\varphi'$ of $K$ as above, we will construct a canonical isomorphism \[\Psit_{\varphi,\varphi'}:\SHMtfun(Y(\varphi))\to\SHMtfun(Y(\varphi')).\] These isomorphisms will then allow us to define $\KHMtfun(Y,K,p)$ without ambiguity (see Definition \ref{def:khm}). We describe the construction of these isomorphisms below.

Let us first consider the case in which $\Img(\varphi')\subset \Img(\varphi)$. 

Let $N$ be a solid torus neighborhood of $K$ with $\Img(\varphi)\subset \inr(N)$. Recall that any two closed tubular neighborhoods of $K$ are related by an ambient isotopy of $Y$ fixing $K$ pointwise (cf. \cite[Theorem 3.5]{kosinski}). A slight extension of this result provides an ambient isotopy $f_t:Y\to Y$, $t\in[0,1]$, such that:
\begin{enumerate}
\item each $f_t$ fixes $p$,
\item each $f_t$ restricts to the identity outside of $N$,
\item $\Img(f_1\circ\varphi)=\Img(\varphi')$,
\item $f_1$ sends the meridional disks $\varphi(\{\pm 1\} \times D^2)$ to the meridional disks $\varphi'(\{\pm 1\}\times  D^2)$.
\end{enumerate} 
Conditions (3) and (4)  imply that $f_1$ restricts to a diffeomorphism of sutured manifolds, \[\bar f_1:Y(\varphi)\to Y(\varphi').\]  We define $\Psit_{\varphi,\varphi'}$ in this case by \begin{equation*}\label{eqn:embeddingscon}\Psit_{\varphi,\varphi'}:=\SHMtfun(\bar f_1):\SHMtfun(Y(\varphi))\to\SHMtfun(Y(\varphi')).\end{equation*}

\begin{remark}
We could  just as easily require that the isotopy $f_t$ fixes $(K,p)$ and sends every meridional disk to a meridional disk. Not requiring that $f_t$ fixes $(K,p)$ will be convenient for our construction of transverse knot invariants in \cite{bs3}. 
\end{remark}

Let us now consider the case of arbitrary embeddings $\varphi,\varphi'$.

Fix a third embedding $\varphi''$ with $\Img(\varphi'')\subset \Img(\varphi)$ and $\Img(\varphi'')\subset \Img(\varphi')$. We define 
\begin{equation}\label{eqn:embeddingsgen}\Psit_{\varphi,\varphi'}:=(\Psit_{\varphi',\varphi''})^{-1}\circ \Psit_{\varphi,\varphi''}:\SHMtfun(Y(\varphi))\to\SHMtfun(Y(\varphi')),\end{equation} where the maps $\Psit_{\varphi',\varphi''}$ and  $\Psit_{\varphi,\varphi''}$ are defined as described previously.

We prove below that this map is well-defined.

\begin{proposition}
\label{prop:khmpsiwelldefined}
The map $\Psit_{\varphi,\varphi'}$ is independent of the choices made in its construction.
\end{proposition}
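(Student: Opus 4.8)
The plan is to reduce the statement to a transitivity property of the maps $\Psit_{\varphi,\varphi'}$ in the ``nested'' case $\Img(\varphi')\subset\Img(\varphi)$, and then derive the general case from that. There are really two things to check: first, that the nested-case map $\Psit_{\varphi,\varphi'}:=\SHMtfun(\bar f_1)$ does not depend on the choice of ambient isotopy $f_t$ (nor on the auxiliary neighborhood $N$); and second, that the formula $\Psit_{\varphi,\varphi'}=(\Psit_{\varphi',\varphi''})^{-1}\circ\Psit_{\varphi,\varphi''}$ in the general case does not depend on the choice of the common sub-embedding $\varphi''$. Both reductions will use the functoriality already established for $\SHMtfun$ in Theorem~\ref{thm:mcgaction} (that $\SHMtfun$ respects composition and depends only on smooth isotopy classes of sutured diffeomorphisms), together with the corresponding nesting compatibilities among the $\bar f_1$.

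For the first point: if $f_t$ and $g_t$ are two ambient isotopies satisfying conditions (1)--(4) with respect to neighborhoods $N$ and $N'$, then I would shrink to a common neighborhood and argue that the two resulting sutured diffeomorphisms $\bar f_1,\bar g_1:Y(\varphi)\to Y(\varphi')$ are smoothly isotopic through sutured diffeomorphisms. This is where the uniqueness-up-to-ambient-isotopy statement for tubular neighborhoods (cited as \cite[Theorem 3.5]{kosinski}), in the relative form used in the text, does the work: the isotopy class of the ``identity-outside-$N$'' diffeomorphism carrying one meridional-disk configuration to another is canonical because any two choices differ by an ambient isotopy supported in a neighborhood of $K$ fixing $(K,p)$ and the meridional disks, and such an isotopy restricts to a sutured isotopy of $Y(\varphi')$. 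Since $\SHMtfun$ is an invariant of the smooth isotopy class of a sutured diffeomorphism (Theorem~\ref{thm:mcgaction}), $\SHMtfun(\bar f_1)=\SHMtfun(\bar g_1)$. I would also record the nesting transitivity: if $\Img(\varphi'')\subset\Img(\varphi')\subset\Img(\varphi)$, then one may choose the isotopies so that $\bar f_1$ for $(\varphi,\varphi'')$ equals the composition of those for $(\varphi,\varphi')$ and $(\varphi',\varphi'')$ up to sutured isotopy, hence $\Psit_{\varphi,\varphi''}\doteq\Psit_{\varphi',\varphi''}\circ\Psit_{\varphi,\varphi'}$ by functoriality; and $\Psit_{\varphi,\varphi}\doteq\mathrm{id}$.

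For the second point, suppose $\varphi''_1$ and $\varphi''_2$ are two embeddings with image contained in both $\Img(\varphi)$ and $\Img(\varphi')$. Choose a fourth embedding $\varphi'''$ with $\Img(\varphi''')\subset\Img(\varphi''_1)\cap\Img(\varphi''_2)$ (shrink inside the intersection). Applying the nesting transitivity from the previous paragraph repeatedly, $\Psit_{\varphi,\varphi''_1}\doteq\Psit_{\varphi''_1\!,\varphi'''}^{-1}\circ\Psit_{\varphi,\varphi'''}$ and similarly with subscript $1$ replaced by $2$, and likewise for $\varphi'$ in place of $\varphi$; substituting these into $(\Psit_{\varphi',\varphi''_i})^{-1}\circ\Psit_{\varphi,\varphi''_i}$ and cancelling the common factor $\Psit_{\varphi''_i,\varphi'''}$ shows both choices $i=1,2$ give $(\Psit_{\varphi',\varphi'''})^{-1}\circ\Psit_{\varphi,\varphi'''}$, which is independent of $i$. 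This is the whole argument; no closure-level constructions enter, only the two nesting identities and functoriality.

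The main obstacle I expect is the first point: making precise the relative uniqueness of tubular neighborhoods with the extra bookkeeping of the basepoint $p$ and of the two meridional disks $\varphi(\{\pm1\}\times D^2)$, i.e. verifying that the ``slight extension'' of \cite[Theorem 3.5]{kosinski} alluded to in the text really produces an isotopy unique up to the kind of ambient isotopy that restricts to a \emph{sutured} isotopy of $Y(\varphi')$. Everything downstream (the nesting transitivity and the $\varphi''$-independence) is then a formal consequence of functoriality of $\SHMtfun$. One should also note the harmless abuse that these equalities hold only up to multiplication by a unit in $\RR$ (i.e.\ $\doteq$), which is exactly the sense in which ``well-defined'' is meant for isomorphisms of projectively transitive systems.
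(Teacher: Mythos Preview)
Your overall architecture matches the paper: reduce to the nested case, establish transitivity there, and then use a common sub-embedding to show independence of $\varphi''$ in the general case. Your treatment of the second point (independence of $\varphi''$) is essentially identical to the paper's.

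The gap is in the first point, and you have correctly flagged it as the obstacle without actually resolving it. Your argument is that two isotopies $f_t,g_t$ ``differ by an ambient isotopy supported in a neighborhood of $K$ fixing $(K,p)$ and the meridional disks, and such an isotopy restricts to a sutured isotopy of $Y(\varphi')$.'' This does not work as stated: the ambient isotopy $h_t=(g_t)^{-1}\circ f_t$ of $Y$ is supported in $N$ and fixes $p$, but it need not preserve $\Img(\varphi)$ (or the meridional disks) at intermediate times $t$, so it does not restrict to an isotopy of $Y(\varphi)$, let alone a sutured one. The uniqueness-of-tubular-neighborhoods theorem gives you nothing about what happens on the complement of the tube at intermediate times.

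The paper's proof works instead with the single diffeomorphism $g=(\bar f_1')^{-1}\circ\bar f_1$ of $Y(\varphi)$, which is the identity outside the thickened torus $N\setminus\Img(\varphi)\cong T^2\times[0,1]$, and analyzes its class in $\pi_0(\Diff(T^2\times[0,1]\rel\partial))$. This requires three nontrivial inputs: (i) showing $g|_{T^2\times\{0\}}$ is isotopic to the identity through maps preserving the meridians, which uses a surgery argument to rule out longitudinal twisting; (ii) the surjection $\pi_1(\Diff(T^2))\to\pi_0(\Diff(T^2\times[0,1]\rel\partial))$ from Appendix~\ref{sec:appendixa}, identifying the source with $\Z\times\Z$ generated by meridional and longitudinal rotations; and (iii) showing $[g]$ lies in the image of the meridional factor $\Z\times\{0\}$, using that $g$ extends over $N$ through an isotopy fixing $p$ and $\partial N$ and that it preserves a meridional disk. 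Only then can one conclude that $g$ is isotopic to the identity through diffeomorphisms preserving $m_\varphi^\pm$. None of this is captured by the tubular-neighborhood uniqueness statement you invoke.

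A minor point: the maps $\Psit_{\varphi,\varphi'}$ here are morphisms of projectively transitive systems, hence honest equalities (the paper writes $=$, not $\doteq$, in the transitivity relation); the unit ambiguity is already absorbed in the category.
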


\begin{proof}
We first consider the case in which $\Img(\varphi')\subset \Img(\varphi)$ and show that the map $\Psit_{\varphi,\varphi'}$ is independent of the choices in its construction, namely $N$ and $f_t$. First, we fix $N$ and show that $\Psit_{\varphi,\varphi'}$ is independent of $f_t$. Suppose $f_t$ and $f'_t$ are two  isotopies as  above. It suffices to show that the induced maps \[\bar f_1,\bar f_1':Y(\varphi)\to Y(\varphi')\] are isotopic (as diffeomorphisms of sutured manifolds) and therefore give rise to the same maps, $\SHMtfun(\bar f_1)=\SHMtfun(\bar f_1')$, by Theorem \ref{thm:mcgaction}. We will show, equivalently, that the diffeomorphism $g=(\bar f_1')^{-1}\circ \bar f_1$ is isotopic to the identity. 

Recall that $g$ is the identity outside of $N$. Let us consider the restriction of $g$ to $N\cap Y(\varphi) = N\ssm \Img(\varphi)$, which we will identify with a  thickened torus $T^2 \times [0,1]$, where $T^2\times\{0\}$ corresponds to $\partial (\Img(\varphi))$ and  $T^2\times\{1\}$ corresponds to $\partial N$. First, note that the restriction of $g$ to $T^2\times\{0\}$ is isotopic to the identity through an isotopy which preserves the meridians $m_{\varphi}^{\pm}$. This is because $g$ preserves  these meridians to start with and sends any longitude to an isotopic longitude. 
(For the latter statement, first observe that any diffeomorphism of $T^2$ sending meridians to meridians must send longitudes  to longitudes. Second, suppose there is a diffeomorphism of $T^2\times [0,1]$ which restricts to the identity on $T^2\times\{1\}$, preserves a meridian $\mu$ on $T^2\times\{0\}$, and sends a longitude $\lambda$ to a curve homologous to $\lambda+ k\mu$. Then one could  show that $n$-surgery on a knot $K$ is homeomorphic to $(n+kr)$-surgery on $K$ for any integers $n,r$ and any knot $K$. This can only happen if $k=0$.) 
We can  realize this isotopy on $T^2\times\{0\}$ as the restriction of an isotopy on $T^2\times[0,1]$ which is the identity on $T^2\times\{1\}$. We may therefore assume that the original sutured diffeomorphism \[g:Y(\varphi)\to Y(\varphi)\] restricts to the identity on $\partial (T^2\times[0,1])$. From here, our aim is  to show that $g$ is isotopic  to the identity on $T^2\times[0,1]$ by an isotopy which restricts to the identity on $T^2\times\{1\}$ and preserves the meridians $m^{\pm}_{\varphi}\subset T^2\times\{0\}$. According to Proposition \ref{prop:diff-surjection}, the natural map \[\pi_1(\Diff(T^2))\to\pi_0(\Diff(T^2\times[0,1] \rel \partial(T^2\times[0,1])))\] is surjective (in fact, it is known that this map is an isomorphism). Moreover, $\pi_1(\Diff(T^2))\cong\Z\times\Z$, where the first and second factors are generated by   full rotations  along a meridional direction (specified by $m_{\varphi}^{\pm}$) and a longitudinal direction, respectively (see \cite{earle-eells}). The fact that $g$ extends to a diffeomorphism of $N$ which is isotopic to the identity through an isotopy (namely, $(f'_t)^{-1}\circ f_t$) which fixes  $p$ and restricts to the identity on $\partial N$ implies  that the class \[[g]\in \pi_0(\Diff(T^2\times[0,1] \rel \partial(T^2\times[0,1])))\] is in the image of the subgroup $\Z\times\{0\}\subset \pi_1(\Diff(T^2))$ under the surjection above. 
(To see this, let $D$ be a meridional disk in $N$ such that $D\cap \Img(\varphi)=\varphi(\{1\}\times D^2)$. Then, by the above fact about $g$, the disks $D$ and $g(D)$ are isotopic  by an isotopy stationary on $p$ and $\partial N$. But $g(D)\cap \Img(\varphi)=\varphi(\{1\}\times D^2)$ as well, by the conditions on $f_t$ and $f'_t$. This implies, without  much difficulty, that the annuli $D\cap (T^2\times [0,1])$ and $g(D)\cap (T^2\times [0,1])$ are isotopic in $T^2\times[0,1]$ by an isotopy which preserves their boundaries. It follows easily that $[g]$ cannot come from a loop in $\Diff(T^2)$ with any longitudinal rotation.) 
But any such $g$ is then isotopic to the identity on $T^2\times[0,1]$ by an isotopy which restricts to the identity on $T^2\times\{1\}$ and  traces out some number of full rotations along the meridional direction on $T^2\times\{0\}$. Such an isotopy can be assumed to preserve the meridians $m_{\varphi}^{\pm}$, as desired.

That $\Psit_{\varphi,\varphi'}$ is independent of $N$ follows because given two such $N_1,N_2$, we can find a third $N_3$ such that $\Img(\varphi)\subset \inr(N_3)$ and $ N_3\subset N_2\cap N_1$. An ambient isotopy $f_t$ supported in $N_3$ therefore also has support in $N_1,N_2$. It then follows from the independence of $\Psit_{f_t}$ on $f_t$ that the maps defined using $N_1,N_2$ agree with the map defined using $N_3$ and, therefore,  with one another.

Note that   if we have three embeddings with $\Img(\varphi'')\subset \Img(\varphi')\subset\Img(\varphi)$, then it follows easily from the definitions that \begin{equation}\label{eqn:compsi}\Psit_{\varphi,\varphi''}=\Psit_{\varphi',\varphi''}\circ\Psit_{\varphi,\varphi'}.\end{equation}

Let us now consider the case of arbitrary embeddings $\varphi,\varphi'$. We must show that the map $\Psit_{\varphi,\varphi'}$ defined in (\ref{eqn:embeddingsgen}) is independent of  $\varphi''$. Suppose $\varphi''_1$ and $\varphi''_2$ are embeddings such that $\Img(\varphi''_i)\subset \Img(\varphi)$ and $\Img(\varphi''_i)\subset \Img(\varphi')$ for $i=1,2$. Let $\varphi''_3$ be an embedding with $\Img(\varphi''_3)\subset \Img(\varphi''_1)$ and $\Img(\varphi''_3)\subset \Img(\varphi''_2).$ Then it follows from (\ref{eqn:compsi}) that \[(\Psit_{\varphi',\varphi''_1})^{-1}\circ \Psit_{\varphi,\varphi''_1}=(\Psit_{\varphi',\varphi''_3})^{-1}\circ \Psit_{\varphi,\varphi''_3} = (\Psit_{\varphi',\varphi''_2})^{-1}\circ \Psit_{\varphi,\varphi''_2},\] completing the proof.
\end{proof}
 
 \begin{proposition}
 \label{prop:khmpsitransitive}
 The isomorphisms constructed above satisfy the transitivity relation \[\Psit_{\varphi,\varphi''}=\Psit_{\varphi',\varphi''}\circ \Psit_{\varphi,\varphi'}\] for any three  $\varphi,\varphi',\varphi''.$
 \end{proposition}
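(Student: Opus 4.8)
\textbf{Proof proposal for Proposition \ref{prop:khmpsitransitive}.}

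The plan is to reduce the general transitivity statement to the special case in which all the relevant images are nested, where transitivity is already recorded as equation (\ref{eqn:compsi}) in the proof of Proposition \ref{prop:khmpsiwelldefined}. Given three embeddings $\varphi,\varphi',\varphi''$ as in (\ref{eqn:tubK}), the first step is to choose a single auxiliary embedding $\varphi_0$ whose image is contained in all three: $\Img(\varphi_0)\subset \Img(\varphi)$, $\Img(\varphi_0)\subset \Img(\varphi')$, and $\Img(\varphi_0)\subset \Img(\varphi'')$. Such a $\varphi_0$ exists because any sufficiently thin tubular neighborhood of $K$ through $p$ will be contained in the intersection of the three open solid tori $\Img(\varphi)\cap\Img(\varphi')\cap\Img(\varphi'')$, and such tubular neighborhoods are cofinal among closed tubular neighborhoods of $K$ (cf.\ the argument for independence of $N$ in the proof of Proposition \ref{prop:khmpsiwelldefined}).

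With $\varphi_0$ fixed, by Definition (\ref{eqn:embeddingsgen}) — and using the already-established fact (Proposition \ref{prop:khmpsiwelldefined}) that $\Psit_{\varphi,\varphi'}$ does not depend on the choice of intermediate embedding — I would write each of the three maps $\Psit_{\varphi,\varphi'}$, $\Psit_{\varphi',\varphi''}$, $\Psit_{\varphi,\varphi''}$ using $\varphi_0$ as the common intermediate embedding:
\[
\Psit_{\varphi,\varphi'} = (\Psit_{\varphi',\varphi_0})^{-1}\circ \Psit_{\varphi,\varphi_0},\quad
\Psit_{\varphi',\varphi''} = (\Psit_{\varphi'',\varphi_0})^{-1}\circ \Psit_{\varphi',\varphi_0},\quad
\Psit_{\varphi,\varphi''} = (\Psit_{\varphi'',\varphi_0})^{-1}\circ \Psit_{\varphi,\varphi_0}.
\]
Then
\[
\Psit_{\varphi',\varphi''}\circ\Psit_{\varphi,\varphi'} = (\Psit_{\varphi'',\varphi_0})^{-1}\circ \Psit_{\varphi',\varphi_0}\circ (\Psit_{\varphi',\varphi_0})^{-1}\circ \Psit_{\varphi,\varphi_0} = (\Psit_{\varphi'',\varphi_0})^{-1}\circ \Psit_{\varphi,\varphi_0} = \Psit_{\varphi,\varphi''},
\]
which is exactly the asserted relation. (Here I am using that each $\Psit_{\cdot,\varphi_0}$ is an isomorphism, which holds since it is either a map induced by a diffeomorphism of sutured manifolds via $\SHMtfun$, or a composition of such, and $\SHMtfun$ sends diffeomorphisms to isomorphisms of projectively transitive systems.)

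The only genuine point requiring care — and hence the main obstacle — is justifying that the three maps above may all be computed through the \emph{same} auxiliary $\varphi_0$: the definition (\ref{eqn:embeddingsgen}) of $\Psit_{\varphi,\varphi'}$ a priori uses an embedding nested in both $\varphi$ and $\varphi'$, so one must invoke the well-definedness established in Proposition \ref{prop:khmpsiwelldefined} to replace an arbitrary intermediate choice by $\varphi_0$. Once that is in hand, the computation is the one-line cancellation above. I would also remark that in the fully nested case $\Img(\varphi'')\subset\Img(\varphi')\subset\Img(\varphi)$ the statement is just (\ref{eqn:compsi}), so no separate argument is needed there; the general case is strictly a bookkeeping reduction to it.
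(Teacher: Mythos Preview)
Your proposal is correct and is exactly the natural elaboration of the paper's one-line proof (``This follows easily from the definitions of these isomorphisms''): pick a common $\varphi_0$ nested in all three, invoke Proposition~\ref{prop:khmpsiwelldefined} to compute each $\Psit$ through $\varphi_0$, and cancel. You have identified and handled the only nontrivial point, namely that well-definedness is needed to justify using the same auxiliary $\varphi_0$ for all three maps.
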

 
 \begin{proof}
 This follows easily from the definitions of these isomorphisms.
 \end{proof}
 
The projectively transitive systems in $\{\SHMtfun(Y(\varphi))\}_{\varphi}$ and the isomorphisms in $\{\Psit_{\varphi,\varphi'}\}_{\varphi,\varphi'}$ thus form what we call a \emph{transitive system of projectively transitive systems of $\RR$-modules}. Note that this  system of systems  defines an actual projectively transitive system of $\RR$-modules; we simply take the union of the $\RR$-modules in the systems $\SHMtfun(Y(\varphi))$ and the union of the  $\RR^\times$-equivalence classes of $\RR$-modules homomorphisms comprising these systems and  the morphisms $\Psit_{\varphi,\varphi'}$.

\begin{definition}
\label{def:khm}
We define $\KHMtfun(Y,K,p)$ to be the projectively transitive system of $\RR$-modules determined, in the manner described above, by the transitive system of systems given by $\{\SHMtfun(Y(\varphi))\}_{\varphi}$ and and $\{\Psit_{\varphi,\varphi'}\}_{\varphi,\varphi'}$.
\end{definition} 
Suppose $f$ is a diffeomorphism from $(Y,K,p)$ to $(Y',K',p')$. For each neighborhood $\varphi$ of $K$ as  in (\ref{eqn:tubK}),  $f$ defines a diffeomorphism of balanced sutured manifolds,  \[{f}_{\varphi}:Y(\varphi)\to Y'(\varphi'),\] where $\varphi'=f\circ\varphi.$ The map $f_{\varphi}$  then induces an isomorphism \[\SHMtfun({f}_{\varphi}):\KHMtfun(Y(\varphi))\to \KHMtfun(Y'(\varphi')),\]  and it is not hard to show that the collection $\{\SHMtfun({f}_{\varphi})\}_{\varphi}$ gives rise to an isomorphism \[\KHMtfun(f):\KHMtfun(Y,K,p)\to\KHMtfun(Y',K',p')\] of projectively transitive systems of $\RR$-modules. Moreover, it is not hard to show that these isomorphisms are invariants of isotopy classes of based diffeomorphisms and respect composition  in such a way that $\KHMtfun$ defines a functor. We record this  in the theorem below, which, in particular, implies Theorem \ref{thm:khmtwisted} from the introduction.
 
\begin{theorem}
\label{thm:khmfun}
$\KHMtfun$ defines a functor from $\BasedKnot$ to $\RPSys$.\qed
\end{theorem}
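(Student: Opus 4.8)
The plan is to verify the three things a functor requires: that $\KHMtfun$ is well-defined on objects, that it is well-defined on morphisms, and that it respects identities and composition. The work on objects is already done in Definition \ref{def:khm} together with Propositions \ref{prop:khmpsiwelldefined} and \ref{prop:khmpsitransitive}: the transitive system of projectively transitive systems $\{\SHMtfun(Y(\varphi))\}_\varphi$, $\{\Psit_{\varphi,\varphi'}\}_{\varphi,\varphi'}$ collapses to an honest projectively transitive system of $\RR$-modules, which we declare to be $\KHMtfun(Y,K,p)$. So I would open the proof by recalling this and then concentrate on the morphism maps $\KHMtfun(f)$ sketched immediately before the theorem statement.

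First I would check that, for a fixed based diffeomorphism $f\colon (Y,K,p)\to(Y',K',p')$, the collection $\{\SHMtfun(f_\varphi)\}_\varphi$ really does assemble into a morphism of the two projectively transitive systems. Concretely, for neighborhoods $\varphi_1,\varphi_2$ of $K$ with images $\varphi_i'=f\circ\varphi_i$, one must show the square relating $\Psit_{\varphi_1,\varphi_2}$, $\Psit_{\varphi_1',\varphi_2'}$, $\SHMtfun(f_{\varphi_1})$ and $\SHMtfun(f_{\varphi_2})$ commutes up to a unit. The key point is that when $\Img(\varphi_2)\subset\Img(\varphi_1)$ the map $\Psit_{\varphi_1,\varphi_2}$ is $\SHMtfun$ applied to the restriction $\bar g$ of an ambient isotopy supported in a tubular neighborhood of $K$, and $f$ conjugates this data over to the corresponding data for $\varphi_1',\varphi_2'$; since $f_{\varphi_2}\circ\bar g$ and $\bar g'\circ f_{\varphi_1}$ are then isotopic as diffeomorphisms of sutured manifolds, Theorem \ref{thm:mcgaction} gives the commutativity. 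The general-$\varphi$ case follows by inserting a common refinement $\varphi_3$ and using \eqref{eqn:compsi} exactly as in the proof of Proposition \ref{prop:khmpsiwelldefined}. I would also note that $\SHMtfun(f_\varphi)$ depends only on the smooth isotopy class of $f_\varphi$, hence only on the based-isotopy class of $f$, by Theorem \ref{thm:mcgaction}.

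Next I would verify functoriality. For the identity, $\mathrm{id}_\varphi$ is the identity diffeomorphism of $Y(\varphi)$, so $\SHMtfun(\mathrm{id}_\varphi)$ is the identity on $\SHMtfun(Y(\varphi))$ by Theorem \ref{thm:mcgaction}, and these assemble to the identity morphism of the system $\KHMtfun(Y,K,p)$. For composition, given $(Y,K,p)\xrightarrow{f}(Y',K',p')\xrightarrow{f'}(Y'',K'',p'')$ and a neighborhood $\varphi$ of $K$, set $\varphi'=f\circ\varphi$ and $\varphi''=f'\circ\varphi'=(f'\circ f)\circ\varphi$; then $(f'\circ f)_\varphi = f'_{\varphi'}\circ f_\varphi$ as diffeomorphisms of sutured manifolds, so $\SHMtfun((f'\circ f)_\varphi)=\SHMtfun(f'_{\varphi'})\circ\SHMtfun(f_\varphi)$ by Theorem \ref{thm:mcgaction}, and passing to the systems gives $\KHMtfun(f'\circ f)=\KHMtfun(f')\circ\KHMtfun(f)$. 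Finally, $\KHMtfun(f)$ is an isomorphism because each $\SHMtfun(f_\varphi)$ is, with inverse induced by $f^{-1}$.

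I expect the main obstacle to be the bookkeeping in the morphism-compatibility step: making precise that conjugation by $f$ carries an ambient isotopy of $Y$ supported near $K$ (with its meridional-disk constraints) to the corresponding isotopy of $Y'$ supported near $K'$, so that the relevant diffeomorphisms of sutured manifolds are genuinely isotopic and Theorem \ref{thm:mcgaction} applies; this is conceptually routine but is where one has to be careful that no genus-two-surface or framing subtlety sneaks in. Everything else is a direct transcription of the arguments used for $\SHMtfun$ in Section \ref{sec:diffeomorphismmaps} together with Propositions \ref{prop:khmpsiwelldefined} and \ref{prop:khmpsitransitive}, so I would keep those parts brief.
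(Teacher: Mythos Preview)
Your proposal is correct and fills in precisely the details the paper omits: the paper states Theorem~\ref{thm:khmfun} with a \qed and no argument, having only remarked beforehand that ``it is not hard to show'' that the $\SHMtfun(f_\varphi)$ assemble into an isomorphism of systems which is an isotopy invariant and respects composition. Your verification of the morphism-compatibility square via conjugation by $f$ and Theorem~\ref{thm:mcgaction}, and of identities and composition via $(f'\circ f)_\varphi = f'_{\varphi'}\circ f_\varphi$, is exactly the routine unpacking the paper intends.
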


We define the untwisted functor $\KHMfun^g$ from $\BasedKnot$ to $\RPSys[\Z]$ described in Theorem \ref{thm:khmuntwisted} in exactly the same way, replacing $\SHMtfun$ with $\SHMfun^g$ everywhere. We are able to define this functor for each $g\geq 2$ since the balanced sutured manifolds $Y(\varphi)$ admit closures of every genus $g\geq 2$. One can also define a twisted functor $\KHMtfun^g$ in each genus, replacing $\SHMtfun$ with $\SHMtfun^g$ everywhere in the construction. The following  analogue of Theorem \ref{thm:natiso} describes the relationship between the twisted and untwisted monopole knot homology invariants defined above.

 \begin{theorem}
 The functors $\KHMfun^g\otimes_\Z\RR$ and $\KHMtfun^g$ from $\BasedKnot$ to $\RPSys$ are naturally isomorphic. \qed
\end{theorem}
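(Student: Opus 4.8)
The plan is to mimic the proof of Theorem~\ref{thm:natiso} essentially verbatim, so I will only indicate the modifications. Recall that $\KHMtfun^g(Y,K,p)$ and $\KHMfun^g(Y,K,p)$ are defined from the balanced sutured manifolds $Y(\varphi)$ by exactly the same transitive-system-of-systems construction, with $\SHMtfun^g$ in place of $\SHMfun^g$ throughout; and that by Theorem~\ref{thm:twisteduntwisted} (or rather its functorial strengthening Theorem~\ref{thm:natiso}) there is, for each balanced sutured manifold $(M,\gamma)$ admitting genus $g$ closures, a canonical isomorphism of projectively transitive systems of $\RR$-modules
\[
\Xi^g_{(M,\gamma)}:\SHMfun^g(M,\gamma)\otimes_\Z\RR\;\xrightarrow{\;\cong\;}\;\SHMtfun^g(M,\gamma),
\]
given by the maps $\{\Xi^g_{\data,\data'}\}$, and natural with respect to diffeomorphisms of balanced sutured manifolds.

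First I would fix a based knot $(Y,K,p)$ and, for each tubular neighborhood $\varphi$ as in~(\ref{eqn:tubK}), apply $\Xi^g_{Y(\varphi)}$ to obtain an isomorphism
\[
\Xi^g_{\varphi}:\SHMfun^g(Y(\varphi))\otimes_\Z\RR\;\xrightarrow{\;\cong\;}\;\SHMtfun^g(Y(\varphi)).
\]
Since $\KHMfun^g(Y,K,p)\otimes_\Z\RR$ is (by construction) the projectively transitive system obtained from the transitive system of systems $\{\SHMfun^g(Y(\varphi))\otimes_\Z\RR\}_\varphi$ with connecting maps $\{\Psi_{\varphi,\varphi'}^g\otimes\mathrm{id}\}$, and $\KHMtfun^g(Y,K,p)$ is the analogous object on the twisted side, the collection $\{\Xi^g_\varphi\}_\varphi$ will assemble into an isomorphism
\[
\KHMfun^g(Y,K,p)\otimes_\Z\RR\;\xrightarrow{\;\cong\;}\;\KHMtfun^g(Y,K,p)
\]
of projectively transitive systems, \emph{provided} the $\Xi^g_\varphi$ are compatible with the connecting maps $\Psi^g_{\varphi,\varphi'}$, i.e.\ that the square with horizontal arrows $\Xi^g_\varphi$, $\Xi^g_{\varphi'}$ and vertical arrows $\Psi^g_{\varphi,\varphi'}\otimes\mathrm{id}$ and $\Psit_{\varphi,\varphi'}$ (the twisted version) commutes up to multiplication by a unit in $\RR$. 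Then, for a diffeomorphism $f:(Y,K,p)\to(Y',K',p')$, naturality of $\Xi^g$ with respect to the induced sutured diffeomorphisms $f_\varphi:Y(\varphi)\to Y'(\varphi')$ (Theorem~\ref{thm:natiso}) gives commutativity of the square relating $\KHMfun^g(f)\otimes\mathrm{id}$ and $\KHMtfun^g(f)$, which is precisely the statement that $\{\Xi^g_\varphi\}$ is a natural isomorphism of functors on $\BasedKnot$.

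The one point requiring genuine verification is the compatibility of $\Xi^g_\varphi$ with the connecting maps $\Psi^g_{\varphi,\varphi'}$. By Proposition~\ref{prop:khmpsitransitive} and its twisted analogue it suffices to treat the case $\Img(\varphi')\subset\Img(\varphi)$, where $\Psi^g_{\varphi,\varphi'}=\SHMfun^g(\bar f_1)$ and $\Psit_{\varphi,\varphi'}=\SHMtfun^g(\bar f_1)$ for the sutured diffeomorphism $\bar f_1:Y(\varphi)\to Y(\varphi')$ coming from the ambient isotopy $f_t$. Thus the required square is exactly the naturality square for $\Xi^g$ applied to the single diffeomorphism $\bar f_1$, which holds by Theorem~\ref{thm:natiso}. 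I expect the main (very mild) obstacle to be purely bookkeeping: checking that the transitive-system-of-systems formalism of Section~\ref{sec:khm} interacts correctly with the $\otimes_\Z\RR$ operation and with morphisms of projectively transitive systems, so that ``a natural isomorphism at the level of sutured manifolds, compatible with connecting maps'' does indeed descend to a natural isomorphism of the amalgamated projectively transitive systems $\KHMfun^g\otimes_\Z\RR$ and $\KHMtfun^g$. Since no new geometric input is needed beyond Theorems~\ref{thm:twisteduntwisted} and~\ref{thm:natiso}, the proof is complete once these compatibilities are recorded, and I would simply state it as ``One can easily adapt the proof of Theorem~\ref{thm:natiso}'' in the style of the surrounding text.
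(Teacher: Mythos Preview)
Your proposal is correct and matches the paper's approach: the paper simply states the theorem with a \qed, treating it as an immediate consequence of Theorem~\ref{thm:natiso} together with the fact that $\KHMfun^g$ and $\KHMtfun^g$ are built from $\SHMfun^g$ and $\SHMtfun^g$ by the identical transitive-system-of-systems construction of Section~\ref{sec:khm}. Your write-up correctly supplies the omitted bookkeeping, in particular the observation that the connecting maps $\Psi_{\varphi,\varphi'}$ on both sides are induced by the same sutured diffeomorphism $\bar f_1$, so compatibility of the $\Xi^g_\varphi$ follows directly from the naturality in Theorem~\ref{thm:natiso}.
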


We may now define the functors $\HMtfun$ and $\HMfun^g$ very similarly. 

Suppose $(Y,p)$ is a based, closed 3-manifold. Let $B^3$ be the unit ball in $\mathbb{R}^3$, let $S^1$ denote the equator  given by \[S^1 = \{(x,y,z)\in \partial B^3\mid z=0\},\] oriented as the boundary of the upper hemisphere \[D^2=\{(x,y,z)\in \partial B^3\mid z\geq 0\}.\] Suppose \[\varphi:p\times B^3\to Y\] is an embedding such that $\varphi(p\times\{0\})=p$. Let $Y(\varphi)$ be the balanced sutured manifold given by \[Y(\varphi):=(Y\ssm \inr(\Img(\varphi)), m_\varphi),\] where $m_\varphi$ is the oriented equator $\varphi(S^1)$ on $\partial Y(\varphi)$. Given tubular neighborhoods $\varphi$ and $\varphi'$ as above, we define a canonical isomorphism \[\Psit_{\varphi,\varphi'}:\SHMtfun(Y(\varphi))\to\SHMtfun(Y(\varphi')),\] very closely mimicking our earlier construction for knots.

We first consider the case in which $\Img(\varphi')\subset \Img(\varphi)$. Let $N$ be a regular neighborhood of $p$ with $\Img(\varphi)\subset \inr(N)$. Choose an ambient isotopy $f_t:Y\to Y$, $t\in[0,1]$, such that:
\begin{enumerate}
\item each $f_t$ fixes $p$,
\item each $f_t$ restricts the identity outside of $N$,
\item $\Img(f_1\circ\varphi)=\Img(\varphi')$,
\item $f_1$ sends $m_{\varphi}$ to $m_{\varphi'}$.
\end{enumerate} 
As before, $f_1$ restricts to a diffeomorphism of sutured manifolds, \[\bar f_1:Y(\varphi)\to Y(\varphi'),\] and we define $\Psit_{\varphi,\varphi'}$ by \begin{equation*}\label{eqn:embeddingscon}\Psit_{\varphi,\varphi'}:=\SHMtfun(\bar f_1).\end{equation*} For the case of arbitrary embeddings $\varphi,\varphi'$, we fix a third embedding $\varphi''$ with $\Img(\varphi'')\subset \Img(\varphi)$ and $\Img(\varphi'')\subset \Img(\varphi')$, and we define 
\begin{equation}\label{eqn:embeddingsgen}\Psit_{\varphi,\varphi'}:=(\Psit_{\varphi',\varphi''})^{-1}\circ \Psit_{\varphi,\varphi''}.\end{equation}

 The  propositions below are direct analogues of Propositions \ref{prop:khmpsiwelldefined} and \ref{prop:khmpsitransitive}.

\begin{proposition}
\label{prop:hmpsiwelldefined}
The map $\Psit_{\varphi,\varphi'}$ is independent of the choices made in its construction. \end{proposition}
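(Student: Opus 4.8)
\textbf{Proof proposal for Proposition \ref{prop:hmpsiwelldefined}.}

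The plan is to mimic, essentially line for line, the proof of Proposition \ref{prop:khmpsiwelldefined}, replacing the solid-torus neighborhood of a knot with a ball neighborhood of the point $p$ and the thickened-torus complement $N\ssm\Img(\varphi)\cong T^2\times[0,1]$ with the thickened-sphere complement $N\ssm\Img(\varphi)\cong S^2\times[0,1]$. First I would dispose of the case $\Img(\varphi')\subset\Img(\varphi)$ by fixing $N$ and showing the resulting isotopy class of $\bar f_1:Y(\varphi)\to Y(\varphi')$ is independent of the choice of ambient isotopy $f_t$; then Theorem \ref{thm:mcgaction} gives $\SHMtfun(\bar f_1)=\SHMtfun(\bar f_1')$. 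As in the knot case, it suffices to show $g=(\bar f_1')^{-1}\circ\bar f_1$, which is the identity outside $N$, is isotopic to the identity rel the sutured structure. Restricting $g$ to $N\cap Y(\varphi)\cong S^2\times[0,1]$, I would first isotope $g$ so that it is the identity on $\partial(S^2\times[0,1])$: on $S^2\times\{1\}=\partial N$ it is already the identity, and on $S^2\times\{0\}$ any orientation-preserving diffeomorphism preserving the equator $m_\varphi$ (and fixing the two hemispheres setwise, which is forced by the conditions on $f_t$) is isotopic to the identity through equator-preserving diffeomorphisms, and this isotopy extends over the collar.

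The key replacement input is a description of $\pi_0(\Diff(S^2\times[0,1]\rel\partial))$ in terms of $\pi_1(\Diff(S^2))$. Here I would invoke Proposition \ref{prop:diff-surjection} (stated in Appendix \ref{sec:appendixa}), which gives surjectivity of the natural map $\pi_1(\Diff(S^2))\to\pi_0(\Diff(S^2\times[0,1]\rel\partial))$, together with the classical fact (Smale) that $\Diff(S^2)\simeq O(3)$, so $\pi_1(\Diff(S^2))\cong\pi_1(SO(3))\cong\Z/2$. Thus $\pi_0(\Diff(S^2\times[0,1]\rel\partial))$ is generated by the single ``spin'' loop. Since $g$ extends to a diffeomorphism of $N\cong B^3$ which is isotopic to the identity through the isotopy $(f_t')^{-1}\circ f_t$ fixing $p$ and fixing $\partial N$ pointwise, the class $[g]$ lies in the image of $\pi_1(\Diff(B^3\rel\partial))$ — but $\Diff(B^3\rel\partial)$ is contractible (Hatcher / Cerf), so that image is trivial and $[g]=0$. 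Hence $g$ is isotopic to the identity on $S^2\times[0,1]$ rel $S^2\times\{1\}$; I would note the generating isotopy can be taken to preserve the equator $m_\varphi\subset S^2\times\{0\}$, so it respects the sutured structure, as needed.

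Independence of $N$ then follows exactly as in the knot case: given $N_1,N_2$ pick $N_3$ with $\Img(\varphi)\subset\inr(N_3)\subset N_3\subset N_1\cap N_2$, and an ambient isotopy supported in $N_3$ is supported in each $N_i$, so all three constructions agree. The composition relation $\Psit_{\varphi,\varphi''}=\Psit_{\varphi',\varphi''}\circ\Psit_{\varphi,\varphi'}$ for nested embeddings $\Img(\varphi'')\subset\Img(\varphi')\subset\Img(\varphi)$ follows from functoriality of $\SHMtfun$ (Theorem \ref{thm:mcgaction}) since the relevant $\bar f_1$'s compose. Finally, for arbitrary $\varphi,\varphi'$ I would show the definition (\ref{eqn:embeddingsgen}) is independent of the auxiliary $\varphi''$ exactly as in the knot case: given $\varphi_1'',\varphi_2''$ both nested inside $\varphi$ and $\varphi'$, choose $\varphi_3''$ nested inside both, and apply the nested composition relation to conclude $(\Psit_{\varphi',\varphi_1''})^{-1}\circ\Psit_{\varphi,\varphi_1''}=(\Psit_{\varphi',\varphi_3''})^{-1}\circ\Psit_{\varphi,\varphi_3''}=(\Psit_{\varphi',\varphi_2''})^{-1}\circ\Psit_{\varphi,\varphi_2''}$.

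\textbf{Main obstacle.} The one genuinely new ingredient compared to the knot case is the bookkeeping of $\pi_0(\Diff(S^2\times[0,1]\rel\partial))$ and the verification that $[g]$ vanishes there; in the knot case the corresponding group $\pi_0(\Diff(T^2\times[0,1]\rel\partial))$ is $\Z\oplus\Z$ and one had to carefully rule out longitudinal rotation using a meridional-disk argument, whereas here the group is only $\Z/2$ and the vanishing of $[g]$ comes for free from the contractibility of $\Diff(B^3\rel\partial)$ — so this case is if anything easier, and the main ``work'' is just confirming that Proposition \ref{prop:diff-surjection} and its corollary apply verbatim with $S^2$ in place of a higher-genus surface (which they do, since the relevant statements in Appendix \ref{sec:appendixa} are proved for general compact surfaces, or can be checked directly for $S^2$).
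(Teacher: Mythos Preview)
Your overall strategy---mimic the knot case, replacing $T^2\times[0,1]$ by $S^2\times[0,1]$---is exactly what the paper does, and the arguments for independence of $N$ and of the auxiliary $\varphi''$ carry over verbatim. However, the key step contains two genuine errors.

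First, Proposition~\ref{prop:diff-surjection} explicitly excludes $\Sigma=S^2$; its proof goes through Waldhausen's Lemma~3.5, which requires $\Sigma$ not to be a sphere, so your claim that it ``applies verbatim'' is false. The paper instead cites Hatcher's proof of the Smale Conjecture directly to conclude that $\pi_1(\Diff(S^2))\to\pi_0(\Diff(S^2\times[0,1]\rel\partial))$ is an isomorphism, with $\pi_1(\Diff(S^2))\cong\pi_1(SO(3))\cong\Z/2$.

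Second, and more seriously, your argument that $[g]=0$ is wrong. There is no natural map $\pi_1(\Diff(B^3\rel\partial))\to\pi_0(\Diff(S^2\times[0,1]\rel\partial))$ of the sort you invoke, and the fact that the extension of $g$ to $B^3$ is isotopic to the identity rel $\partial B^3$ (automatic, since $\Diff(B^3\rel\partial)$ is contractible) does \emph{not} force $[g]=0$: the nontrivial element of $\Z/2$ also extends by the identity over the inner ball to something isotopic to the identity in $\Diff(B^3\rel\partial)$. Indeed $[g]$ can genuinely be nontrivial---take $f_t$ and $f_t'$ differing by a full rotation of the ball about an axis through $p$.

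The repair is what the paper does: do not attempt to show $[g]=0$ at all. Simply observe that the generator of $\pi_1(\Diff(S^2))\cong\Z/2$ is a full rotation about an axis, which can be chosen to preserve the equator $m_\varphi$. Hence, regardless of the value of $[g]$, the diffeomorphism $g$ is isotopic to the identity through maps that are the identity on $S^2\times\{1\}$ and preserve $m_\varphi\subset S^2\times\{0\}$---exactly what is needed for an isotopy of sutured diffeomorphisms. You essentially note this at the end, so the fix is small: delete the $[g]=0$ claim and let the equator-preserving observation carry the argument.
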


\begin{proof}
This proof is almost word-for-word the same as that of Proposition \ref{prop:khmpsiwelldefined}. Adopting the notation of that proof, we must show that $g=(\bar f_1')^{-1}\circ \bar f_1$ is isotopic to the identity. The only difference in this case is that we identify $N\cap Y(\varphi)$ with $S^2\times[0,1]$ rather than $T^2\times[0,1]$, with $S^2\times\{0\}$ corresponding to $\partial (\Img(\varphi))$ and  $S^2\times\{1\}$ corresponding to $\partial N$, so that $g$ restricts to a map which is the identity on $S^2\times\{1\}$. It follows from Hatcher's proof of the Smale Conjecture (see \cite[Appendix]{hatcher}) that the natural map \[\pi_1(\Diff(S^2))\to\pi_0(\Diff(S^2\times[0,1] \rel \partial(S^2\times[0,1])))\] is an isomorphism, where  $\pi_1(\Diff(S^2))\cong \pi_1(SO(3))\cong\Z/2\Z$ is generated by a full rotation of $S^2$ about some axis, preserving a chosen equator (when we write $\Diff$, we are talking about orientation-preserving diffeomorphisms). It follows that $g$ is isotopic to the identity on $S^2\times[0,1]$ by an isotopy which restricts to the identity on $S^2\times\{1\}$ and preserves the equator $m_{\varphi}\subset S^2\times\{0\}$.

The rest of the proof is identical to that of Proposition \ref{prop:khmpsiwelldefined}. 
\end{proof}

 \begin{proposition}
 \label{prop:hmpsitransitive}
 The isomorphisms constructed above satisfy the transitivity relation \[\Psit_{\varphi,\varphi''}=\Psit_{\varphi',\varphi''}\circ \Psit_{\varphi,\varphi'}\] for any three  $\varphi,\varphi',\varphi''.$ \qed
 \end{proposition}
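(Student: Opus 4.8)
The statement to prove is Proposition~\ref{prop:hmpsitransitive}, asserting the transitivity $\Psit_{\varphi,\varphi''}=\Psit_{\varphi',\varphi''}\circ\Psit_{\varphi,\varphi'}$ for the reglueing-type isomorphisms between the systems $\SHMtfun(Y(\varphi))$ in the context of based closed $3$-manifolds. The whole structure here mirrors the knot case exactly, so the plan is to reduce everything to the special ``nested'' case and then invoke the definitions, just as in the proof of Proposition~\ref{prop:khmpsitransitive}.

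The first step is to record the analogue of equation~(\ref{eqn:compsi}): if $\Img(\varphi'')\subset\Img(\varphi')\subset\Img(\varphi)$, then $\Psit_{\varphi,\varphi''}=\Psit_{\varphi',\varphi''}\circ\Psit_{\varphi,\varphi'}$. This follows exactly as in the proof of Proposition~\ref{prop:hmpsiwelldefined}: choosing an ambient isotopy $f_t$ carrying $\Img(\varphi)$ onto $\Img(\varphi')$ (supported in a regular neighborhood $N$ of $p$, fixing $p$, and sending $m_\varphi$ to $m_{\varphi'}$) and another isotopy $f'_t$ carrying $\Img(\varphi')$ onto $\Img(\varphi'')$, one can concatenate to get an isotopy carrying $\Img(\varphi)$ onto $\Img(\varphi'')$; the induced sutured diffeomorphisms compose up to isotopy, and then Theorem~\ref{thm:mcgaction} (functoriality of $\SHMtfun$ under sutured diffeomorphisms) gives the desired equality of induced maps. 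Concretely, $\overline{(f'_1\circ f_1)} = \overline{f'_1}\circ\overline{f_1}$ as sutured diffeomorphisms $Y(\varphi)\to Y(\varphi'')$, so $\SHMtfun$ of the composite is the composite of the $\SHMtfun$'s.

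Given~(\ref{eqn:compsi}) in the nested case, the general case is a purely formal manipulation. For arbitrary $\varphi,\varphi',\varphi''$, pick a single embedding $\psi$ with $\Img(\psi)$ contained in all three of $\Img(\varphi),\Img(\varphi'),\Img(\varphi'')$ simultaneously (such a $\psi$ exists because $\Img(\varphi)\cap\Img(\varphi')\cap\Img(\varphi'')$ is a neighborhood of $p$ and contains an embedded $p\times B^3$ of the required form). By Definition~(\ref{eqn:embeddingsgen}) and the well-definedness from Proposition~\ref{prop:hmpsiwelldefined}, we may compute each of $\Psit_{\varphi,\varphi'}$, $\Psit_{\varphi',\varphi''}$, $\Psit_{\varphi,\varphi''}$ using $\psi$ as the common refinement: $\Psit_{\varphi,\varphi'}=(\Psit_{\varphi',\psi})^{-1}\circ\Psit_{\varphi,\psi}$, and similarly for the other two. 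Then
\[
\Psit_{\varphi',\varphi''}\circ\Psit_{\varphi,\varphi'}=(\Psit_{\varphi'',\psi})^{-1}\circ\Psit_{\varphi',\psi}\circ(\Psit_{\varphi',\psi})^{-1}\circ\Psit_{\varphi,\psi}=(\Psit_{\varphi'',\psi})^{-1}\circ\Psit_{\varphi,\psi}=\Psit_{\varphi,\varphi''},
\]
which is the claim.

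The only genuine subtlety — and hence the step to be careful about — is the nested-case identity~(\ref{eqn:compsi}), specifically the claim that the two induced sutured diffeomorphisms agree up to isotopy so that Theorem~\ref{thm:mcgaction} applies. This is exactly where the analysis in Proposition~\ref{prop:hmpsiwelldefined} (via Hatcher's solution to the Smale conjecture, giving $\pi_1(\Diff(S^2))\to\pi_0(\Diff(S^2\times[0,1]\rel\partial))$ an isomorphism, and the observation that the relevant class preserves the equator $m_\varphi$) does the work; there is no new geometric input needed. Everything else is bookkeeping with the definitions, so I do not anticipate any real obstacle — the proof is, as the excerpt says for the knot analogue, immediate from the definitions and well-definedness.

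\begin{proof}
The argument is identical to that of Proposition~\ref{prop:khmpsitransitive}. First, in the nested case $\Img(\varphi'')\subset\Img(\varphi')\subset\Img(\varphi)$, the relation $\Psit_{\varphi,\varphi''}=\Psit_{\varphi',\varphi''}\circ\Psit_{\varphi,\varphi'}$ follows by concatenating ambient isotopies as in the proof of Proposition~\ref{prop:hmpsiwelldefined}: if $f_t$ carries $\Img(\varphi)$ onto $\Img(\varphi')$ and $f'_t$ carries $\Img(\varphi')$ onto $\Img(\varphi'')$, both supported near $p$, fixing $p$, and sending equators to equators, then $f'_1\circ f_1$ induces $\overline{f'_1}\circ\overline{f_1}:Y(\varphi)\to Y(\varphi'')$, and Theorem~\ref{thm:mcgaction} gives $\SHMtfun(\overline{f'_1}\circ\overline{f_1})=\SHMtfun(\overline{f'_1})\circ\SHMtfun(\overline{f_1})$, i.e.\ $\Psit_{\varphi,\varphi''}=\Psit_{\varphi',\varphi''}\circ\Psit_{\varphi,\varphi'}$.

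For arbitrary $\varphi,\varphi',\varphi''$, choose an embedding $\psi:p\times B^3\to Y$ with $\Img(\psi)\subset\Img(\varphi)\cap\Img(\varphi')\cap\Img(\varphi'')$. By Proposition~\ref{prop:hmpsiwelldefined} the maps $\Psit_{\varphi,\varphi'}$, $\Psit_{\varphi',\varphi''}$, $\Psit_{\varphi,\varphi''}$ may all be computed using $\psi$ as the common refinement, so
\[
\Psit_{\varphi',\varphi''}\circ\Psit_{\varphi,\varphi'}=(\Psit_{\varphi'',\psi})^{-1}\circ\Psit_{\varphi',\psi}\circ(\Psit_{\varphi',\psi})^{-1}\circ\Psit_{\varphi,\psi}=(\Psit_{\varphi'',\psi})^{-1}\circ\Psit_{\varphi,\psi}=\Psit_{\varphi,\varphi''},
\]
as claimed.
\end{proof}
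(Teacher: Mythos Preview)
Your proof is correct and is exactly the argument the paper has in mind: the paper gives no details at all (the proposition is stated with a \qed\ and the knot analogue, Proposition~\ref{prop:khmpsitransitive}, is dismissed with ``This follows easily from the definitions of these isomorphisms''). You have simply spelled out those easy details---the nested-case composition via Theorem~\ref{thm:mcgaction} and the reduction of the general case to a common refinement $\psi$---and there is nothing to add.
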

 
 These propositions motivate the following definition.

\begin{definition}
\label{def:hm}
We define $\HMtfun(Y,p)$ to be the projectively transitive system of $\RR$-modules determined by the transitive system of systems given by $\{\SHMtfun(Y(\varphi))\}_{\varphi}$ and $\{\Psit_{\varphi,\varphi'}\}_{\varphi,\varphi'}$.
\end{definition} 

A diffeomorphism $f:(Y,p)\to(Y',p')$ naturally gives rise to an isomorphism \[\HMtfun(f):\HMtfun(Y,p)\to\HMtfun(Y',p'),\] essentially in the manner outlined above for knots, such that:

\begin{theorem}
$\HMtfun$ defines a functor from $\BasedMfld$ to $\RPSys$.\qed
\end{theorem}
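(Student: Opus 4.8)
The final statement to be proved is that $\HMtfun$ defines a functor from $\BasedMfld$ to $\RPSys$. The plan is to verify the two functoriality axioms: that $\HMtfun$ sends identity morphisms to identity morphisms, and that it respects composition. The construction of the isomorphism $\HMtfun(f)$ associated to a diffeomorphism $f\colon(Y,p)\to(Y',p')$ proceeds exactly as for knots in Section \ref{sec:khm}: for each tubular neighborhood $\varphi\colon p\times B^3\to Y$, the diffeomorphism $f$ restricts to a diffeomorphism of balanced sutured manifolds $f_\varphi\colon Y(\varphi)\to Y'(\varphi')$ with $\varphi'=f\circ\varphi$, inducing $\SHMtfun(f_\varphi)\colon\SHMtfun(Y(\varphi))\to\SHMtfun(Y'(\varphi'))$, and the collection $\{\SHMtfun(f_\varphi)\}_\varphi$ assembles into an isomorphism $\HMtfun(f)$ of the associated projectively transitive systems.

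First I would check compatibility of $\{\SHMtfun(f_\varphi)\}_\varphi$ with the structure maps $\Psit_{\varphi,\varphi'}$, so that it really defines a morphism of projectively transitive systems: one needs that for embeddings $\varphi,\varphi'$ of $p$ in $Y$ and the corresponding embeddings $f\circ\varphi, f\circ\varphi'$ in $Y'$, the square relating $\Psit_{\varphi,\varphi'}$, $\Psit_{f\circ\varphi,f\circ\varphi'}$, $\SHMtfun(f_\varphi)$, $\SHMtfun(f_{\varphi'})$ commutes up to a unit. In the case $\Img(\varphi')\subset\Img(\varphi)$ this reduces to the observation that if $g_t$ is an ambient isotopy of $Y$ of the type used to define $\Psit_{\varphi,\varphi'}$, then $f\circ g_t\circ f^{-1}$ is an isotopy of $Y'$ of the type used to define $\Psit_{f\circ\varphi,f\circ\varphi'}$, together with the fact that $\SHMtfun$ respects composition of sutured diffeomorphisms (Theorem \ref{thm:mcgaction}); the general case follows by choosing a common refinement $\varphi''$ and using \eqref{eqn:embeddingsgen}. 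Then I would check that $\HMtfun(\mathrm{id})$ is the identity, which is immediate since $f_\varphi=\mathrm{id}$ for each $\varphi$ and $\SHMtfun$ sends identities to identities, and that $\HMtfun(f'\circ f)=\HMtfun(f')\circ\HMtfun(f)$ for $(Y,p)\xrightarrow{f}(Y',p')\xrightarrow{f'}(Y'',p'')$, which follows componentwise from $(f'\circ f)_\varphi=f'_{f\circ\varphi}\circ f_\varphi$ and the composition law for $\SHMtfun$ in Theorem \ref{thm:mcgaction}.

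I would also note that the isomorphisms $\HMtfun(f)$ depend only on the smooth isotopy class of $f$: an isotopy from $f_0$ to $f_1$ restricts, for each $\varphi$, to an isotopy of sutured manifolds from $(f_0)_\varphi$ to $(f_1)_{\varphi}$ after an ambient isotopy identifying $Y'(f_0\circ\varphi)$ with $Y'(f_1\circ\varphi)$ of the type used to define $\Psit$, so Theorem \ref{thm:mcgaction} gives the claim componentwise. All of these verifications are direct translations of the arguments already carried out for $\KHMtfun$ in Section \ref{sec:khm} (culminating in Theorem \ref{thm:khmfun}), with the solid torus $S^1\times D^2$ replaced by the ball $B^3$, and with Proposition \ref{prop:hmpsiwelldefined} and Proposition \ref{prop:hmpsitransitive} playing the roles of Propositions \ref{prop:khmpsiwelldefined} and \ref{prop:khmpsitransitive}. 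Indeed the present theorem is already stated and left to the reader in the excerpt; the genuine content — well-definedness and transitivity of $\Psit_{\varphi,\varphi'}$ — has been established in the two preceding propositions, so no essential obstacle remains. The only point requiring a little care, as in the knot case, is the bookkeeping needed to see that the componentwise maps $\{\SHMtfun(f_\varphi)\}_\varphi$ genuinely descend to a morphism of the projectively transitive systems built from all embeddings $\varphi$ simultaneously, but this is routine given Propositions \ref{prop:hmpsiwelldefined} and \ref{prop:hmpsitransitive}.
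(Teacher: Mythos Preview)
Your proposal is correct and matches the paper's approach exactly: the paper states this theorem with a bare \qed\ and no proof, simply asserting that the construction and verification proceed ``essentially in the manner outlined above for knots,'' which is precisely the argument you have spelled out. Your write-up is in fact more detailed than anything the paper provides, and correctly identifies that the substantive content lies in Propositions~\ref{prop:hmpsiwelldefined} and~\ref{prop:hmpsitransitive}, with the functoriality checks reducing to Theorem~\ref{thm:mcgaction}.
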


We define the untwisted functor $\HMfun^g$ described in the introduction in the analogous way, for each $g\geq 2$. As usual, one can also define a twisted functor $\HMtfun^g$ in each genus, and the following relationship holds.

 \begin{theorem}
 The functors $\HMfun^g\otimes_\Z\RR$ and $\HMtfun^g$ from $\BasedMfld$ to $\RPSys$ are naturally isomorphic. \qed
\end{theorem}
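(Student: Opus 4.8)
The final statement to prove is:

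\begin{theorem2}
 The functors $\HMfun^g\otimes_\Z\RR$ and $\HMtfun^g$ from $\BasedMfld$ to $\RPSys$ are naturally isomorphic.
\end{theorem2}

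The plan is to follow the same template used to prove Theorem \ref{thm:natiso} (the analogous statement for $\SHMfun^g$ and $\SHMtfun^g$), transporting it through the construction of $\HMfun^g$ and $\HMtfun^g$ from $\SHMfun^g$ and $\SHMtfun^g$. First, I would recall that for a based closed $3$-manifold $(Y,p)$ and a tubular neighborhood $\varphi: p\times B^3\to Y$, we have by definition $\HMtfun^g(Y,p)=\SHMtfun^g(Y(\varphi))$ and $\HMfun^g(Y,p)=\SHMfun^g(Y(\varphi))$, with the identifications over different choices of $\varphi$ given by the isomorphisms $\Psit_{\varphi,\varphi'}$ (twisted) and their untwisted analogues $\Psi_{\varphi,\varphi'}$. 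By Theorem \ref{thm:twisteduntwisted} (and Theorem \ref{thm:natiso}), for each balanced sutured manifold $Y(\varphi)$ there is an isomorphism of projectively transitive systems $\SHMfun^g(Y(\varphi))\otimes_\Z\RR\xrightarrow{\ \cong\ }\SHMtfun^g(Y(\varphi))$, natural with respect to diffeomorphisms of sutured manifolds. I would take this collection of isomorphisms, one for each $\varphi$, as the candidate natural isomorphism, and call it $\Lambda_{(Y,p)}:\HMfun^g(Y,p)\otimes_\Z\RR\to\HMtfun^g(Y,p)$.

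The main steps are then: (1) check that $\Lambda_{(Y,p)}$ is well-defined, i.e., independent of the choice of $\varphi$ used to represent the transitive system of systems $\{\SHMtfun^g(Y(\varphi))\}_\varphi$; this amounts to verifying the square relating $\Lambda$ for $\varphi$ and $\varphi'$ commutes with $\Psit_{\varphi,\varphi'}$ on the twisted side and $\Psi_{\varphi,\varphi'}$ on the untwisted side. Since both $\Psit_{\varphi,\varphi'}$ and $\Psi_{\varphi,\varphi'}$ are, in the relevant case $\Img(\varphi')\subset\Img(\varphi)$, induced by the sutured diffeomorphism map $\SHMtfun(\bar f_1)$ (resp. $\SHMfun^g(\bar f_1)$) for the same diffeomorphism $\bar f_1:Y(\varphi)\to Y(\varphi')$, and since the general case is built from this one by (\ref{eqn:embeddingsgen}), this commutativity follows directly from the naturality of the isomorphism in Theorem \ref{thm:natiso} with respect to $\bar f_1$. (2) Check that $\Lambda$ is natural in $(Y,p)$: given a based diffeomorphism $f:(Y,p)\to(Y',p')$ with $\varphi'=f\circ\varphi$, the induced map $f_\varphi:Y(\varphi)\to Y'(\varphi')$ is a diffeomorphism of balanced sutured manifolds, and $\HMtfun^g(f)$, $\HMfun^g(f)$ are by construction $\SHMtfun^g(f_\varphi)$, $\SHMfun^g(f_\varphi)$; so the naturality square for $\Lambda$ is exactly the naturality square from Theorem \ref{thm:natiso} applied to $f_\varphi$. (3) Note $\Lambda_{(Y,p)}$ is an isomorphism of projectively transitive systems of $\RR$-modules because each $\SHMfun^g(Y(\varphi))\otimes_\Z\RR\to\SHMtfun^g(Y(\varphi))$ is.

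The step I expect to require the most care is step (1), the independence of $\Lambda$ from $\varphi$, because the definition of $\Psit_{\varphi,\varphi'}$ for arbitrary $\varphi,\varphi'$ goes through an auxiliary $\varphi''$ with $\Img(\varphi'')$ contained in both, so one has to chase through the definitions in (\ref{eqn:embeddingsgen}) and use the transitivity Propositions \ref{prop:hmpsitransitive} (and its untwisted analogue) together with the transitivity already built into the systems $\SHMtfun^g$. But this is precisely parallel to the bookkeeping in Theorem \ref{thm:twisteduntwisted}, so no genuinely new idea is needed — the key point, as noted in the excerpt, is that $Y(\varphi)$ admits closures of every genus $g\geq 2$, so that $\SHMfun^g$, $\SHMtfun^g$ and the natural isomorphism between (the $\RR$-extension of) them are all available for these sutured manifolds, and one simply adapts the proof of Theorem \ref{thm:natiso} word for word.
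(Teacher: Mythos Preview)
Your proposal is correct and is precisely the argument the paper has in mind: the statement carries a \qed in the paper because it follows immediately from Theorem \ref{thm:natiso} once one observes that $\HMfun^g$ and $\HMtfun^g$ are built from $\SHMfun^g$ and $\SHMtfun^g$ via the same sutured-diffeomorphism maps $\bar f_1$. You have simply made explicit the bookkeeping the paper leaves to the reader.
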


\section{Naturality in Sutured Instanton Homology}
\label{sec:instanton}

Here, we adapt the constructions of the previous sections to the instanton setting using the notions of  \emph{odd} and \emph{marked odd} closure. In particular, given a marked odd closure $\data$ of $(M,\gamma)$, we will define a $\C$-module $\SHIt(\data)$  following the construction in \cite{km4}. For every pair $\data,\data'$, we will construct a canonical isomorphism \[\Psit_{\data,\data'}:\SHIt(\data)\to\SHIt(\data'),\] well-defined up to multiplication by a unit in $\C$, such that the modules in $\{\SHIt(\data)\}$ and maps in $\{\Psit_{\data,\data'}\}$ form a projectively transitive system of $\C$-modules, which we  denote by $\SHItfun(M,\gamma)$. As in the monopole setting, diffeomorphisms of balanced sutured manifolds induce maps on $\SHItfun$ in such a way that $\SHItfun$ defines a  functor from $\DiffSut$ to $\RPSys[\C]$, as promised in Theorem \ref{thm:maintwistedshi}. 

Likewise, given a genus $g$ odd closure $\data$, we will define a $\C$-module $\SHI^g(\data)$ as in \cite{km4}, and, for every  pair $\data,\data'$ of such closures, we will construct a canonical isomorphism \[\Psi^g_{\data,\data'}:\SHI^g(\data)\to\SHI^g(\data'),\]  well-defined up to multiplication by a unit in $\C$, such that the modules in $\{\SHI^g(\data)\}$ and maps in $\{\Psi^g_{\data,\data'}\}$ form a projectively transitive system of $\C$-modules, which we denote by $\SHIfun^g(M,\gamma)$. As  in Theorem \ref{thm:mainuntwistedshi}, $\SHIfun^g$  defines a  functor from $\DiffSut^g$ to $\RPSys[\C]$.

Below, we explain the constructions of these twisted and untwisted instanton invariants. As in the monopole case, we will focus on the construction of the map $\Psit_{\data,\data'}$. We will not say anything more about the maps on $\SHItfun$ and $\SHIfun^g$ induced by diffeomorphisms as the constructions of these maps and proofs of their various properties are formally identical to the constructions and proofs found in Section \ref{sec:diffeomorphismmaps}. 

\subsection{Odd Closures of Sutured Manifolds}
First, we describe the odd and marked odd closures used to define the untwisted and twisted sutured monopole homology groups associated to a balanced sutured manifold $(M,\gamma)$.

\begin{definition}
\label{def:oddclosure}
An \emph{odd closure} of $(M,\gamma)$ is a tuple $(Y,R,r,m,\alpha)$, where $(Y,R,r,m)$ is a closure of $(M,\gamma)$ in the sense of Definition \ref{def:smoothclosure}, and $\alpha$ is an oriented, smoothly embedded curve in $Y$ such that:
\begin{enumerate}
\item $\alpha$ is disjoint from $\Img(m)$,
\item $\alpha$ intersects $r(R\times[-1,1])$ in an arc of the form $r(\{p\}\times[-1,1])$ for some point $p\in R$.
\end{enumerate}
\end{definition}

\begin{definition}
\label{def:markedoddclosure}
A \emph{marked odd closure} of $(M,\gamma)$ is a tuple $(Y,R,r,m,\eta,\alpha)$ where $(Y,R,r,m,\alpha)$ is an odd closure of $(M,\gamma)$, as defined above, and $(Y,R,r,m,\eta)$ is a marked closure of $(M,\gamma)$ in the sense of Definition \ref{def:markedsmoothclosure}.
\end{definition}

We  explain  the reason for the adjective ``odd" in Remark \ref{rmk:odd} below.

\subsection{Sutured Instanton Homology}
Here, we recall the construction of Kronheimer and Mrowka's instanton invariants of sutured manifolds, starting with a very brief review of instanton Floer homology for closed 3-manifolds. For more details, see \cite{donaldson, km4}.

Suppose $Y$ is a closed, oriented, smooth 3-manifold and $w \to Y$ is a Hermitian line bundle  such that  $c_1(w)$ has odd pairing with some  class in $H_2(Y;\Z)$.  Let $E \to Y$ be a $U(2)$ bundle with an isomorphism $\theta:\Lambda^2 E \to w$. Let $\mathcal{C}$ be the space of $SO(3)$ connections on $\operatorname{ad}(E)$ and let $\mathcal{G}$ be the group of determinant-1 gauge transformations of $E$ (the automorphisms of $E$ that respect $\theta$).  The instanton Floer  group $I_*(Y)_w$ is  the  $\Z/8\Z$-graded $\C$-module arising from the Morse homology of the Chern-Simons functional on $\mathcal{C}/\mathcal{G}$ (cf.  \cite{donaldson}).  Given any closed, embedded surface $R\subset Y$ there is a natural operator
\[ \mu(R): I_*(Y) \to I_{*}(Y) \]
of degree $-2$. When $R$ has genus at least 2, Kronheimer and Mrowka define the submodule
\[ I_*(Y|R)_w \subset I_*(Y)_w \]
to be the eigenspace of $\mu(R)$ with eigenvalue $2g(R)-2$. 

\begin{example}
If $Y\cong R\times S^1$ and the line bundle $w$ is chosen so that $\langle c_1(w),R\times\{0\}\rangle$ is odd, then  \[I_*(Y|R\times\{0\})_w\cong\C,\] as shown in \cite[Proposition 7.4]{km4}.
\end{example}

Given a marked odd closure $\data=(Y,R,r,m,\eta,\alpha)$, we will denote by  $w$ and $u$  the Hermitian line bundles over $Y$ whose first Chern classes are Poincar{\'e} dual to the curves $\alpha$ and $\eta$, respectively.

\begin{remark}
\label{rmk:odd}
The adjective ``odd" before ``closure" is meant to reflect the fact that \[\langle c_1(w),r(R\times\{0\})\rangle = \alpha\cdot r(R\times\{0\}) = 1\] is odd for $\data$ as above.
\end{remark}

 The groups $\SHI(\data)$ and $\SHIt(\data)$ are  defined as follows (cf.\ \cite[Section 7.4]{km4}).
\begin{definition}
\label{def:shiuntwisted}  
Given an odd closure $\data=(Y,R,r,m,\alpha)$  of $(M,\gamma)$, the \emph{untwisted sutured instanton homology of $\data$} is the $\C$-module  \[\SHI(\data) :=I_*(Y|r(R\times\{0\}))_{w}.\] 
\end{definition}

\begin{definition}
\label{def:shitwisted}
Given a marked odd closure $\data=(Y,R,r,m,\eta,\alpha)$ of $(M,\gamma)$, the \emph{twisted sutured instanton homology of $\data$} is the $\C$-module  \[\SHIt(\data) :=I_*(Y|r(R\times\{0\}))_{u\otimes w}.\] 
\end{definition}

\begin{remark}
Given an odd closure $(Y,R,r,m,\alpha)$, the pair $(Y,r(R\times\{t\}))$, together with the bundle $w\to Y$, is a closure in the sense of Kronheimer and Mrowka, for any $t\in[-1,1]$. The curve $\alpha$ in our notation corresponds to the circle through the marked point $t_0$ in theirs.  One slight difference between our approach and theirs is that we require that $g(R)\geq 2$ while they allow closures in which $g(R)=1$. 
\end{remark}

As in the monopole setting, we will use $\SHI^g(\data)$ and $\SHIt^g(\data)$ in place of $\SHI(\data)$ and $\SHIt(\data)$ when we wish to emphasize that $\data$ has genus $g$. For convenience of notation, we will record line bundles by the Poincar{\'e} duals of their first Chern classes and we will adopt the analogue of the shorthand in Notation \ref{not:shorthand}. So, we will write the sutured instanton homology groups above as:
\begin{align*}
\SHI(\data)&=I_*(Y|R)_{\alpha}\\
\SHIt(\data)&=I_*(Y|R)_{\alpha+\eta}.
\end{align*}

\subsection{The Maps $\Psit_{\data,\data'}$} Here, we define the maps \[\Psit_{\data,\data'}:\SHIt(\data)\to\SHIt(\data')\] alluded to above. As in the monopole case, we will first define these maps for marked odd closures of the same genus and then for marked odd closures whose genera differ by one before defining $\Psit_{\data,\data'}$ for arbitrary marked odd closures. Before defining any of these isomorphisms, however, we  establish an analogue of Theorem \ref{thm:maininvc}.

Suppose  $Y$ is a closed, oriented, smooth 3-manifold; $R$ is a closed, oriented smooth surface of genus at least two; $\eta$ is an oriented, homologically essential, smoothly embedded curve in $R$; and $r:R\times[-1,1]\hookrightarrow Y$ is an embedding. Suppose $\alpha\subset Y$ is an oriented, smoothly embedded curve in $Y$ such that $\alpha \cap \Img(r) = r(\{p\}\times[-1,1])$ for some point $p\in R$.

Suppose $A^u$ and $B^u$ are diffeomorphisms of $R$, for $u=1,2$, such that $A^1\circ B^1$ is isotopic to $A^2\circ B^2$, $(B^2 \circ (B^1)^{-1})(\eta) = \eta$ and $A^u(p) = B^u(p) = p$.  Just as in Section \ref{sec:prelim}, we factor $A^u$ and $B^u$ into compositions of Dehn twists around curves which avoid $p$, and use the corresponding $2$-handle cobordisms to construct maps
\begin{align*}
I_*(X^u_-|R)_{(\alpha+\eta)\times[0,1]} &: I_*(Y^u_-|R)_{\alpha+\eta} \to I_*(Y|R)_{\alpha+\eta} \\
I_*(X^u_+|R)_{(\alpha+\eta)\times[0,1]} &: I_*(Y^u_-|R)_{\alpha+\eta} \to I_*(Y^u_+|R)_{\alpha+\eta}.
\end{align*}
The isotopy class \eqref{eqn:canz1z2} of diffeomorphisms $Y^1_+ \to Y^2_+$ induces an isomorphism
\[ \Theta_{Y^1_+ Y^2_+}: I_*(Y^1_+|R)_{\alpha+\eta} \to I_*(Y^2_+|R)_{\alpha+\eta}, \]
and we have the following.
\begin{theorem}[cf.\ Theorem \ref{thm:maininvc}] \label{thm:maininvc-shi}
The maps $I_*(X^u_-|R)_{(\alpha+\eta)\times[0,1]}$ are invertible, and the maps
\begin{align*}
\Theta_{Y^1_+ Y^2_+} \circ I_*(X^1_+|R)_{(\alpha+\eta)\times[0,1]} \circ (I_*(X^1_-|R)_{(\alpha+\eta)\times[0,1]})^{-1} \\
I_*(X^2_+|R)_{(\alpha+\eta)\times[0,1]} \circ (I_*(X^2_-|R)_{(\alpha+\eta)\times[0,1]})^{-1}
\end{align*}
from $I_*(Y|R)_{\alpha+\eta}$ to $I_*(Y^2_+|R)_{\alpha+\eta}$ are $\C^\times$-equivalent and are isomorphisms.
\end{theorem}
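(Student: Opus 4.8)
The strategy is to transplant the entire proof of Theorem~\ref{thm:maininvc} from the monopole setting into instanton Floer homology, checking that every geometric input survives the translation. The monopole proof had three ingredients: (i) the excision principle for $\HMtoc(-|R)$ along surfaces of genus at least two, which is what makes the splicing cobordisms $\CM$, $\CS$, $\mathcal P$ induce isomorphisms; (ii) the identification $\HMtoc(R\times_\phi S^1|R;\Gamma_\eta)\cong\RR$ for mapping tori, via \cite[Lemma 4.7]{km4}; and (iii) the non-vanishing of the relative invariant $\Psi_{\mathcal L}$ of a Lefschetz fibration over $D^2$, proved in Appendix~\ref{sec:appendixb}. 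All three have instanton analogues already in the literature: Kronheimer and Mrowka prove an excision theorem for $I_*(-|R)_w$ in \cite{km4} (this is precisely what they use to show their sutured instanton invariant is well-defined up to isomorphism), the identification $I_*(R\times S^1|R)_w\cong\C$ for an odd line bundle is \cite[Proposition 7.4]{km4} (the Example immediately preceding Remark~\ref{rmk:odd} above), and the non-vanishing of the instanton relative invariant of a Lefschetz fibration is the content of Appendix~\ref{sec:appendixb} as well (the statement of Proposition~\ref{prop:lefschetz} is phrased for both theories). So the proof is essentially a matter of bookkeeping the line bundles.

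\textbf{Key steps.} First I would set up the instanton analogue of Proposition~\ref{prop:maininvc2}: assuming $\Nu=\emptyset$, so $Y^u_-=Y$, one must show $\Theta_{Y^1_+Y^2_+}\circ I_*(X^1_+|R)_{(\alpha+\eta)\times[0,1]}\doteq I_*(X^2_+|R)_{(\alpha+\eta)\times[0,1]}$. As in the monopole case, one pre- and post-composes with the isomorphisms induced by the merge- and split-type splicing cobordisms $\CM$ and $\CS$ (now carrying the line bundle data $\alpha+\eta$, with the curve $\alpha$ contributing the required odd pairing with the distinguished surface), reducing to the case $Y\ssm\inr(\Img(r))\cong R\times I$ by the excision argument of \cite{km4}; there both $Y$ and $Y^u_+$ are mapping tori, the relevant groups are each isomorphic to $\C$ by the Example above, and it suffices to prove both composites are isomorphisms. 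For that one proves the instanton analogue of Lemma~\ref{lem:mappingtorusiso}: the cobordism $X$ attaching a $-1$-framed $2$-handle along $\gamma\times\{t\}\subset R\times_\phi S^1$ can be re-expressed as the merge-type cobordism $\mathcal P$ capped by the Lefschetz fibration $\mathcal L$ with vanishing cycle $\gamma$, so $I_*(X|R)_{\alpha+\eta}(-)=I_*(\mathcal P|R)_{\alpha+\eta}(-\otimes\Psi_{\mathcal L})$, and this is an isomorphism since $I_*(\mathcal P|R)_{\alpha+\eta}$ is (by excision) and $\Psi_{\mathcal L}=\pm1\in I_*(R\times_{D_\gamma}S^1|R)\cong\C$ by Proposition~\ref{prop:lefschetz}. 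Finally, to upgrade to arbitrary factorizations (allowing negative Dehn twists), one repeats verbatim the argument at the end of the proof of Theorem~\ref{thm:maininvc}: write each negative Dehn twist as a product of positive ones via \eqref{eqn:compnegpos}, build the auxiliary cobordisms $X^{u+2}_+$, $X^{u+4}_+$, $X^{u+6}_+$, observe the relevant composites of $2$-handle cobordisms are isomorphic, and apply the instanton Proposition~\ref{prop:maininvc2} to conclude that $I_*(X^u_-|R)_{\alpha+\eta}$ is invertible with inverse $\doteq I_*(X^{u+2}_+|R)_{\alpha+\eta}$, and then that \eqref{eqn:composition1} and \eqref{eqn:composition2} (their instanton versions) agree up to a unit in $\C^\times$.

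\textbf{Main obstacle.} The only genuine point requiring care is the bookkeeping of line bundles through the excision argument, since instanton excision is more delicate than its monopole counterpart: one must be sure that the bundle $u\otimes w$ (dual to $\eta+\alpha$) restricts appropriately to the pieces being cut and reglued along the copies of $R\times S^1$, and in particular that the reglued tori carry the odd-degree bundle needed for the mapping-torus group to be $\C$ and for the splicing maps to be isomorphisms. Kronheimer and Mrowka handle exactly this configuration in \cite[Section 7]{km4} when they prove their sutured instanton invariant is well-defined, so the needed excision statements are available; the work is to track that the arcs $\alpha\cap\Img(r)=r(\{p\}\times[-1,1])$ and the condition $A^u(p)=B^u(p)=p$ guarantee $\alpha$ survives the cut-and-reglue operations as a closed curve of the correct homological type in each surgered manifold, so that all the line bundles in the composite cobordisms $W^u$ and their excised versions $\overline W^u$ are consistent. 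Beyond this, every step is formally identical to the monopole proof, with $\HMtoc(-|R;\Gamma_\nu)$ replaced throughout by $I_*(-|R)_{(\alpha+\eta)\times[0,1]}$ and $\RR$ replaced by $\C$; I would therefore write the proof as ``the argument is identical to that of Theorem~\ref{thm:maininvc}, using the instanton excision theorem of \cite{km4}, Proposition~7.4 of \cite{km4}, and Proposition~\ref{prop:lefschetz}'' together with the line-bundle remarks above.
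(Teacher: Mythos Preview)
Your overall strategy is correct and matches the paper's approach essentially verbatim: transplant the proof of Theorem~\ref{thm:maininvc} to the instanton setting, replacing monopole excision with the instanton excision theorem of \cite[Section~7.3]{km4}, and replacing the mapping-torus computation and Lefschetz-fibration non-vanishing with their instanton analogues.

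There is one genuine error in your proposal, however. You claim that Proposition~\ref{prop:lefschetz} in Appendix~\ref{sec:appendixb} is ``phrased for both theories'' and hence supplies the non-vanishing of the instanton relative invariant of the Lefschetz fibration $\mathcal{L}$. It is not: that proposition is stated and proved only for $\HMtoc$, and its proof relies on Seiberg--Witten invariants and Taubes' theorem. The instanton analogue you need---that
\[
I_*(\mathcal{L}|R)_{\{p\}\times D^2}(1) \in I_*(\partial\mathcal{L}|R)_{\{p\}\times S^1} \cong \C
\]
is nonzero---is a separate result, and the paper invokes \cite[Proposition~8.2]{sivek-lf} for it. So the ingredient exists in the literature, but not where you point; without a correct citation or proof of this fact, the key step in your instanton version of Lemma~\ref{lem:mappingtorusiso} is unsupported.

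A minor remark: the paper does not treat the line-bundle bookkeeping as the main obstacle. It regards the excision and mapping-torus identifications as routine consequences of \cite{km4}, and flags only the Lefschetz relative invariant as ``the one thing that requires care.'' Your emphasis on tracking $\alpha$ through the cut-and-reglue is not wrong, but it is already handled by the hypotheses $A^u(p)=B^u(p)=p$ and the fact that the Dehn-twist curves avoid $p$, which you correctly note.
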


\begin{proof}
The proof of Theorem \ref{thm:maininvc-shi} is almost exactly the same as that of Theorem \ref{thm:maininvc}. Essentially, we just use Kronheimer and Mrowka's excision theorem for instanton Floer homology (cf. \cite[Section 7.3]{km4}) in place of  excision  for monopole Floer homology.  The one thing that requires care is  the modification of the proof of Proposition \ref{prop:mappingtorusiso}. In the instanton setting,  we have a relatively minimal Lefschetz fibration $X\to D^2$ with fiber $R$ and monodromy map which fixes $p$, and we need to know that $X$ has nonzero relative invariant
\[ I_*(X|R)_{\{p\}\times D^2}(1) \in I_*(\partial X|R)_{\{p\}\times S^1} \cong \C. \]
But this is exactly Proposition 8.2 of \cite{sivek-lf}, so we are done.
\end{proof}

\subsubsection{Same Genus}
Suppose
\begin{align*}
\data&= \data_1 = (Y_1,R_1,r_1,m_1,\eta_1,\alpha_1)\\
\data'& = \data_2 = (Y_2,R_2,r_2,m_2,\eta_2,\alpha_2)
\end{align*}
are marked odd closures of $(M,\gamma)$ with $g(\data')=g(\data)=g$. Below, we define  the isomorphism \[\Psit_{\data,\data'}=\Psit_{\data,\data'}^g:\SHIt^g(\data)\to\SHIt^g(\data')\] almost exactly as in Subsection \ref{ssec:samegenus}. 

To start, we choose a diffeomorphism
\[ C: Y_1 \ssm \inr(\Img(r_1)) \to Y_2 \ssm \inr(\Img(r_2)) \]
as in Subsection \ref{ssec:samegenus}, with the additional condition that $C$ sends $\alpha_1 \cap (Y_1 \ssm \inr(\Img(r_1)))$ to $\alpha_2 \cap (Y_2 \ssm \inr(\Img(r_2)))$.  We  define the maps $\varphi^C_\pm$ and $\varphi^C$ in the usual way, and choose a diffeomorphism \[\psi^C: R_1 \to R_1\] such that 
\begin{eqnarray*}
(\varphi^C_- \circ \psi^C)(\eta_1) &=& \eta_2, \\
(\varphi^C_- \circ \psi^C)(p_1) &=& p_2,
\end{eqnarray*}
where the points $p_i \in R_i$ are defined by $\alpha_i \cap r(R\times[-1,1]) =  r(\{p_i\}\times[-1,1])$.

We now repeat the construction of $\Psit^g_{\data,\data'}$ from Subsection \ref{ssec:samegenus}.  Namely, we express $\varphi^C \circ \psi^C$ and $(\psi^C)^{-1}$ as compositions of Dehn twists, and  we use these compositions to construct cobordisms $X_-$ from $(Y_1)_-$ to $Y_1$ and $X_+$ from $Y_1$ to $(Y_1)_+$.  As before, there is  a preferred isotopy class of diffeomorphisms from $(Y_1)_+$ to $Y_2$, which gives rise to an isomorphism 
\[ \Theta^C_{(Y_1)_+ Y_2}: I_*((Y_1)_+|R_1)_{\alpha_1+\eta_1} \to I_*(Y_2|R_2)_{\alpha_2+\eta_2}.\]

\begin{definition}
The map $\Psit^g_{\data,\data'}$ is given by 
\[ \Psit^g_{\data,\data'} = \Psit^g_{\data_1,\data_2} := \Theta^C_{(Y_1)_+ Y_2} \circ I_*(X_+|R_1)_{(\alpha_1+\eta_1)\times[0,1]} \circ (I_*(X_-|R_1)_{(\alpha_1+\eta_1)\times[0,1]})^{-1}. \]
\end{definition}

These maps are well-defined up to multiplication by a unit in $\C$ and satisfy the required transitivity, as stated below.

\begin{theorem} \label{thm:transitive-shi}
The map $\Psit_{\data,\data'}$ is independent of the choices made in its construction, up to multiplication by a unit in $\C$.  Furthermore, if $\data,\data',\data''$ are  genus $g$ marked  odd closures of $(M,\gamma)$, then
\[ \Psit^g_{\data,\data''} = \Psit^g_{\data',\data''} \circ \Psit^g_{\data,\data'}, \] up to multiplication by a unit in $\C$.
\end{theorem}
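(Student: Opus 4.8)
The plan is to mirror, essentially verbatim, the argument of Theorems~\ref{thm:samegenusinvariance} and \ref{thm:samegenustransitive}, substituting the instanton Floer homology groups $I_*(\,\cdot\,|R)_{\alpha+\eta}$ for the monopole groups $\HMtoc(\,\cdot\,|R;\Gamma_\eta)$ throughout and invoking Theorem~\ref{thm:maininvc-shi} wherever the monopole proof invoked Theorem~\ref{thm:maininvc}. The one genuinely new bookkeeping issue, which pervades the whole subsection, is the extra curve $\alpha$: every closure, every cutting-and-regluing diffeomorphism, and every Dehn twist factorization must be chosen so that $\alpha$ is respected (i.e.\ the twisting curves avoid the points $p_i$, and $C$, $\psi^C$ carry $\alpha_1$ to $\alpha_2$). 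This is exactly the hypothesis built into Theorem~\ref{thm:maininvc-shi} ($A^u(p)=B^u(p)=p$ and $C$ sends $\alpha_1\cap(Y_1\ssm\inr(\Img r_1))$ to $\alpha_2\cap(Y_2\ssm\inr(\Img r_2))$), so once the choices in the construction of $\Psit^g_{\data,\data'}$ are set up with these constraints, the rest goes through mechanically.

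For the well-definedness half, I would run through the same four-item list of choices as in the proof of Theorem~\ref{thm:samegenusinvariance}: (1) the diffeomorphism $C$; (2) the diffeomorphism $\psi^C$; (3) the factorizations of $\varphi^C\circ\psi^C$ and $(\psi^C)^{-1}$ into Dehn twists around curves avoiding $p_1$; (4) the real numbers $t_i,t_i'$. Independence of (2)--(4) with $C$ fixed is an immediate consequence of Theorem~\ref{thm:maininvc-shi}, by the identical reduction used in the monopole case (set $A^1=\varphi^C\circ\psi^C_1$, $A^2=\varphi^C\circ\psi^C_2$, $B^u=(\psi^C_u)^{-1}$, and note $\Theta^C_{(Y_1)^1_+Y_2}=\Theta^C_{(Y_1)^2_+Y_2}\circ\Theta_{(Y_1)^1_+(Y_1)^2_+}$; here one must observe that since $\psi^C_1,\psi^C_2$ both carry $p_1$ to the same preimage of $p_2$, the hypothesis $(B^2\circ(B^1)^{-1})(\eta)=\eta$ of Theorem~\ref{thm:maininvc-shi} holds and $\alpha$ is preserved). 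Independence of $C$ is the analogue of Lemmas~\ref{lem:equalcomps} and \ref{lem:compid}: writing $C_2=C_1\circ D$ with $D$ a diffeomorphism of $Y_1\ssm\inr(\Img r_1)$ fixing $m_1(M\ssm N(\gamma))$ and, crucially, fixing $\alpha_1$, one introduces an auxiliary factorization of $D$ by Dehn twists along curves disjoint from $p_1$ (using both signs, as in Remark~\ref{rmk:positivity2}) and shows the associated composite of $2$-handle maps is $\C^\times$-equivalent to the identity, exactly as in the proof of Lemma~\ref{lem:compid}, with the product region $Q$ chosen to avoid $\alpha$ away from its standard arc.

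For the transitivity half, I would reproduce the proof of Theorem~\ref{thm:samegenustransitive} word for word: choose $C_1,C_2$ (now also carrying the $\alpha_i$ appropriately), set $C_3=C_2\circ C_1$, define $\psi^{C_3}:=(\varphi_-^{C_1})^{-1}\circ\psi^{C_2}\circ\varphi_-^{C_1}\circ\psi^{C_1}$ so that it carries $\eta_1\mapsto\eta_3$ and $p_1\mapsto p_3$ simultaneously, derive the factorizations $\varphi^{C_3}\circ\psi^{C_3}=\varphi^{C_1}\circ\psi^{C_1}\circ g$ and $(\psi^{C_3})^{-1}=h\circ(\psi^{C_1})^{-1}$ with the same $g,h$ as there (all supported away from $p_1$), and then observe that with the correct boundary identifications the composite $2$-handle cobordism $(X^2_+,\nu)\circ(X^1_+,\nu)$ is isomorphic to $(X^3_+,\nu)$ — using $D_{f(a)}=f\circ D_a\circ f^{-1}$ and the identity $r_3\circ(\varphi^{C_3}_-\circ\psi^{C_3})\circ r_1^{-1}=r_3\circ(\varphi^{C_2}_-\circ\psi^{C_2})\circ r_2^{-1}\circ r_2\circ(\varphi^{C_1}_-\circ\psi^{C_1})\circ r_1^{-1}$ — which gives the desired $\C^\times$-equivalence. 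The main obstacle is not conceptual but organizational: one must verify at each step that the additional constraints imposed by the marked point $p$ (equivalently, by $\alpha$) can be met simultaneously with all the constraints already present in the monopole argument, i.e.\ that there is no conflict between ``$\psi^C$ sends $\eta_1$ to $\eta_2$'' and ``$\psi^C$ sends $p_1$ to $p_2$,'' and that the twisting curves in every factorization can be taken disjoint from the relevant $p_i$; both are routine (a generic isotopy argument) but need to be stated. Since all of this is formally identical to the monopole case, I would present the proof as: ``This is proved exactly as Theorems~\ref{thm:samegenusinvariance} and \ref{thm:samegenustransitive}, using Theorem~\ref{thm:maininvc-shi} in place of Theorem~\ref{thm:maininvc} and taking care that all diffeomorphisms and Dehn twist factorizations respect the curve $\alpha$.''
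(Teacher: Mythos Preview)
Your proposal is correct and matches the paper's own proof essentially verbatim: the paper simply states that the proofs of Theorems~\ref{thm:samegenusinvariance} and~\ref{thm:samegenustransitive} rely entirely on topological arguments together with applications of Theorem~\ref{thm:maininvc}, and that one repeats these arguments using Theorem~\ref{thm:maininvc-shi} in its place. If anything, your write-up is more careful than the paper's, since you explicitly flag the $\alpha$-bookkeeping (compatibility of $C$, $\psi^C$, and the Dehn twist curves with the points $p_i$), which the paper leaves implicit.
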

\begin{proof}
The proofs of Theorems \ref{thm:samegenusinvariance} and \ref{thm:samegenustransitive} rely entirely on topological arguments together with several applications of Theorem \ref{thm:maininvc}.  We repeat these arguments verbatim to prove Theorem \ref{thm:transitive-shi}, using Theorem \ref{thm:maininvc-shi} in place of Theorem \ref{thm:maininvc}.
\end{proof}

\begin{definition}
\label{def:tsmig} The \emph{twisted sutured instanton homology of $(M,\gamma)$ in genus $g$} is  the projectively transitive system  of $\C$-modules defined by  $\{\SHIt^g(\data)\}$ and  $\{\Psit^g_{\data,\data'}\}$. We will denote this system by  $\SHItfun^g(M,\gamma)$.
\end{definition}

\subsubsection{Genera Differ by One}

Now, suppose 
\begin{align*}
\data&= \data_1 = (Y_1,R_1,r_1,m_1,\eta_1,\alpha_1)\\
\data'& = \data_4 = (Y_4,R_4,r_4,m_4,\eta_4,\alpha_4).
\end{align*}
are marked odd closures of $(M,\gamma)$ with $g(\data')=g(\data)+1=g+1$. Below, we define the  maps 
\begin{align*}
\Psit_{\data,\data'}=\Psit_{\data,\data'}^{g,g+1}&:\SHIt^{g}(\data)\to\SHIt^{g+1}(\data')\\
\Psit_{\data',\data}=\Psit_{\data',\data}^{g+1,g}&:\SHIt^{g+1}(\data')\to\SHIt^{g}(\data).
\end{align*}
To define $\Psit^{g,g+1}_{\data,\data'}=\Psit^{g,g+1}_{\data_1,\data_4}$, we choose an auxiliary marked odd closure \[\data_3 = (Y_3,R_3,r_3,m_3,\eta_3,\alpha_3),\] with $g(\data_3)=g(\data_4)=g+1,$ such that $(Y_3,R_3,r_3,m_3,\eta_3)$ is a cut-ready marked closure with respect to tori $T_1,T_2\subset Y_3$ as in Subsection \ref{ssec:differbyone}. We further require that the curve $\alpha_3$ be contained in the piece $\inr(Y^1_3)$. We may then form  the corresponding cut-open tuple
\[ \data_2 = (Y_2, R_2, r_2, m_2, \eta_2, \alpha_2), \]
with  $g(\data_2) = g(\data_1)=g$, where \[Y_2 = \bar{Y}^1_3,\,\,\,R_2 = \bar{R}^1_3,\,\,\,\eta_2=\bar{\eta}^1_3,\,\,\,r_2=\bar{r}^1_3\] and $\alpha_2$ is the image of $\alpha_3 \subset Y_3^1$ inside $\bar{Y}_3^1$.

We  now construct a merge-type cobordism $(\mathcal{W},\beta+\nu)$, where $(\mathcal{W},\nu)$ is defined exactly as in Subsection \ref{ssec:differbyone} and $\beta$  is the product cobordism $\alpha_2\times[0,1] \subset Y_3^1\times[0,1] = \mathcal{W}_1$. With respect to the boundary identifications in (\ref{eqn:bident1})-(\ref{eqn:bident2}), $\beta$ is a cylindrical cobordism from $\alpha_2\subset Y_2$ to $\alpha_3\subset Y_3$. Kronheimer and Mrowka show in \cite{km4} that the induced map
\[ I_*(\mathcal{W}|S_3)_{\beta+\nu}: I_*(Y_2|R_2)_{\alpha_2+\eta_2} \otimes_\C I_*(Y_3^2|R_3^2)_{\bar{\eta}^2_3} \to I_*(Y_3|R_3)_{\alpha_3+\eta_3}\] is an isomorphism.
By \cite[Proposition 7.8]{km4}, we have that \[I_*(Y_3^2|R_3^2)_{\bar{\eta}^2_3} \cong \C.\] We choose any such identification and define
\[ \Psit^{g,g+1}_{\data_2,\data_3}(-) := I_*(\mathcal{W}|S_3)_{\beta + \nu}(- \otimes 1). \]
Note that this map is  only well-defined up to multiplication by a unit in $\C$.

We  now define the maps $\Psit^{g,g+1}_{\data,\data'}$ and $\Psit^{g+1,g}_{\data',\data}$ exactly as in Subsection \ref{ssec:differbyone}.

\begin{definition}
\label{def:iPsi23gplus} The map $\Psit^{g,g+1}_{\data,\data'}$ is given by
\[\Psit^{g,g+1}_{\data,\data'}=\Psit^{g,g+1}_{\data_1,\data_4}:=\Psit^{g+1}_{\data_3,\data_4}\circ\Psit^{g,g+1}_{\data_2,\data_3}\circ\Psit^g_{\data_1,\data_2}.\]
\end{definition} 

\begin{definition}
\label{def:iPsi23gminus} The map $\Psit^{g+1,g}_{\data',\data}$ is given by
\[\Psit^{g+1,g}_{\data',\data}:=(\Psit^{g,g+1}_{\data,\data'})^{-1}.\]
\end{definition}

We have the following analogue of Theorem \ref{thm:differbyoneinvariance}.

\begin{theorem}
The map $\Psit^{g,g+1}_{\data,\data'}$ is independent of the choices made in its construction, up to multiplication by a unit in $\C$.
\end{theorem}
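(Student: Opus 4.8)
The strategy is to transcribe the proof of Theorem \ref{thm:differbyoneinvariance} into the instanton setting, just as the proofs of Theorems \ref{thm:transitive-shi} and \ref{thm:maininvc-shi} transcribe the monopole arguments. The choices made in constructing $\Psit^{g,g+1}_{\data,\data'}$ are again those of the cut-ready marked odd closure $\data_3$ (now including the curve $\alpha_3\subset \inr(Y_3^1)$), the tori $T_1,T_2$, the tubular neighborhoods $N_1,N_2$, and the diffeomorphism $F:T_1\to T_2$. As in Subsection \ref{ssec:differbyone}, given two cut-ready marked odd closures $\data_3,\data_3'$ with cut-open closures $\data_2,\data_2'$, Theorem \ref{thm:transitive-shi} reduces the problem to showing that the square
\begin{equation*}\xymatrix@C=45pt@R=35pt{\SHIt^g(\data_2) \ar[r]^{\Psit^{g,g+1}_{\data_2,\data_3}}\ar[d]_{\Psit^g_{\data_2,\data_2'}}&  \SHIt^{g+1}(\data_3) \ar[d]^{\Psit^{g+1}_{\data_3,\data_3'}}\\
\SHIt^g(\data_2') \ar[r]_{\Psit^{g,g+1}_{\data_2',\data_3'}} & \SHIt^{g+1}(\data_3')}\end{equation*}
commutes up to multiplication by a unit in $\C$.

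The argument for this commutativity is the topological one from the proof of Theorem \ref{thm:differbyoneinvariance}, so I would carry it out in the same order. First, I would construct a diffeomorphism $C:Y_3\ssm\inr(\Img(r_3))\to Y_3'\ssm\inr(\Img(r_3'))$ satisfying the commutativity of diagrams \eqref{eqn:comma1a2} and \eqref{eqn:commnn'}, with the additional requirement (as in the same-genus instanton case) that $C$ carry $\alpha_3\cap (Y_3\ssm\inr(\Img(r_3)))$ to $\alpha_3'\cap(Y_3'\ssm\inr(\Img(r_3')))$; the existence proof of such a $C$ is unchanged since $\alpha_3$ lies in the $Y_3^1$ piece, which is where the relevant product structure is exploited. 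Since $\varphi^C_\pm$ sends $c_i\mapsto c_i'$ and $p_i\mapsto p_i'$, I can choose $\psi^C$ preserving each $c_i$ and $p_i$, and factor $\varphi^C\circ\psi^C$ and $(\psi^C)^{-1}$ into Dehn twists around curves $a_i\subset R_3\ssm(c_1\cup c_2)$ (as in Remark \ref{rmk:positivity3}, positive and negative twists are both needed). These induce surgered manifolds $(Y_3)_\pm$, $(\bar Y_3^j)_\pm$ and $2$-handle cobordisms $X_\pm$, $X_\pm^j$; the key point, exactly as in Remark \ref{rmk:disjoint}, is that all the surgery curves $r_3(a_i\times\{t_i\})$, $r_3(a_i\times\{t_i'\})$ are disjoint from $T_1,T_2$ because the $a_i$ are disjoint from $c_1\cup c_2$. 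Hence the merge-type splicing cobordisms $\mathcal{W}$, $\mathcal{W}'$ (now carrying the product curve $\beta=\alpha_2\times[0,1]$) admit analogues $\mathcal{W}_\pm$ from $(\bar Y_3^{1,2})_\pm$ to $(Y_3)_\pm$, and the square above factors as the composition of three squares of the form appearing at the end of the proof of Theorem \ref{thm:differbyoneinvariance}, each of which commutes because the cobordisms $\mathcal{W}$, $\mathcal{W}_\pm$ contain product pieces over the regions where $X_\pm$, $X_\pm^{1,2}$ attach their handles. Commutativity is then checked diagram by diagram exactly as before.

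The only substantive change from the monopole proof is that every invocation of monopole Floer excision and of Theorem \ref{thm:maininvc} must be replaced by Kronheimer and Mrowka's instanton excision theorem (\cite[Section 7.3]{km4}) and by Theorem \ref{thm:maininvc-shi}, and every appeal to $\HMtoc(\text{mapping torus}|R;\Gamma_\eta)\cong\RR$ must be replaced by $I_*(\text{mapping torus}|R)_{(\text{odd line bundle})}\cong\C$ via \cite[Propositions 7.4 and 7.8]{km4}. One must also track the auxiliary curve $\alpha$ through the cobordisms: in each composite cobordism $\alpha$ contributes a product cylinder, so the relevant line bundles restrict correctly and the excision arguments apply verbatim. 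The statement for $\Psit^{g+1,g}_{\data',\data}$ follows immediately, since it is defined as $(\Psit^{g,g+1}_{\data,\data'})^{-1}$. I do not expect any genuine obstacle here beyond this bookkeeping; the one place deserving care is confirming that the chosen $C$ can be taken to respect both the tori data \emph{and} the $\alpha$-data simultaneously, but this is handled exactly as the analogous compatibility in the same-genus instanton construction, where $C$ was already required to carry $\alpha_1$ to $\alpha_2$.
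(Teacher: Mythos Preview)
Your proposal is correct and takes essentially the same approach as the paper, which simply states that the proof is virtually identical to that of Theorem \ref{thm:differbyoneinvariance}. If anything, you have been more thorough than the paper by explicitly noting the need to track the curve $\alpha$ through the cobordisms and to choose $C$ compatible with both the tori data and the $\alpha$-data.
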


\begin{proof}
The proof of this theorem is virtually identical to that of Theorem \ref{thm:differbyoneinvariance}.
\end{proof}

\subsubsection{The General Case}

We now define the map \[\Psit_{\data,\data'}:\SHIt(\data)\to\SHIt(\data')\] for an arbitrary pair  $\data,\data'$ of marked odd closures of $(M,\gamma)$ exactly as in Subsection \ref{ssec:generalcase}.
Namely, we choose a sequence \[\{\data_i= (Y_i,R_i, r_i, m_i, \eta_i,\alpha_i)\}_{i=1}^n\] of marked odd  closures  of $(M,\gamma)$, where $\data = \data_1$, $\data' = \data_n$ and \[|g(\data_i)-g(\data_{i+1})|\leq 1\] for $i=1,\dots,n-1$, and define $\Psit_{\data,\data'}$ as follows.

\begin{definition}
\label{def:ipsi12general}
The map $\Psit_{\data,\data'}$ is given by  \[\Psit_{\data,\data'} =\Psit_{\data_1,\data_n}:= \Psit^\circ_{\data_{n-1},\data_n}\circ\dots\circ\Psit^\circ_{\data_1,\data_2}.\]
\end{definition}

\begin{theorem} 
The map $\Psit_{\data,\data'}$ is independent of the choices made in its construction, up to multiplication by a unit in $\C$. Furthermore, if $\data,\data',\data''$ are   marked  odd closures of $(M,\gamma)$, then
\[ \Psit_{\data,\data''} = \Psit_{\data',\data''} \circ \Psit_{\data,\data'}, \] up to multiplication by a unit in $\C$.
\end{theorem}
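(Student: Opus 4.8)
The plan is to imitate, essentially verbatim, the proof of Theorem \ref{thm:generalcaseinvariance} and Theorem \ref{thm:generalcasetransitive}, since all the necessary building blocks are now in place in the instanton setting. The only inputs those monopole proofs used were: (i) Theorem \ref{thm:samegenustransitive}, which is the same-genus transitivity, now available as Theorem \ref{thm:transitive-shi}; (ii) the compatibility statements for the genus-changing maps recorded in Remark \ref{rmk:changesame}, which follow from Theorem \ref{thm:transitive-shi} together with the fact that $\Psit^{g,g+1}_{\data_2,\data_3}$ is built from the merge-type splicing cobordism $(\mathcal{W},\beta+\nu)$ exactly as in the monopole case; and (iii) the fact that $\Psit^{g+1,g}_{\data',\data}$ is defined as the inverse of $\Psit^{g,g+1}_{\data,\data'}$. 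None of these arguments uses anything specific to monopole Floer homology beyond the formal properties of the functor $I_*(-|R)_{-}$ and the excision machinery, both of which are available in the instanton category by the work of Kronheimer and Mrowka in \cite{km4}.

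For well-definedness, I would argue as in Theorem \ref{thm:generalcaseinvariance}: given two interpolating sequences of marked odd closures between $\data$ and $\data'$, reduce (by reversing the second sequence) to showing that a composition $\Psit^\circ_{\data_{n-1},\data_n}\circ\dots\circ\Psit^\circ_{\data_1,\data_2}$ around a loop $\data=\data_1,\dots,\data_n=\data$ of marked odd closures with consecutive genera differing by at most one is $\C^\times$-equivalent to the identity on $\SHIt(\data)$. This is handled by induction on $n$. The base case $n=2$ is the statement that $\Psit^g_{\data,\data}$ is $\C^\times$-equivalent to the identity, which follows from Theorem \ref{thm:transitive-shi}. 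For the inductive step, one locates either a pair of consecutive closures of equal genus, in which case Theorem \ref{thm:transitive-shi} lets us splice two adjacent maps into one and shorten the sequence; or, failing that, an index $1<i<n$ where the genus of $\data_i$ is a local maximum or minimum, in which case the identity $\Psit^{g+1,g}_{\data_i,\data_{i+1}}\circ\Psit^{g,g+1}_{\data_{i-1},\data_i}\doteq\Psit^{g+1,g}_{\data_i,\data_{i+1}}\circ\Psit^{g,g+1}_{\data_{i+1},\data_i}\circ\Psit^g_{\data_{i-1},\data_{i+1}}\doteq\Psit^g_{\data_{i-1},\data_{i+1}}$ from Remark \ref{rmk:changesame} (or its genus-lowering mirror) again shortens the sequence. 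In all cases the number of terms strictly decreases, so the induction closes.

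The transitivity statement $\Psit_{\data,\data''}\doteq\Psit_{\data',\data''}\circ\Psit_{\data,\data'}$ is then immediate, exactly as in Theorem \ref{thm:generalcasetransitive}: concatenate an interpolating sequence from $\data$ to $\data'$ with one from $\data'$ to $\data''$ to obtain an interpolating sequence from $\data$ to $\data''$, and invoke the well-definedness just proved. I do not anticipate a genuine obstacle here; the one point deserving a word of care is that the curve $\alpha$ (equivalently the line bundle $w$) must be carried along coherently through all the cut-open, merge, and $2$-handle constructions — but this has already been arranged in the genus-preserving and genus-changing constructions above (the diffeomorphism $C$ is required to match the $\alpha_i$, the auxiliary closure $\data_3$ has $\alpha_3\subset\inr(Y^1_3)$, and the splicing cobordism carries the product cobordism $\beta$), so the bookkeeping is already built into the definitions and the topological arguments transfer without change.
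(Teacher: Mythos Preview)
Your proposal is correct and takes essentially the same approach as the paper, which simply notes that the proof is virtually identical to those of Theorems \ref{thm:generalcaseinvariance} and \ref{thm:generalcasetransitive}. You have merely spelled out in detail the inputs (Theorem \ref{thm:transitive-shi}, the instanton analogue of Remark \ref{rmk:changesame}, and Definition \ref{def:iPsi23gminus}) and the inductive shortening argument that the paper leaves implicit, together with a helpful remark on why the curve $\alpha$ causes no new difficulty.
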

\begin{proof}
The proof of this theorem is virtually identical to those of Theorems \ref{thm:generalcaseinvariance} and \ref{thm:generalcasetransitive}.
\end{proof}

\begin{definition}
\label{def:tsmi} The \emph{twisted sutured instanton homology of $(M,\gamma)$} is  the projectively transitive system  of $\C$-modules defined by  $\{\SHIt(\data)\}$ and  $\{\Psit_{\data,\data'}\}$. We will denote this system by  $\SHItfun(M,\gamma)$.
\end{definition}

\subsection{The Untwisted Theory}

Given genus $g$ odd closures $\data,\data'$ of $(M,\gamma)$, we define the canonical isomorphism \[\Psi^g_{\data,\data'}:\SHI(\data)\to\SHI(\data')\] by simply adapting the construction in Section \ref{sec:untwisted} to the instanton setting.

\begin{theorem}
 The map $\Psi^g_{\data,\data'}$ is independent of the choices made in its construction, up to multiplication by a unit in $\C$. Furthermore, if $\data,\data',\data''$ are   genus $g$  odd closures of $(M,\gamma)$, then
\[ \Psi^g_{\data,\data''} = \Psi^g_{\data',\data''} \circ \Psi^g_{\data,\data'}, \] up to multiplication by a unit in $\C$.\qed
\end{theorem}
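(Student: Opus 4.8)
The plan is to repeat, essentially verbatim, the argument used in the untwisted monopole case (Theorem \ref{thm:samegenusinvarianceuntwisted}, proved via Theorems \ref{thm:samegenusinvariance} and \ref{thm:samegenustransitive}), substituting the instanton excision theorem and Theorem \ref{thm:maininvc-shi} for their monopole analogues. First I would observe that the construction of $\Psi^g_{\data,\data'}$ in the untwisted instanton setting is the one obtained from the construction in Section \ref{sec:untwisted} by deleting the curve $\eta$ everywhere and carrying along the odd curve $\alpha$: given odd closures $\data_i = (Y_i,R_i,r_i,m_i,\alpha_i)$ one chooses a diffeomorphism $C$ of the complements sending $\alpha_1$ to $\alpha_2$ (possible since both restrict to $\alpha_i\cap(Y_i\ssm\inr(\Img(r_i)))$), forms $\varphi^C$, factors it into Dehn twists around curves avoiding the marked point $p_1$, builds the cobordisms $X_\pm$ and the diffeomorphism-induced isomorphism $\Theta^C_{(Y_1)_+Y_2}$, and sets
\[ \Psi^g_{\data,\data'} := \Theta^C_{(Y_1)_+Y_2}\circ I_*(X_+|R_1)_{\alpha_1\times[0,1]}\circ (I_*(X_-|R_1)_{\alpha_1\times[0,1]})^{-1}. \]
The invertibility of the $X_-$-map and the fact that this is an isomorphism follow from Theorem \ref{thm:maininvc-shi} exactly as in the twisted instanton case (taking $\eta$ to be homologically essential but otherwise irrelevant, or directly from the untwisted analogue of Proposition \ref{prop:maininvc2}).

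Next I would prove independence of the choices. The choices are the same four as in Theorem \ref{thm:samegenusinvariance}: the diffeomorphism $C$, the factorization of $\varphi^C$ into Dehn twists, and the real numbers $t_i,t_i'$. Independence of the factorization and the $t_i$ is immediate from Theorem \ref{thm:maininvc-shi}, applied with $A^u = B^u$ built from two different factorizations of $\varphi^C$ (here one can take $\eta$ a fixed auxiliary homologically essential curve on $R$, or simply run the argument with the $u$-maps interpreted in the untwisted bundle $w$; the statement and proof of Theorem \ref{thm:maininvc-shi} go through with $\eta$ omitted and $\alpha+\eta$ replaced by $\alpha$ throughout). Independence of $C$ is handled by the same two-lemma argument as in the proof of Theorem \ref{thm:samegenusinvariance}: writing $C_2 = C_1\circ D$ with $D$ supported away from $m_1(M\ssm N(\gamma))$, one shows (instanton analogue of Lemma \ref{lem:equalcomps}) that the $C_2$-map factors as the $C_1$-map composed with a map built from $D$, using that $(X^1_+)\circ(X^3_+)$ is isomorphic to $X^2_+$ since $C_2 = C_1\circ D$; and then (instanton analogue of Lemma \ref{lem:compid}) that the $D$-contribution is $\C^\times$-equivalent to the identity, because $D$, being supported in a product region $F'\times[-1,1]$ of $Y_1\ssm\inr(\Img(r_1))$ disjoint from a neighborhood of $\alpha_1$, can be cancelled by Dehn surgeries along curves in $R_1$ — an instance of Theorem \ref{thm:maininvc-shi} with the composition of Dehn twists isotopic to the identity. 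The only point needing the odd curve is that $D$ should be chosen to commute with $\alpha_1$ up to isotopy, which it does since $\alpha_1$ meets the product region only in a vertical arc and $D$ can be taken to fix a neighborhood of that arc.

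Finally I would prove transitivity, $\Psi^g_{\data,\data''}\doteq\Psi^g_{\data',\data''}\circ\Psi^g_{\data,\data'}$, by repeating the proof of Theorem \ref{thm:samegenustransitive}: choose diffeomorphisms $C_1,C_2$ of the relevant complements (each sending the respective $\alpha$'s to one another), set $C_3 = C_2\circ C_1$, use $\varphi^{C_3}_\pm = \varphi^{C_2}_\pm\circ\varphi^{C_1}_\pm$ and the analogous relation for $\psi$ to choose compatible factorizations, and observe that the composite cobordism $(X^2_+)\circ(X^1_+)$ is isomorphic to $X^3_+$ (the curves $a_i$ transported by $\varphi^{C_1}_-$ land on the curves $b_i$ with framings preserved, and $\alpha$'s are matched up by construction). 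The main obstacle — such as it is — is purely bookkeeping: one must check that every topological identification used in the monopole arguments can be arranged to respect the odd curve $\alpha$ (so that the line bundle $w$ is carried consistently through all cobordisms and diffeomorphisms), which amounts to the observations that $\alpha$ meets the product regions only in vertical arcs through the marked points $p_i$, that the diffeomorphisms $C$ can be taken to match these arcs, and that all Dehn-twist curves are chosen away from the $p_i$. Granting those, the proofs of Theorems \ref{thm:samegenusinvariance} and \ref{thm:samegenustransitive} apply word for word with Theorem \ref{thm:maininvc-shi} in place of Theorem \ref{thm:maininvc} and $\C^\times$ in place of $\RR^\times$, which establishes the theorem.
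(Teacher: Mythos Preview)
Your proposal is correct and matches the paper's approach: the paper gives no explicit proof (the statement ends with \qed), having just said that $\Psi^g_{\data,\data'}$ is defined ``by simply adapting the construction in Section \ref{sec:untwisted} to the instanton setting,'' so the intended argument is precisely to rerun the proof of Theorem \ref{thm:samegenusinvarianceuntwisted} (itself a rerun of Theorems \ref{thm:samegenusinvariance} and \ref{thm:samegenustransitive}) with Theorem \ref{thm:maininvc-shi} replacing Theorem \ref{thm:maininvc}, while carrying the curve $\alpha$ through all diffeomorphisms and cobordisms. Two cosmetic slips: in the untwisted setting there is no $\psi^C$, so there are three choices rather than four, and your transitivity sketch should not mention $\psi$; neither affects the argument.
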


\begin{definition}
\label{def:utsihg} The \emph{untwisted sutured instanton homology of $(M,\gamma)$ in genus $g$} is  the projectively transitive system  of $\C$-modules defined by  $\{\SHI^g(\data)\}$ and  $\{\Psi^g_{\data,\data'}\}$. We will denote this system by  $\SHIfun^g(M,\gamma)$.
\end{definition}

The following theorem  describes the relationship between the twisted and untwisted sutured instanton invariants.

\begin{theorem}
\label{thm:twisteduntwistedshi} $\SHIfun^g(M,\gamma)$ and $\SHItfun^g(M,\gamma)$ are isomorphic as projectively transitive systems of $\C$-modules.
\end{theorem}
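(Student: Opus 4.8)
\textbf{Proof proposal for Theorem \ref{thm:twisteduntwistedshi}.}

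The plan is to mimic, essentially verbatim, the proof of Theorem \ref{thm:twisteduntwisted} (the monopole analogue), substituting the instanton excision theorem of Kronheimer and Mrowka for the monopole excision theorem throughout. Concretely, to produce an isomorphism of projectively transitive systems of $\C$-modules from $\SHIfun^g(M,\gamma)$ to $\SHItfun^g(M,\gamma)$, it suffices to define isomorphisms $\Xi^g_{\data,\data'}:\SHI^g(\data)\to\SHIt^g(\data')$ for every genus $g$ odd closure $\data=(Y,R,r,m,\alpha)$ and every genus $g$ marked odd closure $\data'$ of $(M,\gamma)$, satisfying the commutativity relation
\begin{equation*}
\Psit^g_{\data',\data'''}\circ\Xi^g_{\data,\data'}\doteq\Xi^g_{\data'',\data'''}\circ(\Psi^g_{\data,\data''}\otimes id)
\end{equation*}
for all genus $g$ odd closures $\data,\data''$ and marked odd closures $\data',\data'''$. (Here one must be slightly careful: an odd closure already carries a curve $\alpha$, and a marked odd closure carries both $\alpha$ and $\eta$; the map $\Xi^g$ will go from the $I_*(\cdot|R)_\alpha$ theory to the $I_*(\cdot|R)_{\alpha+\eta}$ theory, so the tensoring by $\RR$ in the monopole statement is replaced by the statement that the two theories have isomorphic module classes, which is what Theorem \ref{thm:twisteduntwistedshi} asserts.)

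The key steps, in order, are as follows. First, given an odd closure $\data=(Y,R,r,m,\alpha)$ and a homologically essential curve $\eta\subset R$ disjoint from the point $p$ where $\alpha$ meets $r(R\times[-1,1])$, write $\data^\eta=(Y,R,r,m,\eta,\alpha)$ for the resulting marked odd closure, and define $\Xi^g_{\data,\data^\eta}$ using the merge-type splicing cobordism $\CM(\data)$ from Section \ref{sec:prelim}, now decorated with the line bundle data: the cobordism $\CM(\data)$ carries the product cobordism $\alpha\times[0,1]$ (so that $\alpha\cdot r(R\times\{0\})=1$ is preserved) together with the cylinder $\nu^\eta=\eta\times\{0\}\times[0,1]$, and one sets $\Xi^g_{\data,\data^\eta}:=I_*(\CM(\data)|R)_{(\alpha\times[0,1])+\nu^\eta}$ after fixing an identification $I_*(R\times S^1|R)_{\alpha'+\eta}\cong\C$ (using the computation quoted in the instanton section that $I_*(R\times S^1|R)_w\cong\C$ when $\langle c_1(w),R\rangle$ is odd). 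That this map is an isomorphism is exactly the instanton analogue of the fact used in the monopole case, which follows from Kronheimer and Mrowka's work. Second, prove the instanton analogue of Proposition \ref{prop:keycommute}: for two genus $g$ odd closures $\data_1,\data_2$ with curves $\eta_1,\eta_2$, the map $(\Xi^g_{\data_2,\data_2^{\eta_2}})^{-1}\circ\Psit^g_{\data_1^{\eta_1},\data_2^{\eta_2}}\circ\Xi^g_{\data_1,\data_1^{\eta_1}}$ is $\C^\times$-equivalent to $\Psi^g_{\data_1,\data_2}\otimes id$. The proof is the same excision argument: form the composite cobordism $W=(-\CM(\data_2))\circ X_+\circ\CM(\data_1)$ (where $(-\CM(\data_2))$ is the split-type cobordism inducing $(\Xi^g_{\data_2,\data_2^{\eta_2}})^{-1}$, by the instanton excision theorem), cut $W$ open along the incompressible $T\cong R\times S^1$ built from the arcs $c\subset S$, $c'\subset S'$ and $R\times\{-7/8,7/8\}\times[0,1]$, cap off with copies of $R\times D^2$, and identify the result as a disjoint union $X_+'\sqcup((R\times S^1)\times[0,3])$. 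Applying the instanton excision theorem (in place of the monopole one) then yields $I_*(W|R)_{\beta+\nu}\doteq\Psi^g_{\data_1,\data_2}\otimes f$ for the isomorphism $f$ induced by $\varphi^C_-\circ\psi^C:R_1\to R_2$. Third, use this proposition exactly as in Section \ref{sec:untwisted} to define $\Xi^g_{\data,\data'}:=\Psit^g_{\data^\eta,\data'}\circ\Xi^g_{\data,\data^\eta}$ for an arbitrary marked odd closure $\data'$, to check that $\Xi^g_{\data,\data'}$ is independent of $\eta$ up to a unit in $\C$, and to verify the required commutativity relation; all three verifications are formal consequences of the proposition together with Theorems \ref{thm:transitive-shi} and \ref{thm:maininvc-shi}, and the diagram-chasing arguments transfer word-for-word.

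The main obstacle — and really the only nontrivial point — is ensuring that Kronheimer and Mrowka's instanton excision theorem applies in exactly the configurations needed, namely to the cut-along-$T$-and-cap-off operation on $W$ where $W$ carries the nontrivial line bundle data $\beta+\nu$ (with $\beta$ the $\alpha$-cylinder having odd intersection with the relevant surfaces) and where the cut surface $T\cong R\times S^1$ itself meets $\alpha$ in a circle on which the bundle restricts nontrivially. This is precisely the setting in which Kronheimer and Mrowka prove their excision theorem in \cite{km4} (it is what underlies their proof that the splicing cobordism maps are isomorphisms), so the verification amounts to checking that the bundles and eigenvalue-of-$\mu(R)$ conventions line up; once that is confirmed, the rest of the argument is a routine transcription of Section \ref{sec:untwisted}. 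As in the monopole case, one then concludes that the collection $\{\Xi^g_{\data,\data'}\}$ defines an isomorphism of projectively transitive systems of $\C$-modules, which is the assertion of Theorem \ref{thm:twisteduntwistedshi}.
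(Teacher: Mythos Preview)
Your proposal is correct and follows the same approach as the paper, which likewise transcribes the proof of Theorem \ref{thm:twisteduntwisted} to the instanton setting via the merge cobordism $\CM$ and Kronheimer--Mrowka's instanton excision theorem. The one imprecision is your description of the $\alpha$-part of the 2-chain in $\CM$: you call it ``the product cobordism $\alpha\times[0,1]$'', but since $\alpha$ meets $r(R\times[-1,1])$ in $r(\{p\}\times[-1,1])$ and $\CM$ is built from the saddle $R\times S$ in that region, the correct 2-chain $\beta$ is a pair-of-pants cobordism from $\alpha\sqcup(\{p\}\times S^1)$ to $\alpha$, with pieces $(\alpha\ssm r(\{p\}\times(-1,1)))\times[0,1]\subset\CM_1$, $\{p\}\times S\subset\CM_2$, and $\{p\}\times[-3/4,3/4]\times[0,1]\subset\CM_3$. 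This is exactly what ensures the $R\times S^1$ boundary component carries an odd bundle (dual to $\{p\}\times S^1+\eta$), so that $I_*(R\times S^1|R)_{\{p\}\times S^1+\eta}\cong\C$ as required; with $\beta$ so described, your outline matches the paper's proof.
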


\begin{proof}
To define an isomorphism from $\SHIfun^g(M,\gamma)$ to $\SHItfun^g(M,\gamma)$, it suffices to define isomorphisms \[\Xi^g_{\data,\data'}:\SHI^g(\data)\to \SHIt^g(\data')\] for all genus $g$ odd closures $\data$ and genus $g$ marked odd closures $\data'$ of $(M,\gamma)$ such that \begin{equation}\label{eqn:commutativeutshi}\Psit^g_{\data',\data'''}\circ\Xi^g_{\data,\data'}\doteq \Xi^g_{\data'',\data'''}\circ \Psi^g_{\data,\data''}\end{equation} for all genus $g$ odd closures $\data,\data''$  and all genus $g$ marked odd closures $\data',\data'''$ of $(M,\gamma)$. We  describe below how the maps $\Xi^g_{\data,\data'}$ are constructed and omit the rest of the proof as it is virtually identical to the proof of Theorem \ref{thm:twisteduntwisted}.

As in the monopole setting, the map $\Xi^g_{\data,\data'}$ is defined in terms of the merge-type cobordisms $\CM$ constructed in Section \ref{sec:prelim}.  Given an odd closure $\data=(Y,R,r,m,\alpha)$ of $(M,\gamma)$ and an oriented, homologically essential, smoothly embedded curve $\eta\subset R$, we let \[\data^\eta =(Y,R,r,m,\eta,\alpha)\] denote the corresponding marked odd closure, and let $p\in  R$ be the point satisfying $\alpha \cap r(R\times[-1,1]) = r(\{p\}\times [-1,1])$.  We define a  merge-type cobordism $(\mathcal{M},\nu^\eta+\beta)$, where $(\mathcal{M}, \nu^\eta)$ is exactly as in Section \ref{sec:untwisted} and $\beta$ is the cobordism from $\alpha \sqcup p\times S^1$ to $\alpha$ defined by
\begin{eqnarray*}
\beta \cap \mathcal{M}_1 & = & (\alpha \ssm r(\{p\}\times (-1,1))) \times [0,1], \\
\beta \cap \mathcal{M}_2 & = & \{p\} \times S, \\
\beta \cap \mathcal{M}_3 & = & \{p\} \times [-3/4,3/4] \times [0,1].
\end{eqnarray*}
This cobordism gives rise to an isomorphism
\[ I_*(\mathcal{M}|R)_{\nu^\eta+\beta}: I_*(Y|R)_{\alpha} \otimes_\C I_*(R\times S^1|R)_{\{p\}\times S^1 + \eta} \to I_*(Y|R)_{\alpha+\eta}. \] We define \[\Xi^g_{\data,\data^\eta}:=I_*(\mathcal{M}|R)_{\nu^\eta+\beta}(-\otimes 1),\] which is well-defined up to multiplication by an element of $\C^\times$, and we define \[\Xi^g_{\data,\data'}:=\Psit^g_{\data^\eta,\data'}\circ \Xi^g_{\data,\data^\eta}.\] As alluded to above, the proof that this map is well-defined up to multiplication by a unit in $\C$ and satisfies the commutativity in (\ref{eqn:commutativeutshi}) follows from the same reasoning used to prove the analogous results in the monopole setting.
\end{proof}

One can also prove the following analogue of Theorem \ref{thm:natiso}.

 \begin{theorem}
 \label{thm:natisoinstanton}
 The functors $\SHIfun^g$ and $\SHItfun^g$ from $\DiffSut^g$ to $\RPSys[\C]$ are naturally isomorphic. \qed
\end{theorem}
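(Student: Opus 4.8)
The statement to prove is Theorem~\ref{thm:natisoinstanton}: the functors $\SHIfun^g$ and $\SHItfun^g$ from $\DiffSut^g$ to $\RPSys[\C]$ are naturally isomorphic.

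The plan is to mimic exactly the argument used in the monopole setting to establish Theorems~\ref{thm:twisteduntwisted} and~\ref{thm:natiso}. First I would observe that, since $\SHIfun^g(M,\gamma)$ is itself a projectively transitive system of $\C$-modules (rather than needing to be tensored up as in the monopole case), constructing a natural isomorphism $\SHIfun^g \Rightarrow \SHItfun^g$ amounts to producing, for each balanced sutured manifold $(M,\gamma)$ admitting genus $g$ closures, an isomorphism of projectively transitive systems $\SHIfun^g(M,\gamma) \to \SHItfun^g(M,\gamma)$, and then checking naturality with respect to diffeomorphisms. The isomorphism of systems is in turn specified by a collection of isomorphisms $\Xi^g_{\data,\data'}\colon \SHI^g(\data) \to \SHIt^g(\data')$, one for each genus $g$ odd closure $\data$ and each genus $g$ marked odd closure $\data'$ of $(M,\gamma)$, satisfying the compatibility
\[
\Psit^g_{\data',\data'''}\circ\Xi^g_{\data,\data'}\doteq \Xi^g_{\data'',\data'''}\circ \Psi^g_{\data,\data''}
\]
for all appropriate $\data,\data'',\data',\data'''$. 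This is precisely the content laid out in the proof of Theorem~\ref{thm:twisteduntwistedshi}, where the maps $\Xi^g_{\data,\data^\eta}$ are already defined via the merge-type cobordism $(\mathcal{M},\nu^\eta+\beta)$ and then extended by $\Xi^g_{\data,\data'}:=\Psit^g_{\data^\eta,\data'}\circ \Xi^g_{\data,\data^\eta}$.

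The key steps, in order, are: (1) recall from the proof of Theorem~\ref{thm:twisteduntwistedshi} that the $\Xi^g_{\data,\data'}$ are isomorphisms well-defined up to a unit in $\C$ and satisfy the above commutativity — this is the instanton analogue of Proposition~\ref{prop:keycommute}, proved by the same excision argument using a submanifold $T\cong R\times S^1$ inside the composite cobordism $(-\CM(\data_2))\circ X_+\circ \CM(\data_1)$, now carrying the extra line-bundle data $\alpha$ along a product cylinder $\beta$ disjoint from $T$; (2) conclude that the collection $\{\Xi^g_{\data,\data'}\}$ defines an isomorphism of projectively transitive systems $\SHIfun^g(M,\gamma)\to\SHItfun^g(M,\gamma)$, which is the content of Theorem~\ref{thm:twisteduntwistedshi} itself; (3) verify naturality: for a diffeomorphism $f\colon (M,\gamma)\to(M',\gamma')$ in $\DiffSut^g$, one checks that the square relating $\SHIfun^g(f)$ and $\SHItfun^g(f)$ via the $\Xi^g$'s commutes up to a unit, which follows because both vertical maps are built from the same underlying identity-type maps $\Theta_{\data'_f,\data'}$ on $I_*$-groups (cf.\ the diagram~(\ref{eqn:comm}) in the monopole case) together with the transitivity in Theorem~\ref{thm:transitive-shi} and the fact that $\Xi^g$ commutes with the cobordism maps $\Psi^g$ and $\Psit^g$ used to build the diffeomorphism maps; (4) assemble these into the statement that the two functors are naturally isomorphic.

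Essentially every ingredient is already in place: the maps $\Xi^g$ are constructed in the proof of Theorem~\ref{thm:twisteduntwistedshi}, the excision argument is declared to go through verbatim, and the diffeomorphism-map formalism of Section~\ref{sec:diffeomorphismmaps} is stated to carry over to the instanton setting. The main obstacle, such as it is, is the bookkeeping of the auxiliary curve $\alpha$ (equivalently the line bundle $w$) through all the cobordisms — one must check that in the excision argument proving the instanton analogue of Proposition~\ref{prop:keycommute}, the cylindrical cobordism $\beta = \alpha\times[0,3]$ in the cut-open manifold $\overline{W}$ decomposes compatibly with the splitting $\overline W \cong X'_+ \sqcup ((R\times S^1)\times[0,3])$, so that the induced map is $\C^\times$-equivalent to $\Psi^g_{\data_1,\data_2}\otimes f$ for an isomorphism $f$ of the (one-dimensional) mapping-torus Floer groups $I_*(R_i\times S^1|R_i)_{\{p_i\}\times S^1+\eta_i}\cong\C$. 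But since $\alpha$ meets each relevant product region $R\times I$ in a vertical arc $\{p\}\times I$ disjoint from the arc $c$ used to define the cutting torus $T$, this decomposition is immediate, and the relative-invariant input needed for the instanton version of Lemma~\ref{lem:mappingtorusiso} is supplied by Proposition~8.2 of~\cite{sivek-lf} exactly as in the proof of Theorem~\ref{thm:maininvc-shi}. Hence the proof reduces to citing the proof of Theorem~\ref{thm:twisteduntwistedshi} and noting the diffeomorphism-naturality check is formally identical to the one in Theorem~\ref{thm:natiso}.

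\begin{proof}[Proof of Theorem~\ref{thm:natisoinstanton}]
The argument is identical to the proofs of Theorems~\ref{thm:twisteduntwisted},~\ref{thm:natiso}, and~\ref{thm:twisteduntwistedshi}. For each object $(M,\gamma)$ of $\DiffSut^g$, the collection of maps $\{\Xi^g_{\data,\data'}\}$ constructed in the proof of Theorem~\ref{thm:twisteduntwistedshi} defines an isomorphism of projectively transitive systems of $\C$-modules
\[
\SHIfun^g(M,\gamma)\longrightarrow \SHItfun^g(M,\gamma).
\]
To see that these isomorphisms assemble into a natural isomorphism of functors, let $f\colon(M,\gamma)\to(M',\gamma')$ be a diffeomorphism of balanced sutured manifolds, let $\data$ be a genus $g$ odd closure and $\data'$ a genus $g$ marked odd closure of $(M,\gamma)$, and let $\mathcal{D}$ be a genus $g$ odd closure and $\mathcal{D}'$ a genus $g$ marked odd closure of $(M',\gamma')$. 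Writing $\data'_f$ and $\mathcal{D}'_f$ for the closures obtained by precomposing the relevant embeddings with $f$ (as in Section~\ref{sec:diffeomorphismmaps}), the diffeomorphism maps $\SHIfun^g(f)$ and $\SHItfun^g(f)$ are built from the cobordism maps $\Psi^g$, $\Psit^g$ and the identity-type isomorphisms $\Theta_{\data'_f,\data'}$, $\Theta_{\mathcal{D}'_f,\mathcal{D}'}$. Since the maps $\Xi^g$ intertwine the $\Psi^g$'s and $\Psit^g$'s up to a unit in $\C$ by the commutativity in~(\ref{eqn:commutativeutshi}), and since $\Theta_{\data'_f,\data'}$ and $\Theta_{\mathcal{D}'_f,\mathcal{D}'}$ are the identity maps on the underlying $I_*$-groups, the relevant naturality square commutes up to multiplication by a unit in $\C$, exactly as the diagram~(\ref{eqn:comm}) does in the monopole setting. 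Hence $\SHIfun^g$ and $\SHItfun^g$ are naturally isomorphic as functors from $\DiffSut^g$ to $\RPSys[\C]$.
\end{proof}
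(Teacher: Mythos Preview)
Your proposal is correct and follows exactly the approach the paper intends: the theorem is stated with a \qed and the preceding sentence ``One can also prove the following analogue of Theorem~\ref{thm:natiso}'' indicates that the proof is meant to be the instanton transcription of Theorems~\ref{thm:twisteduntwisted} and~\ref{thm:natiso}, using the maps $\Xi^g_{\data,\data'}$ already constructed in the proof of Theorem~\ref{thm:twisteduntwistedshi}. Your write-up fills in precisely those details (and correctly observes that no tensoring with $\RR$ is needed since both functors already land in $\RPSys[\C]$).
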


\subsection{Instanton Knot Homology}
\label{ssec:khi}
We  may define twisted and untwisted functors $\KHItfun$ and $\KHIfun^g$ from $\BasedKnot$ to $\RPSys[\C]$ in exactly the same way that we defined $\KHMtfun$ and $\KHMfun^g$ in Section \ref{sec:khm}, replacing $\SHMtfun$ and $\SHMfun^g$ with $\SHItfun$ and $\SHIfun^g$ everywhere in the constructions. We may similarly define a twisted functor $\KHItfun^g$ for every genus $g\geq 2$ exactly as we defined $\KHMtfun^g$, replacing $\SHMtfun^g$ with $\SHItfun^g$ everywhere. The twisted and untwisted instanton knot homology invariants are then related as follows.

 \begin{theorem}
 The functors $\KHIfun^g$ and $\KHItfun^g$ from $\BasedKnot$ to $\RPSys[\C]$ are naturally isomorphic. \qed
\end{theorem}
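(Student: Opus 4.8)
The final statement to prove is: \emph{The functors $\KHIfun^g$ and $\KHItfun^g$ from $\BasedKnot$ to $\RPSys[\C]$ are naturally isomorphic.}

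\medskip

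The plan is to reduce this to Theorem \ref{thm:twisteduntwistedshi}, exactly as the analogous monopole statement (the naturality of $\KHMfun^g \otimes_\Z \RR \cong \KHMtfun^g$) is reduced to Theorem \ref{thm:twisteduntwisted}. Recall that $\KHIfun^g(Y,K,p)$ is defined via the transitive system of projectively transitive systems $\{\SHIfun^g(Y(\varphi))\}_\varphi$, and $\KHItfun^g(Y,K,p)$ via $\{\SHItfun^g(Y(\varphi))\}_\varphi$, where $\varphi$ ranges over tubular neighborhoods of $K$ sending a marked point of $\partial D^2$ to $p$. So it suffices to produce, for each such $\varphi$, an isomorphism of projectively transitive systems of $\C$-modules
\[ \Xi^g_\varphi : \SHIfun^g(Y(\varphi)) \to \SHItfun^g(Y(\varphi)) \]
which is natural with respect to the structure maps $\Psit_{\varphi,\varphi'}$ on both sides (the isomorphisms relating different choices of $\varphi$) and with respect to the maps induced by based diffeomorphisms. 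The isomorphism $\Xi^g_\varphi$ is precisely the one supplied by Theorem \ref{thm:twisteduntwistedshi}, applied to the balanced sutured manifold $Y(\varphi)$: that theorem gives an isomorphism $\SHIfun^g(M,\gamma) \cong \SHItfun^g(M,\gamma)$ of projectively transitive systems of $\C$-modules, built from the maps $\{\Xi^g_{\data,\data'}\}$ on (marked) odd closures.

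\medskip

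First I would record that the maps $\Psit_{\varphi,\varphi'}$ in both the twisted and untwisted settings are induced (via $\SHItfun^g$, resp.\ $\SHIfun^g$) by the same underlying diffeomorphisms $\bar f_1 : Y(\varphi) \to Y(\varphi')$ of balanced sutured manifolds (in the nested case $\Img(\varphi') \subset \Img(\varphi)$) and then extended to arbitrary $\varphi, \varphi'$ by the composition formula \eqref{eqn:embeddingsgen}; this is literally how $\KHIfun^g$ and $\KHItfun^g$ are constructed, mirroring Section \ref{sec:khm}. Next I would invoke the functoriality of the isomorphism in Theorem \ref{thm:twisteduntwistedshi}: just as Theorem \ref{thm:natisoinstanton} upgrades Theorem \ref{thm:twisteduntwistedshi} to a natural isomorphism of functors $\SHIfun^g \cong \SHItfun^g$ on $\DiffSut^g$, the collection $\{\Xi^g_\varphi\}$ commutes with the maps $\SHIfun^g(\bar f_1)$ and $\SHItfun^g(\bar f_1)$ by that naturality. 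Hence $\{\Xi^g_\varphi\}_\varphi$ intertwines the transitive systems $\{\Psit_{\varphi,\varphi'}\}$ on the two sides, so it descends to an isomorphism $\KHIfun^g(Y,K,p) \to \KHItfun^g(Y,K,p)$ of projectively transitive systems of $\C$-modules. Finally, given a based diffeomorphism $f : (Y,K,p) \to (Y',K',p')$, the induced maps $\KHIfun^g(f)$ and $\KHItfun^g(f)$ are assembled from $\SHIfun^g(f_\varphi)$ and $\SHItfun^g(f_\varphi)$ for $f_\varphi : Y(\varphi) \to Y'(f\circ\varphi)$, and naturality of $\Xi^g$ with respect to these sutured diffeomorphisms (again Theorem \ref{thm:natisoinstanton}) gives $\KHItfun^g(f) \circ \Xi^g = \Xi^g \circ \KHIfun^g(f)$. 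Thus $\{\Xi^g_\varphi\}$ defines a natural isomorphism of functors $\KHIfun^g \cong \KHItfun^g$.

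\medskip

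In short, the argument is entirely formal once Theorem \ref{thm:twisteduntwistedshi} and its functorial refinement (Theorem \ref{thm:natisoinstanton}) are in hand: one simply transports the isomorphism of projectively transitive systems through the knot-homology construction, checking compatibility with the two kinds of structure maps (change of tubular neighborhood and based diffeomorphism). The only step requiring any genuine care — and the one I would expect to be the main obstacle, modest as it is — is verifying that the isomorphisms $\Xi^g_{\data,\data'}$ commute with the diffeomorphism-induced maps $\SHItfun^g(\bar f_1)$, i.e.\ that the naturality in Theorem \ref{thm:natisoinstanton} genuinely applies to the particular sutured diffeomorphisms $\bar f_1$ arising from ambient isotopies supported near $K$; but since $\bar f_1$ is an honest orientation-preserving diffeomorphism of balanced sutured manifolds, this is immediate from Theorem \ref{thm:natisoinstanton}. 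I would therefore present the proof as the single sentence ``This follows from Theorem \ref{thm:twisteduntwistedshi} and Theorem \ref{thm:natisoinstanton} exactly as in the monopole case,'' expanded only to the extent of the two preceding paragraphs.
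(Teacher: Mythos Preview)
Your proposal is correct and matches the paper's approach: the paper simply marks the theorem with \qed, implicitly deferring to Theorems \ref{thm:twisteduntwistedshi} and \ref{thm:natisoinstanton} and the analogous monopole argument, which is exactly the reduction you carry out. Your expanded justification is sound, and your one-sentence summary is essentially what the paper's omitted proof amounts to.
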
 

Finally, one defines the functors $\HItfun$ and $\HIfun^g$ promised in the introduction in perfect analogy with the constructions of $\HMtfun$ and $\HMfun^g$.

\appendix
\section{Diffeomorphisms of $\Sigma\times I$ rel $\partial(\Sigma\times I)$}
In this section, $\Sigma$ will denote a smooth, compact, oriented surface, possibly with boundary.  We denote by \[{\rm Diff}(\Sigma\times I \rel \partial(\Sigma\times I))\]   the group of orientation-preserving diffeomorphisms of $\Sigma\times I$ which restrict to the identity near $\partial (\Sigma\times I)$. Consider the natural map \begin{equation}\label{eqn:natmapdiff} \pi_1({\rm Diff}(\Sigma \rel \partial \Sigma), id_\Sigma) \to \pi_0({\rm Diff}(\Sigma \times I \rel \partial(\Sigma \times I))) \end{equation}
which sends a loop \[\gamma: S^1=[0,1]/(0\sim 1) \to {\rm Diff}(\Sigma \rel \partial \Sigma)\] to the diffeomorphism $(x,t) \mapsto (\gamma(t)(x), t)$. 

Our main result is the following.

\begin{proposition}
\label{prop:diff-surjection}
If $\Sigma$ is not a 2-sphere, then the  map in (\ref{eqn:natmapdiff}) is surjective.
\end{proposition}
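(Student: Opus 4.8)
The strategy is to reduce the statement to the classical fact that $\operatorname{Diff}(\Sigma \times I \rel \partial)$ is related to the space of arcs (or the loop space of $\operatorname{Diff}(\Sigma)$) via a fibration, and then extract surjectivity of the connecting map from the long exact sequence of homotopy groups. Concretely, one wants to exhibit a fibration whose total space is contractible (or at least has trivial $\pi_0$ and $\pi_1$ in the relevant range), whose base is $\operatorname{Diff}(\Sigma \rel \partial \Sigma)$, and whose fiber is $\operatorname{Diff}(\Sigma \times I \rel \partial(\Sigma\times I))$, so that the boundary map $\pi_1(\operatorname{Diff}(\Sigma \rel \partial\Sigma)) \to \pi_0(\text{fiber})$ is the map in \eqref{eqn:natmapdiff} up to identification, and is surjective because the next term $\pi_0$ of the total space vanishes.

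First I would set up the relevant fibration. Let $\operatorname{Diff}(\Sigma \times I)_{\partial_0}$ denote the group of diffeomorphisms of $\Sigma \times I$ that restrict to the identity near $\Sigma \times \{0\}$ and near $\partial \Sigma \times I$, but are allowed to move $\Sigma \times \{1\}$ (by a diffeomorphism of $\Sigma$ fixing $\partial\Sigma$). Restriction to the top boundary $\Sigma \times \{1\}$ gives a map $\operatorname{Diff}(\Sigma \times I)_{\partial_0} \to \operatorname{Diff}(\Sigma \rel \partial\Sigma)$; this is a fibration (isotopy extension), its fiber over $\mathrm{id}_\Sigma$ is exactly $\operatorname{Diff}(\Sigma \times I \rel \partial(\Sigma \times I))$, and one checks that the total space $\operatorname{Diff}(\Sigma \times I)_{\partial_0}$ deformation retracts onto $\operatorname{Diff}(\Sigma \rel \partial\Sigma)$ itself (push all the action to the top slice), which in particular makes it connected, so $\pi_0$ of the total space is trivial. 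The long exact sequence of the fibration then reads
\[
\pi_1(\operatorname{Diff}(\Sigma \times I)_{\partial_0}) \to \pi_1(\operatorname{Diff}(\Sigma \rel \partial\Sigma)) \xrightarrow{\ \partial\ } \pi_0(\operatorname{Diff}(\Sigma \times I \rel \partial(\Sigma \times I))) \to \pi_0(\operatorname{Diff}(\Sigma \times I)_{\partial_0}) = 0,
\]
and the connecting homomorphism $\partial$ is precisely the map in \eqref{eqn:natmapdiff} (a loop of diffeomorphisms $\gamma_t$ of $\Sigma$ is carried to the ``suspended'' diffeomorphism $(x,t) \mapsto (\gamma_t(x), t)$ by the standard description of the connecting map of an evaluation/restriction fibration). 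Exactness gives surjectivity of $\partial$ onto $\pi_0$ of the fiber, which is the claim.

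The step I expect to require the most care is verifying that the total space $\operatorname{Diff}(\Sigma \times I)_{\partial_0}$ has trivial $\pi_0$ — i.e.\ that every diffeomorphism of $\Sigma \times I$ fixing the bottom and side boundary is isotopic, through such diffeomorphisms, to one supported near the top. This is a parametrized-collar argument: a diffeomorphism $f$ fixing a neighborhood of $\Sigma\times\{0\}$ can be slid up the $I$-direction so that its ``non-triviality'' concentrates near $\Sigma \times \{1\}$, and once concentrated there it is isotopic to the suspension of $f|_{\Sigma \times \{1\}}$; the hypothesis $\Sigma \neq S^2$ enters to guarantee that $\operatorname{Diff}(\Sigma \rel \partial\Sigma)$ behaves well and that there are no extra components coming from $\pi_0$ or $\pi_1$ obstructions peculiar to the sphere (for $S^2$ the collar-pushing argument is the same, but the conclusion one gets is still surjectivity — in fact the statement is only failing to be asserted for $S^2$ because the authors do not need it there; the genuine content of excluding $S^2$ is really just to keep the statement aligned with its intended applications). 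One should either give the collar-pushing argument directly or cite the standard reference (e.g.\ Hatcher, or Cerf) for the fact that the restriction-to-top-boundary map is a fibration with the stated total space behavior; I would present it as a self-contained isotopy-extension argument since it is short.

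As a remark, an alternative and perhaps cleaner route avoids naming the intermediate group: view $\operatorname{Diff}(\Sigma \times I \rel \partial(\Sigma \times I))$ as the fiber of the evaluation fibration $\operatorname{Diff}(\Sigma \times I \rel (\partial\Sigma \times I))\to \operatorname{Diff}(\Sigma\rel \partial\Sigma)\times \operatorname{Diff}(\Sigma\rel\partial\Sigma)$ given by restriction to the two ends, compose with the diagonal, and again read off surjectivity of the connecting map from contractibility/connectedness of the total space together with the description of the connecting map as suspension. Either way, the essential input is the identification of the connecting homomorphism with \eqref{eqn:natmapdiff} and the vanishing of $\pi_0$ of the appropriate total space.
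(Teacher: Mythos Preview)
Your fibration framework is correctly set up and the identification of the connecting map with \eqref{eqn:natmapdiff} is right, but the argument has a genuine gap at exactly the point you flag as delicate: the connectedness of the total space $\Diff(\Sigma\times I)_{\partial_0}$. In fact, the long exact sequence you write down shows that $\pi_0(\Diff(\Sigma\times I)_{\partial_0})=0$ is \emph{equivalent} to the surjectivity of the connecting map, since $\pi_0$ of the base $\Diff_0(\Sigma\rel\partial\Sigma)$ is already trivial. So invoking connectedness of the total space to deduce surjectivity is circular unless you supply an independent proof of that connectedness.

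The collar-pushing argument you sketch does not do this. Conjugating by a stretch of the $I$-direction does concentrate the support of $f$ near $\Sigma\times\{1\}$, but your next assertion---that once concentrated there, $f$ is isotopic to the suspension of $f|_{\Sigma\times\{1\}}$---is precisely the statement that two elements of $\Diff(\Sigma\times I)_{\partial_0}$ with the same top restriction lie in the same component, i.e.\ that the fiber $\Diff(\Sigma\times I\rel\partial(\Sigma\times I))$ maps trivially to $\pi_0$ of the total space. That is again equivalent to what you are trying to prove. The difficulty is that $f$ need not preserve horizontal slices, so there is no evident way to ``slide it off the top'' or compare it to a level-preserving model without already knowing some form of the result. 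The space $\Diff(\Sigma\times I)_{\partial_0}$ is the pseudoisotopy (concordance) space of $\Sigma$, and its connectedness is a genuine theorem, not a collar trick; Cerf's theorem addresses dimensions $\geq 5$ and does not apply here.

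The paper's proof avoids the circularity by citing the needed hard input directly: Waldhausen \cite[Lemma~3.5]{Waldhausen} shows that for $\Sigma\neq S^2$ every $\phi\in\Diff(\Sigma\times I\rel\partial(\Sigma\times I))$ is isotopic rel boundary to a \emph{level-preserving} diffeomorphism $\phi'$, and then $t\mapsto\phi'(\cdot,t)$ is visibly a loop in $\Diff(\Sigma\rel\partial\Sigma)$ hitting $[\phi]$. This is where the hypothesis $\Sigma\neq S^2$ is actually used. Your fibration repackaging is correct, but the nontrivial content is the same Waldhausen input, and your proposal does not supply a substitute for it.
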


\begin{proof}
For any diffeomorphism $\phi \in {\rm Diff}(\Sigma \times I \rel \partial(\Sigma \times I))$, Waldhausen \cite[Lemma 3.5]{Waldhausen} shows that $\phi$ is isotopic rel $\partial(\Sigma\times I)$ to a level-preserving diffeomorphism $\phi'$ as long as $\Sigma$ is not a sphere, where \emph{level-preserving} means that $\phi'(x,t) \in \Sigma\times\{t\}$ for all $(x,t)$. In particular, each $\phi'(\cdot,t)$ is a diffeomorphism of $\Sigma$.  Then the map $t \mapsto \phi'(\cdot, t)$ determines a class in $\pi_1({\rm Diff}(\Sigma \rel \partial \Sigma), id_\Sigma)$ whose image under the map in (\ref{eqn:natmapdiff}) is $[\phi']=[\phi]\in\pi_0({\rm Diff}(\Sigma \times I \rel \partial(\Sigma \times I))),$ completing the proof.
\end{proof}

\begin{corollary}
\label{cor:diff}
If $\Sigma$ is not a 2-sphere or a torus, then ${\rm Diff}(\Sigma \times I \rel \partial(\Sigma \times I))$ is connected.
\end{corollary}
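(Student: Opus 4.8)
\textbf{Proof plan for Corollary \ref{cor:diff}.}

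The plan is to combine Proposition \ref{prop:diff-surjection} with the well-known computation of the mapping class group of the identity component of $\mathrm{Diff}(\Sigma)$, i.e.\ with the fact that $\pi_1(\mathrm{Diff}(\Sigma\rel\partial\Sigma),id_\Sigma)$ is trivial once $\Sigma$ is neither a $2$-sphere nor a torus (nor a disk or annulus, but those have $\pi_1$ trivial as well, so the statement is uniform). Concretely: since $\Sigma$ is not a sphere, Proposition \ref{prop:diff-surjection} tells us that the natural map
\[
\pi_1(\mathrm{Diff}(\Sigma\rel\partial\Sigma),id_\Sigma)\longrightarrow \pi_0(\mathrm{Diff}(\Sigma\times I\rel\partial(\Sigma\times I)))
\]
is surjective. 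So it suffices to show that the domain is the trivial group when $\Sigma$ is moreover not a torus. First I would handle the closed case: for a closed orientable surface $\Sigma$ of genus $g\ge 2$, a theorem of Earle--Eells (already cited in the paper as \cite{earle-eells}) gives that $\mathrm{Diff}_0(\Sigma)$ is contractible, so in particular $\pi_1(\mathrm{Diff}(\Sigma),id_\Sigma)=\pi_1(\mathrm{Diff}_0(\Sigma),id_\Sigma)=1$; the cases $g=0$ (already excluded) and $g=1$ (the torus, excluded) are exactly the exceptions in that theorem. For surfaces with nonempty boundary, I would invoke the corresponding result (Earle--Schatz, or Gramain) that $\mathrm{Diff}(\Sigma\rel\partial\Sigma)$ is contractible whenever $\Sigma$ is not a disk or an annulus, and note directly that for a disk or annulus $\mathrm{Diff}(\Sigma\rel\partial\Sigma)$ is connected with trivial $\pi_1$ as well (the disk case is Smale's theorem, the annulus is elementary). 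In every allowed case, then, $\pi_1(\mathrm{Diff}(\Sigma\rel\partial\Sigma),id_\Sigma)=1$.

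Given this, the argument finishes in one line: the group $\pi_0(\mathrm{Diff}(\Sigma\times I\rel\partial(\Sigma\times I)))$ is the surjective image of a trivial group, hence is itself trivial, which is precisely the assertion that $\mathrm{Diff}(\Sigma\times I\rel\partial(\Sigma\times I))$ is connected.

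I expect the only real subtlety to be bookkeeping about which surfaces $\Sigma$ require which reference for the vanishing of $\pi_1(\mathrm{Diff}(\Sigma\rel\partial\Sigma))$: the closed genus $\geq 2$ case (Earle--Eells), the bounded case of negative Euler characteristic (Earle--Schatz/Gramain), and the low-complexity bounded cases (disk, annulus) handled by hand. None of these is difficult, but the statement of Corollary \ref{cor:diff} is phrased for \emph{arbitrary} compact orientable $\Sigma$ other than $S^2$ and $T^2$, so the proof should explicitly cover the bounded cases rather than silently assuming $\Sigma$ is closed. Everything else is immediate from Proposition \ref{prop:diff-surjection}.
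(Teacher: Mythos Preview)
Your proposal is correct and matches the paper's proof essentially verbatim: the paper also deduces the corollary immediately from Proposition~\ref{prop:diff-surjection} together with the fact (citing \cite{earle-eells,earle-schatz}) that $\Diff(\Sigma \rel \partial\Sigma)$ has contractible components whenever $\Sigma$ is not a sphere or torus. Your more explicit case analysis for the disk and annulus is extra care the paper omits, but the argument is the same.
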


\begin{proof}
This follows immediately from Proposition \ref{prop:diff-surjection} together with the fact that $\Diff(\Sigma \rel \partial \Sigma)$ has contractible components as long as $\Sigma$ is not the sphere or the torus \cite{earle-eells,earle-schatz}.
\end{proof}

These results  are used throughout our paper, mostly in the following context. Suppose $Y_1$ and $Y_2$ are 3-manifolds and $N_1\subset Y_1$ and $N_2\subset Y_2$ are the images of smooth embeddings 
\begin{align*}
r_1&:R\times I\rightarrow Y_1\\
r_2&:R\times I\rightarrow Y_2,
\end{align*}
for $R$ a closed, oriented, smooth surface with $g(R)\geq 2$. If   $F$ is a diffeomorphism from $Y_1$ to $Y_2$ which maps $N_1$ onto $N_2$, then Corollary \ref{cor:diff} implies that there is a \emph{unique} isotopy class of diffeomorphisms from $Y_1$ to $Y_2$ which restrict to $F$ outside of $N_1$. This reasoning is used in Section \ref{sec:prelim}, for example, to argue that the gluing instructions for  $\CM$, $\CS$ and $\mathcal{P}$ determine isomorphism classes of cobordisms and therefore give rise to well-defined maps, without having to specify collar neighborhoods of the gluing regions. 

We  also use Proposition \ref{prop:diff-surjection} in Section \ref{sec:khm}, in the case that $\Sigma$ is a torus, in our refinement of monopole knot homology.

The following is a corollary of  Corollary \ref{cor:diff}.

\begin{proposition}
\label{prop:diff}
If $R$ is a closed, oriented, smooth surface with $g(R)\geq 2$, then ${\rm Diff}(R\times I \rel \partial(R\times I))$ is contractible.
\end{proposition}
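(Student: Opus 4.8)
The goal is to upgrade Corollary \ref{cor:diff} from connectedness to contractibility of $\mathrm{Diff}(R\times I\rel\partial(R\times I))$ when $g(R)\ge 2$. The plan is to show that all higher homotopy groups vanish, and then invoke Whitehead's theorem (the space is, up to homotopy equivalence, a CW complex, being the space of sections of a fiber bundle with fiber a manifold, or one can appeal to Milnor's theorem that $\mathrm{Diff}$ of a compact manifold has the homotopy type of a CW complex). Concretely, I would proceed by fibration arguments, parallel in spirit to the proof of Proposition \ref{prop:diff-surjection}.

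First, I would set up the key fibration. Restriction of a level-preserving isotopy to levels gives, via Waldhausen's result \cite[Lemma 3.5]{Waldhausen} (every diffeomorphism of $R\times I$ rel boundary is isotopic rel boundary to a level-preserving one, since $R$ is not a sphere), a comparison between $\mathrm{Diff}(R\times I\rel\partial(R\times I))$ and the based loop space of $\mathrm{Diff}(R\rel\partial R)=\mathrm{Diff}(R)$ — but here $R$ is closed, so $\mathrm{Diff}(R\rel\partial R)$ just means $\mathrm{Diff}(R)$. More precisely, I expect there is a homotopy fibration sequence whose total space is $\mathrm{Diff}(R\times I\rel\partial(R\times I))$ and whose relevant comparison space is $\Omega\,\mathrm{Diff}(R)$ (based loops at the identity). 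The cleanest route: use the parametrized version of Waldhausen/Hatcher's results to show the natural map $\Omega\,\mathrm{Diff}(R)\to\mathrm{Diff}(R\times I\rel\partial(R\times I))$ from the proof of Proposition \ref{prop:diff-surjection} — which sends a loop $\gamma$ to $(x,t)\mapsto(\gamma(t)(x),t)$ — is in fact a \emph{weak homotopy equivalence}, not merely $\pi_0$-surjective. Surjectivity on all $\pi_k$ is the parametrized form of Waldhausen's lemma (families of diffeomorphisms of $R\times I$ can be straightened to families of level-preserving ones); injectivity on $\pi_k$ is the corresponding uniqueness-of-straightening statement. Granting this, $\mathrm{Diff}(R\times I\rel\partial(R\times I))\simeq\Omega\,\mathrm{Diff}(R)$.

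Second, I would feed in the known homotopy type of $\mathrm{Diff}(R)$. By Earle–Eells \cite{earle-eells} (see also Earle–Schatz \cite{earle-schatz}), for a closed oriented surface of genus $\ge 2$ the group $\mathrm{Diff}_0(R)$ (the identity component) is contractible; equivalently every component of $\mathrm{Diff}(R)$ is contractible. Therefore $\Omega\,\mathrm{Diff}(R)$, the based loop space at the identity, lies in $\mathrm{Diff}_0(R)$ and is the loop space of a contractible space, hence contractible. Combining with the weak equivalence of the previous step and the CW-homotopy-type of diffeomorphism groups of compact manifolds, Whitehead's theorem gives that $\mathrm{Diff}(R\times I\rel\partial(R\times I))$ is contractible.

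\textbf{Main obstacle.} The delicate point is establishing the parametrized straightening — i.e.\ that $\Omega\,\mathrm{Diff}(R)\to\mathrm{Diff}(R\times I\rel\partial(R\times I))$ is a weak equivalence and not just $\pi_0$-surjective. Waldhausen's \cite[Lemma 3.5]{Waldhausen} as quoted handles a single diffeomorphism; I would need the statement for families parametrized by $S^k$ (and families of isotopies, parametrized by $D^{k+1}$, for injectivity). This is standard in the literature on spaces of incompressible surfaces / Hatcher's work, but one must cite it carefully. An alternative, self-contained approach that sidesteps the full parametrized statement: observe that $\mathrm{Diff}(R\times I\rel\partial(R\times I))$ fibers over the space of embeddings $R\times\{1/2\}\hookrightarrow R\times I$ isotopic to the standard one rel nothing in particular, with fiber $\mathrm{Diff}(R\times[0,1/2]\rel\cdots)\times\mathrm{Diff}(R\times[1/2,1]\rel\cdots)$; the base of embedded copies of $R$ in $R\times I$ is contractible (the surface is incompressible and $\pi_1$-injective, by the theorem of Laudenbach / Hatcher on spaces of incompressible surfaces), so an induction on the "depth" of the interval decomposition reduces contractibility of $\mathrm{Diff}(R\times I\rel\partial)$ to itself in a way that, combined with Corollary \ref{cor:diff} as the base case for $\pi_0$, forces all higher $\pi_k$ to vanish. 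Either way, the essential input is a parametrized incompressible-surface straightening result; the surface-group contractibility \cite{earle-eells,earle-schatz} and Corollary \ref{cor:diff} supply the rest.
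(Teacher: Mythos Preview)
Your primary approach --- showing that the natural map $\Omega\,\mathrm{Diff}(R)\to\mathrm{Diff}(R\times I\rel\partial(R\times I))$ is a weak equivalence and then invoking Earle--Eells --- is correct in outline, but the key step is exactly as hard as what you are trying to prove. The ``parametrized Waldhausen'' you need (straightening $S^k$-families of diffeomorphisms to level-preserving ones) is not in \cite{Waldhausen}; it is essentially Hatcher's theorem that $\mathrm{Diff}(M\rel\partial M)$ has contractible components for Haken $M$, which subsumes the proposition itself. So this route is not wrong, but you have pushed the entire content into a black box of the same depth. Your alternative suggestion --- fibering over embeddings of $R\times\{1/2\}$ and inducting --- has a genuine gap: cutting horizontally gives back two copies of $\mathrm{Diff}(R\times I\rel\partial(R\times I))$, so the ``induction'' never reduces complexity. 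Knowing only $X\simeq X\times X$ and $\pi_0(X)=\ast$ does not force $X$ to be contractible.

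The paper takes a different and more concrete route. Rather than cutting horizontally, it cuts \emph{vertically} along annuli $A=c\times I$ for essential curves $c\subset R$. Hatcher's theorem on spaces of incompressible surfaces makes the relevant embedding space $E_0(A,R\times I\rel\partial A)$ contractible, so each cut gives a homotopy equivalence $\mathrm{Diff}(R\times I\rel\partial)\simeq\mathrm{Diff}(R'\times I\rel\partial)$ where $R'$ is $R$ cut along $c$. Iterating over a pants decomposition reduces to $\mathrm{Diff}(P\times I\rel\partial)$ for $P$ a pair of pants; two more cuts along arc$\times I$ reduce to $\mathrm{Diff}(B^3\rel\partial B^3)$, which is contractible by the Smale conjecture. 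The virtue of this approach is that each step genuinely simplifies the surface, and the deep input (Smale conjecture plus Hatcher's embedding-space result for annuli and disks) is isolated rather than hidden inside a single unproven straightening lemma. Your approach trades the Smale conjecture for Earle--Eells, which is a reasonable exchange, but only once the straightening is actually established.
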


Proposition \ref{prop:diff} is well-known to topologists but we could not find its proof in the literature, so we have included one below following an outline  suggested by Ryan Budney on MathOverflow \cite{budneymo} and elaborated on by Budney in private correspondence with us. We make no claim of originality. Aspects of this proof are used the proof of Lemma \ref{lem:product}, which is key in defining the maps $\Psit^{g,g+1}_{\data,\data'}$ in Subsection \ref{ssec:differbyone}.

\begin{proof}
Suppose $\Sigma$ is not the sphere or the torus. Let $c\subset \Sigma$ be a simple closed curve which does not bound a disk.  By Corollary \ref{cor:diff}, ${\rm Diff}(\Sigma \times I \rel \partial(\Sigma \times I))$ is connected.  In particular, if we let $A = c\times I$ and let $E(A, \Sigma\times I \rel \partial A)$ denote the space of embeddings $A \hookrightarrow \Sigma\times I$ which agree on $\partial A$ with the inclusion, then there is a natural map
\begin{equation}\label{eqn:mapdiffe} \Diff(\Sigma\times I \rel \partial(\Sigma\times I)) \to E(A, \Sigma\times I \rel \partial A) \end{equation}
whose image is connected since the source is.  Hatcher shows in \cite[Theorem 1(b)]{hatcher2}  that $\pi_i(E(A, \Sigma\times I \rel \partial A)) = 0$ for all $i > 0$, so letting $E_0$ be the component hit by the map in (\ref{eqn:mapdiffe}) (i.e., the component containing the inclusion $A = c \times I \hookrightarrow \Sigma \times I$), we have that $E_0$ is contractible.  Moreover, the induced map to $E_0$ is surjective by the isotopy extension theorem, and so we have a fibration
\[  \Diff(\Sigma\times I \rel (\partial(\Sigma\times I) \cup A)) \to \Diff(\Sigma\times I \rel \partial(\Sigma\times I)) \to E_0 \]
with contractible base.  It follows that $\Diff(\Sigma\times I \rel \partial(\Sigma\times I))$ is homotopy equivalent to $\Diff(\Sigma\times I \rel (\partial(\Sigma\times I) \cup A))$.  The latter space can be identified with $\Diff(\Sigma'\times I \rel \partial(\Sigma'\times I))$, where $\Sigma'$ is the surface obtained by cutting $\Sigma$ open along $c$, and so we have  a homotopy equivalence
\[ \Diff(\Sigma\times I \rel \partial(\Sigma\times I)) \simeq \Diff(\Sigma'\times I \rel \partial(\Sigma'\times I)). \]

In particular, if $R$ is a closed, oriented, smooth surface with $g(R) \geq 2$, and $c_1,c_2,\dots,c_{3g-3} \subset R$ are simple closed curves which cut $R$ into $2g-2$ pairs of pants, then we can apply the above reasoning to each $A_i = c_i \times I \subset R\times I$ in turn to conclude that
\[ \Diff(R\times I \rel \partial(R\times I)) \simeq \prod_{i=1}^{2g-2} \Diff(P \times I \rel \partial(P\times I)) \]
where $P$ is a pair of pants.  To prove Proposition \ref{prop:diff}, it therefore suffices to show that $\Diff(P\times I \rel \partial(P\times I))$ is contractible.

Let $a_1, a_2 \subset P$ be a pair of disjoint, properly embedded arcs which cut $P$ into a disk. The papers of Waldhausen and Hatcher cited above also show that the space of embeddings of the union of disks $a_1\times I \sqcup a_2 \times I$ into $P\times I$ rel boundary is contractible. Then, by the same argument as above, we may conclude that $\Diff(P\times I \rel \partial(P\times I))$ is homotopy equivalent to $\Diff(D^2\times I \rel \partial(D^2\times I)) \cong \Diff(B^3 \rel \partial B^3)$.  But the latter is contractible by Hatcher's proof of the Smale Conjecture \cite{hatcher}. This completes the proof of Proposition \ref{prop:diff}.
\end{proof}

\label{sec:appendixa}

\section{Relative Invariants of Lefschetz Fibrations}
\label{sec:appendixb}
In the proof of Proposition \ref{prop:mappingtorusiso}, we used the fact that the relative invariant of a certain Lefschetz fibration is a unit in the   Floer homology  of its boundary. We justify this  below.

\begin{proposition}
\label{prop:lefschetz}
Let $\mathcal{L}$ denote the total space of a relatively minimal Lefschetz fibration $\pi:\mathcal{L}\to D^2$ of fiber genus at least $2$, with boundary $Y=\partial\mathcal{L}$, and let $R$ be a generic fiber.  Then the relative invariant
\[ \Psi_{\mathcal{L}} := \HMtoc(\mathcal{L}|R)(1) \in \HMtoc(Y|R) \cong \Z \]
is equal to $\pm 1$.
\end{proposition}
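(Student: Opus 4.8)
The plan is to reduce the statement to a known non-vanishing result for monopole Floer contact invariants, using the fact that a relatively minimal Lefschetz fibration over $D^2$ is, away from a neighborhood of its singular fibers, a product, and that near each singular fiber the local model is the standard Lefschetz handle attachment. First I would recall that $Y=\partial\mathcal{L}$ carries a natural contact structure $\xi$: since $g(R)\geq 2$, the boundary of a Lefschetz fibration over $D^2$ supports the Milnor fillable (equivalently, the horizontal) contact structure, and $(\mathcal{L},\omega)$ is a (weak, in fact strong after deformation) symplectic filling of $(Y,\xi)$ by the work of Loi--Piergallini and Gompf. The key point is that the fiber $R\subset \mathcal{L}$ is a symplectic surface with $[R]\cdot[R]=0$, and $2g(R)-2$ is exactly the pairing that picks out the summand $\HMtoc(Y|R)$; moreover relative minimality ensures no sphere fiber components and hence $\HMtoc(Y|R)\cong\Z$ by Example \ref{ex:mappingtori} applied to the mapping-torus piece, or directly by \cite{km4}.

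The main step is to identify $\Psi_{\mathcal L}$ with the monopole Floer contact invariant $\mathbf{c}(\xi)\in\widehat{\mathit{HM}}(-Y)$ (or its image in the $|R$ summand). I would invoke the gluing/naturality results for the relative Seiberg--Witten invariant under attaching a symplectic cap, together with Kronheimer--Mrowka's result that $\widehat{\mathit{HM}}$ of a closed symplectic 4-manifold evaluates nontrivially — more precisely, the statement that for a strong symplectic filling $(X,\omega)$ of $(Y,\xi)$ the induced map $\HMtoc(X):\HMtoc(\text{pt})\to\HMtoc(Y)$ hits the contact class, and that $\mathbf{c}(\xi)\neq 0$ for contact structures arising as boundaries of positive Lefschetz fibrations (this is the monopole analogue of the Ozsv\'ath--Szab\'o computation, cf.\ work of Kronheimer--Mrowka and the $\mathrm{PFH}$ / ECH correspondence; in the instanton case this is precisely \cite[Proposition 8.2]{sivek-lf}, which we cite in the proof of Theorem \ref{thm:maininvc-shi}). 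Because $\HMtoc(Y|R)\cong\Z$, a nonzero class in it that is moreover "primitive" — coming from a filling, hence not a proper multiple, by the structure of the cobordism map for a single vanishing cycle, which is modeled on a single $2$-handle attachment of framing $-1$ — must be $\pm 1$. I would make the primitivity precise by decomposing $\mathcal L$ as a composition of the trivial cobordism on $R\times S^1$ with $b_1(\text{base})=0$ "vanishing cycle" $2$-handle cobordisms $X_i$ (one per singular fiber), each of the form appearing in Lemma \ref{lem:mappingtorusiso}; by that lemma each $\HMtoc(X_i|R;\cdot)$ is an isomorphism on the relevant summands, so $\Psi_{\mathcal L}$ equals the image of a generator of $\HMtoc(R\times S^1|R)\cong\Z$ under an isomorphism, hence is $\pm1$.

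Concretely the steps, in order, are: (1) write $\mathcal{L}=\mathcal{L}_0\cup h_1\cup\dots\cup h_k$ where $\mathcal{L}_0\cong R\times D^2$ and each $h_j$ is a $-1$-framed $2$-handle along the vanishing cycle $\gamma_j$ in a fiber over $\partial D^2$, so that as a cobordism from $\varnothing$, $\mathcal L$ is $(R\times D^2)$ followed by the $2$-handle cobordisms $X_1,\dots,X_k$; (2) observe $\HMtoc(R\times D^2|R):\Z\to\HMtoc(R\times S^1|R)\cong\Z$ sends $1$ to a generator — this is the relative invariant of the product Lefschetz fibration, i.e.\ the contact class of the product contact structure on $R\times S^1$, which is a generator because $R\times D^2$ is the simplest Stein/symplectic filling and one can compute it directly via the adjunction/dimension count, or cite \cite{km4}; (3) apply Lemma \ref{lem:mappingtorusiso} to each $X_j$ to see that the composite $\HMtoc(\mathcal{L}|R)=\HMtoc(X_k|R)\circ\dots\circ\HMtoc(X_1|R)\circ\HMtoc(R\times D^2|R)$ is a composition of isomorphisms applied to $1$, hence a generator, hence $\pm1$. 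The hard part will be step (2): pinning down that the relative invariant of the \emph{product} Lefschetz fibration $R\times D^2$ is a \emph{generator} of $\HMtoc(R\times S^1|R)\cong\Z$ rather than merely nonzero or some multiple. I expect to handle this either by the symplectic filling argument above (strong fillings induce surjections onto the contact summand, and $\HMtoc(R\times S^1|R)$ being $\Z$ forces the image to be all of it) or, more self-containedly, by exhibiting a "cap" — a Lefschetz fibration over $D^2$ glued to $R\times D^2$ along $R\times S^1$ producing a closed Lefschetz fibration over $S^2$ with a section or with controlled Seiberg--Witten basic classes — and invoking the product formula for $\HMtoc$ together with the Taubes-type nonvanishing for symplectic $4$-manifolds; the instanton analogue of exactly this input is what \cite{sivek-lf} supplies, so the monopole argument should run in parallel.
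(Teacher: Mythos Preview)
Your proposal has a genuine circularity. You invoke Lemma~\ref{lem:mappingtorusiso} in step~(3) to conclude that each $2$-handle cobordism map $\HMtoc(X_j|R)$ is an isomorphism, but look at how that lemma is proved in the paper: the proof expresses $X$ as the composite $\mathcal{P}\circ\mathcal{L}$ of a splicing cobordism with a single-vanishing-cycle Lefschetz fibration $\mathcal{L}$, and then uses that $\Psi_{\mathcal{L}}=\pm 1$ \emph{by Proposition~\ref{prop:lefschetz}}. So Lemma~\ref{lem:mappingtorusiso} logically depends on the very proposition you are trying to prove. Your inductive decomposition therefore assumes the $k=1$ case of the statement in order to pass from $k$ to $k+1$ vanishing cycles.

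Even setting that aside, your step~(2) --- that the relative invariant of the trivial fibration $R\times D^2$ is a \emph{generator} of $\HMtoc(R\times S^1|R)\cong\Z$ --- is exactly the $k=0$ case of the proposition, and the arguments you sketch for it (contact-class non-vanishing, or capping off to a closed fibration and invoking Taubes) already give the full statement directly, with no decomposition needed. In particular, the contact-invariant route only yields \emph{non-vanishing}, not primitivity; over $\Z$ these are not the same, and you would still need something like the pairing-with-a-cap argument to upgrade nonzero to $\pm 1$. The paper's proof does exactly this in one stroke: it glues on a second Lefschetz fibration $Z$ with $b^+\geq 2$ to form a closed $X$, uses Taubes to get $\mathfrak{m}(X,\spc_\omega)=\pm 1$, and then reads off primitivity of $\psi_{\spc_\omega|_{\mathcal L}}$ from the pairing formula $\mathfrak{m}'_{\spc_0}(X)=\langle \psi_{\spc_0|_{\mathcal L}},\,\overrightarrow{HM}'_\bullet(W_2)(\check 1)\rangle$. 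That capping argument is both the base case of your induction and the content of Lemma~\ref{lem:mappingtorusiso}, so your decomposition does not actually reduce the work.
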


\begin{proof}
Let $Z\to D^2$ be another relatively minimal Lefschetz fibration of the same genus, with boundary $-Y$ and $b^+(Z) \geq 2$, and extend $\pi$ to a Lefschetz fibration on the closed $4$-manifold $X = \mathcal{L} \cup_Y Z$.  Let $\spc_\omega$ denote the canonical $\mathrm{spin}^c$ structure on $X$, and fix a $\mathrm{spin}^c$ structure $\spc_0$ on $X$ such that $\spc_0|_Z = \spc_\omega|_Z$.  Following Sections 3.6 and 41.4 of \cite{kmbook}, we define
\[ \mathfrak{m}'_{\spc_0}(X) = \sideset{}{'}\sum_{\spc} \mathfrak{m}(X,\spc) \]
where $\mathfrak{m}(X,\spc)$ is the Seiberg-Witten invariant of $(X,\spc)$ and the sum runs over isomorphism classes of $\mathrm{spin}^c$ structures on $X$ satisfying $\spc|_{\mathcal{L}} = \spc_0|_{\mathcal{L}}$ and $\spc|_Z = \spc_0|_Z = \spc_\omega|_{Z}$.  Note that $\spc_\omega$ is the unique $\mathrm{spin}^c$ structure on $X$ satisfying $\mathfrak{m}(X,\spc) \neq 0$ (in fact, $\mathfrak{m}(X,\spc_\omega)=\pm 1$) and $\langle c_1(\spc), R \rangle = 2g(R)-2$ by \cite{taubes} and \cite[Theorem 1.3]{sivek-lf}, respectively.  In particular, for any $\spc$ in the sum we have \[\langle c_1(\spc),R \rangle = \langle c_1(\spc_\omega), R\rangle = 2g(R)-2,\] since we can realize $R$ as a surface in $Z$ and $\spc|_Z = \spc_\omega|_Z$.  Therefore we have $\mathfrak{m}(X,\spc) = 0$ unless $\spc = \spc_\omega$, and so
\[ \mathfrak{m}'_{\spc_0}(X) =
\begin{cases}
\pm 1 & \spc_0|_\mathcal{L} = \spc_\omega|_\mathcal{L} \\
0 & \mathrm{otherwise.}
\end{cases} \]

The pairing formula for relative invariants now says that if $W_1: S^3 \to Y$ and $W_2: Y \to S^3$ are the cobordisms obtained by removing a ball from each of $\mathcal{L}$ and $Z$ respectively, then
\[ \mathfrak{m}'_{\spc_0}(X) = \langle \widehat{HM}'_\bullet(W_1)(1), \overrightarrow{HM}'_\bullet(W_2)(\check{1}) \rangle \]
where $\widehat{HM}'_\bullet(W)$ and $\overrightarrow{HM}'_\bullet(W)$ denote the contributions to $\widehat{HM}_\bullet(W)$ and $\overrightarrow{HM}_\bullet(W)$ from the $\mathrm{spin}^c$ structures $\spc_0|_\mathcal{L}$ and $\spc_0|_Z = \spc_\omega|_Z$ respectively.  We will let $\psi_{\spc_0|_\mathcal{L}}$ denote the element $\widehat{HM}'_\bullet(W_1)(1)$ of $\widehat{HM}_\bullet(Y,\spc_0|_Y) = \widehat{HM}_\bullet(Y,\spc_\omega|_Y)$ for convenience, and remark that $\overrightarrow{HM}'_\bullet(W_2)(\check{1})$ does not depend on $\spc_0$ because it is defined in terms of $\spc_0|_Z = \spc_\omega|_Z$.

When $\spc_0 = \spc_\omega$, we have observed that $\langle \psi_{\spc_0|_\mathcal{L}}, \overrightarrow{HM}'_\bullet(W_2)(\check{1})\rangle = \pm 1$.  In particular, $\psi_{\spc_0|_\mathcal{L}}$ must be a primitive element of the nonzero group 
\[ \widehat{HM}_\bullet(Y,\spc_\omega|_Y) \cong \HMtoc(Y,\spc_\omega|_Y) \subset  \HMtoc(Y|R) \cong \Z \]
where the first isomorphism comes from the map $j_*: \HMtoc(Y,\spc_\omega|_Y) \to \widehat{HM}_\bullet(Y,\spc_\omega|_Y)$, which is an isomorphism since $\spc_\omega|_Y$ is nontorsion.  It follows that $\widehat{HM}_\bullet(Y,\spc_\omega|_Y) \cong \Z$ and $\psi_{\spc_\omega}|_\mathcal{L} = \pm 1$, and also that $\overrightarrow{HM}'_\bullet(W_2)(\check{1}) = \pm 1$ and the above pairing on $\widehat{HM}(Y,\spc_\omega|_Y) \cong \Z$ is nondegenerate.  But then we must have $\psi_{\spc_0|_\mathcal{L}} = 0$ whenever $\spc_0|_\mathcal{L} \neq \spc_\omega|_\mathcal{L}$, since $\mathfrak{m}'_{\spc_0} = 0$.

Finally, since $\HMtoc(Y|R) \cong \widehat{HM}_\bullet(Y,\spc_\omega|_Y)$, we can identify the relative invariants $\psi_\spc$ as elements of $\HMtoc(Y|R) \cong \Z$ and write
\[ \Psi_\mathcal{L} = \sum_\spc \psi_\spc \]
over all $\mathrm{spin}^c$ structures $\spc$ on $\mathcal{L}$ such that $\spc|_Y = \spc_\omega|_Y$.  We have shown that $\psi_\spc$ is $\pm 1$ if $\spc = \spc_\omega|_\mathcal{L}$ and zero otherwise, so we conclude that $\Psi_\mathcal{L} = \pm 1$ as desired.
\end{proof}

\bibliographystyle{hplain}
\bibliography{References}

\end{document}